\theoremstyle{plain}
\newtheorem{step}{Step}
\newtheorem{thm}{Theorem}[section]
\newtheorem{theorem}[thm]{Theorem}
\newtheorem{cor}[thm]{Corollary}
\newtheorem{corollary}[thm]{Corollary}
\newtheorem{lemma}[thm]{Lemma}
\newtheorem{proposition}[thm]{Proposition}
\theoremstyle{remark}
\newtheorem{remark}[thm]{Remark}
\theoremstyle{definition}
\newtheorem{definition}[thm]{Definition}
\def\al{{\alpha}}
\def\be{{\beta}}
\def\de{{\delta}}
\def\De{{\Delta}}
\def\om{{\omega}}
\def\Om{{\Omega}}
\let\La\Lambda
\def\si{{\sigma}}
\def\ga{{\gamma}}
\def\ep{{\epsilon}}
\def\Ga{{\Gamma}}
\def\th{{\theta}}
\def\Th{{\Theta}}
\def\phi{{\varphi}}
\let\pa\partial
\let\na\nabla
\DeclareMathAlphabet{\doba}{U}{msb}{m}{n}
\gdef\mC{\doba{C}}
\gdef\mH{\doba{H}}
\gdef\mN{\doba{N}}
\gdef\mR{\doba{R}}
\gdef\mS{\doba{S}}
\gdef\mZ{\doba{Z}}
\def\cE{{\mathcal{E}}}
\def\cR{{\mathcal{R}}}
\def\Spin{{\mathop{\rm Spin}}}
\def\Vol{{\mathop{\rm Vol}}}
\let\vol\Vol
\def\Scal{{\mathop{\rm Scal}}}
\let\scal\Scal
\def\mukim{\mu^{(0)}}
\def\diam{{\mathop{\rm diam}}}
\def\Id{{\mathop{\rm Id}}}
\def\End{{\mathop{\rm End}}}
\def\trace{{\mathop{\rm tr}}}
\let\tr\trace
\let\ti\tilde
\def\d2dt{\frac{d^2}{dt^2}}
\def\grad{{\mathop{\rm grad}}}
\def\divergence{{\mathop{\rm div}}}
\let\embed\hookrightarrow
\def\Rmax{R_{\textrm{max}}}
\def\dvert{{d_{\rm vert}}}
\def\sidash{$\si$\nobreakdash-{}}
\long\def\ignorethis#1{}
\newdimen\templaenge
\def\Atbox#1#2{\setbox0\hbox{$\displaystyle #1$}\templaenge=\textwidth\advance\templaenge by -\wd0%
\setbox1\hbox{$#2$}\advance\templaenge by -\wd1%
$$#1\hbox{\kern\templaenge$#2$\hss}$$\par\bigbreak}
\newtheorem*{caseI}{Case I}
\newtheorem*{subcaseI.1}{Subcase I.1}
\newtheorem*{subcaseI.2}{Subcase I.2}
\newtheorem*{caseII}{Case II}
\newtheorem*{subcaseII.1}{Subcase II.1}
\newtheorem*{subsubcaseII.1.1}{Subsubcase II.1.1}
\newtheorem*{subsubcaseII.1.2}{Subsubcase II.1.2}
\newtheorem*{subcaseII.2}{Subcase II.2}
\def\an{a}
\def\detwo{\de_0}
\def\WS{$WS$}
\let\WSk\WS
\def\gWS{g_{\rm WS}}
\newcommand{\definedas}{\mathrel{\raise.095ex\hbox{\rm :}\mkern-5.2mu=}}
\begin{document}
%%%%%%%%%%%%%%%%%%%%%%%%%%%%%%%%%%%%%%%%%%%%%%%%%%%%%%%%%%%%%%%%%

%%%%%%%%%%%%%%%%%%%%%%%%%%%%%%%%%%%%%%%%%%%%%%%%%%%%%%%%%%%%%%%%%
%\begin{center}
%\framebox{\framebox{
%\vbox{This is project {\red \Project}\\
%Current version {\blue\Version}, from
%{\blue\Datum}, most recent changes by {\blue\Person}.}
%}}
%\end{center}
%%%%%%%%%%%%%%%%%%%%%%%%%%%%%%%%%%%%%%%%%%%%%%%%%%%%%%%%%%%%%%%%%

\title{Smooth Yamabe invariant and surgery}

\author{Bernd Ammann}
\address{Fakult\"at f\"ur Mathematik \\
Universit\"at Regensburg \\
93040 Regensburg \\
Germany\\}
\email{bernd.ammann@mathematik.uni-regensburg.de}

\author{Mattias Dahl}
\address{Institutionen f\"or Matematik \\
Kungliga Tekniska H\"ogskolan \\
100 44 Stockholm \\
Sweden\\}
\email{dahl@math.kth.se}

\author{Emmanuel Humbert}
\address{Laboratoire de Math\'ematiques et Physique Th\'eorique \\
Universit\'e de Tours \\
Parc de Grandmont \\
37200 Tours \\
France \\}
\email{Emmanuel.Humbert@lmpt.univ-tours.fr}

\begin{abstract}
We prove a surgery formula for the smooth Yamabe invariant~$\sigma(M)$
of a compact manifold~$M$. Assume that~$N$ is obtained from~$M$ by
surgery of codimension at least~$3$. We prove the existence of a
positive constant~$\Lambda_n$, depending only on the dimension~$n$
of~$M$, such that
\[
\si(N) \geq \min\{\si(M),\La_n\}.
\]
\end{abstract}

\subjclass[2010]{35J60 (Primary), 35P30, 57R65, 58J50, 58C40 (Secondary)}
%
% 35J60   Nonlinear PDE of elliptic type
% 35P30   Nonlinear eigenvalue problems, nonlinear spectral theory for PDO
% 57R65   Surgery and handlebodies
% 58J50   Spectral problems; spectral geometry; scattering theory
% 58C40   Spectral theory; eigenvalue problems
%
% Old codim2 {53C27 (Primary) 55N22, 57R65 (Secondary)}
% Old mu2-MSC%%%% 53A30, 35J60(Primary) 35P30, 58J50, 58C40 (Secondary)

\date{September 2nd, 2012}

\keywords{Yamabe operator, Yamabe invariant, surgery, positive scalar
curvature}

\maketitle

\setcounter{tocdepth}{1}
\tableofcontents

%%%%%%%%%%%%%%%%%%%%%%%%%%%%%%%%%%%%%%%%%%%%%%%%%%%%%%%%%%%%%%%%%%%%%%%%%
\section{Introduction}
%%%%%%%%%%%%%%%%%%%%%%%%%%%%%%%%%%%%%%%%%%%%%%%%%%%%%%%%%%%%%%%%%%%%%%%%%

%%%%%%%%%%%%%%%%%%%%%%%%%%%%%%%%%%%%%%%%%%%%%%%%%%%%%%%%%%%%%%%%%%%%%%%%%
\subsection{Main result}
%%%%%%%%%%%%%%%%%%%%%%%%%%%%%%%%%%%%%%%%%%%%%%%%%%%%%%%%%%%%%%%%%%%%%%%%%

The smooth Yamabe invariant, also called Schoen's \sidash{}constant,
of a compact manifold $M$ is defined as
\[
\si(M) \definedas \sup \inf \int_M\scal^g\,dv^g,
\]
where the supremum runs over all conformal classes $[g_0]$ on $M$ and
the infimum runs over all metrics $g$ of volume $1$ in $[g_0]$. The
integral $\cE(g) \definedas \int_M\scal^g\,dv^g$ is the integral of
the scalar curvature of $g$ integrated with respect to the volume element
of $g$ and is known as the Einstein--Hilbert functional.

Let $n = \dim M$. We assume that $N$ is obtained from $M$ by surgery
of codimension $n-k \geq 3$. That is for a given embedding
$S^k \embed M$, with trivial normal bundle, $0\leq k\leq n-3$, we
remove a tubular neighborhood $U_\ep(S^k)$ of this embedding.
The resulting manifold has boundary $S^k \times S^{n-k-1}$. This boundary
is glued together with the boundary of $B^{k+1} \times S^{n-k-1}$, and we
thus obtain the closed smooth manifold
\[
N \definedas
(M\setminus U_\ep(S^k))
\cup_{S^k \times S^{n-k-1}}
(B^{k+1} \times S^{n-k-1}).
\]

Our main result is the existence of a positive constant $\La_n$
depending only on $n$ such that
\[
\si(N)\geq \min\{\si(M),\La_n\}.
\]
This formula unifies and generalizes previous results obtained by
Gromov and Lawson, Schoen and Yau, Kobayashi, Peteana nd Yun. It also 
allows many
conclusions by using bordism theory.

In Section~\ref{sec.history} we give a detailed description of the
background of our result, a stronger version of the main result
follows in Section~\ref{subsec.stronger}, followed by a sketch of topological applications, see Section~\ref{subsec.topapp}.
The construction of a generalization of
surgery is recalled in Section~\ref{joining_man}.
Then, in Section~\ref{sec.La.const} the constant $\La_n$ is described
and it is proven
to be positive. After the proof of some preliminary results on limit
spaces in Section~\ref{sec.limit}, we derive a key estimate in Section
\ref{wsbundles}, namely an estimate for
the $L^2$-norm of solutions of a perturbed Yamabe equation on a
special kind of sphere bundle, called \WS-bundle. The last section
contains the proof of the main theorem, Theorem \ref{main.weak}.

%%%%%%%%%%%%%%%%%%%%%%%%%%%%%%%%%%%%%%%%%%%%%%%%%%%%%%%%%%%%%%%%%%%%%%%%%
\subsection{Basic notions, the Yamabe problem, and
some surgery formulas} \label{sec.history}
%%%%%%%%%%%%%%%%%%%%%%%%%%%%%%%%%%%%%%%%%%%%%%%%%%%%%%%%%%%%%%%%%%%%%%%%%

We denote by $B^n(r)$ the open ball of radius $r$ around $0$ in
$\mR^n$ and we set $B^n \definedas B^n(1)$. The unit sphere in $\mR^n$
is denoted by $S^{n-1}$. By $\xi^n$ we denote the standard flat metric
on $\mR^n$ and by $\sigma^{n-1}$ the standard metric of constant
sectional curvature $1$ on $S^{n-1}$. We denote the Riemannian
manifold $(S^{n-1},\sigma^{n-1})$ by $\mS^{n-1}$. All manifolds in this article 
are manifolds without boundary unless stated differently.

Let $(M,g)$ be a Riemannian manifold of dimension $n$. The Yamabe
operator, or Conformal Laplacian, acting on smooth functions on $M$
is defined by
\[
L^g u = \an \Delta^g u + \scal^g u,
\]
where $\an \definedas \frac{4(n-1)}{n-2}$ and where
$\Delta^g = \divergence^g \grad^g$ is the non-negative Laplacian
associated to the metric $g$. Let $p \definedas \frac{2n}{n-2}$.
Define the functional $J^g$ acting on non-zero compactly supported
smooth functions on $M$ by
\begin{equation} \label{def.J^g}
J^g(u)
\definedas
\frac{\int_M u L^g u \, dv^g}
{\left( \int_M u^p \, dv^g \right)^{\frac{2}{p}} }.
\end{equation}
If $g$ and $\tilde{g} = f^{\frac{4}{n-2}} g = f^{p-2} g$ are conformal
metrics on $M$, then the corresponding Yamabe operators are related by
\begin{equation} \label{confL}
L^{\ti g} u
=
f^{-\frac{n+2}{n-2}} L^{g} (fu)
=
f^{1-p} L^{g} (fu).
\end{equation}
It follows that
\begin{equation} \label{confJ}
J^{\ti g} (u)
=
J^{g} (fu).
\end{equation}
For a compact Riemannian manifold $(M,g)$ the conformal Yamabe
constant is defined by
\[
\mu(M,g) \definedas \inf J^g(u)\in\mR,
\]
where the infimum is taken over all non-zero smooth functions $u$ on
$M$. The same value of $\mu(M,g)$ is obtained if one takes the infimum
over positive smooth functions. {}From \eqref{confJ} it follows that the
invariant $\mu$ depends only on the conformal class $[g]$ of~$g$, and
the notation $\mu(M,[g]) = \mu(M,g)$ is also used. For the standard
sphere we have
\begin{equation} \label{mu(S^n)}
\mu(\mS^n) = n(n-1){\om_n}^{2/n},
\end{equation}
where $\om_n$ denotes the volume of $\mS^n$. This value is a universal
upper bound for~$\mu$.
\begin{theorem}[{\cite[Lemma 3]{aubin:76}}]\label{aubin}
The inequality
\[
\mu(M,g) \leq \mu(\mS^n)
\]
holds for any compact Riemannian manifold $(M,g)$.
\end{theorem}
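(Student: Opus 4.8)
The plan is to test the functional $J^g$ with a family of functions concentrated near a single point $x_0 \in M$ that, in a suitable conformal normal coordinate chart, look like the extremal functions on $\mS^n$ transplanted to $M$. Recall that equality $\mu(\mS^n) = J^{\sigma^n}(u)$ on the round sphere is attained (via stereographic projection to flat $\mathbb{R}^n$) by the functions $u_\ep(x) = \bigl(\ep/(\ep^2 + |x|^2)\bigr)^{(n-2)/2}$ on $(\mathbb{R}^n, \xi^n)$, for which $\an \int_{\mathbb{R}^n} |\na u_\ep|^2 \, dv^{\xi^n} \big/ \bigl(\int_{\mathbb{R}^n} u_\ep^p \, dv^{\xi^n}\bigr)^{2/p} = \mu(\mS^n)$ independently of $\ep$; this is just the conformal invariance \eqref{confJ} together with \eqref{mu(S^n)}.

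First I would fix normal coordinates centered at $x_0$ on a ball $B^n(r_0) \subset M$, so that $g_{ij}(x) = \delta_{ij} + O(|x|^2)$ and $\sqrt{\det g} = 1 + O(|x|^2)$, and set $u_{\ep,\eta}(x) = \chi(x)\, u_\ep(x)$ where $\chi$ is a fixed cutoff supported in $B^n(r_0)$ and equal to $1$ on $B^n(r_0/2)$. Since $n - 2 \ge 1$, one checks $u_\ep \in L^p$ and $\na u_\ep \in L^2$ near the origin, and as $\ep \to 0$ the function $u_{\ep}$ concentrates so strongly at $x_0$ that the cutoff and the deviation of $g$ from $\xi^n$ contribute only lower-order terms. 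Concretely, expanding the numerator $\int_M u_{\ep,\eta} L^g u_{\ep,\eta}\, dv^g = \an \int |\na u_\ep|_g^2 \, dv^g + \int \scal^g u_\ep^2 \, dv^g + (\text{cutoff errors})$ and the denominator $\bigl(\int u_{\ep,\eta}^p \, dv^g\bigr)^{2/p}$, I would show that the leading term of the numerator is $\an \int_{\mathbb{R}^n} |\na u_\ep|^2 \, dv^{\xi^n}$ and the leading term of the denominator is $\bigl(\int_{\mathbb{R}^n} u_\ep^p \, dv^{\xi^n}\bigr)^{2/p}$, so that $J^g(u_{\ep,\eta}) = \mu(\mS^n) + o(1)$ as $\ep \to 0$. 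Taking the infimum then gives $\mu(M,g) \le \mu(\mS^n)$.

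The main obstacle is the error analysis: one must show that the scalar-curvature term $\int \scal^g u_\ep^2 \, dv^g$ and the terms coming from the cutoff $\chi$ and from $g - \xi^n$ are all $o$ of the leading terms. The delicate case is low dimension. For $n \ge 4$ the term $\int \scal^g u_\ep^2$ is of strictly lower order than $\int |\na u_\ep|^2$ as $\ep \to 0$ (by the different homogeneities: $\int_{B^n(r_0)} |\na u_\ep|^2 \sim \ep^{-(n-2)}$ while $\int_{B^n(r_0)} u_\ep^2 \sim \ep^{-(n-4)}$ for $n \ge 5$, and $\sim \ep^{-2}|\log \ep|$ for $n = 4$), so the estimate is clean; for $n = 3$ both integrals are $O(\ep^{-1})$ but the scalar-curvature contribution can be absorbed since one still gets $J^g(u_{\ep}) \le \mu(\mS^n) + C\ep$ after using $\scal^g(x_0)$ finite, or alternatively one invokes the positive mass theorem — but for the present \emph{inequality} (as opposed to strict inequality) the crude bound $\scal^g \le \sup_M |\scal^g| < \infty$ on the shrinking support suffices in all dimensions. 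Thus the whole argument reduces to bookkeeping of powers of $\ep$, with conformal invariance \eqref{confJ} doing the conceptual work of identifying the limiting value as $\mu(\mS^n)$.
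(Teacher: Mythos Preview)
The paper does not give its own proof of this theorem; it is simply quoted as a known result from Aubin's 1976 paper, so there is nothing in the paper to compare your argument against. Your approach is precisely Aubin's original one---testing $J^g$ with cut-off, rescaled standard bubbles $u_\ep$ in normal coordinates---and it is correct for the non-strict inequality.

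One minor slip: with your normalization the scaling for $n=3$ is not quite as you state. One has $\int_{B^n(r_0)} |\nabla u_\ep|^2 \sim \ep^{-1}$ but $\int_{B^n(r_0)} u_\ep^2 = O(1)$ (not $O(\ep^{-1})$), so the ratio is $O(\ep)$ and the scalar-curvature term is still lower order; this only makes the estimate easier, and your conclusion $J^g(u_{\ep,\eta}) \le \mu(\mS^n) + o(1)$ stands in all dimensions $n\ge 3$.
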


For $u>0$ the $J^g$-functional is related to the
Einstein-Hilbert-functional via
\[
J^g(u)
=
\frac{\cE(u^{4/(n-2)}g)}{\vol(M,u^{4/(n-2)}g)^{\frac{n-2}{n}}},
\qquad \forall u\in C^\infty(M,\mR^+),
\]
and it follows that $\mu(M,g)$ has the alternative characterization
\[
\mu(M,g)
=
\inf_{\ti g\in [g]}\frac{\cE(\ti g)}{\vol(M,\ti g)^{\frac{n-2}{n}}}.
\]
Critical points of the functional $J^g$ are given by solutions of the
Yamabe equation
\[
L^g u = \mu |u|^{p-2} u
\]
for some $\mu \in \mR$. If the inequality in Theorem \ref{aubin} is
satisfied strictly, that is if $\mu(M,g)< \mu(\mS^n)$, then the
infimum in the definition of $\mu(M,g)$ is attained.

\begin{theorem}[\cite{trudinger:68,aubin:76}]\label{attained}
Let $M$ be connected. If $\mu(M,g) < \mu(\mS^n)$ then there exists a
smooth positive function $u$ with $J^g(u)=\mu$ and $\|u\|_{L^p}=1$.
This implies that $u$ solves \eqref{Yamabe_equation} with
$\mu = \mu(M,g)$. The minimizer $u$ is unique if $\mu\leq 0$.
\end{theorem}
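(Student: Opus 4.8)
The plan is to run the classical \emph{subcritical approximation} of Trudinger and Aubin: solve the equation with a subcritical exponent $q<p$, where compactness comes for free, and then let $q\uparrow p$, the whole difficulty being to keep the solutions under control in the limit. So I would proceed in four steps. \emph{Step 1 (subcritical minimizers).} For $q\in[2,p)$ put
\[
\mu_q\definedas\inf\Bigl\{\int_M uL^gu\,dv^g\ :\ u\in C^\infty(M),\ \|u\|_{L^q}=1\Bigr\}.
\]
On the unit $L^q$-sphere the functional $u\mapsto\int_M uL^gu\,dv^g=\an\|\nabla u\|_{L^2}^2+\int_M\scal^g u^2\,dv^g$ is bounded below, since $\int_M uL^gu\,dv^g\geq-\|\scal^g\|_\infty\|u\|_{L^2}^2\geq-\|\scal^g\|_\infty\vol(M,g)^{1-2/q}$, and it is weakly lower semicontinuous on $H^1(M,g)$: the Dirichlet term is, and the potential term is even weakly continuous because $H^1(M,g)\embed L^2(M,g)$ is compact. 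Since $q<p$ the embedding $H^1(M,g)\embed L^q(M,g)$ is compact, so a minimizing sequence has, after passing to a subsequence, a weak $H^1$-limit which is also a strong $L^q$-limit; this limit $u_q\geq0$ attains $\mu_q$ and satisfies $\|u_q\|_{L^q}=1$. Its Euler--Lagrange equation is the subcritical Yamabe equation $L^gu_q=\mu_q u_q^{q-1}$; since the nonlinearity is subcritical, elliptic regularity gives $u_q\in C^\infty(M)$, and the strong maximum principle gives $u_q>0$.

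\emph{Step 2 (uniform estimates as $q\uparrow p$ --- the crux).} Testing $\mu_q$ with a fixed smooth positive near-minimizer of $J^g$ and using $\|v\|_{L^q}\to\|v\|_{L^p}$ for fixed $v\in C^\infty(M)$ gives $\limsup_{q\uparrow p}\mu_q\leq\mu(M,g)$; the estimate in Step~1 gives $\mu_q\geq-C$ uniformly on $[2,p]$, and combined with $\|u_q\|_{L^2}\leq\vol(M,g)^{1/2-1/q}$ it makes $\an\|\nabla u_q\|_{L^2}^2=\mu_q-\int_M\scal^g u_q^2\,dv^g$ uniformly bounded, so $(u_q)$ is bounded in $H^1(M,g)$. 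The decisive point is a uniform bound $\|u_q\|_{L^\infty}\leq C$, which I would prove by contradiction. If $m_q\definedas\max_M u_q\to\infty$ along a subsequence, with maximum attained at $x_q\to x_0$, then rescaling $u_q$ around $x_q$ at the natural scale $m_q^{-(q-2)/2}$ and applying interior Schauder estimates produces in the limit a nonnegative $v$ on $(\mR^n,\xi^n)$ with $v(0)=1$, $0\leq v\leq1$, finite Dirichlet energy, solving the limiting critical equation $\an\Delta^{\xi^n}v=\mu_0 v^{p-1}$ with $\mu_0=\lim\mu_q\leq\mu(M,g)$; an integration by parts (justified by the decay of $v$) gives $\an\int_{\mR^n}|\nabla v|^2=\mu_0\int_{\mR^n}v^p$, forcing $\mu_0>0$ and hence $J^{\xi^n}(v)\geq\mu(\mS^n)$ by the sharp Sobolev inequality on $\mR^n$. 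A concentration--compactness bookkeeping of the mass and energy of $(u_q)$ then yields $\mu(M,g)=\lim\mu_q\geq\mu(\mS^n)$, contradicting the hypothesis $\mu(M,g)<\mu(\mS^n)$ (recall $\mu(\mS^n)>0$ and $\mu(M,g)\leq\mu(\mS^n)$ by Theorem~\ref{aubin}). This is the only place where the strict Aubin inequality enters, and it is exactly what rules out concentration; I expect it to be the main obstacle.

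\emph{Step 3 (limit, regularity, positivity).} Given the uniform $L^\infty$-bound, Schauder estimates give a uniform $C^{2,\al}$-bound, so along a subsequence $u_q\to u$ in $C^2(M)$ with $u\geq0$ and $L^gu=\mu_0 u^{p-1}$. Uniform convergence together with $\|u_q\|_{L^q}=1$ and $q\uparrow p$ gives $\|u\|_{L^p}=1$, so $u\not\equiv0$; hence $u>0$ by the strong maximum principle and $u\in C^\infty(M)$ by bootstrapping. Passing to the limit in $\int_M u_qL^gu_q\,dv^g=\mu_q$ gives $J^g(u)=\mu_0$, whence $\mu_0\geq\inf J^g=\mu(M,g)$, and with $\mu_0\leq\mu(M,g)$ from Step~2 we get $\mu_0=\mu(M,g)$. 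Thus $u$ is a positive smooth function with $\|u\|_{L^p}=1$ and $J^g(u)=\mu(M,g)$, and being a critical point of $J^g$ it solves $L^gu=\mu(M,g)|u|^{p-2}u$.

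\emph{Step 4 (uniqueness when $\mu\leq0$).} Let $\mu\definedas\mu(M,g)\leq0$ and let $u_1,u_2>0$ both realize $\mu$ with $\|u_i\|_{L^p}=1$, so $L^gu_i=\mu u_i^{p-1}$. Set $\ti g\definedas u_1^{p-2}g$. By \eqref{confL}, $\scal^{\ti g}=u_1^{1-p}L^gu_1=\mu$, and $w\definedas u_2/u_1$ satisfies $L^{\ti g}w=u_1^{1-p}L^g(u_1w)=u_1^{1-p}L^gu_2=\mu w^{p-1}$, that is
\[
\an\Delta^{\ti g}w=\mu\bigl(w^{p-1}-w\bigr)=\mu\,w\bigl(w^{p-2}-1\bigr).
\]
At a maximum point of $w$ one has $\Delta^{\ti g}w\geq0$, hence $\mu\,w(w^{p-2}-1)\geq0$; at a minimum point $\Delta^{\ti g}w\leq0$, hence $\mu\,w(w^{p-2}-1)\leq0$. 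If $\mu<0$, then since $w>0$ and $p-2>0$ this forces $w\leq1$ at the maximum and $w\geq1$ at the minimum, so $w\equiv1$. If $\mu=0$, then $w$ is harmonic on the closed manifold $(M,\ti g)$, hence a positive constant, which must be $1$ because $\|u_1\|_{L^p}=\|u_2\|_{L^p}=1$. In either case $u_1=u_2$.
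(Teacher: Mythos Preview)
The paper does not prove this theorem; it merely cites it from Trudinger and Aubin, so there is no ``paper's own proof'' to compare against. Your overall architecture --- subcritical approximation, then pass to the limit --- is exactly the classical strategy, and Steps~1, 3, and~4 are fine. The gap is in Step~2.

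The difficulty with your blow-up argument is the mismatch between the subcritical exponent $q<p$ and the critical scaling. Rescaling at rate $\rho_q=m_q^{-(q-2)/2}$, the rescaled Dirichlet energy on a fixed ball satisfies
\[
\int_{B(0,R)}|\nabla v_q|^2\,dy\;\le\;m_q^{(q-2)(n-2)/2-2}\int_M|\nabla u_q|^2\,dv^g
\;=\;m_q^{-(p-q)(n-2)/2}\cdot O(1),
\]
so if $m_q^{\,p-q}\to\infty$ the limit $v$ has zero Dirichlet energy, hence $v\equiv1$ by Liouville, and then your ``decay of $v$'' is false, the integration by parts is unjustified, and $\mu_0=0$ rather than $\mu_0>0$. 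If instead $m_q^{\,p-q}$ stays bounded the limit can be a genuine bubble, but then $J^{\xi^n}(v)=\mu_0\|v\|_{L^p}^{\,p-2}\ge\mu(\mS^n)$ gives $\mu_0\ge\mu(\mS^n)$ only once you know $\|v\|_{L^p(\mR^n)}\le1$, and the subcritical rescaling does not preserve the $L^p$-mass (nor the $L^q$-mass), so this bound is not for free. Your phrase ``concentration--compactness bookkeeping'' is hiding a genuine dichotomy on the rate of blow-up that you have not resolved; note also that nothing in your argument as written actually \emph{uses} the hypothesis $\mu(M,g)<\mu(\mS^n)$.

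The classical route (see Trudinger, Aubin, or the survey of Lee--Parker cited in the paper) avoids blow-up entirely. One uses Aubin's almost-sharp Sobolev inequality on $M$: for every $\ep>0$ there is $B_\ep$ with
\[
\|u\|_{L^p}^2\;\le\;\Bigl(\frac{\an}{\mu(\mS^n)}+\ep\Bigr)\|\nabla u\|_{L^2}^2+B_\ep\|u\|_{L^2}^2.
\]
Testing the equation $L^gu_q=\mu_qu_q^{q-1}$ against $u_q^{1+2\de}$ and invoking this inequality, the strict hypothesis $\mu(M,g)<\mu(\mS^n)$ makes a key coefficient positive and yields a uniform bound $\|u_q\|_{L^r}\le C$ for some fixed $r>p$; then standard Moser iteration gives the uniform $L^\infty$-bound. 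This is where the strict inequality does its work, and it is more robust than trying to control the subcritical blow-up rate.
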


The inequality $\mu(M,g) < \mu(\mS^n)$ was shown by Aubin
\cite{aubin:76} for non-conformally flat, compact manifolds
of dimension at least $6$. Later Schoen \cite{schoen:84} could apply
the positive mass theorem to obtain this strict inequality for all
compact manifolds not conformal to the standard sphere. We thus have a
solution of
\begin{equation} \label{Yamabe_equation}
L^g u = \mu u^{p-1},\qquad u>0.
\end{equation}

To explain the geometric meaning of these results we recall a few
facts about the Yamabe problem, see for example \cite{lee.parker:87}
and \cite[Chapter 5]{schoen.yau:94} for more details on this
material.  The name of Yamabe is associated to the problem, as
Yamabe wrote the first article about this subject \cite{yamabe:60}.

For a given compact Riemannian manifold $(M,g)$ the Yamabe
problem consists of finding a metric of constant scalar curvature
in the conformal class of~$g$. The above results yield a minimizer
$u$ for $J^g$. Equation \eqref{Yamabe_equation} is equivalent to the
fact that the scalar curvature of the metric $u^{4/(n-2)}g$ is
everywhere equal to~$\mu$. Thus, the above Theorem, together with
$\mu(M,g)<\mu(\mS^n)$, resolves the Yamabe problem.

A conformal class $[g]$ on $M$ contains a metric of positive scalar
curvature if and only if $\mu(M,[g])>0$. If $M = M_1\amalg M_2$ is a
disjoint union of $M_1$ and $M_2$ and if $g_i$ is the restriction of
$g$ to $M_i$, then an elementary argument where one rescales the
components with different factors yields that
\[
\mu(M,[g])
=
\min \left\{ \mu(M_1,[g_1]), \mu(M_2,[g_2]) \right\}
\]
if $\mu(M_1,[g_1])\geq 0$ or $\mu(M_2,[g_2])\geq 0$, and otherwise
\[
\mu(M,[g])
=
-\left(
|\mu(M_1,[g_1])|^{n/2}+|\mu(M_2,[g_2])|^{n/2}
\right)^{2/n}.
\]

One now defines the smooth Yamabe invariant of an arbitrary compact manifold $M$ of dimension at least $3$ as
\[
\si(M) \definedas \sup \mu(M,[g])\leq n (n-1)\om_n^{2/n},
\]
where the supremum is taken over all conformal classes $[g]$ on $M$.

The introduction of this invariant was originally motivated by
Yamabe's attempt to find Einstein metrics on a given compact manifold,
see \cite{schoen:87} and \cite{lebrun:99b}.
Yamabe's idea in the early 1960's was to search
for a conformal class $[g_{\rm sup}]$ that attains the supremum. The
minimizer $g_0$ of $\cE$ among all unit volume metrics in
$[g_{\rm sup}]$ exists according to Theorem~\ref{attained}, and Yamabe
hoped that the $g_0$ obtained with this minimax procedure would be a
stationary point of $\cE$ among all unit volume metrics (without fixed
conformal class), which is equivalent to $g_0$ being an Einstein
metric.

Yamabe's approach was very ambitious. If $M$ is a simply connected
compact $3$-manifold, then an Einstein metric on $M$ is necessarily a
round metric on $S^3$, hence the $3$-dimensional Poincar\'e conjecture
would follow.  It turned out, that his approach actually yields an
Einstein metric in some special cases.
For example, LeBrun \cite{lebrun:99a} showed
that if a compact $4$-dimensional manifold~$M$
carries a K\"ahler-Einstein metric
with non-positive scalar curvature,
then the supremum is attained by the conformal class of this metric.
Moreover, in any maximizing conformal class the minimizer is a
K\"ahler-Einstein metric.

Compact quotients $M=\Gamma\backslash\mH^3$ of $3$-dimensional
hyperbolic space $\mH^3$ yield other examples on which Yamabe's approach
yields an Einstein metric. On such quotients the supremum is attained
by the hyperbolic
metric on $M$. The proof of this statement
uses Perelman's proof of the Geometrization conjecture,
see \cite{anderson:06} and \cite[Section II.8]{kleiner.lott:08}.
In particular, $\si(\Gamma\backslash\mH^3)=-6
(v_\Ga)^{2/3}$ where $v_\Ga$ is the volume of $\Gamma\backslash\mH^3$
with respect to the hyperbolic metric.

On a general manifold, Yamabe's approach failed for various reasons.
In dimension $3$ and $4$ obstructions against the existence of
Einstein metrics are known today, see for example
\cite{lebrun:96,lebrun:p08}.  In many cases the supremum is not
attained.

R. Schoen and O. Kobayashi started to study the smooth Yamabe invariant
systematically in the late 1980's,
\cite{schoen:87,schoen:89,kobayashi:85,kobayashi:87}. In particular,
they determined $\si(S^{n-1}\times S^1)$ to be
$\si(S^n)=n(n-1)\om_n^{2/n}$. On $S^{n-1}\times S^1$ the supremum in
the definition of $\si$ is not attained.  Because of
Schoen's important results in these articles, the
smooth Yamabe invariant is also often called Schoen's \sidash{}constant.

The smooth Yamabe invariant determines the existence of positive
scalar curvature metrics. Namely, it follows from above that the
smooth Yamabe invariant $\si(M)$ is positive if and only if the
manifold $M$ admits a metric of positive scalar curvature.  Thus the
value of $\si(M)$ can be interpreted as a quantitative refinement of
the property of admitting a positive scalar curvature metric.

In general calculating the \sidash{}invariant is very difficult.
LeBrun \cite[Section~5]{lebrun:96}, \cite{lebrun:99a} showed that the
\sidash{}invariant of a complex algebraic surfaces is negative (resp. zero)
if and only if it is of general type (resp. of Kodaira dimension $0$ or $1$),
and the value of $\sigma(M)$ can be calculated explicitly in these cases.
As already explained above, the \sidash{}invariant can also
be calculated for hyperbolic $3$-manifolds, they are realized by the
hyperbolic metrics.

There are many manifolds admitting a
Ricci-flat metric, but no metric of positive scalar curvature,
for example tori, K3-surfaces and compact connected
$8$-dimensional manifolds admitting metrics with holonomy~$\Spin(7)$.
These conditions imply $\si(M)=0$, and the supremum is attained.

Conversely, Bourguignon showed that if $\si(M)=0$ and if the supremum
is attained by a conformal class $[g_{\rm sup}]$, then
$\cE:[g_{\rm sup}]\to \mR$ attains its minimum in a Ricci-flat metric
$g_0 \in [g_{\rm sup}]$. Thus Cheeger's splitting principle implies
topological restrictions on $M$ in this case. In particular, a compact
quotient $\Gamma\backslash N$ of a non-abelian nilpotent Lie group~$N$
does not admit metrics of non-negative scalar curvature, but it
admits a sequence of metrics $g_i$ with
$\mu(\Gamma\backslash N,g_i)\to 0$. Thus $\Gamma\backslash N$ is an
example of a manifold for which $\si(\Gamma\backslash N)=0$, for which
the supremum is not attained.

All the examples mentioned up to here have $\si(M)\leq 0$.
Positive smooth Yamabe invariants are even harder to determine.
The calculation of non-positive $\sigma(M)$ often relies on the
formula
\[
|\min\{\si(M),0\}|^{n/2}
=
\inf_g \int_M |\scal^g|^{n/2} \, dv^g
\]
where the infimum runs over all Riemannian metrics $g$ on $M$, see
\cite[Proof of Proposition 2.1]{lebrun:99b}. This formula does not
distinguish between different positive values of $\si(M)$, and thus
it cannot be used in the positive case.

It has been conjectured by Schoen \cite[Page 10, lines 6--11]{schoen:89}
that all finite quotients of round spheres satisfy
$\sigma(S^n/\Gamma)=(\#\Gamma)^{-2/n}Y(\mS^n)$, but this conjecture is
only verified for $\mathbb{R} P^3$ \cite{bray.neves:04}, namely
$\si(\mR P^3)= 6 (\om_3 /2)^{2/3}$. The smooth Yamabe invariant
%\sidash{}invariant
is also known
for connected sums of copies of real projective space~$\mR P^3$ with
copies of $S^2\times S^1$~\cite{akutagawa.neves:07},
for $\mC P^2$ \cite{gursky.lebrun:98} and for connected sums of
$\mC P^2$ with several copies of $S^3 \times S^1$. With similar methods,
it can also be determined for some related manifolds, but for example
the value of $\si(S^2\times S^2)$ is not known. To the knowledge of the
authors there are no manifolds $M$ of dimension $n \geq 5$ for which
it has been shown that $0 < \si(M)< \si(S^n)$, but due to Schoen's
conjecture finite quotients of spheres would be examples of such
manifolds.

As explicit calculation of the Yamabe invariant is difficult, it is
natural to use surgery theory to get estimates for more complicated
examples. Several articles study the behavior of the smooth Yamabe
invariant under surgery. In \cite{gromov.lawson:80b} and
\cite{schoen.yau:79c} it is proven that the existence of a positive
scalar curvature metric is preserved under surgeries of codimension
at least $3$. In terms of the \sidash{}invariant this means that if $N$
is obtained from a compact manifold $M$ by surgery of codimension at
least $3$ and $\si(M)>0$, then $\si(N)>0$.

Later Kobayashi proved in \cite{kobayashi:87} that if $N$ is obtained
from $M$ by $0$-dimensional surgery, then $\si(N)\geq \si(M)$. A first
consequence is an alternative deduction of
$\si(S^{n-1}\times S^1) = \si(S^n)$ using the fact that~$S^{n-1}~\times~S^1$
is obtained from $S^n$ by $0$-dimensional surgery.
More generally one sees that
$\si(S^{n-1} \times S^1 \# \cdots \# S^{n-1}\times S^1) = \si(S^n)$
as this connected sum is obtained from $S^n$ by $0$-dimensional
surgeries as well.

Note that it follows from what we said above that the smooth Yamabe
invariant of disjoint unions $M=M_1\amalg M_2$ satisfies
\[
\si(M)
=
\min \left\{ \si(M_1), \si(M_2) \right\}
\]
if $\si(M_1)\geq 0$ or $\si(M_2)\geq 0$, and otherwise
\[
\si(M)
=
-\left(|\si(M_1)|^{n/2}+|\si(M_2)|^{n/2}\right)^{2/n}.
\]
Kobayashi's result then implies
$\si(M_1\# M_2) \geq \si(M_1\amalg M_2)$,
and thus yields a lower bound for $\si(M_1 \# M_2)$ in terms of
$\si(M_1)$ and $\si(M_2)$.

A similar monotonicity formula for the \sidash{}invariant was proved
by Petean and Yun in \cite{petean.yun:99}. They prove that
$\si(N) \geq \min \{ \si(M), 0 \}$ if~$N$ is obtained from~$M$
by surgery of codimension at least $3$. See also
\cite[Proposition 4.1]{lebrun:99b} and~\cite{akutagawa.botvinnik:03}
for other approaches to this
result. Clearly, this surgery result is particularly interesting in
the case $\si(M) \leq 0$, and it has several fruitful applications. In
particular, any simply connected compact manifold of dimension at
least $5$ has $\si(M)\geq 0$; see \cite{petean:03}. This result has been
generalized to manifolds with certain types of fundamental group in
\cite{botvinnik.rosenberg:02}. Further results in the same spirit for $n=4$
can be found in \cite{sung:09}.

%%%%%%%%%%%%%%%%%%%%%%%%%%%%%%%%%%%%%%%%%%%%%%%%%%%%%%%%%%%%%%%%%%%%%%%%%
\subsection{Stronger version of the main result}\label{subsec.stronger}
%%%%%%%%%%%%%%%%%%%%%%%%%%%%%%%%%%%%%%%%%%%%%%%%%%%%%%%%%%%%%%%%%%%%%%%%%

In the present article we derive a surgery formula which is stronger than
the Gromov-Lawson resp.\ Schoen-Yau surgery formula, the Kobayashi surgery
formula and the Petean-Yun surgery formula described above. Suppose
that $M_1$ and $M_2$ are compact manifolds of dimension $n$ and that
$W$ is a compact manifold of dimension~$k$. Let embeddings $W
\hookrightarrow M_1$ and $W \hookrightarrow M_2$ be given. We assume
further that the normal bundles of these embeddings are trivial.
Removing tubular neighborhoods of the images of~$W$ in~$M_1$
and~$M_2$, and gluing together these manifolds along their common
boundary, we get a new compact manifold~$N$, the connected sum of~$M_1$
and~$M_2$ along~$W$. Strictly speaking $N$ also depends on the
choice of trivialization of the normal bundle. See section
\ref{joining_man} for more details.

Surgery is a special case of this construction: if $M_2=S^n$,
$W=S^k$ and if $S^k\hookrightarrow S^n$ is the standard embedding,
then $N$ is obtained from~$M_1$ via $k$-dimensional surgery along
$S^k\hookrightarrow M_1$.

\begin{theorem} \label{main.weak}
Let $M_1$ and $M_2$ be compact manifolds of dimension $n$.
If~$N$ is obtained as a connected sum of $M_1$ and $M_2$ along a
$k$-dimensional submanifold where $k \leq n-3$, then
\[
\sigma(N) \geq \min\left\{ \sigma(M_1\amalg M_2),\Lambda_{n,k}\right\}
\]
where $\Lambda_{n,k}$ is positive, and only depends on $n$ and $k$.
Furthermore $\Lambda_{n,0}=~\si(S^n)$.
\end{theorem}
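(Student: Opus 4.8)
The plan is to reduce the connected-sum statement to an analytic estimate on the gluing region. The starting point is the variational characterization: to prove $\sigma(N)\ge\min\{\sigma(M_1\amalg M_2),\Lambda_{n,k}\}$ it suffices, for every $\theta<\min\{\sigma(M_1\amalg M_2),\Lambda_{n,k}\}$, to exhibit a conformal class on $N$ with $\mu(N,[g])\ge\theta$. I would start from metrics $g_i$ on $M_i$ with $\mu(M_i,[g_i])$ close to $\sigma(M_i)$, normalized so that near the submanifold $W$ the metric looks like a product $g_W + \xi^{n-k}$ (a neighborhood of $W$ in $M_i$ can be taken to be $W\times B^{n-k}$ with an almost-product metric, since a tubular neighborhood of a submanifold with trivialized normal bundle has this form). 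Then on the cylinder $W\times(S^{n-k-1}\times[0,T])$ that interpolates between the two pieces I would glue in a family of metrics modeled on $g_W + \ground{n-k-1} + dt^2$ with an appropriate conformal factor, the so-called \WS-bundle (\emph{warped} or \emph{$WS$-}bundle) metric, of length parameter $T$ going to infinity as the tubular neighborhood radius $\ep$ goes to $0$.

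The heart of the argument is a concentration-compactness / limit-space analysis. Suppose for contradiction that for some fixed $\theta<\min\{\sigma(M_1\amalg M_2),\Lambda_{n,k}\}$ there is a sequence of gluing parameters $\ep\to0$ for which $\mu(N_\ep,[g_\ep])<\theta$. Then the Yamabe minimizers (or near-minimizers) $u_\ep$ on $N_\ep$, solving the Yamabe equation $L^{g_\ep}u_\ep=\mu_\ep u_\ep^{p-1}$ with $\|u_\ep\|_{L^p}=1$, must carry a definite amount of $L^p$-mass into the degenerating gluing region — otherwise, away from that region the manifold converges to $M_1\amalg M_2$ and a standard argument (cutting off, using that $\mu(M_i,[g_i])$ is close to $\sigma(M_i)>\theta$) would force $\mu_\ep\ge\theta-o(1)$, a contradiction. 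So one extracts a blow-up limit: after rescaling and passing to a subsequence, $u_\ep$ converges in $\Cloc$ to a nonzero limit solution on one of the model limit spaces — either $\mR^n$ with the flat metric (giving back $\mu(\mS^n)>\Lambda_{n,k}$), or the model \WS-bundle $\mR^{k}\times\mR\times S^{n-k-1}$ with a product-type metric, on which the relevant infimum of the Yamabe-type functional is \emph{by definition} what $\Lambda_{n,k}$ is built from. Using the positivity of $\Lambda_{n,k}$ (established in Section~\ref{sec.La.const}) and the key $L^2$-estimate on \WS-bundles from Section~\ref{wsbundles}, the limit solution would have to satisfy $J\ge\Lambda_{n,k}>\theta$, again contradicting $\mu_\ep<\theta$.

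The main obstacle — and what the technical sections are really for — is making the limit-space dichotomy rigorous while the underlying manifold itself is degenerating: one needs uniform elliptic estimates on the long, thin collars that are independent of $\ep$, control of how $L^p$-mass can escape to the "ends" of the rescaled pictures, and a careful bookkeeping of which rescaling rate picks out which limit space. In particular the $L^2$-norm estimate for solutions of the perturbed Yamabe equation on \WS-bundles is what prevents mass from leaking along the $S^{n-k-1}$ directions in a way that would make the limit trivial; without it the blow-up limit could be zero and the argument collapses. A secondary subtlety is that $\Lambda_{n,k}$ must be defined as an infimum over a suitable class of model metrics/functions so that (a) it is genuinely positive, and (b) the blow-up limit always falls into that class; reconciling these two requirements is the delicate part of Section~\ref{sec.La.const}. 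The case $k=0$ is special because the model \WS-bundle degenerates to a rescaled sphere situation, and one recovers $\Lambda_{n,0}=\sigma(S^n)$ essentially from Obata/Aubin-type rigidity; this is consistent with Kobayashi's $0$-dimensional surgery result $\sigma(N)\ge\sigma(M)$.
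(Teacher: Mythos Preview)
Your overall strategy matches the paper's: construct metrics $g_\th$ on $N$, take Yamabe minimizers $u_\th$, and run a concentration--compactness dichotomy between ``mass stays on $M$'' (giving $\mu(M,g)$ as lower bound), ``point concentration'' (blow-up to $\mR^n$, giving $\mu(\mS^n)$), and ``diffuse on the neck'' (limit on a model space, giving $\La_{n,k}$). But your metric construction and identification of the limit spaces is where the real gap lies. A product neck $g_W+\si^{n-k-1}+dt^2$ would, in the bounded-$L^\infty$ case with no further rescaling, produce a limit space $W\times\mS^{n-k-1}\times\mR$ that depends on $(W,g_W)$, and hence a $W$-dependent constant rather than a universal $\La_{n,k}$. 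The paper instead first passes to the conformal metric $r^{-2}g$ near $W$, which makes a punctured tubular neighborhood look asymptotically like $\mH^{k+1}\times\mS^{n-k-1}$ (Proposition~\ref{HconfS}), and then inserts a slowly varying warping $e^{f(t)}$ to bridge the two sides. The $W$-factor is thereby scaled by the large factor $e^{f(t_\th)}$, so after recentering it flattens to $\mR^k$; what survives in the limit is the derivative $c=\lim f'(t_\th)\in[-1,1]$, and the limit spaces are the one-parameter family $\mH_c^{k+1}\times\mS^{n-k-1}$. The constant $\La_{n,k}$ is then an infimum over this whole family (and over two solution classes $\Om^{(1)},\Om^{(2)}$, since the limit solution need not lie in $L^2$). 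This is not merely a bookkeeping issue in how $\La_{n,k}$ is defined---it is forced by the metric construction needed to make the constant independent of $W$.

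Two smaller corrections. The $L^2$-estimate on $WS$-bundles (Theorem~\ref{theo.fibest}) does not concern leakage along $S^{n-k-1}$; it bounds $\int u_\th^2$ along the long $t$-direction of the neck, which is essential because $\vol(N,g_\th)\to\infty$ and one must show the limit solution lies in $L^2$ (Subcase~II.1) or else falls into the $\Om^{(2)}$ class (Subcase~II.2). And for $k=0$ the model spaces $\mH_c^1\times\mS^{n-1}$ are all isometric to the standard cylinder $\mR\times\mS^{n-1}$, conformal to $\mS^n\setminus\mS^0$; the equality $\La_{n,0}=\mu(\mS^n)$ comes from a removable-singularity argument (Lemma~\ref{lemma.cyl}), not from Obata rigidity.
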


{}From Theorem \ref{aubin} we know that $\si(M) \leq \si(S^n)$ and thus we see that
$\si(M \amalg S^n) = \si(M)$ for all compact $M$.  Hence, we obtain
for the special case of surgery the following corollary.

\begin{corollary}
Let $M$ be a compact manifold of dimension $n$.
Assume that~$N$ is obtained from~$M$ via surgery along a $k$-dimensional
sphere $W$, $k \leq n-3$. We then have
\[
\sigma(N) \geq \min\left\{ \sigma(M),\Lambda_{n,k}\right\}
\]
\end{corollary}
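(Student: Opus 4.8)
The plan is to deduce the Corollary directly from Theorem~\ref{main.weak} applied with $M_1 = M$ and $M_2 = S^n$, together with the two auxiliary facts recorded in the excerpt. First I would observe that if $W \hookrightarrow M$ is an embedded $k$-sphere with trivial normal bundle and $S^k \hookrightarrow S^n$ is the standard embedding, then by the remark preceding Theorem~\ref{main.weak} the connected sum of $M$ and $S^n$ along $W = S^k$ is precisely the manifold obtained from $M$ by $k$-dimensional surgery along $W$; this identifies the $N$ of the Corollary with an $N$ to which Theorem~\ref{main.weak} applies. (One should note that the construction of $N$ a priori depends on the choice of trivialization of the normal bundle, but this is exactly the same ambiguity already present in the statement of the Corollary, so no extra hypothesis is needed.)

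Having made this identification, Theorem~\ref{main.weak} gives
\[
\sigma(N) \geq \min\left\{\sigma(M \amalg S^n), \Lambda_{n,k}\right\}.
\]
The next step is to simplify $\sigma(M \amalg S^n)$. Here I would invoke the disjoint-union formula for the smooth Yamabe invariant stated in Section~\ref{sec.history}: since $\sigma(S^n) = n(n-1)\omega_n^{2/n} \geq 0$, we are in the case where at least one summand has non-negative invariant, so $\sigma(M \amalg S^n) = \min\{\sigma(M), \sigma(S^n)\}$. Finally, Theorem~\ref{aubin} (Aubin's inequality) gives $\sigma(M) \leq \sigma(S^n)$ for every compact $M$ — indeed $\mu(M,[g]) \leq \mu(\mS^n) = \sigma(S^n)$ for every conformal class, hence the supremum $\sigma(M)$ is $\leq \sigma(S^n)$ — so the minimum collapses to $\sigma(M \amalg S^n) = \sigma(M)$. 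This is exactly the computation highlighted in the paragraph between Theorem~\ref{main.weak} and the Corollary.

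Substituting back yields $\sigma(N) \geq \min\{\sigma(M), \Lambda_{n,k}\}$, which is the claim. There is essentially no obstacle here: the Corollary is a formal specialization of Theorem~\ref{main.weak}, and the only content beyond quoting that theorem is the elementary bookkeeping that $\sigma(M \amalg S^n) = \sigma(M)$, which rests on Aubin's universal upper bound and the behavior of $\sigma$ under disjoint union. All the genuine difficulty — the analytic surgery estimate and the positivity of $\Lambda_{n,k}$ — is contained in Theorem~\ref{main.weak} itself, whose proof occupies the later sections of the paper.
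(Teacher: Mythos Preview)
Your proof is correct and follows exactly the same approach as the paper: realize surgery as the connected sum with $S^n$ along $S^k$, apply Theorem~\ref{main.weak}, and then reduce $\sigma(M \amalg S^n)$ to $\sigma(M)$ via Aubin's inequality $\sigma(M)\leq\sigma(S^n)$. The paper compresses the last step into a single sentence, but the content is identical.
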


The constants $\La_{n,k}$ will be defined in Section
\ref{sec.La.const}. In Subsections \ref{subsec.La1.pos} and
\ref{subsec.La2.pos} we prove that these constants are positive, and
in Subsection \ref{subsec.La.null} we prove that
$\La_{n,0} = \mu(\mS^n)$. Explicit lower bounds for $\La_{n,k}$ can be
found for all $k\not\in \{1,n-3\}$, however these are not optimal, see
Subsection~\ref{subsec.La.k-ge-2}. An explicit calculation of~$\La_{n,k}$
for $k>0$ seems very difficult. The main problem consists
in calculating the conformal Yamabe constant of certain Riemannian
products, which in general is a hard problem. See~\cite{akutagawa.florit.petean:07,ammann.dahl.humbert:13}
for recent progress on this problem.

%%%%%%%%%%%%%%%%%%%%%%%%%%%%%%%%%%%%%%%%%%%%%%%%%%%%%%%%%%%%%%%%%%%%%%%%
\subsection{Topological applications}\label{subsec.topapp}
%%%%%%%%%%%%%%%%%%%%%%%%%%%%%%%%%%%%%%%%%%%%%%%%%%%%%%%%%%%%%%%%%%%%%%%%

The above surgery result can be combined with standard techniques of
bordism theory. As these topological applications are not the main subject
of this article, we will only sketch some typical conclusions as
examples here.

%Such applications will be the subject of a sequel to
%this article, and we will only give some typical conclusions as
%examples here.

The first corollary uses the fact that spin bordism groups and
oriented bordism groups are finitely generated together with
techniques developed for the proof of the $h$-cobordism theorem.
\begin{corollary}
For any $n\geq 5$ there is a constant $C_n>0$, depending only on $n$,
such that
\[
\si(M) \in \{0\} \cup [C_n,\si(S^n)]
\]
for any simply-connected compact manifold $M$ of dimension $n$.
\end{corollary}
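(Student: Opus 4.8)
The plan is to combine the surgery formula of Theorem~\ref{main.weak} with the structure theory of simply-connected manifolds via bordism. First I would recall that for $n\geq 5$ every simply-connected compact manifold $M^n$ is, by the work of Gromov--Lawson and Stolz, either spin or non-spin (and in the latter case totally non-spin once simply connected), and that every such $M$ admits a spin or oriented bordism $[M]$ in $\Omega_n^{\Spin}$ respectively $\Omega_n^{\SO}$. The key topological input is that these bordism groups are finitely generated abelian groups, so we may fix a finite generating set represented by manifolds $B_1,\dots,B_N$. Then any simply-connected $M^n$ can be obtained from a disjoint union of copies (with multiplicities) of the $B_i$ by a sequence of surgeries of codimension $\geq 3$; this uses the $h$-cobordism theorem together with the standard trick of trading handles, exactly as in the Gromov--Lawson proof that simply-connected non-spin manifolds of dimension $\geq 5$ admit positive scalar curvature.

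Second, I would set up the quantitative bookkeeping. Iterating Theorem~\ref{main.weak} along this surgery sequence, each surgery step replaces the current lower bound $\sigma(\text{current})$ by $\min\{\sigma(\text{current}),\Lambda_{n,k}\}$ for the relevant $k\leq n-3$. Since the dimensions $k$ range over a finite set $\{0,1,\dots,n-3\}$ and all the $\Lambda_{n,k}$ are positive, I would set $\Lambda_n\definedas\min_{0\leq k\leq n-3}\Lambda_{n,k}>0$. Hence after finitely many surgeries one gets $\sigma(M)\geq\min\{\sigma(P),\Lambda_n\}$ where $P$ is the disjoint union of the $B_i$'s we started with. But the collection of possible starting manifolds $P$ is not finite a priori, because multiplicities can be arbitrarily large; here one uses that $\sigma$ of a disjoint union, when all pieces have $\sigma\geq 0$, is the minimum, and when some piece has $\sigma<0$ the disjoint-union formula only makes $\sigma(P)$ more negative but still bounded below by $\min_i\{\sigma(B_i),-(\sum|\sigma(B_i)|^{n/2})^{2/n}\}$ — wait, multiplicities do lower this. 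To handle this cleanly I would instead arrange, using connected sum rather than disjoint union at the level of the bordism representatives (which is legitimate since connected sum is $0$-dimensional surgery and $\sigma(M_1\#M_2)\geq\sigma(M_1\amalg M_2)$), to reduce to a \emph{single} connected manifold built from a \emph{bounded} number of generators, or alternatively observe that it suffices to bound $\sigma(M)$ below by a constant and use that $\sigma(M)\in\{0\}\cup[\,\cdot\,,\sigma(S^n)]$ comes from a separate dichotomy.

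Third, I would prove the dichotomy "$\sigma(M)=0$ or $\sigma(M)\geq C_n$." This is the heart of the matter. If $M$ admits a positive scalar curvature metric then $\sigma(M)>0$ and by the surgery chain above $\sigma(M)\geq\min\{\sigma(P),\Lambda_n\}$; since we may take the $B_i$ to be chosen once and for all and since at each stage the quantity is $\geq\Lambda_n$ once it has been "cut down" (the point being that the generating manifolds themselves have $\sigma(B_i)$ equal to some fixed finite list of values, and the only way to get a small positive value is through the surgery floor $\Lambda_n$, never through the generators which either have $\sigma\leq 0$ or $\sigma$ in a fixed finite positive set), we obtain $\sigma(M)\geq C_n$ with $C_n\definedas\min\{\Lambda_n,\min_i\{\sigma(B_i):\sigma(B_i)>0\}\}$. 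If on the other hand $M$ admits no positive scalar curvature metric, then $\sigma(M)\leq 0$; but by Petean--Yun's surgery formula (or the $\sigma(M)\leq 0$ case of Theorem~\ref{main.weak}, since $\Lambda_{n,k}>0$) and the bordism argument, $\sigma(M)\geq\min\{\sigma(P),0\}$, and choosing $P$ to be bordant to $M$ one knows $P$ too admits no psc metric, but for \emph{these} finitely-many-type building blocks (after again reducing multiplicities via connected sum and using $\sigma\geq 0$ when available) one can by a separate argument — this is where I expect trouble — force $\sigma(P)=0$, hence $\sigma(M)=0$.

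The main obstacle will be exactly this last point: controlling the negative side, i.e. ruling out $\sigma(M)\in(-\infty,0)$ for simply-connected $M$ of dimension $\geq 5$. The cleanest route is to invoke Petean's theorem (cited in the excerpt as \cite{petean:03}) that every simply-connected compact manifold of dimension $\geq 5$ has $\sigma(M)\geq 0$; granting that, the corollary reduces purely to the positive-side analysis, which the surgery formula handles as sketched. So the honest plan is: (i) cite $\sigma(M)\geq 0$ for simply-connected $n\geq 5$; (ii) if $\sigma(M)=0$ we are done; (iii) if $\sigma(M)>0$ then $M$ admits a psc metric, run the Gromov--Lawson--Stolz bordism/surgery reduction to a fixed finite list of building blocks, apply Theorem~\ref{main.weak} iteratively with the uniform floor $\Lambda_n=\min_{0\leq k\leq n-3}\Lambda_{n,k}>0$, and read off $C_n>0$ as the minimum of $\Lambda_n$ and the finitely many positive $\sigma$-values of the building blocks. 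The subtle bookkeeping point — that multiplicities of building blocks in a disjoint union do not spoil the bound — is resolved by replacing disjoint unions with connected sums wherever a summand has nonnegative Yamabe invariant, which is automatic here since we are in the case $\sigma(M)>0$ and hence, along the whole surgery chain, working with psc manifolds whose $\sigma$ is positive.
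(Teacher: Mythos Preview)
Your plan is broadly correct and matches the paper's own one-sentence sketch (finite generation of the spin and oriented bordism groups plus $h$-cobordism handle techniques). The ingredients you identify --- Petean's $\sigma(M)\geq 0$, the Gromov--Lawson/Stolz handle-trading to pass within a bordism class by codimension-$\geq 3$ surgeries, and the iterated surgery formula with floor $\Lambda_n$ --- are exactly the right ones.

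There is, however, a genuine gap in your step (iii). Your proposed constant $C_n=\min\bigl\{\Lambda_n,\min_i\{\sigma(B_i):\sigma(B_i)>0\}\bigr\}$ does not obviously work: a simply-connected $M$ with $\sigma(M)>0$ may lie in a bordism class whose expression in your fixed generators necessarily involves some $B_j$ with $\sigma(B_j)=0$ (Petean only guarantees $\sigma(B_i)\geq 0$, not $>0$). For the corresponding connected sum $P$ the formula $\sigma(M_1\# M_2)\geq\min\{\sigma(M_1),\sigma(M_2)\}$ then gives only $\sigma(P)\geq 0$, so the surgery bound yields $\sigma(M)\geq\min\{\sigma(P),\Lambda_n\}\geq 0$, which you already knew. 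Your attempted resolution (``along the whole surgery chain, working with psc manifolds'') is circular: it presupposes that the starting manifold $P$ has $\sigma(P)>0$, which is precisely what is in doubt.

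The clean fix is the mechanism the paper sets up immediately after stating the Corollary. One shows that $\bar\sigma(M)=\min\{\sigma(M),\Lambda_n\}$ is a \emph{bordism invariant} on simply-connected representatives (the Proposition), and that the level sets $\{\,[M]:\bar\sigma(M)\geq t\,\}$ are \emph{subgroups} of the bordism group (the Theorem). Since finitely generated abelian groups are Noetherian, the increasing chain of subgroups obtained by letting $t\searrow 0$ stabilizes at some positive $t=C_n$, and this stabilized subgroup coincides with $\{\,[M]:\bar\sigma(M)>0\,\}$; hence $\sigma(M)>0$ forces $\bar\sigma(M)\geq C_n$. Equivalently, once you know $\{\,[M]:\bar\sigma(M)>0\,\}$ is a subgroup, you may choose simply-connected generators $B_1',\ldots,B_N'$ of \emph{this} subgroup, each with $\sigma(B_i')>0$, and run your argument with these in place of generators of the full bordism group --- then your formula for $C_n$ is correct. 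Either way, it is the subgroup/Noetherian step that closes the gap you noticed but did not resolve.
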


We now sketch how interesting bordism invariants can be constructed
using our main result. This construction will be explained here only
for spin manifolds, but similar constructions can also be done for
oriented, non-spin manifolds or for non-oriented manifolds.

Fix a finitely presented group $\Gamma$, and let $B\Gamma$ be the
classifying space of~$\Gamma$. We consider pairs $(M,f)$ where $M$
is a compact spin manifold and where $f:M\to B\Gamma$ is continuous.
Two such pairs $(M_1,f_1)$ and $(M_2,f_2)$ are called spin bordant
over $B\Gamma$ if there exists an $(n+1)$-dimensional spin manifold
$W$ with boundary $-M_1\amalg M_2$ with a map $F:W \to B\Gamma$ such
that the restriction of~$F$ to the boundary yields $f_1$ and $f_2$.
It is implicitly required that the boundary carries the induced
orientation and spin structure and $-M_1$ denotes $M_1$ with reversed
orientation. Being spin bordant over $B\Gamma$ is an equivalence
relation. The equivalence class of $(M,f)$ under this equivalence
relation is denoted by $[M,f]$ and the set of equivalence classes
is called $\Omega_n^{\rm Spin}(B\Gamma)$. Disjoint union of manifolds
defines a sum on $\Omega_n^{\rm Spin}(B\Gamma)$ which turns it into an
abelian group.

We say that a pair $(M,f)$ with $f:M\to B\Gamma$ is a
\emph{$\pi_1$-bijective representative of $[M,f]$}
if $M$ is connected and if the induced map $f_*:\pi_1(M)\to \Gamma$ is
a bijection. Any equivalence class in $\Omega_n^{\rm Spin}(B\Gamma)$
has a $\pi_1$-bijective representative.

Now we define
\[
\Lambda_n\definedas\min\{\Lambda_{n,1},\ldots \Lambda_{n,n-3}\}>0,
\]
\[
\bar\sigma(M)\definedas\min\{\sigma(M),\Lambda_n\}.
\]

\begin{proposition}
Let $n\geq 5$.
Let $(M_1,f_1)$ and $(M_2,f_2)$ be compact spin manifolds with
maps $f_i:M_i\to B\Gamma$. If $(M_1,f_1)$ and $(M_2,f_2)$
are spin bordant over $B\Gamma$ and if $(M_2,f_2)$ is a
$\pi_1$-bijective representative of its class, then
\[
\bar\sigma(M_1)\leq\bar\sigma(M_2).
\]
\end{proposition}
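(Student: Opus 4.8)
The plan is to combine Theorem~\ref{main.weak} with a handle-trading argument from bordism theory, reducing everything to the topological statement that $M_2$ is obtained from $M_1$ by a finite sequence of surgeries of codimension $\geq 3$. The analytic half of this reduction is immediate: suppose $M_1 = P_0, P_1, \dots, P_r = M_2$ with each $P_{t+1}$ obtained from $P_t$ by a surgery along a sphere of dimension $k_t \leq n-3$. Since such a surgery is the connected sum with $S^n$ along that sphere and $\sigma(P_t \amalg S^n) = \sigma(P_t)$ by Theorem~\ref{aubin}, Theorem~\ref{main.weak} gives $\sigma(P_{t+1}) \geq \min\{\sigma(P_t), \Lambda_{n,k_t}\}$. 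For $1 \leq k_t \leq n-3$ this is $\geq \min\{\sigma(P_t), \Lambda_n\}$ since $\Lambda_{n,k_t} \geq \Lambda_n$, while for $k_t = 0$ it equals $\min\{\sigma(P_t), \sigma(S^n)\} = \sigma(P_t)$ by Theorem~\ref{aubin}; in either case $\sigma(P_{t+1}) \geq \bar\sigma(P_t)$. As $\bar\sigma(P_t) = \min\{\sigma(P_t), \Lambda_n\} \leq \Lambda_n$, this yields $\bar\sigma(P_{t+1}) = \min\{\sigma(P_{t+1}), \Lambda_n\} \geq \bar\sigma(P_t)$, and chaining gives $\bar\sigma(M_1) \leq \bar\sigma(M_2)$. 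Note that we do not need the surgeries in the chain to be compatible with maps to $B\Gamma$; the reference maps are used only in building the chain.

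To build the chain, let $(W^{n+1}, F)$ be a spin bordism over $B\Gamma$ with $\partial W = -M_1 \amalg M_2$ and $F|_{M_i} = f_i$. Connecting the components of $M_1$ by $0$-dimensional surgeries (each of codimension $n \geq 3$, and by Theorem~\ref{main.weak} only increasing $\bar\sigma$, hence harmless for the inequality we want) we may assume $M_1$ connected; attaching $1$-handles to the interior of $W$ and extending $F$ over them, which leaves $\partial W$ unchanged, we may assume $W$ connected. Since $F_* \colon \pi_1(W) \to \Gamma$ composed with the inclusion-induced map $\pi_1(M_2) \to \pi_1(W)$ is the isomorphism $f_{2*}$, the map $F_*$ is surjective. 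Performing $1$-dimensional surgeries in the interior of $W$ along loops normally generating $\ker F_*$ — finitely many, as $\pi_1(W)$ is finitely generated and $\Gamma$ finitely presented, each extendable over $F$ because such loops are nullhomotopic in $B\Gamma$ — we arrange that $\pi_1(W) \cong \Gamma$ and that $\pi_1(M_2) \to \pi_1(W)$ is an isomorphism. Then $(W, M_2)$ is $1$-connected, so by the relative Hurewicz theorem $H_2(W, M_2; \mZ[\Gamma])$ is a quotient of the finitely generated $\mZ[\Gamma]$-module $\pi_2(W)$. Doing $2$-dimensional surgeries in the interior of $W$ along embedded spheres representing $\mZ[\Gamma]$-generators of $\pi_2(W)$ — embeddings exist since $\dim W \geq 6$, the normal bundles are trivial because $W$ is spin, and the surgeries extend over $F$ since $\pi_2(B\Gamma) = 0$ — we further arrange $H_2(W, M_2; \mZ[\Gamma]) = 0$, without changing $\pi_1(W)$ or $\partial W$.

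Now pick a handle decomposition of $W$ built on $M_2 \times [0,1]$. Because $\dim W = n+1 \geq 6$, the classical handle-trading arguments (going back to Smale; see also Gromov--Lawson and Rosenberg--Stolz for analogous uses in scalar-curvature surgery) apply: connectedness lets us cancel all handles of index $0$; surjectivity of $\pi_1(M_2) \to \pi_1(W)$ lets us trade all handles of index $1$ for handles of index $3$; and $H_2(W, M_2; \mZ[\Gamma]) = 0$ lets us cancel all remaining handles of index $2$ against handles of index $3$ (the Whitney trick being applied in level hypersurfaces of dimension $n \geq 5$, where there is no fundamental-group obstruction since the relevant attaching spheres are $2$-spheres). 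Hence $W$ is built from $M_2 \times [0,1]$ using handles of index $\geq 3$ only. Reading the same decomposition from the $M_1$ end, $W$ is built from $M_1 \times [0,1]$ using handles of index $\leq (n+1) - 3 = n-2$; a handle of index $j$ effects, on the current level hypersurface, a surgery along a $(j-1)$-dimensional sphere of codimension $n-j+1 \geq 3$. The successive level hypersurfaces thus form a chain $M_1 = P_0, \dots, P_r = M_2$ of the required kind, and the first paragraph completes the proof.

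The step I expect to be the main obstacle is this last one: disposing of the handles that, counted from the $M_1$ side, have index $n-1$, $n$, or $n+1$ — equivalently, the handles of index $0$, $1$, and $2$ counted from the $M_2$ side. Removing the index-$0$ and index-$1$ handles is routine once $W$ is connected and $\pi_1(M_2) \to \pi_1(W)$ is surjective, but removing the index-$2$ handles genuinely requires the preliminary interior surgery on $W$ killing $H_2(W, M_2; \mZ[\Gamma])$, and that surgery is only possible after $\pi_1(M_2) \to \pi_1(W)$ has been made bijective, so that the relative Hurewicz theorem identifies $H_2(W, M_2; \mZ[\Gamma])$ with a quotient of $\pi_2(W)$. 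This is precisely where the hypothesis that $(M_2, f_2)$ is $\pi_1$-bijective enters; because no corresponding assumption is made on $(M_1, f_1)$, the symmetric argument is unavailable, and one obtains only the one-sided inequality $\bar\sigma(M_1) \leq \bar\sigma(M_2)$ rather than equality.
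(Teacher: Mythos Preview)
The paper does not actually prove this proposition; it is stated without argument in Section~\ref{subsec.topapp}, which the authors explicitly describe as only a sketch of topological applications. The intended justification is the standard bordism argument (Gromov--Lawson, refined in \cite{rosenberg.stolz:01}): modify $W$ so that $M_2$ is reached from $M_1$ by surgeries of codimension at least $3$, then apply the Corollary to Theorem~\ref{main.weak} repeatedly. Your write-up supplies exactly this argument and is correct in outline and in almost every detail.

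One step deserves tightening. You claim that $\pi_2(W)$ is a finitely generated $\mZ[\Gamma]$-module, but for a general finitely presented group $\Gamma$ the ring $\mZ[\Gamma]$ need not be Noetherian (take $\Gamma$ free on two generators), and finite generation of $\pi_2(W)=H_2(\tilde W;\mZ)$ over $\mZ[\Gamma]$ is not automatic. Fortunately you only need that $H_2(W,M_2;\mZ[\Gamma])$ is finitely generated, and this does hold: since $(W,M_2)$ is $1$-connected, the relative cellular chain complex (of finitely generated free $\mZ[\Gamma]$-modules) has $H_0=H_1=0$, so successive splittings using projectivity of free modules show that $\ker\partial_2$ is a finitely generated projective direct summand of $C_2$, and $H_2$ is a quotient of it. Equivalently, first invoke Wall's theorem once to get a handle decomposition on $M_2$ with no $0$- or $1$-handles, so that $C_0=C_1=0$ and $H_2$ is visibly a quotient of the finitely generated free module $C_2$. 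Either way you then lift finitely many generators of $H_2(W,M_2;\mZ[\Gamma])$ to $\pi_2(W)$ via the surjection $\pi_2(W)\to\pi_2(W,M_2)$ and do surgery on those spheres, rather than on a possibly non-existent finite generating set for all of $\pi_2(W)$. With this correction the argument is complete.
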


We define $s_\Gamma:\Omega_n^{\rm Spin}(B\Gamma)\to \mathbb{R}$ by
\[
s_\Gamma([M,f])\definedas\sup_{(M_1,f_1)\in [M,f]}\bar\si(M_1).
\]
The proposition states $s_\Ga([M,f])= \bar\si(M)$ if $(M,f)$ is
a $\pi_1$-bijective representative of its class.
The surgery formula further implies
\[
s_\Gamma \Big([M_1,f_1]+[M_2,f_2]\Big)
\geq
\min\Big\{s_\Gamma([M_1,f_1]),s_\Gamma([M_2,f_2])\Big\}
\]
if $s_\Gamma([M_1,f_1])\geq 0$ or $s_\Gamma([M_2,f_2])\geq 0$,
and otherwise
\[
s_\Gamma\Big([M_1,f_1]+[M_2,f_2]\Big)
\geq
- \left(|s_\Gamma([M_1,f_1])|^{n/2}
+|s_\Gamma([M_2,f_2])|^{n/2}\right)^{2/n}.
\]

We conclude, and obtain the following theorem.

\begin{theorem}
Let $t\in\mR$, $t \geq 0$, $n\in \mN$, $n\geq 5$. Then the sets
\[
 \Omega_n^{\rm Spin}(B\Gamma)^{> t}
\definedas
\{[M,f]\in \Omega_n^{\rm Spin}(B\Gamma)\,|\, s_\Gamma([M,f])> t\}
\]
and
\[
 \Omega_n^{\rm Spin}(B\Gamma)^{\geq t}
\definedas
\{[M,f]\in \Omega_n^{\rm Spin}(B\Gamma)\,|\, s_\Gamma([M,f])\geq t\}
\]
are subgroups of $\Omega_n^{\rm Spin}(B\Gamma)$.
\end{theorem}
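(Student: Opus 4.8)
The plan is to verify the three subgroup axioms for $H\definedas\Omega_n^{\mathrm{Spin}}(B\Ga)^{\ge t}$, and simultaneously — replacing every ``$\ge$'' by ``$>$'' — for $\Omega_n^{\mathrm{Spin}}(B\Ga)^{>t}$, using as input only the two surgery inequalities for $s_\Ga$ displayed just above (available since $n\ge 5$), the definition $\bar\si(M)=\min\{\si(M),\La_n\}$, and the elementary fact that $\si$, and hence $\bar\si$ and $s_\Ga$, is independent of the orientation. Since $\Omega_n^{\mathrm{Spin}}(B\Ga)$ is abelian, it is enough to show that (a) the zero class lies in $H$, (b) $H+H\subseteq H$, and (c) $-H\subseteq H$.

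First I would treat (b). Let $x=[M_1,f_1]$ and $y=[M_2,f_2]$ lie in $H$, so that $s_\Ga(x)\ge t$ and $s_\Ga(y)\ge t$. Because $t\ge 0$, both values are $\ge 0$, and we are therefore in the first of the two cases of the surgery inequality for $s_\Ga$; it gives $s_\Ga(x+y)\ge\min\{s_\Ga(x),s_\Ga(y)\}\ge t$, i.e.\ $x+y\in H$. (It is precisely the assumption $t\ge 0$ that keeps us out of the second, more delicate case of that inequality.) The same argument with ``$>$'' everywhere in place of ``$\ge$'' settles $\Omega_n^{\mathrm{Spin}}(B\Ga)^{>t}$. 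For (c) I would use that the inverse of $[M,f]$ in $\Omega_n^{\mathrm{Spin}}(B\Ga)$ is $[-M,f]$, where $-M$ carries the reversed orientation and the induced spin structure. Orientation reversal $(P,g)\mapsto(-P,g)$ is a bijection from the set of representatives of $[M,f]$ onto the set of representatives of $[-M,f]$, and it does not change $\bar\si$ of the underlying manifold; taking the supremum of $\bar\si$ over these two sets therefore yields $s_\Ga(-x)=s_\Ga(x)$, so $-H\subseteq H$.

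The one point that needs a little care is (a). If $H\ne\emptyset$, choose $x\in H$; by (c) we have $s_\Ga(-x)=s_\Ga(x)\ge t\ge 0$, so applying the first case of the surgery inequality to the decomposition $0=x+(-x)$ gives $s_\Ga(0)\ge\min\{s_\Ga(x),s_\Ga(-x)\}=s_\Ga(x)\ge t$, whence $0\in H$; the strict version is identical. To identify the range of $t$ in which this is not vacuous, I would additionally record that $s_\Ga$ attains its maximum value $\La_n$ at the zero class: on one hand $\bar\si(M)\le\La_n$ for every $M$ by definition, so $s_\Ga\le\La_n$ everywhere; on the other hand $[S^n,\const]=0$ and $\bar\si(S^n)=\min\{\si(S^n),\La_n\}=\La_n$, since every $\La_{n,k}$ satisfies $\La_{n,k}\le\mu(\mS^n)=\si(S^n)$ by its definition in Section~\ref{sec.La.const} together with Theorem~\ref{aubin}. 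Hence $\Omega_n^{\mathrm{Spin}}(B\Ga)^{\ge t}$ is nonempty exactly for $0\le t\le\La_n$ and $\Omega_n^{\mathrm{Spin}}(B\Ga)^{>t}$ exactly for $0\le t<\La_n$; on these ranges (a)--(c) make them subgroups, while for the remaining values of $t$ they are empty.

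I expect the main (in fact the only) subtlety to be this bookkeeping: noticing that the sign condition $t\ge 0$ is exactly what puts us in the benign case of the surgery inequality, where the naive $\min$-estimate suffices, and that the boundedness $s_\Ga\le\La_n$ governs precisely when the sets are nonempty. The remaining content is a routine check of the group axioms and of the orientation-invariance of the Yamabe invariant.
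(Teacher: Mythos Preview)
Your proof is correct and is precisely the argument the paper leaves implicit: the paper gives no separate proof but simply writes ``We conclude, and obtain the following theorem'' after displaying the surgery inequality for $s_\Ga$, and your verification of the subgroup axioms---closure under addition via the first case of that inequality (which applies because $t\ge 0$), closure under inversion via the orientation-independence of $\si$, and $0\in H$ via $0=x+(-x)$---is exactly what that one-line remark intends. Your additional bookkeeping about when the sets are nonempty (and that $s_\Ga(0)=\La_n$) is a helpful refinement the paper does not spell out.
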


The theorem admits --- among other interesting conclusions --- the
following application. For a positive integer $p$ we write $p\#M$ for
the connected sum $M\#\cdots \#M$ where $M$ appears $p$ times. We
already know that $\sigma(p\#M) \geq \sigma(M)$ if $\sigma(M)\geq 0$.

\begin{corollary}
Suppose that $M$ is a compact spin
manifold of dimension at least $5$
with $\sigma(M)\in (0,\Lambda_n)$. Let $p$ and $q$ be two
relatively prime positive integers. If $\sigma(p\#M)>\sigma(M)$,
then $\sigma(q\#M)=\sigma(M)$.
\end{corollary}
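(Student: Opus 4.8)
The plan is to translate the statement into the language of the bordism group $\Omega_n^{\rm Spin}(B\Gamma)$ and to use that $\Omega_n^{\rm Spin}(B\Gamma)^{>t}$ is a subgroup. Since $p\#M$ is only meaningful for connected $M$, assume $M$ connected, and set $\Gamma\definedas\pi_1(M)$, which is finitely presented because $M$ is compact. Let $f\colon M\to B\Gamma$ be the classifying map of the universal cover, so that $f_*\colon\pi_1(M)\to\Gamma$ is the identity; thus $(M,f)$ is a $\pi_1$-bijective representative of its class, and by the Proposition preceding the definition of $s_\Gamma$ we have $s_\Gamma([M,f])=\bar\sigma(M)$. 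Since $0<\sigma(M)<\Lambda_n$ this yields $s_\Gamma([M,f])=\bar\sigma(M)=\sigma(M)$.

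First I would identify the bordism class of the iterated connected sum. For every integer $r\geq 1$ the manifold $r\#M$ admits a map $F_r\colon r\#M\to B\Gamma$ realizing on $\pi_1$ the fold homomorphism $\pi_1(M)^{*r}\to\pi_1(M)$; such an $F_r$ exists and is unique up to homotopy because $B\Gamma$ is aspherical. Now a connected sum is spin bordant to the corresponding disjoint union; one such bordism is obtained by attaching $1$-handles to $\bigl(\coprod_{i=1}^r M\bigr)\times[0,1]$, and taking $f$ on each slice together with any extension over the contractible handles equips it with a map to $B\Gamma$ restricting to $f\amalg\cdots\amalg f$ on one end and, up to homotopy, to $F_r$ on the other. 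Hence $[r\#M,F_r]=r\,[M,f]$ in $\Omega_n^{\rm Spin}(B\Gamma)$, and since $(r\#M,F_r)$ is a representative of $r\,[M,f]$, the definition of $s_\Gamma$ as a supremum over representatives gives $\bar\sigma(r\#M)\leq s_\Gamma\bigl(r\,[M,f]\bigr)$.

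Next I would feed in the hypothesis. If $\sigma(r\#M)>\sigma(M)$ for some $r$, then, using $\Lambda_n>\sigma(M)$, we get $\bar\sigma(r\#M)=\min\{\sigma(r\#M),\Lambda_n\}>\sigma(M)$, hence $s_\Gamma(r\,[M,f])>\sigma(M)$, that is $r\,[M,f]\in H\definedas\Omega_n^{\rm Spin}(B\Gamma)^{>\sigma(M)}$, which is a subgroup by the subgroup theorem stated above (this is where $n\geq5$ and $\sigma(M)\geq0$ enter). Applying this with $r=p$ gives $p\,[M,f]\in H$. If, arguing by contradiction, $\sigma(q\#M)>\sigma(M)$ as well, the same reasoning gives $q\,[M,f]\in H$; choosing $a,b\in\mathbb{Z}$ with $ap+bq=1$ (possible since $\gcd(p,q)=1$) and using that $H$ is a subgroup yields $[M,f]=a\,(p[M,f])+b\,(q[M,f])\in H$, so $s_\Gamma([M,f])>\sigma(M)$, contradicting $s_\Gamma([M,f])=\sigma(M)$. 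Therefore $\sigma(q\#M)\leq\sigma(M)$, and since $\sigma(M)\geq0$ we already know $\sigma(q\#M)\geq\sigma(M)$, so $\sigma(q\#M)=\sigma(M)$.

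The bulk of this is formal bookkeeping with $\bar\sigma$, $s_\Gamma$ and the subgroup $H$; the step that really needs care is the identification $[r\#M,F_r]=r\,[M,f]$, i.e.\ producing a spin bordism over $B\Gamma$ between the $r$-fold connected sum (with the fold map) and the $r$-fold disjoint union of $(M,f)$. Asphericity of $B\Gamma$ is what forces the reference maps to agree, and one should check that the $1$-handle bordism respects spin structures and orientations so that no sign intervenes. All the analytic substance has been packaged into the surgery formula, the Proposition, and the subgroup theorem already available.
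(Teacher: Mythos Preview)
Your proof is correct and follows precisely the route the paper intends: the corollary is stated there as an application of the subgroup theorem for $\Omega_n^{\rm Spin}(B\Gamma)^{>t}$, and your argument unpacks exactly this, identifying $[r\#M,F_r]=r[M,f]$, invoking the subgroup property together with B\'ezout, and closing with Kobayashi's inequality $\sigma(q\#M)\geq\sigma(M)$. The paper gives no further details, so there is nothing to compare beyond confirming that you filled them in correctly.
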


If Schoen's conjecture about the \sidash{}invariant of quotients of spheres
holds true, then quotients of spheres
by large fundamental groups yield examples of manifolds~$M$
with $\si(M)\in (0,\Lambda_n)$.

%%%%%%%%%%%%%%%%%%%%%%%
%Setting
%$\bar{\si}(M) \definedas \min\{\si(M),\La_{n,1},\ldots,\La_{n,n-3}\}$
%one sees that $\bar{\si}(M)$ is a bordism invariant, where the precise
%meaning of this expression depends on some topological properties of
%the manifold $M$. For example $\bar{\si}(M)$ is a spin-bordism
%invariant of simply connected spin manifolds of dimension $\geq 5$. It
%is an oriented bordism invariant of simply connected oriented non-spin
%manifolds of dimension $\geq 5$. Non-simply connected manifolds can be
%dealt with by considering bordisms with maps to $B\pi_1(M)$.

The determination of manifolds admitting positive scalar curvature
metrics, that is manifolds with $\si(M)>0$, has led to interesting
results and challenging problems in topology, see for example
\cite{rosenberg.stolz:01}. It would be interesting to develop similar
topological tools for manifolds with $\si(M)>\ep$ for $\ep>0$.
As explained above such manifolds form a subgroup on the bordism
level.
%In particular, it would be interesting to find on the bordism
%level a ring structure on manifolds with $\si(M^n)>\ep_n$ where
%$\ep_n>0$ is a given sequence of positive numbers, generalizing the ring
%structure on positive scalar curvature bordism classes.

The subgroups $\Omega_n^{\rm Spin}(B\Gamma)^{> t}$ also provide
interesting algebraic
structures. Any homomorphism $\Gamma_1\to \Gamma_2$ provides
a homomorphism
  $$\Omega_n^{\rm Spin}(B\Gamma_1)^{>t}\to \Omega_n^{\rm Spin}(B\Gamma_2)^{>t}.$$
After introducing some factors and powers depending on the dimension,
these subgroups carry an ideal-like structure. More precisely, it follows
from \cite{ammann.dahl.humbert:13} that for any numbers $t_5>0$ there is a
sequence $t_n>0$, $n\geq 5$, such that 
taking products of manifolds
defines a $\mZ$-bilinear map
  $$\Omega_{*\geq 5}^{\rm Spin}(B\Gamma_1)\times \Omega_{*\geq 5}^{\rm Spin}(B\Gamma_2)^{> t_*}\to \Omega_{*\geq 5}^{\rm Spin}(B(\Gamma_1\times \Gamma_2))^{> t_*}$$
where the index $*\geq 5$ indicates that we consider the spin bordism
ring of manifolds whose dimension is at least $5$.
In particular $\Omega_{*\geq 5}^{\rm Spin}(B\Gamma)^{> t_*}$ is a module
over the ring $\Omega_{*\geq 5}^{\rm Spin}:=\Omega_{*\geq 5}^{\rm Spin}(B\{1\})$
and  $\Omega_{*\geq 5}^{{\rm Spin}\;> t_*}$ is an ideal in
$\Omega_{*\geq 5}^{\rm Spin}$.
Analogous structures exist for $\Omega_n^{\rm Spin}(B\Gamma)^{\geq  t}$.

%%%%%%%%%%%%%%%%%%%%%%%%%%%%%%%%%%%%%%%%%%%%%%%%%%%%%%%%%%%%%%%%%%%%%%%%%
\subsection{Comparison to other results}
%%%%%%%%%%%%%%%%%%%%%%%%%%%%%%%%%%%%%%%%%%%%%%%%%%%%%%%%%%%%%%%%%%%%%%%%%

At the end of the section we want to mention some similar constructions
in the literature.
An analogous surgery formula holds if we replace the Conformal
Laplacian by the Dirac operator, see~\cite{ammann.dahl.humbert:09b}
for details and applications. D.~Joyce \cite{joyce:03}, followed by
L.~Mazzieri \cite{mazzieri:08,mazzieri:p06b}, considered a problem
tightly related to our result: their goal is to construct a metric on
a manifold obtained via a connected sum along a $k$-dimensional submanifold. For these metrics they construct a solution of the Yamabe
equation on the new manifold that is close to solutions of the Yamabe
equations on the original pieces. Such a construction was achieved by
D.~Joyce for $k=0$ and by L.~Mazzieri for $k\in \{1,\ldots,n-3\}$
provided that the embeddings defining the connected sum are isometric.
In contrast to our article their solutions on the new manifold are not
necessarily minimizers of the volume-normalized Einstein-Hilbert
functional. Similar constructions have also been developed by
R.~Mazzeo, D.~Pollack, and K.~Uhlenbeck
\cite{mazzeo.pollack.uhlenbeck:95} in order to glue together metrics
of constant scalar curvature. Recently, J.~Corvino, M.~Eichmair, and
P.~Miao showed how to glue together metrics while preserving constant
scalar curvature and volume, see \cite{corvino.eichmair.miao:p12}.
Further, P.~T.~Chrusciel, J.~Isenberg, and D.~Pollack
\cite{chrusciel.isenberg.pollack:05}, found methods to glue together
solutions of the vacuum Einstein constraint equations.

Other authors studied the equivariant analogues. In this setting
one assumes that a compact Lie group $G$ acts on the manifolds
before and after surgery and that the surgery is compatible with this actions.
Furthermore all metrics are assumed to be $G$-invariant and the
Yamabe constant and Yamabe invariant are replaced by their
equivariant analogues. The equivariant Yamabe problem is solved in many cases,
in particular on spin manifolds or in the case that all orbits have
positive dimension, see \cite{hebey.vaugon:93}, \cite{madani:10,madani:12}.
An equivariant analogue of the Petean-Yun surgery formula was provided
in \cite{sung:06}. B.~Hanke \cite{hanke:08} proved that the existence of
$G$-invariant positive scalar curvature metrics is preserved under
equivariant surgeries of the appropriate dimensions, which is the
equivariant generalization of the result by Gromov and Lawson,
respectively Schoen and Yau, cited above.

%%%%%%%%%%%%%%%%%%%%%%%%%%%%%%%%%%%%%%%%%%%%%%%%%%%%%%%%%%%%%%%%%%%%%%%%%
\subsection*{Acknowledgments}
%%%%%%%%%%%%%%%%%%%%%%%%%%%%%%%%%%%%%%%%%%%%%%%%%%%%%%%%%%%%%%%%%%%%%%%%%

The authors want to thank the Max Planck Institute for Gravitational
Physics in Potsdam for its hospitality, its support, and its friendly
working conditions which had an important impact on this article.
We thank Andreas Hermann for the numerical computation of the
equivariant version of $\La_{4,1}$ mentioned in Section
\ref{subsec.La.speculation}. We also thank Kazuo Akutagawa for interesting
discussions and insightful comments. Finally, we want to express our
deep thanks to the anonymous referee whose many valuable remarks have
helped us greatly to improve the paper.

%%%%%%%%%%%%%%%%%%%%%%%%%%%%%%%%%%%%%%%%%%%%%%%%%%%%%%%%%%%%%%%%%%%%%%%%%
\section{The connected sum along a submanifold}
\label{joining_man}
%%%%%%%%%%%%%%%%%%%%%%%%%%%%%%%%%%%%%%%%%%%%%%%%%%%%%%%%%%%%%%%%%%%%%%%%%

In this section we are going to describe how two manifolds are joined
along a common submanifold with trivialized normal bundle. Strictly
speaking this is a differential topological construction, but since we
work with Riemannian manifolds we will make the construction adapted
to the Riemannian metrics and use distance neighborhoods defined by
the metrics etc.

Let $(M_1,g_1)$ and $(M_2,g_2)$ be complete Riemannian manifolds of
dimension~$n$. Let $W$ be a compact manifold of dimension~$k$, where
$0 \leq k \leq n$. Let $\bar{w}_i: W \times \mR^{n-k} \to TM_i$,
$i=1,2$, be smooth embeddings. We assume that $\bar{w}_i$ restricted
to $W \times \{ 0 \}$ maps to the zero section of $TM_i$ (which we
identify with~$M_i$) and thus gives an embedding $W \to M_i$. The
image of this embedding is denoted by $W_i'$. Further we assume that
$\bar{w}_i$ restrict to linear isomorphisms $\{ p \} \times \mR^{n-k}
\to N_{\bar{w}_i(p,0)} W_i'$ for all $p \in W_i$, where $N W_i'$
denotes the normal bundle of $W_i'$ defined using $g_i$.

By setting $w_i \definedas \exp^{g_i} \circ \bar{w}_i$ we obtain the
embeddings $w_i: W \times B^{n-k}(\Rmax)\to M_i$ for some
$\Rmax > 0$ and $i=1,2$. We have $W_i' = w_i(W \times \{ 0 \})$ and we
define the disjoint union
\[
(M,g) \definedas (M_1 \amalg M_2, g_1 \amalg g_2),
\]
and
\[
W' \definedas W_1' \amalg W_2'.
\]
Let $r_i$ be the function on $M_i$ giving the distance to $W_i'$.
Then $r_1 \circ w_1 (p,x) = r_2 \circ w_2(p,x) = |x|$ for $p \in W$,
$x \in B^{n-k}(\Rmax)$. Let $r$ be the function on $M$ defined by
$r(x) \definedas r_i(x)$ for $x \in M_i$, $i=1,2$. For $0 < \ep$ we
set $U_i(\ep) \definedas \{ x \in M_i \, : \, r_i(x) < \ep \}$ and
$U(\ep) \definedas U_1(\ep) \cup U_2(\ep)$. For $0 < \ep < \th$ we
define
\[
N_{\ep}
\definedas
( M_1 \setminus U_1(\ep) ) \cup ( M_2 \setminus U_2(\ep) )/ {\sim},
\]
and
\[
U^N_\ep (\th)
\definedas
(U(\th) \setminus U(\ep)) / {\sim}
\]
where ${\sim}$ indicates that we identify the point 
$x \in \partial U_1(\ep)$ in~$M_1$
with the corresponding point $w_2 \circ w_1^{-1} (x) \in \partial U_2(\ep)$
in~$M_2$. Hence
\[
N_\ep
=
(M \setminus U(\th) ) \cup U^N_\ep (\th).
\]

We say that $N_\ep$ is obtained from $M_1$, $M_2$ (and $\bar{w}_1$,
$\bar{w}_2$) by a connected sum along $W$ with parameter $\ep$.

The diffeomorphism type of $N_\ep$ is independent of $\ep$, hence we
will usually write $N = N_\ep$. However, in situations when dropping
the index causes ambiguities we will keep the notation $N_\ep$. For
example the function $r: M \to [0,\infty)$ gives a continuous function
$r_\ep: N_\ep \to [\ep, \infty)$ whose domain depends on $\ep$. It is
also going to be important to keep track of the subscript $\ep$ on
$U^N_\ep (\th)$ since crucial estimates on solutions of the Yamabe
equation will be carried out on this set.

The surgery operation on a manifold is a special case of taking
connected sum along a submanifold. Indeed, let $M$ be a compact
manifold of dimension~$n$ and let $M_1 = M$, $M_2 = S^n$,
$W = S^k$. Let $w_1 : S^k \times B^{n-k} \to M$ be an embedding
defining a surgery and let $w_2 :  S^k \times B^{n-k} \to S^n$ be the
standard embedding. Since $S^n \setminus w_2 (S^k \times B^{n-k})$ is
diffeomorphic to $B^{k+1} \times S^{n-k-1}$ we have in this situation
that $N$ is obtained from $M$ by performing surgery on~$w_1$, see
\cite[Section VI, 9]{kosinski:93}.

%%%%%%%%%%%%%%%%%%%%%%%%%%%%%%%%%%%%%%%%%%%%%%%%%%%%%%%%%%%%%%%%%%%%%%%%%
\section{The constants $\La_{n,k}$} \label{sec.La.const}
%%%%%%%%%%%%%%%%%%%%%%%%%%%%%%%%%%%%%%%%%%%%%%%%%%%%%%%%%%%%%%%%%%%%%%%%%

In Section \ref{sec.history} we defined the conformal Yamabe constant
only for compact manifolds. There are several ways to generalize the
conformal Yamabe constant to non-compact manifolds.  In this section
we define two such generalizations $\mukim$ and $\mu^{(1)}$, and also
introduce a related quantity called $\mu^{(2)}$. These invariants
will be needed to define the constants $\La_{n,k}$ and to prove their
positivity on our model spaces $\mH^{k+1}_c\times \mS^{n-k-1}$.

The definition of $\mu^{(2)}$ comes from a technical difficulty in the
proof of Theorem~\ref{main.strong} and is only relevant in the case
$k = n-3 \geq 3$, see Remark~\ref{rem.twoone}.

%%%%%%%%%%%%%%%%%%%%%%%%%%%%%%%%%%%%%%%%%%%%%%%%%%%%%%%%%%%%%%%%%%%%%%%%%
\subsection{The manifolds $\mH^{k+1}_c\times \mS^{n-k-1}$}
%%%%%%%%%%%%%%%%%%%%%%%%%%%%%%%%%%%%%%%%%%%%%%%%%%%%%%%%%%%%%%%%%%%%%%%%%

For $0\leq k<n$ and $c \in \mR$ we define the metric
$\eta^{k+1}_c \definedas e^{2 c t} \xi^k + dt^2$ on
$\mR^k \times \mR$ and we write
\[
\mH^{k+1}_c
\definedas
(\mR^k \times \mR, \eta^{k+1}_c) .
\]
This is a model of the simply connected complete manifold
of constant curvature~$-c^2$. We denote by
\[
G_c \definedas \eta^{k+1}_c + \sigma^{n-k-1}
\]
the product metric on $\mH^{k+1}_c \times \mS^{n-k-1}$.
The scalar curvature of $\mH^{k+1}_c\times \mS^{n-k-1}$ is
$\scal^{G_c} = -k(k+1) c^2+(n-k-1)(n-k-2)$.

\begin{proposition} \label{HconfS}
$\mH_1^{k+1}\times\mS^{n-k-1}$ is conformal to
$\mS^n \setminus \mS^k$.
\end{proposition}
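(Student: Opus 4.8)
The plan is to exhibit an explicit conformal diffeomorphism. Recall that the round sphere $\mS^n$ minus a point is conformal to flat $\mR^n = (\mR^n,\xi^n)$ via stereographic projection. More generally, I claim that $\mS^n \setminus \mS^k$ is conformal to $\mH^{k+1}\times \mS^{n-k-1}$ (with $c=1$), and the cleanest route is to find a function $f$ on $\mR^n \setminus \mR^k$ (where $\mR^k \subset \mR^n$ is a coordinate subspace) such that $f^{p-2}\xi^n$ is a product metric of the desired type, and then combine with stereographic projection. Concretely, write points of $\mR^n$ as $(y,z)$ with $y \in \mR^k$, $z \in \mR^{n-k}$, so that $\mR^k = \{z=0\}$; on the complement, introduce $s \definedas |z| > 0$ and the angular variable $\om \in S^{n-k-1}$, so $\xi^n = \xi^k + ds^2 + s^2\sigma^{n-k-1}$. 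The first step is to check that the conformal change $\frac{1}{s^2}(\xi^k + ds^2 + s^2\sigma^{n-k-1}) = \frac{1}{s^2}(\xi^k + ds^2) + \sigma^{n-k-1}$ is exactly a product, and that $\frac{1}{s^2}(\xi^k + ds^2)$, under the substitution $s = e^{-t}$ (so $ds = -e^{-t}\,dt$, $ds^2 = e^{-2t}dt^2$, $\frac{1}{s^2}ds^2 = dt^2$ and $\frac{1}{s^2}\xi^k = e^{2t}\xi^k$), becomes $e^{2t}\xi^k + dt^2 = \eta^{k+1}_1$. Hence $\frac{1}{s^2}\xi^n$ restricted to $\mR^n\setminus\mR^k$ is isometric to $G_1 = \eta^{k+1}_1 + \sigma^{n-k-1}$ on $\mH^{k+1}_1 \times \mS^{n-k-1}$, and in particular $\mR^n \setminus \mR^k$ is conformal to $\mH^{k+1}_1 \times \mS^{n-k-1}$.

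The second step is to identify $\mR^n \setminus \mR^k$ conformally with $\mS^n \setminus \mS^k$. Use inverse stereographic projection $\phi : \mR^n \to \mS^n \setminus \{\mathrm{pt}\}$, which is a conformal diffeomorphism onto the complement of the north pole; one must arrange that the coordinate subspace $\mR^k \subset \mR^n$ together with the point at infinity maps onto a round $k$-sphere $\mS^k \subset \mS^n$. This is standard: the one-point compactification of an affine $k$-plane through the origin is, under stereographic projection, a great $k$-sphere (or more generally a round $k$-subsphere), and all round $k$-subspheres of $\mS^n$ are equivalent under the conformal group $\mathrm{O}(n+1,1)$ of $\mS^n$, hence in particular under $\mathrm{Conf}(\mS^n)$. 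So $\phi$ restricts to a conformal diffeomorphism $\mR^n \setminus \mR^k \to \mS^n \setminus \mS^k$. Composing with the isometry from Step~1 gives the desired conformal diffeomorphism $\mH^{k+1}_1 \times \mS^{n-k-1} \to \mS^n \setminus \mS^k$.

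The only real point requiring care — and the main obstacle, though it is a mild one — is Step~2: verifying that under a suitably normalized stereographic projection the punctured coordinate $k$-plane $\mR^k \setminus$ (nothing, it is closed in $\mR^n$, but noncompact) completes to a genuinely round embedded $k$-sphere $\mS^k$ missing exactly one point's worth of boundary, i.e.\ that $\mR^n\setminus\mR^k$ maps onto $\mS^n$ minus a full round $k$-sphere and not minus a $k$-sphere-plus-a-point. The resolution is that the missing point of $\phi(\mR^n)$, namely the north pole, lies on the closure of $\phi(\mR^k)$ (indeed it is $\lim_{|y|\to\infty}\phi(y,0)$), so $\phi(\mR^k)\cup\{\mathrm{pt}\}$ is the round $k$-sphere and $\phi$ maps $\mR^n\setminus\mR^k$ bijectively and conformally onto $\mS^n\setminus(\phi(\mR^k)\cup\{\mathrm{pt}\}) = \mS^n\setminus\mS^k$. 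Everything else is the routine conformal-change computation carried out in Step~1. I would present Step~1 with the substitution $s=e^{-t}$ spelled out and Step~1's scalar-curvature consistency check left to the reader (it matches the formula $\scal^{G_1} = -k(k+1)+(n-k-1)(n-k-2)$ already recorded, and $\mS^n\setminus\mS^k$ carries a metric conformal to a scalar-flat... no — rather, no curvature check is needed at all, since conformality is established directly by the explicit diffeomorphism).
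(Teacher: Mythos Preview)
Your proof is correct, but it follows a different route from the paper's. The paper works intrinsically on $\mS^n$: it writes the round metric on $\mS^n\setminus(\mS^k\cup(\mS^k)^\perp)$ as the doubly warped product $(\cos s)^2\si^k + ds^2 + (\sin s)^2\si^{n-k-1}$, where $s=d(\cdot,\mS^k)$, then substitutes $\sinh t=\cot s$ to see that $(\sin s)^{-2}\si^n = (\sinh t)^2\si^k + dt^2 + \si^{n-k-1}$, recognizing the first two terms as hyperbolic space in polar normal coordinates. It then extends across $(\mS^k)^\perp$ and handles $k=0$ separately with signed $s$ and $t$.

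Your argument instead passes through $\mR^n\setminus\mR^k$ and uses the upper half-space/horospherical model: the conformal change $s^{-2}\xi^n$ with $s=e^{-t}$ lands directly in the coordinate form $e^{2t}\xi^k + dt^2 + \si^{n-k-1}=G_1$ that the paper adopts as its \emph{definition} of $\mH^{k+1}_1\times\mS^{n-k-1}$, and then stereographic projection supplies the conformal identification $\mR^n\setminus\mR^k\cong\mS^n\setminus\mS^k$. This has two advantages: it avoids the case distinction $k=0$ versus $k\geq 1$, and it requires no separate recognition that the polar-coordinate metric $(\sinh t)^2\si^k+dt^2$ is isometric to the horospherical model $\eta^{k+1}_1$. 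On the other hand, the paper's approach makes the conformal factor on $\mS^n$ explicit as $(\sin s)^{-2}=(\cosh t)^{-2}$, a formula that is reused later (for instance in the proof that $\La_{n,0}=\mu(\mS^n)$). Your careful check that the north pole lies on the closure $\overline{\phi(\mR^k)}$, so that removing $\mR^k$ corresponds to removing a full round $\mS^k$ and not $\mS^k\cup\{\text{pt}\}$, is exactly the point that needs attention in your route, and you have addressed it.
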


\begin{proof}
Let $\mS^k$ be embedded in $\mS^n\subset \mR^{n+1}$ by setting the
last $n-k$ coordinates to zero and let $s \definedas d(\cdot,\mS^k)$
be the intrinsic distance to~$\mS^k$ in $\mS^n$. Then the function
$\sin s$ is smooth
and positive on $S^n\setminus S^k$. The points of maximal distance
$\pi/2$ to $\mS^k$ lie on an $(n-k-1)$-sphere, denoted by
$(\mS^{k})^{\perp}$. On $\mS^n\setminus (\mS^k\cup (\mS^{k})^\perp)$
the round metric is
\[
\si^n
=
(\cos s)^2 \si^k + ds^2 + (\sin s)^2 \si^{n-k-1}.
\]
Substitute $s\in (0,\pi/2)$ by $t\in(0,\infty)$ such that
$\sinh t = \cot s$. Then $\cosh t = (\sin s)^{-1}$ and
$\cosh t\,dt= -(\sin s)^{-2}\,ds$, so $\si^n$ is conformal to
\[
(\sin s)^{-2} \si^n
=
(\sinh t)^2 \si^k + dt^2 + \si^{n-k-1}.
\]
Here we see that the first two terms give a metric
\[
(\sinh t)^2 \si^k + dt^2
\]
on $S^k \times (0,\infty)$. This is just
the standard metric on $\mH_1^{k+1} \setminus \{p_0\}$
where $t = d( \cdot, p_0)$, written in polar normal coordinates.
In the case $k\geq 1$ it is evident that the conformal
diffeomorphism
$\mS^n\setminus(\mS^k\cup (\mS^k)^\perp)\to(\mH_1^{k+1}
\setminus \{p_0\})\times \mS^{n-k-1}$ extends to a conformal diffeomorphism
$\mS^n\setminus\mS^k\to \mH_1^{k+1} \times \mS^{n-k-1}$.

In the case $k=0$ we equip $s$ and $t$ with a sign, that is we let
$s>0$ and $t>0$ on one of the components of
$\mS^n \setminus(\mS^0 \cup (\mS^0)^\perp)$, and $s<0$ and $t<0$ on
the other component. The functions $s$ and $t$ are then smooth on
$\mS^n\setminus \mS^0$ and take values $s\in (-\pi/2,\pi/2)$ and $t\in
\mR$. Then the argument is the same as above.
\end{proof}

%%%%%%%%%%%%%%%%%%%%%%%%%%%%%%%%%%%%%%%%%%%%%%%%%%%%%%%%%%%%%%%%%%%%%%%%%
\subsection{Definition of $\La_{n,k}$ } %\label{subsec.def.La}
%%%%%%%%%%%%%%%%%%%%%%%%%%%%%%%%%%%%%%%%%%%%%%%%%%%%%%%%%%%%%%%%%%%%%%%%%

Let $(N,h)$ be a Riemannian manifold of dimension~$n$. For $i=1,2$ we
let $\Omega^{(i)}(N,h)$ be the set of non-negative $C^2$ functions $u$
that solve the Yamabe equation
\begin{equation} \label{eq.conf}
L^h u = \mu u^{p-1}
\end{equation}
for some $\mu = \mu(u) \in \mR$ and satisfy
\begin{itemize}
\item $u \not \equiv 0$,
\item $\|u\|_{L^p(N)} \leq 1$,
\item $u \in L^{\infty}(N)$,
\end{itemize}
together with
\begin{itemize}
\item $u \in L^2(N)$, for $i=1$,
\end{itemize}
or
\begin{itemize}
\item $\mu(u) \|u\|^{p-2}_{L^{\infty}(N)} \geq
 \frac{(n-k-2)^2(n-1)}{8(n-2)}$, for $i=2$.
\end{itemize}
For $i=1,2$ we set
\[
\mu^{(i)} (N,h) \definedas \inf_{u \in \Omega^{(i)}(N,h)} \mu(u).
\]
In particular, if $\Omega^{(i)}(N,h)$ is empty then
$\mu^{(i)}(N,h)=\infty$.

\begin{definition}
For integers $n \geq 3$ and $0 \leq k \leq n-2$ let
\[
\La^{(i)}_{n,k}
\definedas
\inf_{c \in [-1,1]}
\mu^{(i)} (\mH^{k+1}_c\times \mS^{n-k-1})
\]
and
\[
\La_{n,k}
\definedas
\min \left\{ \La^{(1)}_{n,k},\La^{(2)}_{n,k}\right\}.
\]
\end{definition}

Note that the infimum could just as well be taken over $c \in [0,1]$
since $\mH_{c}^{k+1}\times\mS^{n-k-1}$ and
$\mH_{-c}^{k+1}\times\mS^{n-k-1}$ are isometric. We are going to
prove that these constants are positive.
\begin{theorem} \label{La>0}
For all $n \geq 3$ and $0 \leq k \leq n-3$, we have
$\La_{n,k} >0$.
\end{theorem}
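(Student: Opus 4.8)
The strategy is to prove separately that $\La^{(1)}_{n,k}>0$ and $\La^{(2)}_{n,k}>0$; since $\La_{n,k}$ is the minimum of the two, this suffices. Throughout we work on a space $(N,G_c)=\mH^{k+1}_c\times \mS^{n-k-1}$ with $c\in[0,1]$, whose scalar curvature $\scal^{G_c}=-k(k+1)c^2+(n-k-1)(n-k-2)$ is bounded below by a positive constant because $k\le n-3$ forces $(n-k-1)(n-k-2)\ge 2$ while $k(k+1)c^2\le k(k+1)$; more care is needed to see the lower bound is uniform, but in any case $\scal^{G_c}$ is controlled uniformly in $c$. The key analytic point is a Sobolev-type inequality on $N$ that is uniform in $c$: I would first establish that there is a constant $C_{n,k}>0$, independent of $c\in[0,1]$, with $\|u\|_{L^p(N)}^2\le C_{n,k}\,\big(\|\na u\|_{L^2(N)}^2+\|u\|_{L^2(N)}^2\big)$ for all $u\in C^\infty_c(N)$. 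This should follow from the product structure (a Sobolev inequality on $\mH^{k+1}_c$ uniform in $c\in[0,1]$ times the compact factor $\mS^{n-k-1}$), together with the results on limit spaces announced for Section~\ref{sec.limit} — the point being that as $c$ ranges over the compact interval $[0,1]$ the geometry does not degenerate, and a bubbling/concentration-compactness argument rules out the failure of a uniform constant.

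\medskip
\noindent\textbf{Positivity of $\La^{(1)}_{n,k}$.} Let $u\in \Omega^{(1)}(N,G_c)$ with $L^{G_c}u=\mu u^{p-1}$, so $u\in L^2\cap L^\infty$, $u\ge 0$, $u\not\equiv 0$, $\|u\|_{L^p}\le 1$. Testing the Yamabe equation against $u$ and integrating (legitimate because $u\in L^2$ and $u\in L^\infty$ control all the terms, and one can justify the integration by parts on the complete manifold $N$ using cutoff functions together with $u\in L^2$, $\na u\in L^2$ — the latter coming from elliptic regularity applied to $L^{G_c}u=\mu u^{p-1}\in L^2$) gives
\[
\an\|\na u\|_{L^2}^2+\int_N \scal^{G_c}u^2\,dv=\mu\int_N u^p\,dv\le \mu,
\]
using $\|u\|_{L^p}\le 1$ and, if $\mu\le 0$, the statement is already done since we only need a positive lower bound — so assume $\mu>0$. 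On the other hand the uniform Sobolev inequality gives $1\ge\|u\|_{L^p}^2$ is useless directly; instead bound $\mu$ from below: from the displayed identity, $\mu\ge \an\|\na u\|_{L^2}^2+\inf\scal^{G_c}\cdot\|u\|_{L^2}^2$ once we know $\int u^p\le 1$ is replaced by an equality-type normalization. The cleaner route: multiply the Yamabe identity through and combine with Hölder $\int u^p\le \|u\|_{L^2}^{?}\cdots$ — better, use that $\mu=\mu(u)=J^{G_c}(u)/\|u\|_{L^p}^{?}$; more precisely $\mu\,\|u\|_{L^p}^{p}=\int uL^{G_c}u$, hence $\mu\ge \big(\int uL^{G_c}u\big)\big/\|u\|_{L^p}^p\ge \mu(N,G_c)\cdot\|u\|_{L^p}^{2-p}\ge \mu(N,G_c)$ when $\|u\|_{L^p}\le 1$ and $p>2$, provided $\mu(N,G_c)>0$. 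So it remains to show $\mu(N,G_c)\ge \text{const}>0$ uniformly in $c\in[0,1]$: combine the uniform lower bound on $\scal^{G_c}$ with the uniform Sobolev inequality to get $\int uL^{G_c}u=\an\|\na u\|_{L^2}^2+\int\scal^{G_c}u^2\ge \delta_{n,k}\big(\|\na u\|_{L^2}^2+\|u\|_{L^2}^2\big)\ge \delta_{n,k}C_{n,k}^{-1}\|u\|_{L^p}^2$, which is exactly $\mu(N,G_c)\ge \delta_{n,k}C_{n,k}^{-1}>0$.

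\medskip
\noindent\textbf{Positivity of $\La^{(2)}_{n,k}$.} Here $u$ need not be in $L^2$, but instead we have the extra hypothesis $\mu(u)\,\|u\|_{L^\infty}^{p-2}\ge \frac{(n-k-2)^2(n-1)}{8(n-2)}=:A_{n,k}>0$. In particular $\mu(u)>0$ automatically, and $\mu(u)\ge A_{n,k}\,\|u\|_{L^\infty}^{2-p}$. To exploit this we need a matching \emph{upper} bound on $\|u\|_{L^\infty}$ — equivalently a lower bound on $\|u\|_{L^\infty}^{2-p}$ since $2-p<0$ means large $\|u\|_{L^\infty}$ hurts. The mechanism should be: from $\|u\|_{L^p(N)}\le 1$ and a local Moser iteration / De Giorgi-Nash-Moser estimate for the subcritical-looking equation $\an\Delta^{G_c}u=\mu u^{p-1}-\scal^{G_c}u$ with coefficients bounded uniformly in $c$, one gets a pointwise bound $\|u\|_{L^\infty(N)}\le K_{n,k}\,\big(1+\mu(u)\big)^{\alpha}$ for explicit $\alpha=\frac{1}{?}$ depending only on $n,k$; the uniformity in $c$ again rests on the uniform geometry for $c\in[0,1]$. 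Feeding this into $\mu(u)\ge A_{n,k}\,\|u\|_{L^\infty}^{2-p}\ge A_{n,k}K_{n,k}^{2-p}(1+\mu(u))^{\alpha(2-p)}$ and noting $\alpha(2-p)<0$ yields an inequality of the form $\mu(u)(1+\mu(u))^{|\alpha(2-p)|}\ge \text{const}>0$, which forces $\mu(u)\ge \text{const}>0$. Taking the infimum over $u$ and then over $c\in[-1,1]$ gives $\La^{(2)}_{n,k}>0$.

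\medskip
\noindent\textbf{Main obstacle.} The routine parts are the integration by parts on the complete manifold (standard once $u,\na u\in L^2$) and the Moser iteration (standard, subcritical once we localize). The genuine difficulty is proving the \emph{uniformity in $c\in[-1,1]$} of both the Sobolev inequality and the $L^\infty$–estimate: one must rule out that a minimizing sequence $c_j$ (together with solutions $u_j$) degenerates, either by $u_j$ concentrating to a bubble modeled on $\mR^n$ (handled by the strict Aubin inequality $\mu(\mS^n)$ being a threshold, Theorem~\ref{aubin}) or by $c_j\to 0$ so that $\mH^{k+1}_{c_j}\to \mR^{k+1}$ flat (handled because the flat product $\mR^{k+1}\times\mS^{n-k-1}$ still has $\scal>0$ and a good Sobolev inequality when $k\le n-3$). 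This is exactly the kind of limit-space analysis flagged for Section~\ref{sec.limit}, and I expect the proof to invoke those results; the case $k=n-3$ (where $\scal$ of the flat limit is only $(n-k-1)(n-k-2)=2$, the smallest it can be) is the tight one and is presumably why the auxiliary invariant $\mu^{(2)}$ with its threshold $A_{n,k}$ was introduced in the first place, cf.\ the remark that $\mu^{(2)}$ is only relevant for $k=n-3\ge 3$.
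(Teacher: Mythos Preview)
Your overall strategy --- reduce $\La^{(1)}_{n,k}>0$ to a uniform lower bound on the conformal Yamabe constant $\mukim(\mH_c^{k+1}\times\mS^{n-k-1})$, and treat $\La^{(2)}_{n,k}>0$ separately --- matches the paper. But there is a genuine gap in each half.

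\medskip
\textbf{The $\La^{(1)}$ argument.} You assert that $\scal^{G_c}=-k(k+1)c^2+(n-k-1)(n-k-2)$ is bounded below by a positive constant for $c\in[0,1]$. This is false: for $k=n-3$ and $c=1$ one gets $\scal^{G_1}=2-(n-3)(n-2)$, which is negative as soon as $n\ge 5$. Consequently the coercivity step
\[
\an\|\na u\|_{L^2}^2+\int\scal^{G_c}u^2 \;\ge\; \de_{n,k}\bigl(\|\na u\|_{L^2}^2+\|u\|_{L^2}^2\bigr)
\]
breaks down for $c$ near $1$, and a uniform Sobolev inequality alone cannot rescue it (the spectral gap $\frac{k^2c^2}{4}$ of $\Delta$ on $\mH_c^{k+1}$ does not fully compensate either; try $n=5$, $k=2$, $c=1$). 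The paper handles the two regimes differently: for small $c$, where $\scal^{G_c}>0$, it proceeds exactly as you suggest (Hebey's uniform Sobolev inequality on spaces of bounded geometry). For $c$ bounded away from $0$, it uses a conformal scaling identity comparing $\mukim$ at different values of $c$ (Lemma~\ref{lemma3}) together with the key fact that $\mH_1^{k+1}\times\mS^{n-k-1}$ is conformal to $\mS^n\setminus\mS^k$ (Proposition~\ref{HconfS}), which gives $\mukim(\mH_1^{k+1}\times\mS^{n-k-1})=\mu(\mS^n)$. This conformal input is what you are missing; the problematic end of the interval is $c\to 1$, not $c\to 0$.

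\medskip
\textbf{The $\La^{(2)}$ argument.} Your claimed estimate $\|u\|_{L^\infty}\le K(1+\mu)^\alpha$ from $\|u\|_{L^p}\le 1$ is exactly the delicate point: the exponent $p=\frac{2n}{n-2}$ is critical, so Moser iteration requires \emph{local smallness} of $\|\mu\,u^{p-2}\|_{L^{n/2}}$, not merely boundedness, and this cannot be guaranteed without first ruling out concentration. The paper does not attempt such a direct bound. Instead it argues by contradiction: if $\mu_i\searrow 0$ then the $\Omega^{(2)}$-condition forces $\|u_i\|_{L^\infty}\to\infty$; rescaling by $m_i:=u_i(x_i)\sim\|u_i\|_{L^\infty}$ and using the limit-space lemmas of Section~\ref{sec.limit} produces a nontrivial nonnegative $u\in L^p(\mR^n)$ with $L^{\xi^n}u=0$ and $\|u\|_{L^p}\le 1$, and then Lemma~\ref{limit_space=R^n} yields the contradiction $0\ge\mu(\mS^n)$. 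Your ``bubbling/concentration-compactness'' remark is pointing in the right direction, but the actual mechanism is this blow-up plus a Liouville-type classification, not a quantitative Moser bound.
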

The condition $k \leq n-3$ is important, as this implies that $\mS^{n-k-1}$
has positive curvature.

To prove Theorem \ref{La>0} we have to prove that $\La^{(1)}_{n,k}> 0$
and that $\La^{(2)}_{n,k} > 0$. This is the object of the following two
subsections. In the final subsection we prove that
$\La_{n,0}=\mu(\mS^n)=n (n-1)\om_n^{2/n}$.

\begin{remark} \label{rem.twoone}
Suppose that either $k\leq n-4$ or $k=n-3\leq 2$. One can then use
methods similar to those used in Section \ref{wsbundles} to show that
any $L^p$-solution of \eqref{eq.conf} on the model spaces is also an
$L^2$-solution, see~\cite{ammann.dahl.humbert:p11b}. An analogous
argument also works in the case $(n,k)=(6,3)$, for model spaces with
$c<1$, and this allows similar conclusions, see
\cite{ammann.dahl.humbert:p12}. This implies that
$\La_{n,k}^{(2)} \geq \La_{n,k}^{(1)}$ if $k\leq n-4$ or $k=n-3\leq 3$,
and hence
\[
\La_{n,k}
=
\La_{n,k}^{(1)}.
\]
In the case $k=n-3\geq 4$ there are $L^p$-solutions of \eqref{eq.conf}
on $\mH_1^{k+1}\times \mS^{n-k-1}$ that are not $L^2$-solutions.
\end{remark}

%%%%%%%%%%%%%%%%%%%%%%%%%%%%%%%%%%%%%%%%%%%%%%%%%%%%%%%%%%%%%%%%%%%%%%%%%
\subsection{Proof of $\La^{(1)}_{n,k}>0$}  \label{subsec.La1.pos}
%%%%%%%%%%%%%%%%%%%%%%%%%%%%%%%%%%%%%%%%%%%%%%%%%%%%%%%%%%%%%%%%%%%%%%%%%

The proof proceeds in several steps. We first introduce a conformal
Yamabe constant for non-compact manifolds and show that it gives a
lower bound for $\mu^{(1)}$. We then conclude by studying this
conformal invariant.

Let $(N,h)$ be a Riemannian manifold that is not necessarily compact
or complete. We define the conformal Yamabe constant $\mukim$ of
$(N,h)$ following Schoen-Yau \cite[Section~2]{schoen.yau:88},
see also \cite{kim:00}, as
\[
\mukim(N,h) \definedas \inf J^h(u)
\]
where $J^h$ is defined in \eqref{def.J^g} and the infimum runs over the
set of all non-zero compactly supported smooth functions $u$ on
$N$. If $h$ and $\tilde{h}$ are conformal metrics on $N$ it follows
from \eqref{confJ} that $\mukim(N,h) = \mukim(N,\tilde{h})$.
\begin{lemma} \label{lemma1}
Let $0\leq k \leq n-2$. Then
\[
\mu^{(1)}(\mH_c^{k+1}\times\mS^{n-k-1})
\geq
\mukim(\mH_c^{k+1}\times\mS^{n-k-1})
\]
for all $c \in \mR$.
\end{lemma}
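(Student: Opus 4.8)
The plan is to compare the two infima directly on the level of functions. Let $u \in \Omega^{(1)}(\mH_c^{k+1}\times\mS^{n-k-1})$, so that $u$ is a non-negative $C^2$ solution of $L^h u = \mu(u) u^{p-1}$ with $u \not\equiv 0$, $\|u\|_{L^p} \leq 1$, $u \in L^\infty$ \emph{and} $u \in L^2$. I want to show $\mu(u) \geq \mukim(\mH_c^{k+1}\times\mS^{n-k-1})$; taking the infimum over $\Omega^{(1)}$ then gives the lemma. The natural idea is that $u$ itself is ``almost'' a test function for the variational characterization of $\mukim$: multiplying the Yamabe equation by $u$ and integrating formally gives $\int u L^h u \, dv = \mu(u)\int u^p\,dv$, and since $\|u\|_{L^p}\leq 1$ we would get $\mu(u) \geq \mu(u)\|u\|_{L^p}^p = \int u L^h u\,dv \geq \mukim \cdot \|u\|_{L^p}^2 \geq \mukim$ if $\mukim \leq 0$, with the obvious variant if $\mukim > 0$ (in which case even more care with the normalization, or simply $\mu(u) \geq 0 \geq$ nothing is needed — actually the inequality is only interesting when the right side is finite and we must handle both signs). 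The subtlety is that $u$ is not compactly supported, so neither the integration by parts $\int u L^h u = \an\int |\na u|^2 + \int \scal\, u^2$ nor the estimate $J^h(u)\geq\mukim$ is immediately licensed.

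The key step, and the one I expect to be the main obstacle, is therefore a \textbf{cutoff argument}: approximate $u$ by $\chi_R u$ where $\chi_R$ is a family of compactly supported cutoff functions equal to $1$ on larger and larger sets, with $|\na\chi_R| \leq C/R$ or similar, and show that $J^h(\chi_R u) \to J^h(u)$ (interpreted suitably) as $R \to \infty$, so that $\mukim \leq \liminf_R J^h(\chi_R u)$ yields the bound on $\mu(u)$. Here is where \emph{all three} integrability hypotheses enter: $u \in L^p$ controls the denominator $\int (\chi_R u)^p \to \int u^p$ by dominated convergence; $u\in L^2$ together with $u \in L^\infty$ (hence $u^2 \in L^1$, and $\scal^{G_c}$ is a bounded constant on these model spaces) controls the zeroth-order term $\int \scal^{G_c}(\chi_R u)^2 \to \int \scal^{G_c} u^2$; and the gradient term requires showing $\int |\na(\chi_R u)|^2 \to \int |\na u|^2$, for which one expands $|\na(\chi_R u)|^2 = \chi_R^2|\na u|^2 + 2\chi_R u \na\chi_R\cdot\na u + u^2|\na\chi_R|^2$ and must argue the cross term and the $|\na\chi_R|^2$ term go to zero. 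The $u^2|\na\chi_R|^2$ term is bounded by $(C^2/R^2)\int_{\mathrm{supp}\,\na\chi_R} u^2 \to 0$ precisely because $u \in L^2$; the cross term is then handled by Cauchy--Schwarz against the other two. One also needs $\int |\na u|^2 < \infty$, which follows from the equation: pairing $L^h u = \mu u^{p-1}$ against $\chi_R^2 u$ and using the same cutoff estimates gives a uniform bound on $\int\chi_R^2|\na u|^2$, whence $\na u \in L^2$ by monotone convergence.

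Assembling these pieces: from $\na u, u \in L^2$ and the equation we get $\int u L^h u\,dv = \mu(u)\int u^p\,dv$ with all integrals finite, and $\int u L^h u\,dv = \lim_R \int (\chi_R u) L^h(\chi_R u)\,dv \geq \mukim \cdot \lim_R \left(\int(\chi_R u)^p\right)^{2/p} = \mukim\left(\int u^p\right)^{2/p}$ when $\mukim \geq 0$, and the analogous chain (using $\|u\|_{L^p}\leq 1$ to pass from the $L^p$-norm squared to a constant, and noting $\left(\int u^p\right)^{2/p} \leq 1 \leq$ or $\geq$ as appropriate) in both sign cases. This forces $\mu(u) \geq \mukim(\mH_c^{k+1}\times\mS^{n-k-1})$ regardless of the sign of $\mukim$, and taking the infimum over $u\in\Omega^{(1)}$ completes the proof. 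The only genuinely delicate point is the interchange of limits in the cutoff argument, which is where the extra hypothesis $u\in L^2$ built into $\Omega^{(1)}$ is used in an essential way — and this is exactly the reason the companion quantity $\mu^{(2)}$, with its different constraint, is needed separately in the case $k = n-3 \geq 4$.
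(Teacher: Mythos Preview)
Your approach is correct and is essentially the paper's: a cutoff argument using $u\in L^2$ to kill the error term, yielding $\mukim\le\lim_\alpha J^{G_c}(\chi_\alpha u)=\mu(u)\|u\|_{L^p}^{p-2}\le\mu(u)$. The paper's execution is slightly leaner: rather than first proving $\nabla u\in L^2$ and then passing to the limit in each piece of $|\nabla(\chi_R u)|^2$, it applies the identity $\int(\chi u)L(\chi u)=\int\chi^2 u\,Lu+\an\int|d\chi|^2u^2$ directly (with fixed-width cutoffs satisfying $|d\chi_\alpha|\le 1$), so that the only error term is $\an\int_{\operatorname{Supp}(d\chi_\alpha)}u^2$, an $L^2$-tail of $u$ that tends to zero.
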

\begin{proof}
Suppose that $u \in \Omega^{(1)} (\mH_c^{k+1} \times \mS^{n-k-1})$
is a solution of \eqref{eq.conf} on $\mH_c^{k+1}\times\mS^{n-k-1}$
with $\mu = \mu(u) \in [\mu^{(1)}(\mH_c^{k+1}\times\mS^{n-k-1}),
\mu^{(1)}(\mH_c^{k+1}\times\mS^{n-k-1})+\ep]$.  Let~$\chi_\al$ be a
cut-off function on $\mH_c^{k+1}\times\mS^{n-k-1}$ depending only on
the distance $r$ to a fixed point, such that $\chi_\al(r) = 1$ for $r
\leq \al$, $\chi_\al(r) = 0$ for $r \geq \al+2$, and $| d\chi_\al |
\leq 1$. We are going to see that
\begin{equation} \label{limJ}
\begin{split}
\mukim(\mH_c^{k+1}\times\mS^{n-k-1})
&\leq
\lim_{\al \to \infty} J^{G_c}(\chi_\al u) \\
&=
\mu \| u \|_{L^p(\mH_c^{k+1}\times\mS^{n-k-1})}^{p-2} \\
&\leq
\mu \\
&\leq \mu^{(1)}(\mH_c^{k+1}\times\mS^{n-k-1}) + \ep.
\end{split}
\end{equation}
Integrating by parts and using Equations \eqref{eq.conf} and
\eqref{formula.dchiu} we get
\begin{equation*}
\begin{split}
\int_{ \mH_c^{k+1}\times\mS^{n-k-1}}
(\chi_\al u)  L^{G_c}(\chi_\al u) \, dv^{G_c}
&=
\int_{ \mH_c^{k+1}\times\mS^{n-k-1}}
\chi_\al^2 u L^{G_c} u \, dv^{G_c} \\
&\quad
+ \an \int_{ \mH_c^{k+1}\times\mS^{n-k-1}}
|d\chi_\al|^2 u^2 \, dv^{G_c} \\
&=
\mu \int_{ \mH_c^{k+1}\times\mS^{n-k-1}}
\chi_\al^2 u^p \, dv^{G_c} \\
&\quad
+ \an \int_{ \operatorname{Supp}(d\chi_\al)}
|d\chi_\al|^2 u^2 \, dv^{G_c}.
\end{split}
\end{equation*}
Since $u \in L^2( \mH_c^{k+1}\times\mS^{n-k-1})$ and
$|d \chi_\al| \leq 1$ the last integral
goes to zero as $\al\to \infty$ and we conclude that
\[
\lim_{\al\to \infty}  \int_{ \mH_c^{k+1}\times\mS^{n-k-1}}
(\chi_\al u)  L^{G_c}(\chi_\al u) \, dv^{G_c}
=
\mu \| u \|_{L^p(\mH_c^{k+1}\times\mS^{n-k-1})}^p.
\]
Going back to the definition of $J^{G_c}$ we easily get \eqref{limJ},
and Lemma~\ref{lemma1} follows.
\end{proof}

\begin{remark}\label{rem.equal}
It follows from~\cite[Theorem~13]{grosse:p09} and a straight-forward
cut-off argument that
\[
\mu^{(1)}(\mH_c^{k+1}\times\mS^{n-k-1})
=
\mukim(\mH_c^{k+1}\times\mS^{n-k-1})
\]
if the space $\mH_c^{k+1}\times\mS^{n-k-1}$ has
positive scalar curvature, i.\thinspace e.\ if we have
$(n-k-1)(n-k-2)>c^2k(k+1)$.
\end{remark}

We define
\[
\La^{(0)}_{n,k}
\definedas
\inf_{c \in [-1,1]} \mukim(\mH_c^{k+1}\times\mS^{n-k-1}).
\]
Then Lemma \ref{lemma1} tells us that $\La^{(1)}_{n,k} \geq
\La^{(0)}_{n,k}$, so we are done if we prove that
$\La^{(0)}_{n,k}>0$. To do this we need two lemmas.

\begin{lemma} \label{lemma2}
Let $0\leq k\leq n-2$. Then
\[
\mukim(\mH_1^{k+1}\times\mS^{n-k-1})
=
\mu(\mS^n).
\]
\end{lemma}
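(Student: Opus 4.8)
The plan is to show both inequalities separately, using Proposition~\ref{HconfS} together with Theorem~\ref{aubin} for one direction and a careful capacity/cut-off argument for the other. First I would record the easy direction. By Proposition~\ref{HconfS}, $\mH_1^{k+1}\times\mS^{n-k-1}$ is conformal to $\mS^n\setminus\mS^k$, and since $\mukim$ is a conformal invariant this gives $\mukim(\mH_1^{k+1}\times\mS^{n-k-1})=\mukim(\mS^n\setminus\mS^k)$. Now $\mukim$ is defined as an infimum of $J^h(u)$ over compactly supported smooth functions, and every such function on $\mS^n\setminus\mS^k$ extends by zero to a function (in general only Lipschitz, but one may smooth it) on all of $\mS^n$; hence $\mukim(\mS^n\setminus\mS^k)\geq \mu(\mS^n)$, and Theorem~\ref{aubin} shows in fact $\mu(\mS^n)$ is the largest possible value, so this yields $\mukim(\mH_1^{k+1}\times\mS^{n-k-1})\geq \mu(\mS^n)$ once the reverse inclusion of test-function spaces is justified. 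Actually the clean way is: any compactly supported $u$ on $\mS^n\setminus \mS^k$ is in particular a (not identically zero) function on $\mS^n$, so $J^{\si^n}(u)\geq \mu(\mS^n)$, giving $\mukim(\mS^n\setminus\mS^k)\geq\mu(\mS^n)$ directly.

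For the reverse inequality $\mukim(\mH_1^{k+1}\times\mS^{n-k-1})\leq\mu(\mS^n)$, I would exhibit a sequence of compactly supported test functions on $\mS^n\setminus\mS^k$ whose $J$-values converge to $\mu(\mS^n)$. The natural candidates are the Aubin extremal functions on $\mS^n$ — the standard bubbles, which after stereographic projection are the conformal factors relating $\si^n$ to $\xi^n$ — cut off near $\mS^k$. The key point is that $\mS^k$ has codimension $n-k\geq 2$ in $\mS^n$, so it has zero $2$-capacity: there exist cut-off functions $\chi_j$ equal to $0$ near $\mS^k$, equal to $1$ outside a shrinking neighborhood, with $\int_{\mS^n}|d\chi_j|^2\,dv\to 0$. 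Multiplying a fixed near-extremal function $v$ (with $J^{\si^n}(v)$ close to $\mu(\mS^n)$) by $\chi_j$ produces compactly supported functions on $\mS^n\setminus\mS^k$; expanding $\int (\chi_j v)L^{\si^n}(\chi_j v)$ exactly as in the proof of Lemma~\ref{lemma1} (using \eqref{formula.dchiu}), the error terms involve $\int |d\chi_j|^2 v^2$, which tends to $0$ by the capacity estimate since $v$ is bounded near $\mS^k$, while $\int(\chi_j v)^p\to\int v^p$. Hence $J^{\si^n}(\chi_j v)\to J^{\si^n}(v)$, and taking $v$ ever closer to extremal gives $\mukim(\mS^n\setminus\mS^k)\leq\mu(\mS^n)$.

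Combining the two inequalities gives $\mukim(\mS^n\setminus\mS^k)=\mu(\mS^n)$, and by conformal invariance $\mukim(\mH_1^{k+1}\times\mS^{n-k-1})=\mu(\mS^n)$, which is the claim. I expect the main obstacle to be the capacity argument: one must check that the codimension hypothesis $n-k\geq 2$ genuinely forces the $2$-capacity of the submanifold $\mS^k\subset\mS^n$ to vanish, and that the cut-off functions can be chosen so that the cross term $\int \chi_j v\,\langle d\chi_j, dv\rangle$ and the term $\int|d\chi_j|^2 v^2$ both vanish in the limit — this is routine when $k\le n-3$ (and more generally $k\le n-2$) but requires writing $\chi_j$ as a function of the distance $r$ to $\mS^k$ with $|d\chi_j|\lesssim (r\log(1/r))^{-1}$ on a logarithmic annulus, so that $\int_{r\sim\text{annulus}} r^{-2}(\log 1/r)^{-2} \cdot r^{n-k-1}\,dr\to 0$. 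The remaining steps — conformal invariance of $\mukim$, extension by zero, and the integration by parts — are all either quoted from the excerpt or entirely standard.
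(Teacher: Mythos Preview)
Your proof is correct and, for the inequality $\mukim(\mH_1^{k+1}\times\mS^{n-k-1})\geq\mu(\mS^n)$, essentially identical to the paper's: both use Proposition~\ref{HconfS} to identify the space conformally with $\mS^n\setminus\mS^k$ and then observe that the infimum defining $\mukim(\mS^n\setminus\mS^k)$ runs over a subset of the test functions for $\mu(\mS^n)$.

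For the reverse inequality the two arguments differ. The paper simply invokes Aubin's proof of Theorem~\ref{aubin}: one places a highly concentrated bubble at any point of the manifold, and since such a bubble is supported in an arbitrarily small ball it is automatically compactly supported in $\mH_1^{k+1}\times\mS^{n-k-1}$ (or equivalently in $\mS^n\setminus\mS^k$), yielding $J$-values approaching $\mu(\mS^n)$. The paper in fact skips the details, noting the inequality is not used later. Your route instead takes a fixed near-extremal function on all of $\mS^n$ and cuts it off near $\mS^k$, exploiting that a submanifold of codimension $n-k\geq 2$ has vanishing $H^1$-capacity. This is also valid and your sketch is accurate, including the need for logarithmic cutoffs in the borderline case $k=n-2$. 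The Aubin approach is arguably more economical here since it works on any Riemannian manifold without reference to the conformal model or any capacity estimate; your approach, on the other hand, makes explicit use of the identification with $\mS^n\setminus\mS^k$ and is closer in spirit to the removable-singularity arguments appearing elsewhere in the paper (e.g.\ Lemma~\ref{lemma.cyl}).
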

\begin{proof}
The inequality $\mukim(\mH_1^{k+1}\times\mS^{n-k-1}) \leq \mu(\mS^n)$
is completely analogous to \cite[Lemma 3]{aubin:76}. As we do not need
this inequality later, we skip the proof.
To prove the opposite inequality
$\mukim(\mH_1^{k+1}\times\mS^{n-k-1}) \geq \mu(\mS^n)$ we use
Proposition \ref{HconfS} and the conformal invariance of
$\mukim$, and we obtain
\[
\mukim(\mH_1^{k+1}\times\mS^{n-k-1})
=
\mukim(\mS^n \setminus \mS^k).
\]

Clearly $\mukim(\mS^n \setminus \mS^k) \geq \mu(\mS^n)$ as the infimum
defining the left hand side runs over a smaller set of functions,
see \cite[Lemma~2.1]{schoen.yau:88}.
\end{proof}

\begin{lemma} \label{lemma3}
Let $0 \leq k \leq n-2$ and $0 < c_0 \leq c_1$. Then
\[
\mukim(\mH_{c_0}^{k+1}\times\mS^{n-k-1})
\geq
\left(\frac{c_0}{c_1}\right)^{\frac{2(n-k-1)}n}
\mukim(\mH_{c_1}^{k+1}\times\mS^{n-k-1}) .
\]
\end{lemma}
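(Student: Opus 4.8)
The plan is to exploit the scaling behaviour of $\mukim$ to reduce Lemma~\ref{lemma3} to a comparison between $G_{c_1}$ and the metric obtained from it by \emph{shrinking the spherical factor}, and then to run a one-line test-function estimate using $\scal^{\sigma^{n-k-1}}\ge 0$.

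First I would note that $\mukim$ is unchanged under a constant rescaling of the metric: for $\rho>0$ the metric $\rho^2 h$ equals $f^{p-2}h$ with $f=\rho^{(n-2)/2}$ constant, so \eqref{confJ} gives $J^{\rho^2 h}(u)=J^h(\rho^{(n-2)/2}u)=J^h(u)$ by the $0$-homogeneity of $J^h$, hence $\mukim(N,\rho^2 h)=\mukim(N,h)$. Now set $\rho\definedas c_0/c_1\in(0,1]$. A constant rescaling multiplies sectional curvature by $\rho^{-2}$, so $\rho^2\eta^{k+1}_{c_0}$ has constant curvature $-c_0^2\rho^{-2}=-c_1^2$ and is therefore isometric to $\eta^{k+1}_{c_1}$; concretely the map $(x,t)\mapsto(\rho x,\rho t)$ pulls $\eta^{k+1}_{c_1}$ back to $\rho^2\eta^{k+1}_{c_0}$. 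Taking the product of this isometry with the identity on $S^{n-k-1}$ turns $\rho^2 G_{c_0}=\rho^2\eta^{k+1}_{c_0}+\rho^2\sigma^{n-k-1}$ into $\eta^{k+1}_{c_1}+\rho^2\sigma^{n-k-1}$, so together with scale-invariance
\[
\mukim(\mH^{k+1}_{c_0}\times\mS^{n-k-1})=\mukim\big(\mR^k\times\mR\times S^{n-k-1},\ \eta^{k+1}_{c_1}+\rho^2\sigma^{n-k-1}\big).
\]

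Second, fix a non-zero compactly supported smooth $u$ and write $g\definedas G_{c_1}$, $h_\rho\definedas\eta^{k+1}_{c_1}+\rho^2\sigma^{n-k-1}$. Splitting according to the two factors, $\int u L^g u\,dv^g=A_1(u)+A_2(u)$ with $A_1(u)\definedas\int(\an|\nabla_1 u|^2+\scal^{\eta^{k+1}_{c_1}}u^2)\,dv^g$ and $A_2(u)\definedas\int(\an|\nabla_2 u|^2+\scal^{\sigma^{n-k-1}}u^2)\,dv^g$, where $\nabla_1,\nabla_2$ are the gradients along the $\mH^{k+1}_{c_1}$- and $S^{n-k-1}$-directions. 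Since passing to $h_\rho$ multiplies the second-factor Laplacian and scalar curvature by $\rho^{-2}$ and the volume element by $\rho^{n-k-1}$, and since $2/p=(n-2)/n$, a direct computation gives
\[
J^{h_\rho}(u)=\rho^{\frac{2(n-k-1)}{n}}\cdot\frac{A_1(u)+\rho^{-2}A_2(u)}{\big(\int u^p\,dv^g\big)^{2/p}}.
\]
Because $k\le n-2$ we have $\scal^{\sigma^{n-k-1}}=(n-k-1)(n-k-2)\ge 0$, hence $A_2(u)\ge 0$, and since $\rho^{-2}\ge 1$ this gives $A_1(u)+\rho^{-2}A_2(u)\ge A_1(u)+A_2(u)$, so $J^{h_\rho}(u)\ge\rho^{2(n-k-1)/n}J^g(u)\ge\rho^{2(n-k-1)/n}\mukim(g)$. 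Taking the infimum over $u$ and inserting the previous identity yields
\[
\mukim(\mH^{k+1}_{c_0}\times\mS^{n-k-1})\ge\Big(\frac{c_0}{c_1}\Big)^{\frac{2(n-k-1)}{n}}\mukim(\mH^{k+1}_{c_1}\times\mS^{n-k-1}).
\]

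The argument is essentially a bookkeeping computation, with no genuine obstacle. The two points to get right are: (i) recognising that global scale-invariance of $\mukim$ converts the statement about the parameter $c$ into one about shrinking the spherical factor; and (ii) the sign $\scal^{\sigma^{n-k-1}}\ge 0$ --- which is exactly where $k\le n-2$ is used --- together with $\rho^{-2}\ge 1$, which is what makes the comparison $J^{h_\rho}(u)\ge\rho^{2(n-k-1)/n}J^{G_{c_1}}(u)$ go in the direction needed.
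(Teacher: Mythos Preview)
Your proof is correct and follows essentially the same route as the paper. Both arguments normalize the hyperbolic factor and then compare two product metrics that differ only by a scale on the spherical factor: the paper does this via the explicit coordinate change $s=ct+\ln c$, which turns $G_c$ into a conformal multiple of $\eta^{k+1}_1+c^2\sigma^{n-k-1}$, while you achieve the same normalization by invoking scale invariance of $\mukim$ and the isometry $\rho^2\eta^{k+1}_{c_0}\cong\eta^{k+1}_{c_1}$; the final test-function comparison (your $A_1+\rho^{-2}A_2\ge A_1+A_2$ versus the paper's pointwise inequalities $\scal^{\tilde G_{c_0}}\ge\scal^{\tilde G_{c_1}}$, $|du|^2_{\tilde G_{c_0}}\ge|du|^2_{\tilde G_{c_1}}$) is the same computation in slightly different dress.
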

\begin{proof}
Let $c > 0$. Setting $s = c t + \ln c$ we see that
\[
G_c
=
e^{2ct} \xi^k +  dt^2 + \si^{n-k-1}
=
\frac{1}{c^2} \left( e^{2s} \xi^k +  ds^2 \right) + \si^{n-k-1}.
\]
Hence $G_c$ is conformal to the metric
\[
\tilde{G}_c
\definedas
e^{2s} \xi^k +  ds^2 + c^2 \si^{n-k-1}
\]
and by the conformal invariance of $\mukim$ we get that
\[
\mukim( \mH_{c_i}^{k+1} \times \mS^{n-k-1})
=
\mukim (\mR^k \times \mR \times S^{n-k-1},\tilde{G}_{c_i})
\]
for $i = 0,1$. In these coordinates we easily compute that
$\scal^{\tilde{G}_{c_0}} \geq \scal^{\tilde{G}_{c_1}}$,
$|du|^2_{\tilde{G}_{c_0}} \geq |du|^2_{\tilde{G}_{c_1}}$, and
$dv^{\tilde{G}_{c_0}} = \left( \frac{c_0}{c_1} \right)^{n-k-1}
dv^{\tilde{G}_{c_1}}$. We conclude that
\[
J^{\tilde{G}_{c_0}}(u)
\geq
\left( \frac{c_0}{c_1} \right)^{\frac{2(n-k-1)}n}
J^{\tilde{G}_{c_1}}(u)
\]
for all functions $u$ on $\mR^k \times \mR \times S^{n-k-1}$ and Lemma
\ref{lemma3} follows.
\end{proof}

If we set $c_1 = 1$ and use Lemma \ref{lemma2} together with
\eqref{mu(S^n)} we get the following result.
\begin{cor}
For $0\leq k\leq n-2$ and $c_0 > 0$ we have
\[
\inf_{c\in[c_0,1]}
\mukim(\mH_{c}^{k+1}\times\mS^{n-k-1})
\geq
n(n-1)\,{\om_n}^{2/n} {c_0}^{4/n}.
\]
\end{cor}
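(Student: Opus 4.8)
The plan is to produce the estimate for each fixed $c$ by chaining the results just established, and then take the infimum over $c\in[c_0,1]$. Since Lemmas~\ref{lemma2} and~\ref{lemma3} already contain all of the analytic content, I expect this to be essentially bookkeeping, the only delicate point being a narrow range of values of $k$ discussed at the end.

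First I would fix $c\in[c_0,1]$ and apply Lemma~\ref{lemma3} with the pair $(c_0,c_1)=(c,1)$; this is admissible precisely because $0<c\le1$, which is exactly why we set $c_1=1$ and let $c$ range over $[c_0,1]$ rather than over all positive values. It gives
\[
\mukim(\mH_c^{k+1}\times\mS^{n-k-1})
\geq
c^{\frac{2(n-k-1)}{n}}\,\mukim(\mH_1^{k+1}\times\mS^{n-k-1}).
\]
Next I would evaluate the right-hand factor at $c=1$: Lemma~\ref{lemma2} gives $\mukim(\mH_1^{k+1}\times\mS^{n-k-1})=\mu(\mS^n)$, and \eqref{mu(S^n)} evaluates this to $n(n-1)\om_n^{2/n}$. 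Hence, for every $c\in[c_0,1]$,
\[
\mukim(\mH_c^{k+1}\times\mS^{n-k-1})
\geq
n(n-1)\,\om_n^{2/n}\,c^{\frac{2(n-k-1)}{n}}.
\]

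It remains to take the infimum over $c\in[c_0,1]$. The hypothesis $0\le k\le n-2$ forces $n-k-1\ge1$, so the exponent $\tfrac{2(n-k-1)}{n}$ is positive, the function $c\mapsto c^{2(n-k-1)/n}$ is increasing on $[c_0,1]$, and its minimum is attained at the left endpoint $c=c_0$. This yields
\[
\inf_{c\in[c_0,1]}\mukim(\mH_c^{k+1}\times\mS^{n-k-1})
\geq
n(n-1)\,\om_n^{2/n}\,c_0^{\frac{2(n-k-1)}{n}}.
\]
Since $0<c_0\le1$, one has $c_0^{2(n-k-1)/n}\geq c_0^{4/n}$ exactly when $2(n-k-1)\le4$, that is when $k\ge n-3$, and in that range the claimed inequality follows at once; the case $k=0$ is immediate as well, since $\mH^1_c=(\mR,dt^2)$ for every $c$, so the infimum there simply equals $\mu(\mS^n)$. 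The one place I would pause is the intermediate window $1\le k\le n-4$: the chain above then yields only the power $c_0^{2(n-k-1)/n}$, which is weaker than $c_0^{4/n}$, so reaching the stated constant in that range calls for an extra input beyond the bare lemmas (for instance a sharper, monotonicity-in-$c$ form of the comparison, exploiting that $\mS^{n-k-1}$ then has dimension at least~$3$), and I would single this out as the step requiring care.
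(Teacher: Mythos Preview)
Your approach is exactly the paper's: set $c_1=1$ in Lemma~\ref{lemma3}, invoke Lemma~\ref{lemma2} and \eqref{mu(S^n)}, and minimize over $c\in[c_0,1]$. The bound this produces is
\[
\inf_{c\in[c_0,1]}\mukim(\mH_c^{k+1}\times\mS^{n-k-1})
\ \ge\ n(n-1)\,\om_n^{2/n}\,c_0^{\,2(n-k-1)/n},
\]
and your computation of this exponent is correct.

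Your hesitation in the final paragraph is well placed, but the resolution is not an ``extra input'': the exponent $4/n$ printed in the corollary is simply a slip. The one-line proof the paper indicates (``set $c_1=1$ and use Lemma~\ref{lemma2} together with \eqref{mu(S^n)}'') yields precisely $2(n-k-1)/n$; note that $4/n$ is the special value of this exponent at $k=n-3$. For $1\le k\le n-4$ the printed bound is genuinely stronger than what the lemmas give and does not follow from them, so you should not search for a hidden argument. The corollary is only used in the proof of Theorem~\ref{la3>0} to obtain a positive lower bound for $\inf_{c\in[c_0,1]}\mukim$, and any positive exponent suffices there; the correct statement has exponent $2(n-k-1)/n$, and with that correction your argument is complete. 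Your side remark that $\mH_c^1=(\mR,dt^2)$ is independent of $c$, so that the $k=0$ case is trivial, is also correct.
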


Finally, we are ready to prove that $\La^{(0)}_{n,k}$ is
positive.
\begin{theorem} \label{la3>0}
Let $0 \leq k \leq n-3$. Then $\La^{(0)}_{n,k} > 0$.
\end{theorem}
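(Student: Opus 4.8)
The plan is to split the parameter interval. Since $\mH_c^{k+1}\times\mS^{n-k-1}$ and $\mH_{-c}^{k+1}\times\mS^{n-k-1}$ are isometric, $\La^{(0)}_{n,k}=\inf_{c\in[0,1]}\mukim(\mH_c^{k+1}\times\mS^{n-k-1})$. Pick a small $c_0\in(0,1)$, to be fixed below. For $c\in[c_0,1]$ there is nothing left to do: the corollary stated just above --- which combines Lemma~\ref{lemma2}, Lemma~\ref{lemma3} and~\eqref{mu(S^n)} --- already yields
\[
\inf_{c\in[c_0,1]}\mukim(\mH_c^{k+1}\times\mS^{n-k-1})\geq n(n-1)\,\om_n^{2/n}\,c_0^{4/n}>0 .
\]
So the point is to produce a lower bound for $\mukim$ that is uniform over the complementary range $c\in[0,c_0]$, on which the estimate above degenerates to $0$ as $c\to0$.

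On $[0,c_0]$ I would combine uniformly positive scalar curvature with uniformly bounded geometry. First, since $k\leq n-3$ the number $s_0\definedas(n-k-1)(n-k-2)/2$ is positive, so we may fix $c_0>0$ so small that
\[
\scal^{G_c}=(n-k-1)(n-k-2)-k(k+1)c^2\geq s_0\qquad\text{for all }c\in[0,c_0].
\]
Second, the underlying smooth manifold $\mR^{k+1}\times S^{n-k-1}$ does not depend on $c$, and for $c\in[0,c_0]$ all sectional curvatures of $G_c$ lie in $[-c_0^2,1]$, hence $|\Ric^{G_c}|\leq n-1$; moreover each $(\mR^{k+1}\times S^{n-k-1},G_c)$ is a homogeneous manifold whose injectivity radius is bounded below by a positive constant independent of $c\in[0,c_0]$ (the factor $\mH_c^{k+1}$ is flat for $c=0$ and Cartan--Hadamard for $c>0$, hence has infinite injectivity radius, while $\mS^{n-k-1}$ is fixed). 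Thus the family $\{G_c\}_{c\in[0,c_0]}$ has uniformly bounded geometry, so by the standard Sobolev inequality for complete manifolds of bounded geometry (Aubin; see also Hebey's monograph on Sobolev spaces) there is a constant $A=A(n)>0$, independent of $c\in[0,c_0]$, with
\[
\|u\|_{L^p(G_c)}^2\leq A\left(\|du\|_{L^2(G_c)}^2+\|u\|_{L^2(G_c)}^2\right)
\]
for every compactly supported smooth $u$ on $\mR^{k+1}\times S^{n-k-1}$. Together with $\scal^{G_c}\geq s_0$ and $\an>0$ this gives
\[
\int u\,L^{G_c}u\,dv^{G_c}=\int\left(\an|du|_{G_c}^2+\scal^{G_c}u^2\right)dv^{G_c}\geq\min\{\an,s_0\}\left(\|du\|_{L^2(G_c)}^2+\|u\|_{L^2(G_c)}^2\right)\geq\frac{\min\{\an,s_0\}}{A}\,\|u\|_{L^p(G_c)}^2 ,
\]
so $J^{G_c}(u)\geq\min\{\an,s_0\}/A$ for all such $u$, and hence $\mukim(\mH_c^{k+1}\times\mS^{n-k-1})\geq\min\{\an,s_0\}/A>0$ for all $c\in[0,c_0]$.

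Combining the two ranges,
\[
\La^{(0)}_{n,k}\geq\min\left\{\frac{\min\{\an,s_0\}}{A},\;n(n-1)\,\om_n^{2/n}\,c_0^{4/n}\right\}>0 ,
\]
which proves the theorem. The only genuinely nontrivial point is the middle step: one has to check that the product metrics $G_c$ indeed have uniformly bounded geometry for $c\in[0,c_0]$ --- the curvature bounds are immediate, so the substance is a uniform geometric bound (a lower bound on the injectivity radius, or equivalently on the volume of unit balls), which is where homogeneity of the model spaces is convenient --- and then to quote the corresponding bounded-geometry Sobolev inequality. The rest is routine.
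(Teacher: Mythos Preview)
Your proposal is correct and follows essentially the same approach as the paper: split the range $[0,1]$ at a small $c_0$, use the corollary (via Lemmas~\ref{lemma2} and~\ref{lemma3}) on $[c_0,1]$, and on $[0,c_0]$ exploit the uniformly positive scalar curvature together with a uniform Sobolev inequality coming from bounded geometry (the paper cites Hebey \cite[Theorem 4.6]{hebey:96} directly at this point). You are slightly more explicit than the paper in spelling out why the family $\{G_c\}_{c\in[0,c_0]}$ has uniformly bounded geometry, but the structure and the key ingredients are identical.
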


For this theorem the restriction $k\leq n-3$ is necessary. The proof
needs the positive scalar curvature of $\mS^{n-k-1}$, and it
can be shown that the theorem no longer holds for $k=n-2$.

\begin{proof}
Choose $c_0 > 0$ small enough so that
$\scal^{G_{c_0}} >0$. We then have
$\scal^{G_c} \geq \scal^{G_{c_0}}$ for all $c \in [0,c_0]$.
Hence
\[
\mukim(\mH_{c}^{k+1}\times\mS^{n-k-1})
\geq
\inf
\frac{\int_{\mH_{c}^{k+1}\times\mS^{n-k-1} }
\left( \an |du|_{G_{c}}^2 + \scal^{G_{c_0}} u^2 \right)
\, dv^{G_{c}}}
{\| u \|^2_{L^p( \mH_{c}^{k+1}\times\mS^{n-k-1})}}
\]
where the infimum is taken over all non-zero smooth functions $u$
with compact support. By Hebey \cite[Theorem 4.6, page 64]{hebey:96},
there exists a constant $A >0$ such that for all $c \in [0,c_0]$ and
all smooth non-zero functions $u$ compactly supported in
$\mH_{c}^{k+1}\times\mS^{n-k-1}$ we have
\[
\| u \|_{L^p( \mH_{c}^{k+1}\times\mS^{n-k-1})}^2
\leq
A \int_{\mH_{c}^{k+1}\times\mS^{n-k-1} }
\left( |du|_{G_{c}}^2 + u^2 \right) \, dv^{G_{c}}.
\]
This implies that
\[
\mukim (\mH_{c}^{k+1}\times\mS^{n-k-1})
\geq
\frac{1}{A} \min \left\{ \an, \scal^{G_{c_0}} \right\} > 0
\]
for all $c \in [0,c_0]$, and together with Lemma \ref{lemma3} we
obtain that
\[
\inf_{c \in [0,1] }
\mukim (\mH_{c}^{k+1}\times\mS^{n-k-1})
> 0.
\]
Since $\mH_{c}^{k+1}\times\mS^{n-k-1}$ and
$\mH_{-c}^{k+1}\times\mS^{n-k-1}$ are isometric we have
\[
\La^{(0)}_{n,k}
=
\inf_{c \in [-1,1] }
\mukim(\mH_{c}^{k+1}\times\mS^{n-k-1})
> 0.
\]
This ends the proof of Theorem \ref{la3>0}.
\end{proof}

As an immediate consequence we obtain that $\La^{(1)}_{n,k}$ is
positive.
\begin{cor}
Let $0 \leq k \leq n-3$. Then $\La^{(1)}_{n,k} > 0$.
\end{cor}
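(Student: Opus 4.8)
The plan is to obtain this as an immediate consequence of the two facts already established in this subsection: the pointwise comparison $\mu^{(1)} \geq \mukim$ on the model spaces (Lemma \ref{lemma1}), and the positivity of $\La^{(0)}_{n,k}$ (Theorem \ref{la3>0}). So there is essentially nothing new to prove here; the substance was in getting those two statements.

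Concretely, I would start from the definitions
\[
\La^{(1)}_{n,k}
=
\inf_{c \in [-1,1]} \mu^{(1)}(\mH_c^{k+1}\times\mS^{n-k-1}),
\qquad
\La^{(0)}_{n,k}
=
\inf_{c \in [-1,1]} \mukim(\mH_c^{k+1}\times\mS^{n-k-1}).
\]
By Lemma \ref{lemma1}, for each fixed $c\in[-1,1]$ we have $\mu^{(1)}(\mH_c^{k+1}\times\mS^{n-k-1}) \geq \mukim(\mH_c^{k+1}\times\mS^{n-k-1})$, where the convention $\mu^{(1)}=\infty$ when $\Omega^{(1)}(\mH_c^{k+1}\times\mS^{n-k-1})$ is empty makes the inequality trivially true in that case. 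Taking the infimum over $c\in[-1,1]$ on both sides preserves the inequality, hence $\La^{(1)}_{n,k} \geq \La^{(0)}_{n,k}$. Since $k\leq n-3$, Theorem \ref{la3>0} gives $\La^{(0)}_{n,k}>0$, and therefore $\La^{(1)}_{n,k}>0$.

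The only place where the hypothesis $k\leq n-3$ enters is inside Theorem \ref{la3>0}, via the positive scalar curvature of $\mS^{n-k-1}$ together with Hebey's Sobolev inequality on the model manifolds and the scaling estimate of Lemma \ref{lemma3}; none of this needs to be revisited. Thus the main (and really only) point to be careful about is bookkeeping: ensuring that the chain $\La^{(1)}_{n,k}\geq\La^{(0)}_{n,k}>0$ is applied with the correct conventions for empty solution sets, so that passing to the infimum over $c$ does not destroy the strict positive lower bound.
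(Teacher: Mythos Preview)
Your proposal is correct and follows exactly the paper's approach: the paper states just before Lemma~\ref{lemma2} that Lemma~\ref{lemma1} gives $\La^{(1)}_{n,k}\geq \La^{(0)}_{n,k}$, and then Theorem~\ref{la3>0} yields $\La^{(0)}_{n,k}>0$, so the corollary is recorded as an immediate consequence with no further argument.
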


%%%%%%%%%%%%%%%%%%%%%%%%%%%%%%%%%%%%%%%%%%%%%%%%%%%%%%%%%%%%%%%%%%%%%%%%%
\subsection{Proof of $\La^{(2)}_{n,k}>0$} \label{subsec.La2.pos}
%%%%%%%%%%%%%%%%%%%%%%%%%%%%%%%%%%%%%%%%%%%%%%%%%%%%%%%%%%%%%%%%%%%%%%%%%

\begin{theorem}
Let $0\leq k\leq n-3$. Then $\La^{(2)}_{n,k} >0$.
\end{theorem}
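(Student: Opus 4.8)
The plan is to argue by contradiction with a blow-up (rescaling) argument, exploiting the fact that the defining inequality of $\Omega^{(2)}$ forces the $L^\infty$-norm of an almost-optimal solution to diverge when $\mu(u)\to 0$. (Note that Lemma~\ref{lemma1}, which handled $\La^{(1)}_{n,k}$ via $\mukim$, is not available here, since $\Omega^{(2)}$-solutions need not lie in $L^2$.) First observe that every $u\in\Omega^{(2)}(N,h)$ satisfies $\mu(u)\,\|u\|_{L^\infty(N)}^{p-2}\ge C_{n,k}:=\frac{(n-k-2)^2(n-1)}{8(n-2)}>0$ with $\|u\|_{L^\infty}>0$, so $\mu(u)>0$; hence $\La^{(2)}_{n,k}\ge 0$, and if the theorem failed we would have $\La^{(2)}_{n,k}=0$. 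So suppose there are $c_i\in[-1,1]$ and $u_i\in\Omega^{(2)}(\mH^{k+1}_{c_i}\times\mS^{n-k-1})$ with $\mu_i:=\mu(u_i)\to 0$. Then $\|u_i\|_{L^\infty}\ge(C_{n,k}/\mu_i)^{1/(p-2)}\to\infty$, and I would choose points $x_i$ with $m_i:=u_i(x_i)\ge\tfrac12\|u_i\|_{L^\infty}$, so that $m_i\to\infty$ (each $u_i$ is smooth by elliptic regularity, being a bounded solution of \eqref{eq.conf}).

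Next I would rescale. Set $h_i:=G_{c_i}$, $\tilde h_i:=m_i^{p-2}h_i$ and $\tilde u_i:=m_i^{-1}u_i$. Using $L^{\lambda^2 h}=\lambda^{-2}L^h$ for a constant $\lambda$, one checks that $L^{\tilde h_i}\tilde u_i=\mu_i\tilde u_i^{p-1}$, while scale-invariance of the $L^p$-norm gives $\|\tilde u_i\|_{L^p(\tilde h_i)}=\|u_i\|_{L^p(h_i)}\le 1$; by construction $\tilde u_i(x_i)=1$ and $\|\tilde u_i\|_{L^\infty}\le 2$. Moreover $\scal^{\tilde h_i}=m_i^{-(p-2)}\scal^{G_{c_i}}\to 0$ uniformly, since $\scal^{G_{c_i}}=-k(k+1)c_i^2+(n-k-1)(n-k-2)$ stays bounded for $c_i\in[-1,1]$. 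The spaces $(\mH^{k+1}_{c_i}\times\mS^{n-k-1},G_{c_i})$ have uniformly bounded geometry: sectional curvature lies in $[-1,1]$, all covariant derivatives of the curvature are bounded (homogeneity), and the injectivity radius is bounded below by a positive constant independent of $c_i$ (again by homogeneity together with the curvature bound, using that $\mH^{k+1}_{c_i}$ has no closed geodesics and $\mS^{n-k-1}$ is a genuine sphere because $k\le n-3$). Multiplying the metric by $m_i^{p-2}\to\infty$ sends the curvature to $0$ and the injectivity radius to $\infty$, so the rescaled pointed manifolds $(\mH^{k+1}_{c_i}\times\mS^{n-k-1},\tilde h_i,x_i)$ converge in the pointed $C^\infty$ Cheeger--Gromov sense to $(\mR^n,\xi^n,0)$.

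Then I would pull back everything through the approximating diffeomorphisms (sending $0$ to $x_i$) and pass to a limit. The pulled-back functions are uniformly bounded and solve $\an\Delta^{\tilde h_i}\tilde u_i=\mu_i\tilde u_i^{p-1}-\scal^{\tilde h_i}\tilde u_i$, whose right-hand side is uniformly bounded in $L^\infty_{\mathrm{loc}}$ and whose leading operator converges to $\an\Delta^{\xi^n}$; interior elliptic estimates and bootstrapping give uniform $C^{2,\alpha}_{\mathrm{loc}}$ bounds, so a subsequence converges in $C^2_{\mathrm{loc}}(\mR^n)$ to a nonnegative $\tilde u_\infty$ with $\tilde u_\infty(0)=1$ and $\tilde u_\infty\le 2$. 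Letting $\mu_i\to 0$ and $\scal^{\tilde h_i}\to 0$ in the equation gives $\Delta^{\xi^n}\tilde u_\infty=0$, so $\tilde u_\infty$ is a bounded nonnegative harmonic function on $\mR^n$, hence constant, so $\tilde u_\infty\equiv 1$ by the normalisation. But then, for every fixed $R>0$,
\[
\Vol\bigl(B_R(0)\bigr)
=\int_{B_R(0)}\tilde u_\infty^p\,dv^{\xi^n}
=\lim_{i\to\infty}\int_{B_R(x_i)}\tilde u_i^p\,dv^{\tilde h_i}
\le 1 ,
\]
where $B_R$ denotes the metric ball of radius $R$; this is absurd for large $R$. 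The contradiction proves $\La^{(2)}_{n,k}>0$.

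The hard part will be the blow-up bookkeeping — above all, arranging that the bounded-geometry constants (the sectional-curvature bound and, crucially, the positive lower bound for the injectivity radius) are uniform in $c\in[-1,1]$, so that the blow-up limit is genuinely flat $\mR^n$ and not some other complete manifold; it is exactly here, and in the uniform control of $\scal^{G_c}$, that the hypothesis $k\le n-3$ (ensuring $\mS^{n-k-1}$ is a positively curved sphere) is used. The remaining ingredients — the scaling behaviour of the Yamabe operator and of the $L^p$-norm, interior elliptic regularity for the rescaled solutions, Liouville's theorem for bounded harmonic functions, and Fatou's lemma — are routine.
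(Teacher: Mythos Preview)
Your argument is correct and follows the same blow-up strategy as the paper: assume $\La^{(2)}_{n,k}=0$, obtain solutions $u_i$ with $\mu_i\searrow 0$, conclude $\|u_i\|_{L^\infty}\to\infty$ from the defining inequality of $\Omega^{(2)}$, rescale by $m_i^{p-2}$, and pass to a limit on $\mR^n$. The paper implements the convergence via homogeneity (translating all $x_i$ to a fixed point) and the exponential-coordinate Lemma~\ref{diffeom}, then invokes Lemma~\ref{lim_sol}; your Cheeger--Gromov formulation is an equivalent packaging of the same analysis.

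The one genuine difference is the endgame. The paper obtains a nontrivial $u\in L^p(\mR^n)$ solving $L^{\xi^n}u=\bar\mu\,u^{p-1}$ with $\bar\mu=0$ and then appeals to Lemma~\ref{limit_space=R^n} (transferring to the cylinder and using Lemma~\ref{lemma.cyl}) to derive the contradiction $0=\bar\mu\ge\mu(\mS^n)$. You instead note directly that the limit is bounded and harmonic, apply Liouville to get $\tilde u_\infty\equiv 1$, and contradict the $L^p$-bound via volume growth. Your route is more elementary and self-contained for this particular case, while the paper's lemma is a reusable tool that also handles the Case~I blow-ups later, where the limiting $\bar\mu$ need not be zero.

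A small correction: your remark that $k\le n-3$ is needed for the bounded-geometry constants is not quite right---the injectivity-radius and curvature bounds on $\mH^{k+1}_c\times\mS^{n-k-1}$ are uniform in $c\in[-1,1]$ for any $k\le n-2$. The place the hypothesis actually enters your proof (and the paper's) is that $C_{n,k}=\frac{(n-k-2)^2(n-1)}{8(n-2)}$ is strictly positive, which is what forces $\mu_i>0$ and $m_i\to\infty$; for $k=n-2$ one has $C_{n,k}=0$ and the argument collapses at that step.
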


\begin{proof}
We prove this by contradiction. Assume that there exists a sequence
$(c_i)$ of $c_i \in [-1,1]$ for which
$\mu_i \definedas \mu^{(2)}(\mH^{k+1}_{c_i}\times \mS^{n-k-1})$
tends to a limit $l \leq 0 $ as $i \to \infty$. After removing the
indices $i$ for which $\mu_i$ is infinite we get for every~$i$ a
positive solution $u_i \in \Omega^2(\mH^{k+1}_{c_i}\times \mS^{n-k-1})$
of the equation
\[
L^{G_{c_i}} u_i = \mu_i u_i^{p-1} .
\]
By definition of $\Omega^{(2)}(\mH^{k+1}_{c_i}\times \mS^{n-k-1})$
we have
\begin{equation} \label{uilai}
\frac{(n-k-2)^2(n-1)}{8(n-2)}
\leq
\mu_i  \| u_i\|_{L^\infty}^{p-2},
\end{equation}
which implies that $\mu_i > 0$. We conclude that
$l \definedas \lim_i \mu_i = 0$. We cannot assume that
$ \| u_i \|_{L^\infty}$ is attained but we can choose points
$x_i \in  \mH^{k+1}_{c_i} \times \mS^{n-k-1}$ such that
$u_i(x_i) \geq \frac{1}{2}\| u_i\|_{L^\infty}$. Moreover, we can
compose the functions $u_i$ with isometries so that all the $x_i$ are
the same point $x$. From \eqref{uilai} we get
\[
\frac{1}{2}
\left( \frac{(n-k-2)^2(n-1)}{8(n-2)\mu_i} \right)^{\frac{1}{p-2}}
\leq
u_i(x).
\]
We define $m_i \definedas u_i(x)$. Since
$\lim_{i \to \infty} \mu_i = 0$ we have
$\lim_{i \to \infty} m_i = \infty$. Restricting to a subsequence we
can assume that $c \definedas \lim_i c_i\in [-1,1]$ exists.
Define $\tilde{g}_i \definedas m_i^\frac{4}{n-2} G_{c_i}$.  We
apply Lemma \ref{diffeom} with $\al = 1/i$,
$(V, \ga_\al) = \mH^{k+1}_{c_i}\times \mS^{n-k-1}$,
$(V, \ga_0) = \mH^{k+1}_{c}\times \mS^{n-k-1}$,
$q_\al = x_i = x$, and $b_\al = m_i^\frac{2}{n-2}$.
For $r>0$ we obtain diffeomorphisms
\[
\Th_i :
B^n(r)
\to
B^{G_{c_i}} (x, m_i^{-\frac{2}{n-2}}r)
\]
such that the sequence $\Th_i^* (\tilde{g}_i)$ tends to the flat
metric $\xi^n$ on $B^n(r)$. We let
$\tilde{u}_i \definedas m_i^{-1} u_i$. By \eqref{confL} we then have
\[
L^{\tilde{g}_i} \tilde{u}_i = \mu_i {\tilde{u}_i}^{p-1}
\]
on $B^{G_{c_i}} ( x_i, m_i^{-\frac{2}{n-2}} r)$ and
\begin{equation*}
\begin{split}
\int_ {B^{G_{c_i}} ( x_i, m_i^{-\frac{2}{n-2}} r)}
{\tilde{u}_i}^p \,dv^{\tilde{g}_i}
&=
\int_{B^{G_{c_i}} ( x_i, m_i^{-\frac{2}{n-2}} r)}
u_i^p \,dv^{G_{c_i}} \\
&\leq
\int_N u_i^p dv^{G_{c_i}} \\
& \leq  1.
\end{split}
\end{equation*}
Here we used $dv^{\tilde{g}_i} =
m_i^p \,dv^{G_{c_i}}$. The last inequality comes from the fact that
any function in $\Omega^{(2)}(\mH^{k+1}_{c_i}\times \mS^{n-k-1})$
has $L^p$-norm smaller than $1$. Since
\[\Th_i:
(B^n(r), \Th_i^* (\tilde{g}_i))
\to
(B^{G_{c_i}} ( x, m_i^{-\frac{2}{n-2}} r), \tilde{g}_i)
\]
is an isometry we redefine $\tilde{u}_i$ as
$\tilde{u}_i \circ \Th_i$ which gives us solutions of
\[
L^{\Th_i^*(\tilde{g}_i)} \tilde{u}_i
=
\mu_i \tilde{u}_i^{p-1}
\]
on $B^n(r)$ with
$\int_{B^n(r)} \tilde{u}_i^p \, dv^{\Th_i^*(\tilde{g}_i)}
\leq 1$.
Since $\| \tilde{u}_i \|_{L^\infty(B^n(r))}
= \tilde{u}_i (0) = 1$ we can
apply Lemma~\ref{lim_sol} with $V = \mR^n$, $\al=1/i$,
$g_\al = \Th_i^*(\tilde{g}_i)$, and $u_\al = \tilde{u}_i$
(we can apply this lemma since each compact set of $\mR^n$ is
contained in some ball $B^n(r)$). This shows that there exists a
non-negative $C^2$ function $u$ on $\mR^n$ that does not
vanish identically (since $u(0)=1$) and that satisfies
\[
L^{\xi^n} u
=
\an  \Delta^{\xi^n} u
=
\bar{\mu} u^{p-1}
\]
where $\bar{\mu} = 0$. By \eqref{normlr_lim} we further have
\[
\int_{ B^n(r)} u^p \, dv^{\xi^n}
=
\lim_{i \to \infty} \int_{ B^{G_{c_i}}
( x, m_i^{-\frac{2}{n-2}} r)} u_i^p
\,dv^{G_{c_i}}
\leq 1
\]
for any $r>0$. In particular,
\[
\int_{\mR^n} u^p  \, dv^{\xi^n}  \leq 1.
\]
Lemma \ref{limit_space=R^n} below then implies the contradiction
$0 = \bar{\mu}\geq \mu(\mS^n)$. This proves that $\La^{(2)}_{n,k}$ is
positive.
\end{proof}

%%%%%%%%%%%%%%%%%%%%%%%%%%%%%%%%%%%%%%%%%%%%%%%%%%%%%%%%%%%%%%%%%%%%%%%%%
\subsection{The constants $\La_{n,0}$} \label{subsec.La.null}
%%%%%%%%%%%%%%%%%%%%%%%%%%%%%%%%%%%%%%%%%%%%%%%%%%%%%%%%%%%%%%%%%%%%%%%%%

Now we show that 
  $$\La_{n,0}=\mu(\mS^n)=n(n-1)\om_n^{2/n}.$$ 
The
corresponding model spaces $\mH_c^1\times \mS^{n-1}$ carry the standard
product metric $dt^2 + \si^{n-1}$ of $\mR\times \mS^{n-1}$, independently
of $c \in [-1,1]$.  Thus $\La_{n,0}^{(i)}=\mu^{(i)}(\mR\times \mS^{n-1})$.
Proposition~\ref{HconfS} yields a conformal diffeomorphism from the
cylinder $\mR\times \mS^{n-1}$ to $\mS^n\setminus\mS^0$, the
$n$-sphere with north and south poles removed.

\begin{lemma}
\[
\La_{n,0}^{(i)}\leq\mu(\mS^n)= n (n-1)\om_n^{2/n}
\]
for $i=1,2$.
\end{lemma}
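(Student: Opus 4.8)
The plan is to prove both inequalities at once by exhibiting a single explicit function $v$ on the cylinder $(\mR\times\mS^{n-1},g_{\mathrm{cyl}})$, where $g_{\mathrm{cyl}}\definedas dt^2+\si^{n-1}$, that lies in both $\Omega^{(1)}$ and $\Omega^{(2)}$ and has associated constant $\mu(v)=\mu(\mS^n)$. Since $\La_{n,0}^{(i)}=\mu^{(i)}(\mR\times\mS^{n-1})$ is by definition $\inf_{u\in\Omega^{(i)}}\mu(u)$, the existence of such a $v$ gives $\La_{n,0}^{(i)}\leq\mu(\mS^n)$ for $i=1,2$ immediately. The function $v$ will be the transplant, under the conformal diffeomorphism of Proposition~\ref{HconfS}, of the round Yamabe minimizer on $\mS^n$.

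First I would record the conformal factor: from the computation in the proof of Proposition~\ref{HconfS} in the case $k=0$ one gets $\si^n=(\cosh t)^{-2}g_{\mathrm{cyl}}$ on $\mS^n\setminus\mS^0$, i.e.\ $\si^n=f^{p-2}g_{\mathrm{cyl}}$ with $f\definedas(\cosh t)^{-(n-2)/2}$. Next take the constant function $u_0\equiv\om_n^{-(n-2)/(2n)}$ on $\mS^n$, which by \eqref{mu(S^n)} solves $L^{\si^n}u_0=\mu(\mS^n)u_0^{p-1}$ and has $\|u_0\|_{L^p(\mS^n)}=1$, and set
\[
v\definedas f u_0=\om_n^{-(n-2)/(2n)}(\cosh t)^{-(n-2)/2}
\]
on $\mR\times\mS^{n-1}$. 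Then $v$ is smooth, positive and bounded, with $\|v\|_{L^\infty}=\om_n^{-(n-2)/(2n)}$ attained at $t=0$; by the conformal covariance \eqref{confL} of the Yamabe operator $v$ solves $L^{g_{\mathrm{cyl}}}v=\mu(\mS^n)v^{p-1}$, so $\mu(v)=\mu(\mS^n)$; and since $dv^{\si^n}=f^p\,dv^{g_{\mathrm{cyl}}}$ we obtain $\|v\|_{L^p(\mR\times\mS^{n-1})}=\|u_0\|_{L^p(\mS^n)}=1$.

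It then remains to check the membership conditions. For $i=1$ I must verify $v\in L^2$: the integral $\int_{\mR\times\mS^{n-1}}v^2\,dv^{g_{\mathrm{cyl}}}$ equals a constant times $\int_\mR(\cosh t)^{-(n-2)}\,dt$, which is finite for every $n\geq3$. For $i=2$ I must verify $\mu(v)\|v\|_{L^\infty}^{p-2}\geq\frac{(n-2)^2(n-1)}{8(n-2)}=\frac{(n-1)(n-2)}{8}$; using $p-2=4/(n-2)$ one computes directly $\mu(v)\|v\|_{L^\infty}^{p-2}=n(n-1)\om_n^{2/n}\cdot\om_n^{-2/n}=n(n-1)$, and $n(n-1)\geq\frac{(n-1)(n-2)}{8}$ holds trivially. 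Hence $v\in\Omega^{(1)}\cap\Omega^{(2)}$, and the lemma follows.

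There is no real obstacle here; the proof is a direct construction. The only points needing care are pinning down the conformal weight $f=(\cosh t)^{-(n-2)/2}$ and the normalising constant $\om_n^{-(n-2)/(2n)}$ so that $\|v\|_{L^p}=1$ and $\mu(v)=\mu(\mS^n)$ come out exactly, and checking the $L^2$-integrability of $v$, which is where the standing hypothesis $n\geq3$ enters. Once the explicit value $\mu(v)\|v\|_{L^\infty}^{p-2}=n(n-1)$ is computed, the $\mu^{(2)}$-condition is automatic.
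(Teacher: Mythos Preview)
Your proof is correct and essentially identical to the paper's: both construct the explicit test function $v(t)=\om_n^{-(n-2)/(2n)}(\cosh t)^{-(n-2)/2}$ on the cylinder, verify that it solves the Yamabe equation with constant $\mu(\mS^n)$ and $\|v\|_{L^p}=1$, check $v\in L^2$ for $\Omega^{(1)}$, and compute $\mu(v)\|v\|_{L^\infty}^{p-2}=n(n-1)$ for $\Omega^{(2)}$. The only cosmetic difference is that the paper phrases the construction geometrically (as a volume-$1$ metric of constant scalar curvature) while you invoke the conformal covariance formula \eqref{confL} directly; the content is the same.
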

\begin{proof}
We use the notation of Proposition \ref{HconfS} with $k=0$. Then the
standard metric on $S^n$ is
\[
\si^n=(\sin s)^2(dt^2+\si^{n-1})= (\cosh t)^{-2}(dt^2+\si^{n-1}).
\]
It follows that $(\om_n)^{-2/n} (\cosh t)^{-2}(dt^2+\si^{n-1})$ is a
(non-complete) metric of volume~$1$ and scalar curvature
$n(n-1)\om^{2/n} = \mu(\mS^n)$ on
$\mH_c^1 \times \mS^{n-1} = \mR\times \mS^{n-1}$. This is equivalent
to saying that
\[
u(t)
\definedas \om_n^{-\frac{n-2}{2n}}(\cosh t)^{-\frac{n-2}{2}}
\]
is a solution of \eqref{eq.conf} with $\mu=\mu(\mS^n)$ and
$\|u\|_{L^p}=1$ on $\mH_c^1\times \mS^{n-1}$ equipped with the
product metric. Clearly we have $u\in L^2$, and
$\|u\|_{L^\infty} = \om_n^{-\frac{n-2}{2n}}<\infty$.
Thus $u\in \Om^{(1)}(\mH_c^1\times \mS^{n-1})$. As a consequence, we obtain
$\La_{n,0}^{(1)}\leq n (n-1)\om_n^{2/n}$.

Further, we have
\[
\mu(\mS^n) \|u\|_{L^\infty}^{p-2}
=
n(n-1)
>
\frac{(n-0-2)^2(n-1)}{8(n-2)},
\]
and thus $u\in \Om^{(2)}(\mH_c^1\times \mS^{n-1})$, which implies
$\La_{n,0}^{(2)}\leq n (n-1)\om_n^{2/n}$.
\end{proof}

\begin{lemma}\label{lemma.cyl}
Let $u\in C^2(\mR\times \mS^{n-1})$ be a solution of \eqref{eq.conf}
on $\mR\times \mS^{n-1}$ with $\|u\|_{L^p} \leq 1$,
$u \not\equiv 0$. Then $\mu\geq \mu(\mS^n)$.
\end{lemma}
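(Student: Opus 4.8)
The plan is to reduce everything to the standard sphere via the conformal diffeomorphism from Proposition~\ref{HconfS} with $k=0$, and to show that an $L^p$-solution on the cylinder ``pushes forward'' to a genuine test function on $\mS^n$, at least after checking that the relevant integrals behave correctly near the two removed poles.

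More precisely: by Proposition~\ref{HconfS} the cylinder $\mR\times\mS^{n-1}$ with its product metric $dt^2+\si^{n-1}$ is conformal to $\mS^n\setminus\mS^0$; explicitly $\si^n = (\cosh t)^{-2}(dt^2+\si^{n-1})$, so if we write $f(t)\definedas (\cosh t)^{-(n-2)/2}$ then $\si^n = f^{p-2}(dt^2+\si^{n-1})$. Given $u\in C^2(\mR\times\mS^{n-1})$ solving $L^{G}u = \mu u^{p-1}$ with $\|u\|_{L^p}\leq 1$ on the cylinder (here $G = dt^2+\si^{n-1}$), the conformal transformation rule \eqref{confJ} says $J^{\si^n}(v) = J^{G}(fv)$, and conversely $J^{G}(w) = J^{\si^n}(f^{-1}w)$. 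Applying this with $w = u$ we would like to conclude $J^{G}(u) = J^{\si^n}(f^{-1}u) \geq \mu(\mS^n)$. The quantity $J^G(u)$ itself equals $\mu\|u\|_{L^p}^{p-2}\leq\mu$ by the usual computation (multiply the Yamabe equation by $u$ and integrate, using $u\not\equiv 0$ and $\|u\|_{L^p}\leq 1$), so this would give $\mu\geq\mu(\mS^n)$ as desired. The conclusion that $J^G(u)\le \mu$ needs $\mu\ge 0$; but $\mu<0$ is excluded since on the cylinder $\scal^G = (n-1)(n-2)>0$, so $\int u L^G u\,dv^G\ge \an\int|du|^2 + (n-1)(n-2)\int u^2 > 0$ forces $\mu>0$.

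\textbf{The main obstacle} is that $v\definedas f^{-1}u = (\cosh t)^{(n-2)/2}u$ is in general \emph{not} a legitimate competitor for $\mu(\mS^n)$: the infimum defining $\mu(\mS^n)$ runs over functions on the compact manifold $\mS^n$, but $v$ is only defined on $\mS^n\setminus\mS^0$ and may blow up at the two poles as $t\to\pm\infty$ since $\cosh t\to\infty$. So the argument must instead go through the non-compact Yamabe constant $\mukim$. The clean route is: first show $J^G(u)\ge \mukim(\mR\times\mS^{n-1})$ by a cut-off argument exactly as in the proof of Lemma~\ref{lemma1} --- multiply $u$ by $\chi_\al$ depending on $|t|$ with $|d\chi_\al|\le 1$, expand $J^G(\chi_\al u)$, and let $\al\to\infty$; the error term $\an\int_{\operatorname{Supp}(d\chi_\al)}|d\chi_\al|^2 u^2\,dv^G$ tends to $0$ because $u\in L^p$ on a cylinder, where $L^p$ control on shells of bounded width forces $\int_{|t|\in[\al,\al+2]}u^2\to 0$ by H\"older (the shells have finite, uniformly bounded volume). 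Then $\mukim(\mR\times\mS^{n-1}) = \mukim(\mS^n\setminus\mS^0) \geq \mu(\mS^n)$ by conformal invariance of $\mukim$ (Proposition~\ref{HconfS} with $k=0$) together with \cite[Lemma~2.1]{schoen.yau:88}, since compactly supported test functions on $\mS^n\setminus\mS^0$ form a subset of those on $\mS^n$. Chaining these inequalities with $J^G(u)=\mu\|u\|_{L^p}^{p-2}\le\mu$ yields $\mu\ge\mu(\mS^n)$.

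\textbf{Remark on the cut-off step.} One subtlety worth spelling out: to run the cut-off argument we only need $u\in L^2$ on the support region of $d\chi_\al$, which is automatic here since on a shell $|t|\in[\al,\al+2]$ of volume $2\om_{n-1}$ we have $\int u^2 \le (2\om_{n-1})^{1-2/p}\big(\int u^p\big)^{2/p}\to 0$ as $\al\to\infty$ by dominated convergence ($u\in L^p(\mR\times\mS^{n-1})$). This is strictly weaker than assuming $u\in L^2(\mR\times\mS^{n-1})$ globally, which is why no $L^2$ hypothesis appears in the statement of Lemma~\ref{lemma.cyl}; the finite width of the cylinder's cross-section does all the work. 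With that in hand the integration-by-parts identity
\[
\int_{\mR\times\mS^{n-1}} (\chi_\al u)L^G(\chi_\al u)\,dv^G
=
\mu\int_{\mR\times\mS^{n-1}}\chi_\al^2 u^p\,dv^G
+ \an\int_{\operatorname{Supp}(d\chi_\al)}|d\chi_\al|^2 u^2\,dv^G
\]
gives, after dividing by $\big(\int(\chi_\al u)^p\big)^{2/p}\to\big(\int u^p\big)^{2/p}$ and letting $\al\to\infty$, that $\mukim(\mR\times\mS^{n-1})\le\lim_\al J^G(\chi_\al u) = \mu\|u\|_{L^p}^{p-2}$, and the proof closes as above.
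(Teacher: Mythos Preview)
Your proof is correct and follows essentially the same cut-off/H\"older strategy as the paper's. The only cosmetic difference is that you perform the cut-off on the cylinder side and route through $\mukim(\mR\times\mS^{n-1})=\mukim(\mS^n\setminus\mS^0)\geq\mu(\mS^n)$, whereas the paper first transforms $u$ to $\tilde u=(\sin s)^{-(n-2)/2}u$ on $\mS^n\setminus\mS^0$ and cuts off there, testing directly against $\mu(\mS^n)$; the two arguments are conformal translates of one another (your shell $|t|\in[\al,\al+2]$ of bounded cylinder-volume corresponds to the paper's neighborhood $U_\al(\mS^0)$ of sphere-volume $\sim\al^n$, and your bounded $|d\chi_\al|_G$ corresponds to their $|d\chi_\al|_{\si^n}\leq 2/\al$). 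One minor point: your separate justification that $\mu>0$ via ``$\int uL^Gu>0$'' is not quite legitimate since $u$ need not be in $L^2$ globally, but it is also unnecessary---once you have $\mu\|u\|_{L^p}^{p-2}\geq\mukim(\mR\times\mS^{n-1})\geq\mu(\mS^n)>0$ the sign of $\mu$ is automatic, and then $\|u\|_{L^p}\leq 1$ gives $\mu\geq\mu\|u\|_{L^p}^{p-2}$.
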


\begin{proof}
As above $\si^n=(\sin s)^2 (dt^2+\si^{n-1})$. If $u$ solves
\eqref{eq.conf} with $h=dt^2+\si^{n-1}$ then
$\ti u\definedas(\sin s)^{-\frac{n-2}2}u$ solves
\[
L^{\si^n}\ti u = \mu \ti u^{p-1}.
\]
Further $\ti u^p\,dv^{\si^n}=u^p\,dv^h$, hence
$\nu\definedas\|\ti u\|_{L^p(S^n\setminus S^0,\si^n)}\leq 1$.
For $\al>0$ small, we choose a smooth cut-off function
$\chi_\al: S^n \to [0,1]$ which is $1$ on $S^n\setminus U_{\al}(S^0)$,
with support disjoint from $S^0$, and with
$|d\chi_\al|_{\si^n}\leq 2/\al$. Then using \eqref{formula.dchiu} in
Appendix~\ref{app.formula.dchiu} we see that
\[
\int_{\mS^n}
(\chi_\al \ti u) L^{\si^n}(\chi_\al \ti u) \,dv^{\si^n}
=
\mu \int_{\mS^n}
u^p\chi_\al^2\,dv^{\si^n}+ a\int_{\mS^n}|d\chi_\al|_{\si^n}^2 \ti u^2
\,dv^{\si^n}.
\]
The first summand tends to $\mu\nu^p$ as $\al\searrow 0$. By H\"older's
inequality the second summand is bounded by
\[
\frac{4a}{\al^2}
\|\ti u\|_{L^p(U_\al(S^0)\setminus S^0,\si^n)}^2
\Vol(U_\al(S^0) \setminus S^0,\si^n)^{2/n}
\leq
C \|\ti u\|_{L^p(U_\al(S^0) \setminus S^0, \si^n)}^2
\to 0
\]
as $\al\searrow 0$. Together with
$\lim_{\al\searrow 0} \|\chi_\al\ti u\|_{L^p(S^n\setminus S^0,\si^n)} =
\nu$ we obtain
\[
\mu(\mS^n)
\leq
J^{\si^n}(\chi_\al \ti u) \to \mu\nu^{p-2}
\leq
\mu
\]
as $\al\searrow 0$.
\end{proof}

This lemma obviously implies $\La_{n,0}^{(i)}\geq \mu(\mS^n)$ for
$i=1,2$, and thus we have
\[
\La_{n,0} = \La_{n,0}^{(1)} = \La_{n,0}^{(2)} = \mu(\mS^n).
\]

%%%%%%%%%%%%%%%%%%%%%%%%%%%%%%%%%%%%%%%%%%%%%%%%%%%%%%%%%%%%%%%%%%%%%%%%%
\subsection{The constants $\La_{n,k}$ for $1\leq k\leq n-3$}
\label{subsec.La.k-ge-2}
%%%%%%%%%%%%%%%%%%%%%%%%%%%%%%%%%%%%%%%%%%%%%%%%%%%%%%%%%%%%%%%%%%%%%%%%%

For $2\leq k\leq n-4$ we have found an explicit positive lower bound
on $\La^{(0)}_{n,k}$ which will be published in
\cite{ammann.dahl.humbert:13}. Together with Remark~\ref{rem.twoone}
we obtain a lower bound for $\La_{n,k}$,
see also~\cite{ammann.dahl.humbert:p11b}. For
$m \definedas k+1\in \{3,\ldots,n-3\}$ we conclude
\[
\Lambda_{n,m-1} \geq
n\,a_n\;
\left(\frac{Y_m}{m a_m}\right)^{\frac mn}
\left(\frac{Y_{n-m}}{(n-m)a_{n-m}}\right)^{\frac{n-m}n}
\]
A lower bound in the case $k=1$ and in the cases $(n,k)=(5,2)$
was established in~\cite{ammann.dahl.humbert:p12}.
These lower bounds are not optimal, but
% if our conjectures
%about $\La_{n,k}$ presented in the following subsection hold,
they are optimal up to a factor of at most $2$.
% in the sense
%the quotient of the right hand side by the left hand side in
%\eqref{ineq.lower.exp} is larger than $1/2$.

We collected all known and conjectured values for $\La_{n,k}$ for $n\leq 9$
in Figure~\ref{tab.lank}. In the table, $>0$ means that
% it is proven that $\La_{n,k}$ is positive, but
no explicit positive lower estimate has been worked out until now.

\def\abs{\\[.2cm]}

\begin{figure}
\[
\begin{array}{llccl} %\label{lista}
n  & k  & \mbox{known }\La_{n,k} & \mbox{conjectured }\La_{n,k}&  \ \ \mu(\mS^n)\\
\hline
3 & 0 & 43.82323 & 43.82323 & 43.82323\abs

4 & 0 & 61.56239 & 61.56239 & 61.56239\\
4 & 1 & 38.9     & 59.40481 & 61.56239\abs

5 & 0 & 78.99686 & 78.99686 & 78.99686\\
5 & 1 & 56.6     & 78.18644 & 78.99686\\
5 & 2 & 45.1     & 75.39687 & 78.99686\abs

6 & 0 & 96.29728 & 96.29728 & 96.29728\\
6 & 1 & >0       & 95.87367 & 96.29728\\
6 & 2 & 54.77904 & 94.71444 & 96.29728\\
6 & 3 & 49.98764 & 91.68339 & 96.29728\abs

7 & 0 & 113.5272 & 113.5272 & 113.5272\\
7 & 1 & >0       & 113.2670 & 113.5272\\
7 & 2 & 74.50435 & 112.6214 & 113.5272\\
7 & 3 & 74.50435 & 111.2934 & 113.5272\\
7 & 4 & >0       & 108.1625 & 113.5272\abs

8 & 0 & 130.7157 & 130.7157 & 130.7157\\
8 & 1 & >0       & 130.5398 & 130.7157\\
8 & 2 & 92.24278 & 130.1272 & 130.7157\\
8 & 3 & 95.76372 & 129.3551 & 130.7157\\
8 & 4 & 92.24278 & 127.9414 & 130.7157\\
8 & 5 & >0       & 124.7747 & 130.7157\abs

9 & 0 & 147.8778 & 147.8778 & 147.8778\\
9 & 1 & 109.2993 & 147.7507 & 147.8778\\
9 & 2 & 109.4260 & 147.4615 & 147.8778\\
9 & 3 & 114.3250 & 146.9519 & 147.8778\\
9 & 4 & 114.3250 & 146.1089 & 147.8778\\
9 & 5 & 109.4260 & 144.6521 & 147.8778\\
9 & 6 & >0       & 141.4740 & 147.8778

%10 & 0 & 165.0220 &  & 165.0220642\\
%10 & 1 & 102.6925 &  & 165.0220642\\
%10 & 2 & 126.4134 &  & 165.0220642\\
%10 & 3 & 132.0534 &  & 165.0220642\\
%10 & 4 & 133.3072 &  & 165.0220642\\
%10 & 5 & 132.0534 &  & 165.0220642\\
%10 & 6 & 126.4134 &  & 165.0220642\\
%10 & 7 & >0       &  & 165.0220642
\end{array}
\]
\caption{Known and conjectured lower estimates for $\La_{n,k}$.}\label{tab.lank}
\end{figure}

%%%%%%%%%%%%%%%%%%%%%%%%%%%%%%%%%%%%%%%%%%%%%%%%%%%%%%%%%%%%%%%%%%%%%%%%%
\subsection{Speculation about $\La_{n,k}$ for $k\geq 1$}
\label{subsec.La.speculation}
%%%%%%%%%%%%%%%%%%%%%%%%%%%%%%%%%%%%%%%%%%%%%%%%%%%%%%%%%%%%%%%%%%%%%%%%%

We want to speculate about two relations that seem likely to us
although we have no proof. Conformally, the model spaces
$\mH^{k+1}_c \times \mS^{n-k-1}$ can be viewed as an interpolation
between $\mR^{k+1}\times \mS^{n-k-1}$ (for $c=0$) and the sphere
$\mS^{n}$ (for $c=1$). Since the sphere has the largest possible value of
the conformal Yamabe constant we could hope that the function
$c\mapsto \mukim(\mH^{k+1}_c\times \mS^{n-k-1})$ is increasing for
$c\in [0,1]$, or in particular
\[
\mukim(\mR^{k+1}\times \mS^{n-k-1})
\leq
\mukim(\mH^{k+1}_c\times \mS^{n-k-1})
\]
for all $c\in [-1,1]$. This would imply
\[
\La_{n,k}
=
\mukim(\mR^{k+1}\times \mS^{n-k-1}).
\]

To formulate the second potential relation we define the following
variant of $\mukim(\mH^{k+1}_c\times \mS^{n-k-1})$:
\[
\mukim_{\mH^{k+1}_c}(\mH^{k+1}_c\times \mS^{n-k})
\definedas
\inf \{J^{G_c}(u) \,|\,
u\in C_0^\infty(\mH^{k+1}_c)\}.
\]
Here $J^{G_c}$ is the functional of $\mH^{k+1}_c\times \mS^{n-k-1}$,
but we only evaluate it for functions that are constant along the
sphere $ \mS^{n-k-1}$. We ask, similarly to the Question formulated in
the Introduction in \cite{akutagawa.florit.petean:07}, whether
\[
\mukim_{\mH^{k+1}_c}(\mH^{k+1}_c\times \mS^{n-k})
=
\mukim(\mH^{k+1}_c\times \mS^{n-k}).
\]
It seems likely to us that the answer is yes, if and only if
$|c|\leq 1$.

An affirmative answer for $|c|\leq 1$ would imply, using a reflection
argument, that we can restrict not only to functions that are constant
along the sphere, but even to radial functions. Here a radial function
is defined as a function of the form $u(x,y)=u(d^{\mH^{k+1}_c}(x))$
where $d^{\mH^{k+1}_c}(x)$ is the distance from $x$ to a fixed point in
$\mH^{k+1}_c$. The constants $\La_{n,k}$ could then be calculated
numerically. For example we would obtain
\[
\La_{4,1}
= \mukim(\mR^2\times \mS^2)
= 59.40481 \ldots
\]
and thus $\si(S^2\times S^2)\geq  59.40481\ldots \,$, which
should be compared to $\mu(\mS^4)= 61.56239\ldots$ and
$\mu(\mS^2\times \mS^2)=16\pi= 50.26548\ldots$

Using the handle reduction techniques of the proof of the
$h$-cobordism theorem, together with information about the spin
bordism groups in low dimensions, we would be able to conclude
the following lower
bounds on $\si(M)$ for simply connected spin manifolds of dimension~$n$
(and with vanishing index in the case $n=8$).
\[
\begin{array}{lc}
n  & \si(M) >\\
\hline
5 &  75.3968 \\
6 &  91.683 \\
7 &  108.162 \\
8 &  124.774 \\
\end{array}
\]
If $n = 5,6,7$ we use that $M$ is spin bordant to a sphere, for
$n = 8$ we have that $M$ is spin bordant to a number of copies of
$\mH P^2$. For the standard metric we have $\mu(\mH P^2)=144.959\ldots$.
In all four cases we would have $\si(M)/ \si(\mS^n) > 0.95$. Similar
conclusions can be drawn for non-spin manifolds.

These inequalities would imply for example that $\si(\mC P^3)$ is not
attained by the Fubini-Study metric, as $\mu(\mC P^3)=82.9864\ldots$ for
this conformal class.

%
%Other conclusions would be:
%All simply-connected compact spin manifolds of dimension~$5$ satisfy
%$\si(M)> 75.3968$.
%All simply-connected compact manifolds of
%dimension~$6$ (resp.\ dimension~$7$) satisfy $\si(M)> 91.683$
%(resp.\ $\si(M)> 108.162$).
%Using $\mu(\mH P^2)=144.959\ldots$, we would also see that
%any simply connected compact spin manifold of dimension~$8$ with
%$\hat{\cA}(M)=0$ satisfies $\si(M)>124.774$.
%In all four cases we would have
%$\si(M^n)/\si(\mS^n)>0.95$.
%
%These inequalities would imply e.g. that $\si(\mC P^3)$ is not attained
%by the Fubini-Study metric, as $\mu(CP^3)=82.9864\ldots$ for this
%conformal class.
%

%%%%%%%%%%%%%%%%%%%%%%%%%%%%%%%%%%%%%%%%%%%%%%%%%%%%%%%%%%%%%%%%%%%%%%%%%
\section{Limit spaces and limit solutions} \label{sec.limit}
%%%%%%%%%%%%%%%%%%%%%%%%%%%%%%%%%%%%%%%%%%%%%%%%%%%%%%%%%%%%%%%%%%%%%%%%%

In the proofs of the main theorems we will construct limit solutions
of the Yamabe equation on certain limit spaces. For this we need the
following two lemmas.

\begin{lemma} \label{diffeom}
Let $V$ be an $n$-dimensional manifold. Let $(q_{\al})$
be a sequence of points in $V$ that converges to a point $q$ as
$\al \searrow 0$. Let~$(\ga_\al)$ be a sequence of metrics defined on a
neighborhood $O$ of $q$ that converges to a metric $\ga_0$ in
the $C^2(O)$-topology. Finally, let $(b_\al)$ be a sequence of positive
real numbers such that $\lim_{\al \searrow 0} b_\al = \infty$. Then for
$r>0$ there exists for $\al$ small enough a diffeomorphism
\[
\Theta_\al:
B^n(r)
\to
B^{\ga_\al} (q_\al, b_\al^{-1} r)
\]
with $\Theta_\al(0)= q_\al$ such that the metric
$\Theta_{\al}^*(b_\al^2 \ga_\al)$ tends to the
flat metric $\xi^n$ in $C^2(B^n(r))$.
\end{lemma}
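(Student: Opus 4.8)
The plan is to produce the diffeomorphisms $\Theta_\al$ by composing a rescaling with the exponential map of the limiting metric $\ga_0$, and then to control the pulled-back rescaled metrics via uniform $C^2$-convergence of $\ga_\al\to\ga_0$ together with the classical fact that geodesic normal coordinates linearize a metric to second order at the base point. First I would fix $r>0$ and work in the fixed neighborhood $O$ of $q$. Since $\ga_\al\to\ga_0$ in $C^2(O)$ and $q_\al\to q$, the $\ga_\al$-exponential maps $\exp_{q_\al}^{\ga_\al}$ are defined on a ball of some fixed radius $\rho_0>0$ in $T_{q_\al}V$ for all small $\al$, and since $b_\al\to\infty$ we have $b_\al^{-1}r<\rho_0$ eventually, so $\exp_{q_\al}^{\ga_\al}$ restricts to a diffeomorphism from the $\ga_\al$-ball of radius $b_\al^{-1}r$ in $T_{q_\al}V$ onto $B^{\ga_\al}(q_\al,b_\al^{-1}r)$. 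Identifying $T_{q_\al}V$ with $\mR^n$ by a $\ga_\al$-orthonormal frame (chosen to converge to a $\ga_0$-orthonormal frame, which is possible since the frames live in a compact set and $\ga_\al\to\ga_0$), I would set
\[
\Theta_\al(x)\definedas \exp_{q_\al}^{\ga_\al}\!\left(b_\al^{-1}x\right),\qquad x\in B^n(r),
\]
which maps $B^n(r)$ diffeomorphically onto $B^{\ga_\al}(q_\al,b_\al^{-1}r)$ and satisfies $\Theta_\al(0)=q_\al$.

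Next I would compute $\Theta_\al^*(b_\al^2\ga_\al)$. Writing $\ph_\al\definedas\exp_{q_\al}^{\ga_\al}$ and $S_{b_\al}(x)=b_\al^{-1}x$ for the scaling, we have $\Theta_\al=\ph_\al\circ S_{b_\al}$, hence
\[
\Theta_\al^*(b_\al^2\ga_\al)
=
b_\al^2\, S_{b_\al}^*\big(\ph_\al^*\ga_\al\big)
=
S_{b_\al}^*\big(b_\al^2\cdot b_\al^{-2}\,\text{(scaled pullback)}\big);
\]
more precisely, if $h_\al\definedas\ph_\al^*\ga_\al$ is the metric $\ga_\al$ written in its own normal coordinates centered at $q_\al$, then $(\Theta_\al^*(b_\al^2\ga_\al))_{ij}(x)=(h_\al)_{ij}(b_\al^{-1}x)$. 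Now $h_\al$ is a geodesic normal coordinate expression, so $(h_\al)_{ij}(0)=\de_{ij}$ and $\pa_k(h_\al)_{ij}(0)=0$; a Taylor expansion gives $(h_\al)_{ij}(y)=\de_{ij}+O(|y|^2)$ with the second-order remainder controlled by the $\ga_\al$-curvature, which is bounded uniformly in $\al$ because $\ga_\al\to\ga_0$ in $C^2$. Therefore $(h_\al)_{ij}(b_\al^{-1}x)=\de_{ij}+O(b_\al^{-2})$ uniformly on $B^n(r)$, and likewise for first and second $x$-derivatives one picks up extra negative powers of $b_\al$. Since $b_\al\to\infty$, this shows $\Theta_\al^*(b_\al^2\ga_\al)\to\xi^n$ in $C^2(B^n(r))$.

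The one point that needs genuine care — and which I expect to be the main technical obstacle — is making the "$O(|y|^2)$" estimates on $h_\al$, and on its derivatives up to order two, \emph{uniform in $\al$}. This requires knowing that the normal coordinate components of a metric, together with their derivatives up to second order, depend continuously (in $C^2$ on a fixed small ball) on the metric in the $C^2$-topology and on the base point; this in turn follows from smooth dependence of solutions of the geodesic ODE on parameters and initial conditions, plus the standard formulas expressing $(h_\al)_{ij}$ and its low-order derivatives in terms of $\ga_\al$, $\pa\ga_\al$, $\pa^2\ga_\al$ along geodesics emanating from $q_\al$. Once this uniform dependence is in hand, the convergence $\Theta_\al^*(b_\al^2\ga_\al)\to\xi^n$ in $C^2(B^n(r))$ is immediate, and the lemma follows. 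I would also remark that the frames identifying $T_{q_\al}V$ with $\mR^n$ can be chosen measurably/continuously in $\al$ without affecting the conclusion, since any two choices differ by an element of $\SO(n)$, under which $\xi^n$ is invariant.
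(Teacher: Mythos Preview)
Your construction is exactly the one the paper uses: $\Theta_\al(x)=\exp_{q_\al}^{\ga_\al}(b_\al^{-1}x)$, after which the paper simply asserts ``It is easily checked that $\Theta_\al$ is the desired diffeomorphism.'' Your proposal supplies precisely the details the paper omits (the Taylor expansion of the metric in normal coordinates and the uniformity in $\al$ coming from $C^2$-convergence of $\ga_\al$), so the approach is the same --- note only that your opening sentence says ``exponential map of the limiting metric $\ga_0$,'' whereas you (correctly) use the $\ga_\al$-exponential map in the actual construction.
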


\begin{proof}
Denote by $\exp_{q_\al}^{\ga_\al}: U_\al \to O_\al$ the exponential
map at the point~$q_\al$ defined with respect to the metric $\ga_\al$.
Here $O_\al$ is a neighborhood of $q_\al$ in $V$ and $U_{\al}$ is a
neighborhood of the origin in $\mR^n$. We set
\[
\Theta_\al:
B^n(r) \ni x
\mapsto
\exp_{q_\al}^{\ga_\al}( b_\al^{-1} x) \in
B^{\ga_\al} (q_\al, b_\al^{-1} r)  .
\]
It is easily checked that $\Theta_\al$ is the desired diffeomorphism.
\end{proof}

\begin{lemma} \label{lim_sol}
Let $V$ be an $n$-dimensional manifold. Let $(g_\al)$ be a sequence
of metrics that converges to a metric $g$ in $C^2$ on all compact
sets $K \subset V$ as $\al \searrow 0$. Assume that $(U_\al)$ is an
increasing sequence of subdomains of $V$ such that
$\bigcup_{\al} U_\al = V$. Let $u_\al \in C^2(U_\al)$ be a
sequence of positive functions such that
$\| u_\al \|_{L^\infty(U_\al)}$ is bounded independently of~$\al$. We assume
\begin{equation} \label{eqal}
L^{g_{\al}} u_\al = \mu_\al u_\al^{p-1}
\end{equation}
where the $\mu_\al$ are numbers tending to $\bar{\mu}$. Then there
exists a non-negative function $u \in C^2(V)$,  satisfying
\begin{equation} \label{eq_lim}
L^g u
= \bar{\mu} u^{p-1}
\end{equation}
on $V$ and a subsequence of $u_\al$ that tends to $u$ in $C^1$ on
each open set $\Om \subset V$ with compact closure. In particular
\begin{equation} \label{norminf_lim}
\| u \|_{L^\infty(K)}
=
\lim_{\al \searrow 0} \| u_\al \|_{L^\infty(K)},
\end{equation}
and
\begin{equation} \label{normlr_lim}
\int_K u^r \,dv^g
=
\lim_{\al \searrow 0} \int_K u_\al^r \,dv^{g_\al}
\end{equation}
for any compact set $K$ and any $r \geq 1$.
\end{lemma}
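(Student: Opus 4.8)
The plan is to combine uniform interior elliptic estimates with a diagonal extraction over an exhaustion of $V$. First I would fix open sets $\Om_1,\Om_2,\dots\subset V$ with $\ol{\Om_j}$ compact, $\ol{\Om_j}\subset\Om_{j+1}$ and $\bigcup_j\Om_j=V$. Since $\ol{\Om_{j+1}}$ is compact and the $U_\al$ increase to $V$, we have $\ol{\Om_{j+1}}\subset U_\al$ for all sufficiently small $\al$; moreover $g_\al\to g$ in $C^2(\ol{\Om_{j+1}})$, so for small $\al$ the operators $\Delta^{g_\al}$ are uniformly elliptic on $\Om_{j+1}$, their coefficients are bounded in $C^1(\ol{\Om_{j+1}})$ independently of $\al$, and $\scal^{g_\al}$ is bounded in $C^0(\ol{\Om_{j+1}})$. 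Writing \eqref{eqal} as $\an\,\Delta^{g_\al}u_\al=\mu_\al u_\al^{p-1}-\scal^{g_\al}u_\al$ and using that $M\definedas\sup_\al\|u_\al\|_{L^\infty(U_\al)}<\infty$ and $\mu_\al\to\bar\mu$, the right-hand side is bounded in $L^\infty(\Om_{j+1})$ uniformly in $\al$.

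Then I would run the standard elliptic bootstrap on each pair $\Om_j\subset\Om_{j+1}$. Interior $L^q$-estimates give, for every finite $q$, a bound $\|u_\al\|_{W^{2,q}(\Om_j)}\le C_{j,q}$ independent of $\al$ --- the uniform ellipticity and the uniform modulus of continuity of the leading coefficients, both provided by the $C^2$-convergence of the $g_\al$, are what keep $C_{j,q}$ uniform. Taking $q>n$ and using the Sobolev embedding $W^{2,q}\embed C^{1,\be}$ with $\be=1-n/q$, the sequence $u_\al$ is bounded in $C^{1,\be}(\ol{\Om_j})$, hence equi-Lipschitz on $\Om_j$. Since $p-2=4/(n-2)>0$, the function $t\mapsto t^{p-1}$ is Lipschitz on $[0,M]$, so $u_\al^{p-1}$ --- and likewise $\scal^{g_\al}u_\al$ --- is bounded in $C^{0,\be}(\ol{\Om_j})$ uniformly in $\al$; note that no positive lower bound on $u_\al$ is required. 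Interior Schauder estimates now upgrade this to $\|u_\al\|_{C^{2,\be}(\Om_{j-1})}\le C_j$, still uniformly in $\al$.

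A set bounded in $C^{2,\be}(\ol\Om)$ is precompact in $C^2(\ol\Om)$ by Arzel\`a--Ascoli, so extracting successively on $\Om_1,\Om_2,\dots$ and passing to the diagonal subsequence (not relabelled) produces $u\in C^2(V)$ with $u\ge0$ and $u_\al\to u$ in $C^2$ on every relatively compact open subset of $V$, in particular in $C^1$ as asserted. Letting $\al\searrow0$ in \eqref{eqal}, with $g_\al\to g$ and $u_\al\to u$ locally in $C^2$ and $\mu_\al\to\bar\mu$, gives \eqref{eq_lim} on $V$. For compact $K\subset V$ I pick $j$ with $K\subset\Om_j$: then $u_\al\to u$ uniformly on $\ol{\Om_j}$ yields \eqref{norminf_lim}, and since moreover $dv^{g_\al}\to dv^g$ uniformly on $\ol{\Om_j}$ and $\Vol(K)<\infty$, also \eqref{normlr_lim} for every $r\ge1$. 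The only step demanding real care is the uniformity in $\al$ of the constants in the $L^q$- and Schauder estimates; this is exactly what the $C^2$-convergence of the metrics buys us (uniform ellipticity and uniform control of the coefficients of $\Delta^{g_\al}$ on each relatively compact set), and it is the reason the hypothesis is phrased in the $C^2$-topology rather than merely $C^0$.
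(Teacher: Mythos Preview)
Your proof is correct and follows essentially the same route as the paper: interior elliptic estimates on a nested exhaustion, compactness, and diagonal extraction. The only cosmetic difference is that the paper stops the regularity bootstrap at $W^{2,2n}\embed C^1$, extracts a $C^1$-limit, and then invokes regularity for the weak limit equation to get $u\in C^2$, whereas you push one step further with Schauder to obtain $C^{2,\be}$ bounds and hence $C^2$-convergence of the subsequence itself; this gives a slightly stronger convergence than stated but is otherwise the same argument.
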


\begin{proof}
Let $K$ be a compact subset of $V$ and let $\Om$ be an open set with
smooth boundary and compact closure in $V$ such that $K \subset \Om$.
{}From equation \eqref{eqal} and the boundedness of
$\| u_\al \|_{\infty}$ we see with standard results on elliptic
regularity (see for example \cite{gilbarg.trudinger:77}) that $(u_\al)$
is bounded in the Sobolev space $H^{2,2n}(\Om,g)$, that is all
derivatives of $u_\al|_\Om$ up to second order are bounded in
$L^{2n}(\Om)$). As this Sobolev space embeds compactly into
$C^1(\Om)$, a subsequence of $(u_\al)$ converges in $C^1(\Om)$ to a
function $u^\Om \in C^1(\Om)$, $u^\Om \geq 0$, depending on $\Om$.
Let $\phi \in C^{\infty}(\Om)$ be compactly supported in $\Om$.
Multiplying Equation \eqref{eqal} by $\phi$ and integrating over
$\Om$, we obtain that $u^\Om$ satisfies Equation \eqref{eq_lim}
weakly on $\Om$. By standard regularity results $u^\Om \in C^2(\Om)$
and satisfies Equation \eqref{eq_lim}.

As a next step we choose an increasing sequence of compact sets~$K_m$ satisfying
$\bigcup_m K_m = V$. Using the above arguments and taking successive
subsequences it follows that $(u_\al)$ converges to functions $u_m \in C^{2}(K_m)$
that solve Equation \eqref{eq_lim} and satisfy
$u_m \geq 0$ and $u_m |_{K_{m-1}} = u_{m-1}$. We define $u$ on $V$ by
$u = u_m$ on $K_m$. By taking a diagonal subsequence of $(u_\al)$ we
get that $(u_\al)$ tends to $u$ in $C^1$ on any compact set $K \subset
V$. This ends the proof of Lemma~\ref{lim_sol}.
\end{proof}

The next Lemma is useful when the sequence of metrics in Lemma~\ref{lim_sol}
converges to the flat metric $\xi^n$ on $\mR^n$.

%Lemma \ref{lim_sol} will be applied several times in the article, in
%most applications the limit space $\mR^n$ will be obtained. In this
%situation, the following lemma will be helpful.

\begin{lemma} \label{limit_space=R^n}
Let $\xi^n$ be the standard flat metric on $\mR^n$, and assume that
$u \in C^2(\mR^n)$, $u \geq 0$, $u \not\equiv 0$ satisfies
\begin{equation} \label{EqRn}
L^{\xi^n} u = \mu u^{p-1}
\end{equation}
for some $\mu \in \mR$. Assume in addition that
$u \in L^p(\mR^n)$ and that
\begin{equation*}
\| u \|_{L^p(\mR^n)} \leq  1.
\end{equation*}
Then $\mu \geq \mu(\mS^n)$.
\end{lemma}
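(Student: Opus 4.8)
The plan is to reduce the statement on $\mR^n$ to the sharp Yamabe inequality on the round sphere $\mS^n$ by pulling back via stereographic projection. Recall that $\mR^n$ with the flat metric $\xi^n$ is conformal to $\mS^n$ minus a point, via the inverse stereographic projection; concretely $\xi^n = f^{p-2}\,\si^n$ (pulled back to $\mR^n$) for the conformal factor $f(x) = \bigl(\tfrac{2}{1+|x|^2}\bigr)^{\frac{n-2}{2}}$, up to a normalising constant. First I would use the transformation law \eqref{confL} to convert the given solution $u$ of $L^{\xi^n}u = \mu u^{p-1}$ into a function $\ti u \definedas f^{-1}u$ (transported to $\mS^n\setminus\{\mathrm{pt}\}$) which then satisfies $L^{\si^n}\ti u = \mu\,\ti u^{p-1}$ there, and by the conformal invariance \eqref{confJ} of the volume term one checks $\|\ti u\|_{L^p(\mS^n\setminus\{\mathrm{pt}\})} = \|u\|_{L^p(\mR^n)} \le 1$. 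So after this change of variables the problem becomes: an $L^p$-normalised non-negative $C^2$ solution of the Yamabe equation on $\mS^n$ with the single point removed, and I must show its Yamabe value $\mu$ is at least $\mu(\mS^n)$.

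The natural route from here is a removable-singularity / cut-off argument in the spirit of Lemma~\ref{lemma.cyl} above (indeed the case $k=0$ there is the cylindrical incarnation of exactly this statement). I would take cut-off functions $\chi_\al$ on $\mS^n$ that vanish near the removed point and are $1$ outside an $\al$-neighbourhood of it, with $|d\chi_\al|_{\si^n}$ controlled, insert $\chi_\al\ti u$ into the definition of $J^{\si^n}$, expand using the product formula \eqref{formula.dchiu}, and estimate the error term $\an\int|d\chi_\al|^2\ti u^2$ by H\"older's inequality against $\|\ti u\|_{L^p}$ on the shrinking neighbourhood together with the volume of that neighbourhood; since the codimension of the removed point is $n\ge 3$ this error tends to $0$. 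Combined with $\int \chi_\al^2\ti u^p\,dv^{\si^n} \to \|\ti u\|_{L^p}^p$ and Theorem~\ref{aubin} applied to the test function $\chi_\al\ti u$, one gets
\[
\mu(\mS^n) \le J^{\si^n}(\chi_\al\ti u) \longrightarrow \mu\,\|\ti u\|_{L^p}^{p-2} \le \mu
\]
as $\al\searrow 0$, which is the claim. Alternatively, if one prefers the cylindrical picture, compose with the conformal diffeomorphism of Proposition~\ref{HconfS} (case $k=0$) sending $\mR^n\setminus\{0\}$ to $\mR\times\mS^{n-1}$ and then quote Lemma~\ref{lemma.cyl} directly; but one still has to handle the origin, so the cut-off estimate is unavoidable in some form.

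The main obstacle is making the cut-off error estimate legitimate: one must be sure that $\ti u\in L^p$ near the removed point is genuinely enough to kill $\int|d\chi_\al|^2\ti u^2$ in the limit, which relies precisely on $n\ge 3$ so that the volume factor $\Vol(U_\al)^{2/n}$ beats the $\al^{-2}$ blow-up of $|d\chi_\al|^2$ after applying H\"older with exponents $p/2$ and its conjugate. There is no a priori pointwise bound on $\ti u$ near the puncture (the conformal factor $f$ blows up there), so the argument must stay purely integral, exactly as in the proof of Lemma~\ref{lemma.cyl}; once that estimate is in hand the rest is a direct application of Theorem~\ref{aubin} and the conformal transformation formulas.
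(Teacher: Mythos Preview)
Your proposal is correct, and both routes you sketch work. The paper actually takes precisely your ``alternative'' route: it pulls $u$ back along the conformal diffeomorphism $\phi:\mR\times\mS^{n-1}\to\mR^n\setminus\{0\}$, $\phi(t,x)=e^tx$, obtaining $\hat u\definedas e^{(n-2)t/2}\,u\circ\phi$ with $L^{dt^2+\si^{n-1}}\hat u=\mu\hat u^{p-1}$ and $\|\hat u\|_{L^p}=\|u\|_{L^p}\le 1$, and then simply quotes Lemma~\ref{lemma.cyl}.

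Your worry that in the cylindrical picture ``one still has to handle the origin'' is unfounded: the origin of $\mR^n$ corresponds to the end $t\to-\infty$ of the cylinder, not to an interior singularity. Since $u$ is $C^2$ across $0$, the pullback $\hat u$ is $C^2$ on all of $\mR\times\mS^{n-1}$ (indeed $\hat u\to 0$ as $t\to-\infty$), so Lemma~\ref{lemma.cyl} applies directly with no extra removable-singularity step. The cut-off near $\mS^0$ inside the proof of Lemma~\ref{lemma.cyl} already absorbs both ends. Your direct stereographic approach is equally valid and arguably cleaner (one puncture instead of two); the paper's version simply reuses a lemma already proved.
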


\begin{proof}
The map $\phi:\mR\times \mS^{n-1}\to \mR^n\setminus\{0\}$,
$\phi(t,x) = e^t x$, is a conformal diffeomorphism with
\[
dt^2+\si^{n-1} = e^{-2t} \phi^* \xi^n.
\]
Thus if $u$ is a solution of \eqref{EqRn}, then
$\hat u \definedas e^{(n-2)t/2}u\circ \phi$ is a solution of
$L^{dt^2+\si^{n-1}} \hat u = \mu \hat u^{p-1}$ and
$\|\hat u\|_{L^p(\mR\times\mS^{n-1})} = \|u\|_{L^p(\mR^n)}\leq 1$.
The result now follows from Lemma \ref{lemma.cyl}.
\end{proof}

%%%%%%%%%%%%%%%%%%%%%%%%%%%%%%%%%%%%%%%%%%%%%%%%%%%%%%%%%%%%%%%%%%%%%%%%%
\section{$L^2$-estimates on \WS-bundles} \label{wsbundles}
%%%%%%%%%%%%%%%%%%%%%%%%%%%%%%%%%%%%%%%%%%%%%%%%%%%%%%%%%%%%%%%%%%%%%%%%%

Manifolds with a certain structure of a double bundle will appear in
the proofs of our main results. In this section we derive
$L^2$-estimates for solutions to a perturbed Yamabe equation on a \WS-bundle.

%%%%%%%%%%%%%%%%%%%%%%%%%%%%%%%%%%%%%%%%%%%%%%%%%%%%%%%%%%%%%%%%%%%%%%%%%
\subsection{Definition and statement of the result}
%%%%%%%%%%%%%%%%%%%%%%%%%%%%%%%%%%%%%%%%%%%%%%%%%%%%%%%%%%%%%%%%%%%%%%%%%

Let $n \geq 1$ and $0 \leq k \leq n-3$ be integers. Let $W$ be a
closed manifold of dimension~$k$ and let $I$ be an interval. By a
\emph{\WS-bundle} we will mean the product
$P \definedas I \times W \times S^{n-k-1}$ equipped with a metric of
the form
\begin{equation}\label{gthform}
\gWS
=
dt^2 + e^{2\phi(t)}h_t + \sigma^{n-k-1}
\end{equation}
where $h_t$ is a smooth family of metrics on $W$ depending on
$t \in I$ and $\phi$ is a function on $I$. The condition $k\leq n-3$
implies that the sphere $S^{n-k-1}$ carries positive scalar curvature,
which is an essential ingredient in the proof of Theorem~\ref{theo.fibest}.
Let $\pi : P \to I$ be the projection onto the first factor and let
$F_t \definedas \pi^{-1}(t) = \{ t \} \times W \times S^{n-k-1}$.
The metric induced on $F_t$ is
$g_t \definedas e^{2\phi(t)} h_t + \sigma^{n-k-1}$.
Let $H_t$ be the mean curvature of $F_t$ in~$P$, that is $H_t \pa_t$
is the mean curvature vector of $F_t$. We always use the sign
convention for the mean curvature vector for which it points in the
direction of decreasing volume of $F_t$. The mean curvature is given
by the formula
\begin{equation} \label{meancurvP}
H_t
=
- \frac{k}{n-1}\phi'(t) - e(h_t)
\end{equation}
with
$e(h_t) \definedas \frac{1}{2(n-1)}{\trace}_{h_t}(\partial_t h_t)$.
Clearly, $e(h_t) = 0$ if $t \mapsto h_t$ is constant.
The derivative of the volume element $dv^{g_t}$ of $F_t$ is
\[
\pa_t dv^{g_t} = -(n-1) H_t dv^{g_t}.
\]
It is straightforward to check that the scalar curvatures of $\gWS$
and $h_t$ are related by (see Appendix~\ref{app.bgm} for details)
\begin{equation} \label{scalP}
\begin{split}
\scal^{\gWS}
&=
e^{-2\phi(t)} \scal^{h_t} + (n-k-1)(n-k-2) \\
&\quad
- k(k+1) \phi'(t)^2
- 2k \phi''(t)
- (k+1) \phi'(t) \tr (h_t^{-1} \pa_t h_t ) \\
&\quad
+ \frac{3}{4} \tr( ( h_t^{-1} \pa_t h_t)^2 )
- \frac{1}{4} ( \tr(h_t^{-1} \pa_t h_t) )^2
- \tr({h_t}^{-1} \pa_t^2 h_t).
\end{split}
\end{equation}

\begin{definition}
We say that condition $(A_t)$ holds if the following assumptions
are true:
\Atbox{\ \
\begin{matrix}
1.)& t \mapsto h_t\mbox{ is constant},\hfill\\
2.)& e^{-2 \phi(t)} \inf_{x \in W} \Scal^{h_t}(x)
\geq -\frac{n-k-2}{32} \an ,\hfill\\
3.)& |\phi'(t)| \leq 1,\hfill\\
4.)& 0 \leq -2k \phi''(t) \leq \frac1{2} (n-1)(n-k-2)^2.
\end{matrix}}{(A_t)}
Similarly, we say that condition $(B_t)$ holds if the following
assumptions are true:
\Atbox{\ \
\begin{matrix}
1.) & t \mapsto \phi(t)\mbox{ is constant,}\hfill\\
2.) & \inf_{x\in F_t} \Scal^{\gWS}(x)
\geq \frac{1}{2} \Scal^{\si^{n-k-1}}
= \frac{1}{2} (n-k-1)(n-k-2),\hfill\\
3.) & \frac{(n-1)^2}{2} e(h_t)^2
+ \frac{n-1}{2} \partial_t e(h_t)
\geq
- \frac{3}{64} (n-k-2).\hfill
\end{matrix}}{(B_t)}
\end{definition}

Let $P$ be \WS-bundle equipped with a metric $G$ that is close to
$\gWS$ in a sense to be made precise later. Let $\al, \beta \in \mR$
be such that $[\al,\beta] \subset I$. Our goal is to derive an
estimate for the distribution of $L^2$-norm of a positive solution to
the Yamabe equation
\[
L^G u = \mu u^{p-1}.
\]
If we write this equation in terms of the metric $\gWS$ we get a
perturbed version of the Yamabe equation for $\gWS$. We assume that we
have a smooth positive solution $u$ of the equation
\begin{equation} \label{eqyamodif}
L^{\gWS} u
=
\an\Delta^{\gWS} u + \Scal^{\gWS} u
=
\mu u^{p-1} + d^* A(du) + Xu + \ep \pa_t u - su
\end{equation}
where $s, \ep \in C^\infty(P)$, $A\in \End(T^*P)$, and $X\in \Gamma(TP)$
are perturbation terms coming from the difference between $G$ and
$\gWS$. We assume that the endomorphism $A$ is symmetric and that $X$
and $A$ are vertical, that is $dt(X) = 0$ and $A(dt) = 0$.

\begin{theorem}\label{theo.fibest}
Assume that $P$ carries a metric $\gWS$ of the form
\eqref{gthform}. Let $\al,\beta \in \mR$ be such that
$ [\al,\beta] \subset I$. Assume further that for each $t \in I$
either condition $(A_t)$ or condition $(B_t)$ is true. We also assume
that $u$ is a positive solution of \eqref{eqyamodif} satisfying
\begin{equation} \label{assumpmaxu}
\mu \| u \|_{L^\infty(P)}^{p-2}
\leq
\frac{(n-k-2)^2(n-1)}{8(n-2)}.
\end{equation}
Then there exists $c_0>0$ independent of $\al$, $\beta$, and $\phi$,
such that if
\[
\| A \|_{L^\infty(P)},
\| X \|_{L^\infty(P)},
\| s \|_{L^\infty(P)},
\| \ep \|_{L^\infty(P)},
\| e(h_t) \|_{L^\infty(P)}
\leq
c_0
\]
then
\[
\int_{\pi^{-1} \left((\al + \ga ,\be - \ga)\right) }
u^2 \, dv^{\gWS}
\leq
\frac{4 \| u \|_{L^\infty}^2}{n-k-2}
\left(
\Vol^{g_\al} ( F_{\al }) + \Vol^{g_\be} ( F_{\be })
\right),
\]
where $\ga \definedas \frac{\sqrt{32}}{n-k-2}$.
\end{theorem}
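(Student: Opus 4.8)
The strategy is a maximum-principle / integral-estimate argument along the base interval $I$. Write $u_t \definedas u|_{F_t}$ and introduce the function
\[
f(t) \definedas \int_{F_t} u^2 \, dv^{g_t},
\]
and try to show that $f$ is, up to small corrections controlled by $c_0$, a convex function of $t$, or more precisely that it satisfies a differential inequality of the form $f''(t) \geq \delta^2 f(t) - (\text{small})$ with $\delta$ a definite positive multiple of $n-k-2$. Once such an inequality is in place, a standard barrier/comparison argument against the solutions $e^{\pm \delta t}$ of the associated ODE forces the bulk of the mass to be concentrated near the endpoints $t = \al$ and $t = \be$, giving a bound of the stated form $\int u^2 \leq \frac{C}{n-k-2}\|u\|_\infty^2(\Vol(F_\al) + \Vol(F_\be))$, with the exponentially decaying interior contribution absorbed into the region $(\al,\al+\ga)\cup(\be-\ga,\be)$ that is excluded on the left-hand side. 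This is why $\ga$ has the shape $\ga = \sqrt{32}/(n-k-2)$: it is (a multiple of) the reciprocal of the decay rate $\delta$.

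To produce the differential inequality I would differentiate $f$ twice in $t$, using $\pa_t dv^{g_t} = -(n-1)H_t \, dv^{g_t}$ for the volume element and the perturbed Yamabe equation \eqref{eqyamodif} to rewrite the Laplacian terms that appear. Concretely, $f'(t) = \int_{F_t}\big(2u\,\pa_t u - (n-1)H_t u^2\big)\,dv^{g_t}$, and differentiating again brings in $\pa_t^2 u$, which via \eqref{eqyamodif} can be traded for the fiberwise Yamabe operator $L^{g_t}$ acting on $u$, the curvature term $\Scal^{\gWS}$, the nonlinearity $\mu u^{p-1}$, and the perturbation terms $d^*A(du)$, $Xu$, $\ep\pa_t u$, $su$. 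Integrating the fiberwise operator by parts on the closed fiber $F_t$ produces $\int_{F_t}(\an|d_{F_t}u|^2 + \Scal^{g_t}u^2)$; here is where conditions $(A_t)$ and $(B_t)$ enter. Under $(A_t)$ the fiber metric is a product $e^{2\phi}h + \si^{n-k-1}$ with $h$ fixed, so $\Scal^{g_t} = e^{-2\phi}\Scal^h + (n-k-1)(n-k-2)$, and assumption 2 of $(A_t)$ guarantees this is bounded below by a definite positive multiple of $(n-k-2)\an$; the terms $-2k\phi''$ and $-k(k+1)(\phi')^2$ from \eqref{scalP}, controlled by assumptions 3 and 4 of $(A_t)$, together with the mean-curvature contributions $H_t = -\tfrac{k}{n-1}\phi'$ and its derivative, are likewise dominated. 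Under $(B_t)$ instead $\phi$ is constant, $\Scal^{\gWS}$ is directly bounded below by $\tfrac12(n-k-1)(n-k-2)$ (assumption 2), and assumption 3 of $(B_t)$ is exactly the lower bound on $\tfrac{(n-1)^2}{2}e(h_t)^2 + \tfrac{n-1}{2}\pa_t e(h_t)$ needed to control the second derivative of the volume element $\pa_t\big((n-1)H_t\big)$. In both cases the crucial point is that the nonlinear term contributes favorably: by the normalization \eqref{assumpmaxu}, $\mu u^{p-1} \leq \mu\|u\|_\infty^{p-2}u \leq \tfrac{(n-k-2)^2(n-1)}{8(n-2)}u$, so when it appears multiplied by $u$ it can be absorbed against the positive curvature term $\Scal^{g_t}u^2$ leaving a net positive coefficient of $f(t)$.

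With all of this, one arrives at $f''(t) \geq \delta^2 f(t) - R(t)$ where $R(t)$ collects the perturbation contributions (each term carrying a factor $\|A\|_\infty$, $\|X\|_\infty$, $\|s\|_\infty$, $\|\ep\|_\infty$, or $\|e(h_t)\|_\infty$, possibly after a Cauchy–Schwarz/Young estimate that also eats part of $f$ and part of $\int|d_{F_t}u|^2$) plus possibly a term controlled by $\|u\|_\infty^2$ times fiber volumes; choosing $c_0$ small enough makes $\delta^2$ survive as a strictly positive constant of the right order. Then I would run the ODE comparison: let $g(t)$ solve $g'' = \delta^2 g$ with $g(\al) = f(\al)$, $g(\be) = f(\be)$, i.e.\ $g$ the appropriate combination of $\cosh$/$\sinh$ of $\delta(t-\al)$ and $\delta(\be-t)$; a maximum-principle argument for the operator $\pa_t^2 - \delta^2$ (using that $f - g$ cannot have an interior positive max) gives $f \leq g + (\text{error from }R)$ on $[\al,\be]$, and then $\int_{\al+\ga}^{\be-\ga} f(t)\,dt$ is bounded by the tail integral of $g$, which is $O\big(\delta^{-1}e^{-\delta\ga}(f(\al)+f(\be))\big)$; with $\delta\ga$ a fixed number this is a constant times $\delta^{-1}(f(\al)+f(\be))$, and $f(\al)+f(\be) \leq \|u\|_\infty^2(\Vol(F_\al)+\Vol(F_\be))$ closes the estimate with constant $4/(n-k-2)$ after tracking the numerical factors.

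\textbf{Main obstacle.} The delicate part is the bookkeeping in the second step: one must differentiate twice through a $t$-dependent metric and a perturbed PDE, integrate by parts correctly on the fibers, and then show that \emph{every} negative contribution — the $\phi''$ and $(\phi')^2$ terms from \eqref{scalP}, the mean-curvature terms and their $t$-derivatives, the nonlinear term, and the five perturbation terms — is dominated by the single positive quantity coming from the positive scalar curvature of $\mS^{n-k-1}$ (this is exactly why $k \leq n-3$ is needed). Extracting a clean differential inequality $f'' \geq \delta^2 f - R$ with an explicit, dimensionally correct $\delta$ and with $R$ genuinely small requires carefully calibrated applications of Young's inequality so that the positive $\int_{F_t}\an|d_{F_t}u|^2$ term is available to absorb cross terms like $\int u\, d^*A(du)$ and $\int \ep u\,\pa_t u$ without spoiling the coefficient of $f(t)$. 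The two-case structure $(A_t)$ vs.\ $(B_t)$ means this calibration has to be done twice, and the constants in the hypotheses ($\tfrac{n-k-2}{32}\an$, $\tfrac12(n-1)(n-k-2)^2$, $\tfrac{3}{64}(n-k-2)$) are presumably tuned precisely so that a single $c_0$ and a single $\delta$ work uniformly in $\al,\be,\phi$ across both cases.
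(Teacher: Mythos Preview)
Your plan is correct and is essentially the paper's argument: they work with $w(t)=f(t)^{1/2}$ rather than $f$, derive the same second-order differential inequality by differentiating twice and feeding in \eqref{eqyamodif} and \eqref{scalP} under $(A_t)$/$(B_t)$, and then apply an ODE comparison (their Lemma~\ref{lem_intw}, an exponential barrier estimate, in place of your maximum-principle comparison against $\cosh/\sinh$). Two points to watch when you carry it out: first, the mean-curvature cross term (e.g.\ $k\phi'(t)\int_{F_t}u\,\partial_t u$ in case $(A_t)$) is not killed by Young's inequality on the perturbations but by the Cauchy--Schwarz lower bound $\int_{F_t}(\partial_t u)^2\ge\bigl(w'(t)-\tfrac{k}{2}\phi'(t)w(t)\bigr)^2$, which cancels it to leading order; second, your remainder $R(t)$ must be absorbed into the coefficient of $f$ (i.e.\ you need a clean $f''\ge\tilde\delta^2 f$), because a genuinely additive $R$ would produce an error $\sim(\beta-\alpha)\sup R$ in the comparison that is not uniform in the interval length---the paper achieves this absorption via the trick $v=w^{1+\hat\delta}$, though working with $f=w^2$ as you do makes the $(w')^2$ term come out with the right sign and avoids that detour.
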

Note that this theorem only gives information when
$\be - \al > 2\ga$.

%%%%%%%%%%%%%%%%%%%%%%%%%%%%%%%%%%%%%%%%%%%%%%%%%%%%%%%%%%%%%%%%%%%%%%%%
\subsection{Proof of Theorem \ref{theo.fibest}}
%%%%%%%%%%%%%%%%%%%%%%%%%%%%%%%%%%%%%%%%%%%%%%%%%%%%%%%%%%%%%%%%%%%%%%%%

For the proof of Theorem \ref{theo.fibest} we need the following lemma.
\begin{lemma}\label{lem_intw}
Let $T$ and $\gamma$ be positive numbers, and assume that  
$w:~[-T-\ga,~T+\ga]~\to~\mR$
is a smooth positive function satisfying
\begin{equation} \label{w''geq}
w''(t)
\geq
\frac{w(t)}{\ga^2}.
\end{equation}
Then
\begin{equation}\label{intwlemma}
\int_{-T}^{T} w(t)^m \,dt
\leq
\frac{\ga}{m} \Bigl( (w(T+\ga))^m+(w(-T-\ga))^m \Bigr)
\end{equation}
for all $m \geq 1$.
\end{lemma}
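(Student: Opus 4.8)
The plan is to exploit the differential inequality $w'' \geq w/\ga^2$ by comparing $w$ with explicit solutions of the associated equation $v'' = v/\ga^2$, namely linear combinations of $e^{t/\ga}$ and $e^{-t/\ga}$, i.e.\ of $\cosh(t/\ga)$ and $\sinh(t/\ga)$. The key analytic input is a maximum-principle / convexity-type statement: if $w'' \geq w/\ga^2$ and $w$ is positive, then on any interval $[a,b]$ the function $w$ is bounded above by the solution $v$ of $v'' = v/\ga^2$ with $v(a) = w(a)$, $v(b) = w(b)$. Indeed, $d \definedas w - v$ satisfies $d'' \geq d/\ga^2$ with $d(a) = d(b) = 0$; if $d$ were positive somewhere in $(a,b)$ it would attain an interior positive maximum at some $t_0$, where $d''(t_0) \leq 0$ but $d''(t_0) \geq d(t_0)/\ga^2 > 0$, a contradiction. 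Hence $w \leq v$ on $[a,b]$.

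With this comparison in hand, apply it on the interval $[a,b] = [-T-\ga, T+\ga]$, so that for $t \in [-T,T]$ we have $w(t) \leq v(t)$ where $v$ is the unique solution of $v'' = v/\ga^2$ agreeing with $w$ at the endpoints $\pm(T+\ga)$. Writing $A \definedas w(T+\ga)$ and $B \definedas w(-T-\ga)$, one checks that $v(t) = \frac{A \sinh\!\big(\frac{t+T+\ga}{\ga}\big) + B \sinh\!\big(\frac{T+\ga - t}{\ga}\big)}{\sinh\!\big(\frac{2(T+\ga)}{\ga}\big)}$, a nonnegative function on $[-T-\ga, T+\ga]$. Since $m \geq 1$ and $t \mapsto t^m$ is convex and increasing on $[0,\infty)$, and since $v$ is a convex combination of the two boundary-matching exponential-type pieces, it suffices to estimate $\int_{-T}^T v(t)^m\,dt$. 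The cleanest route is to bound $v$ pointwise on $[-T,T]$ by the two exponentials it is built from: on $[-T,T]$ one has $v(t) \leq A\, e^{(t-T)/\ga} + B\, e^{-(t+T)/\ga}$ (this follows from $\sinh x \leq \frac12 e^x$ together with the denominator bound $\sinh(2(T+\ga)/\ga) \geq \frac12 e^{2(T+\ga)/\ga}$, after simplification). Then by the elementary inequality $(x+y)^m \leq$ \dots — actually it is cleaner to avoid expanding the $m$-th power of a sum and instead split the integration domain or use that $v$ is monotone on each half. Concretely, $v$ has at most one interior critical point, so $[-T,T]$ splits into at most two intervals on each of which $v$ is monotone; on an interval where $v$ is increasing, $v(t) \leq A e^{(t-T)/\ga}$ suffices after noting $v(T) \le A$, and symmetrically for the decreasing part, giving $\int_{-T}^T v^m \leq \ga A^m/m + \ga B^m/m$ by a direct computation $\int_{-\infty}^{T} A^m e^{m(t-T)/\ga}\,dt = \ga A^m/m$.

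The main obstacle is getting the constant exactly right: one must be careful that the comparison function and the exponential majorants are chosen so that no extra factor worse than $\ga/m$ appears, and that the splitting of $[-T,T]$ according to monotonicity of $v$ does not cause the two halves to overcount. I expect the cleanest writeup first establishes the comparison inequality $w \leq v$ via the interior-maximum argument, then proves the pointwise bound $w(t) \leq w(T+\ga) e^{(t-T)/\ga} + w(-T-\ga) e^{-(t+T)/\ga}$ for $t \in [-T-\ga,T+\ga]$ (valid because the right-hand side itself solves $v'' = v/\ga^2$ and dominates $w$ at both endpoints), and finally integrates the $m$-th power using $\int_{-\infty}^{T} e^{m(t-T)/\ga}\,dt = \ga/m$ and $\int_{-T}^{\infty} e^{-m(t+T)/\ga}\,dt = \ga/m$, noting that for $m \ge 1$ and nonnegative $a,b$ one needs $\int_{-T}^T (a(t)+b(t))^m\,dt \le \int (\text{something})$ — to sidestep this I would instead bound $w(t)^m \le \big(2\max\{w(T+\ga)e^{(t-T)/\ga}, w(-T-\ga)e^{-(t+T)/\ga}\}\big)^m$ only if a factor $2^m$ is acceptable; if not, the monotonicity splitting above is the way to keep the sharp constant $\ga/m$.
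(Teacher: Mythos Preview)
Your comparison principle $w \leq v$ on $[-T-\ga,T+\ga]$, via the interior-maximum argument, is valid and is a genuinely different starting point from the paper's. The difficulty is entirely in your integration step, and there the proposal has a real gap: the claim ``$v(T)\le A$'' that you invoke on the increasing side is not justified and in fact fails when $B=w(-T-\ga)\gg A=w(T+\ga)$ (compute $v(T)$ from your explicit formula). The alternative route you sketch, bounding $(a+b)^m$ or inserting a factor $2^m$, does not give the sharp constant $\ga/m$ either.

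The paper bypasses $v$ altogether. Since $w''>0$, the function $w$ has a unique minimum at some $t_0$; on $(t_0,T]$ one sets $W(t)\definedas w(t)+\ga w'(t)$. From $w''\geq w/\ga^2$ one gets $W'\geq W/\ga$, hence $W(t)\leq W(T)e^{-(T-t)/\ga}$, while convexity of $w$ ($w'$ increasing) gives $W(T)=w(T)+\ga w'(T)\leq w(T)+\int_T^{T+\ga}w'(s)\,ds=w(T+\ga)$. Since $w'\geq 0$ on $(t_0,T]$ we have $w\leq W$, so $w(t)\leq w(T+\ga)e^{-(T-t)/\ga}$ there, and $\int_{t_0}^T w^m\leq (\ga/m)\,w(T+\ga)^m$ follows by a direct integral. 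The other half is symmetric. This auxiliary-function trick is what replaces your monotonicity splitting and delivers the sharp constant with no loss.

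Your splitting idea can actually be rescued, but not via ``$v(T)\le A$''. Writing $v(t)=C_1e^{t/\ga}+C_2e^{-t/\ga}$, on the increasing side $t\geq t_1$ (with $t_1$ the minimum of $v$) one has $C_2e^{-t/\ga}\leq C_1e^{t/\ga}$ and $C_1e^{(T+\ga)/\ga}\leq v(T+\ga)=A$, hence $v(t)\leq 2C_1e^{t/\ga}\leq (2/e)\,A\,e^{(t-T)/\ga}<A\,e^{(t-T)/\ga}$, after which the integral bound follows. This works, but is more circuitous than the paper's argument; if you want to keep your comparison-with-$v$ framework, this is the correct way to close it.
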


\begin{proof}
Assume that $w|_{[-T-\ga,T+\ga]}$ attains its minimum at $t_0$.
Since $w'' \geq w/\ga^2 >0$  we have $w'(t) > 0$ for $t \in (t_0,T+\ga)$,
and $w'(t) < 0$ for
$t \in (-T-\ga,t_0)$. We first study the case when $t_0 \in (-T,T)$.
We define $W(t) \definedas w(t) + \ga w'(t)$.
As $w$ and $w'$ are increasing on $(t_0,T+\gamma)$, we get
\begin{equation} \label{W+}
\begin{split}
W(T)
&=
w(T) + \int_T^{T+\ga} w'(T) \,dt \\
&\leq
w(T)+\int_T^{T+\ga}w'(t)\,dt \\
&=
w(T+\ga).
\end{split}
\end{equation}
{}From \eqref{w''geq} we see that $W'(t) \geq W(t)/\ga$, or
$\pa_t \ln W(t) \geq 1/\ga$. Integrating this relation between
$t \in (t_0,T)$ and $T$ we get
\[
W(t)
\leq
e^{-\frac{T-t}{\ga}} W(T).
\]
Using that $w \leq W$ on $(t_0,T)$ together with \eqref{W+} we obtain
\[
w(t)
\leq
W(t)
\leq
e^{-\frac{T-t}{\ga}} w(T+\ga),
\]
and hence
\[
w(t)^m
\leq
e^{- m\frac{T-t}{\ga} } (w(T+\ga))^m
\]
for all $t\in [t_0,T]$ and $m \geq 1$.
Integrating this relation over $t \in [t_0,T]$ we get
\begin{equation} \label{intw+}
\int_{t_0}^T w(t)^m\,dt
\leq
\frac{\ga (1 - e^{- m \frac{T-t_0}{\ga}})}{m} (w(T+\ga))^m
\leq
\frac{\ga}{m} (w(T+\ga))^m.
\end{equation}
Similarly we conclude that
\begin{equation} \label{intw-}
\int_{-T}^{t_0} w(t)^m \, dt
\leq
\frac{\ga}{m} (w(-T-\ga))^m.
\end{equation}
This proves relation \eqref{intwlemma} in this case. In the case that
$t_0 \leq -T$ relation~\eqref{intw+} remains valid. Using
\[
\int_{-T}^T w(t)^m \, dt
\leq
\int_{t_0}^T w(t)^m \, dt
\]
and
\[
(w(T+\ga))^m \leq (w(T+\ga))^m + (w(-T-\ga))^m,
\]
we obtain relation \eqref{intwlemma}. We proceed in a similar way
using \eqref{intw-} in case $t_0 \geq T$. This ends the proof of Lemma
\ref{lem_intw}.
\end{proof}

\begin{proof}[Proof of Theorem \ref{theo.fibest}]
The Laplacian $\Delta^{\gWS}$ on $P$ is related to the Laplacian
$\Delta^{g_t}$ on $F_t$ through the formula
\[
\Delta^{\gWS}= \Delta^{g_t} - \pa_t^2 + (n-1) H_t \pa_t,
\]
so
\begin{equation*}
\begin{split}
\int_{F_t} u \Delta^{\gWS} u \, dv^{g_t}
&=
\int_{F_t}
\left(
u \Delta^{g_t} u - u (\pa_t^2 u) + (n-1) H_t u(\pa_t u)
\right)
\, dv^{g_t} \\
&=
\int_{F_t}
\left(
|\dvert u|^2 - u (\pa_t^2 u) + (n-1) H_t u(\pa_t u)
\right)
\, dv^{g_t}.
\end{split}
\end{equation*}
Together with \eqref{eqyamodif} we get
\begin{equation*}
\begin{split}
\an\int_{F_t} u \pa_t^2 u \, dv^{g_t}
&=
\int_{F_t}
\Big(
\an |\dvert u|^2
+ \an (n-1) H_t u \pa_t u \\
&\qquad
- \< \dvert u, A(\dvert u) \>
- u Xu
- \ep u \pa_t u \\
&\qquad
+ (\Scal^{\gWS} + s) u^2
- \mu u^p
\Big)
\, dv^{g_t}.
\end{split}
\end{equation*}
In the following we denote by $\de(c_0)$ a positive constant that
goes to~$0$ if $c_0$ tends to $0$ and whose convergence depends only
on $n$, $\mu$, and $h$. We set
$S_t \definedas \inf_{F_t} \scal^{\gWS}$.
If we use the inequality $2 \int |ab| \leq \int (a^2 + b^2)$ to
simplify the terms involving $X$ and $\ep$ we obtain
\begin{equation*}
\begin{split}
\an\int_{F_t} u \pa_t^2 u \, dv^{g_t}
&\geq
\int_{F_t}
\Big(
(\an - \de(c_0)) |\dvert u|^2
+ \an (n-1) H_t u \pa_t u \\
&\qquad
- \de(c_0) (\pa_t u)^2
+ (S_t - \de(c_0)) u^2
- \mu u^p
\Big)
\, dv^{g_t}.
\end{split}
\end{equation*}
If $c_0$ is small enough so that $\an - \de(c_0) > 0$ we conclude that
\begin{equation} \label{eqconf1}
\begin{split}
\an \int_{F_t}
\Big(
u \pa_t^2 u - (n-1) H_t u(\pa_t u)
\Big)
\, dv^{g_t}
&\geq
(S_t - \de(c_0)) w(t)^2 \\
&\quad
- \int_{F_t}
\Big(\de(c_0) (\pa_t u)^2 + \mu u^p \Big) \, dv^{g_t},
\end{split}
\end{equation}
where we define
\[
w(t)
\definedas
\| u \|_{L^2(F_t)}
=
\left( \int_{F_t}u^2\,dv^{g_t} \right)^{1/2}.
\]
Differentiating this we get
\begin{equation} \label{eq.udtu}
\begin{split}
2 w'(t)w(t)
&=
\pa_t \int_{F_t} u^2\, dv^{g_t} \\
&=
\int_{F_t}
\Big(
2 u (\partial_t u)
- (n-1)H_t u^2
\Big)
\, dv^{g_t}.
\end{split}
\end{equation}

We now assume that $(A_t)$ holds. Then \eqref{meancurvP} tells us that
\[
H_t = - \frac{k}{n-1} \phi'(t),
\]
so \eqref{eq.udtu} becomes
\begin{equation} \label{eq.udtu.A}
w'(t)w(t)
=
\int_{F_t} u (\partial_t u) \, dv^{g_t}
+
\frac{k}{2} \phi'(t) w(t)^2.
\end{equation}
We differentiate this and obtain
\begin{equation*}
\begin{split}
w'(t)^2 + w''(t) w(t)
&=
\int_{F_t} (\partial_t u)^2 \, dv^{g_t} \\
&\quad
+ \int_{F_t}
\Big(u \partial_t^2 u - (n-1) H_t u \partial_t u \Big)
\, dv^{g_t} \\
&\quad
+ \frac{k}{2} \phi''(t) w(t)^2 + k \phi'(t) w'(t) w(t).
\end{split}
\end{equation*}
{}From \eqref{eqconf1} we get
\begin{equation} \label{main2.A}
\begin{split}
w'(t)^2 + w''(t) w(t)
&\geq
\left(1 - \frac{\de(c_0)}{\an}\right)
\int_{F_t} (\partial_t u)^2 \, dv^{g_t} \\
&\quad
+ \left(
\frac{1}{\an} \left( S_t - \de(c_0) \right) + \frac{k}{2} \phi''(t)
\right) w(t)^2 \\
&\quad
- \frac{1}{\an}  \int_{F_t} \mu u^p \, dv^{g_t}
+ k \phi'(t) w'(t) w(t).
\end{split}
\end{equation}
We now use Cauchy-Schwarz and \eqref{eq.udtu.A} to get
\begin{equation*}
\begin{split}
w(t)^2 \int_{F_t} (\partial_t u)^2 \, dv^{g_t}
&\geq
\left( \int_{F_t} u (\partial_t u) \, dv^{g_t} \right)^2 \\
&=
\left( w'(t)w(t) - \frac{k}{2} \phi'(t) w(t)^2 \right)^2,
\end{split}
\end{equation*}
and thus
\begin{equation} \label{dtu^2.A}
\int_{F_t} (\partial_t u)^2 \, dv^{g_t}
\geq
\left( w'(t) - \frac{k}{2} \phi'(t) w(t) \right)^2 .
\end{equation}
{}From assumption \eqref{assumpmaxu} it follows that
\begin{equation} \label{upterm.A}
\frac{\mu}{\an} \int_{F_t} u^p \,dv^{g_t}
\leq
\frac{(n-k-2)^2}{32} w(t)^2 .
\end{equation}
Inserting \eqref{dtu^2.A} and \eqref{upterm.A} into \eqref{main2.A} we
obtain
\begin{equation*}
\begin{split}
w'(t)^2 + w''(t) w(t)
&\geq
\left(1 - \frac{\de(c_0)}{\an}\right)
\left( w'(t) - \frac{k}{2} \phi'(t) w(t) \right)^2 \\
&\quad
+ \left(
\frac{1}{\an} \left( S_t - \de(c_0) \right) + \frac{k}{2} \phi''(t)
\right) w(t)^2 \\
&\quad
- \frac{(n-k-2)^2}{32} w(t)^2
+ k \phi'(t) w'(t) w(t),
\end{split}
\end{equation*}
or after some rearranging,
\begin{equation} \label{main3.A}
\begin{split}
w''(t) w(t)
&\geq
- \frac{\de(c_0)}{\an}
\left( w'(t) - \frac{k}{2} \phi'(t) w(t) \right)^2 \\
&\quad
+ \left(
\frac{1}{\an} \left( S_t - \de(c_0) \right) + \frac{k}{2} \phi''(t)
+ \frac{k^2}{4} \phi'(t)^2 - \frac{(n-k-2)^2}{32}
\right) w(t)^2.
\end{split}
\end{equation}
Next we estimate the coefficient of $w(t)^2$ in the last line of
\eqref{main3.A}. We denote this coefficient by $D$. Using \eqref{scalP}
and assumption 1.) of $(A_t)$, which tells us that $e(h_t) = 0$, we get
\begin{equation*}
\begin{split}
D
&=
\frac{1}{\an}
\left(
e^{-2\phi(t)} \inf_{x\in W}\scal^{h_t}(x) - k (k+1) \phi'(t)^2
- 2k \phi''(t) + (n-k-1)(n-k-2)
\right) \\
&\quad
- \frac{\de(c_0)}{\an}
+ \frac{k}{2} \phi''(t)
+ \frac{k^2}{4} \phi'(t)^2 - \frac{(n-k-2)^2}{32} \\
&=
\frac{1}{\an} e^{-2\phi(t)}  \inf_{x\in W}\scal^{h_t}(x)
+ \frac{1}{\an} \left( (n-k-1)(n-k-2) - \de(c_0) \right)
+ \frac{k}{2(n-1)} \phi''(t) \\
&\quad
- \frac{k}{4(n-1)} (n-k-2) \phi'(t)^2
- \frac{(n-k-2)^2}{32}.
\end{split}
\end{equation*}
{}From assumptions 2.) and 3.) of $(A_t)$ we obtain
\begin{equation*}
\begin{split}
D
&\geq
-\frac{n-k-2}{32}
+ \frac{1}{\an} \left( (n-k-1)(n-k-2) - \de(c_0) \right)
+ \frac{k}{2(n-1)} \phi''(t) \\
&\quad
- \frac{k}{4(n-1)} (n-k-2)
- \frac{(n-k-2)^2}{32} \\
&=
\frac{1}{4(n-1)} \left( (n-1)(n-k-2)^2 + 2k \phi''(t) \right) \\
&\quad
-\frac{n-k-2}{32} - \frac{(n-k-2)^2}{32} - \frac{\de(c_0)}{\an}.
\end{split}
\end{equation*}
Using assumption 4.) of $(A_t)$ and $n-k-2 \geq 1$ we further obtain
\begin{equation*}
\begin{split}
D
&\geq
\frac{1}{4(n-1)} \left( \frac{1}{2} (n-1)(n-k-2)^2 \right) \\
&\quad
-\frac{(n-k-2)^2}{32} - \frac{(n-k-2)^2}{32} - \frac{\de(c_0)}{\an} \\
&=
\frac{(n-k-2)^2}{16} - \frac{\de(c_0)}{\an}.
\end{split}
\end{equation*}
Inserting this in \eqref{main3.A} we get
\begin{equation*}
\begin{split}
w''(t) w(t)
&\geq
- \frac{\de(c_0)}{\an}
\left( w'(t) - \frac{k}{2} \phi'(t) w(t) \right)^2 \\
&\quad
+ \left( \frac{(n-k-2)^2}{16} - \frac{\de(c_0)}{\an} \right) w(t)^2 \\
&\geq
- \frac{2\de(c_0)}{\an} w'(t)^2 \\
&\quad
+ \left(
- \frac{2\de(c_0)}{\an} \frac{k^2}{4} \phi'(t)^2
+ \frac{(n-k-2)^2}{16} - \frac{\de(c_0)}{\an}
\right) w(t)^2,
\end{split}
\end{equation*}
where we also used the elementary inequality
$(a-b)^2 \leq 2a^2 + 2b^2$. Again using assumption 3.) of $(A_t)$ we
conclude
\begin{equation} \label{main4.A}
\begin{split}
w''(t) w(t)
&\geq
- \frac{2\de(c_0)}{\an} w'(t)^2 \\
&\quad
+ \left(
\frac{(n-k-2)^2}{16}
- \frac{\de(c_0)}{\an} \left( 1 + \frac{k^2}{2} \right)
\right) w(t)^2.
\end{split}
\end{equation}
Fix a small positive number $\hat{\de}$.
Choose $c_0$ small so that
$\de(c_0)$ is also small. Then \eqref{main4.A} tells us that
\begin{equation} \label{main5.A}
w''(t) w(t)
\geq
\frac{(n-k-2)^2}{32} w(t)^2
- \hat{\de} w'(t)^2.
\end{equation}
Define $v(t) \definedas w(t)^{1+\hat{\de}}$. This function satisfies
\begin{equation*}
\begin{split}
v''(t)
&=
(1+\hat{\de}) w''(t) w(t)^{\hat{\de}}
+
\hat{\de} (1+\hat{\de}) w'(t)^2 w(t)^{\hat{\de}-1}\\
&\geq
(1+\hat{\de})\frac{(n-k-2)^2}{32} w(t)^{1+\hat{\de}} \\
&\geq
\frac{(n-k-2)^2}{32} v(t).
\end{split}
\end{equation*}

Next we assume that $(B_t)$ holds. Then \eqref{meancurvP} becomes
\[
H_t = - e(h_t),
\]
and from \eqref{eq.udtu} we get
\begin{equation} \label{eq.udtu.B}
w'(t)w(t)
=
\int_{F_t}
\left(
u (\partial_t u) + \frac{n-1}{2} e(h_t) u^2
\right)
\, dv^{g_t}.
\end{equation}
Differentiating this we get
\begin{equation*}
\begin{split}
w'(t)^2 + w''(t) w(t)
&=
\int_{F_t} \Bigg(
(\partial_t u)^2 + (n-1) e(h_t) u \partial_t u \\
&\qquad
+ \left(
\frac{(n-1)^2}{2} e(h_t)^2 + \frac{n-1}{2} \partial_t e(h_t)
\right) u^2
\Bigg) \, dv^{g_t} \\
&\quad
+ \int_{F_t}
\left( u \partial_t^2 u - (n-1) H_t u \partial_t u \right)
\, dv^{g_t}.
\end{split}
\end{equation*}
Next we use \eqref{eqconf1} followed by assumptions 2.) and 3.) of~$(B_t)$
to obtain
\begin{equation*}
\begin{split}
w'(t)^2 + w''(t) w(t)
&\geq
\int_{F_t} \Bigg(
(\partial_t u)^2 + (n-1) e(h_t) u \partial_t u \\
&\qquad
+ \left(
\frac{(n-1)^2}{2} e(h_t)^2 + \frac{n-1}{2} \partial_t e(h_t)
\right) u^2 \\
&\qquad
-\frac{\de(c_0)}{\an} (\pa_t u)^2
-\frac{\mu}{\an}  u^p
\Bigg) \, dv^{g_t} \\
&\quad
+
\frac{1}{\an} (S_t - \de(c_0)) w(t)^2 \\
&\geq
\int_{F_t} \left(
\left( 1 - \frac{\de(c_0)}{\an} \right) (\pa_t u)^2
+ (n-1) e(h_t) u \partial_t u
-\frac{\mu}{\an}  u^p
\right) \, dv^{g_t} \\
&\quad
+
\left( \frac{1}{2\an}(n-k-1)(n-k-2) - \frac{3}{64}(n-k-2)
- \frac{\de(c_0)}{\an} \right) w(t)^2.
\end{split}
\end{equation*}
{}From \eqref{upterm.A} we further get, using $k\leq n-3$ in the
last step,
\begin{equation} \label{main3.B}
\begin{split}
w'(t)^2 + w''(t) w(t)
&\geq
\int_{F_t} \left(
\left( 1 - \frac{\de(c_0)}{\an} \right) (\pa_t u)^2
+ (n-1) e(h_t) u \partial_t u
\right) \, dv^{g_t} \\
&\quad
+ \Bigg(
\frac{1}{2\an}(n-k-1)(n-k-2) - \frac{3}{64}(n-k-2) \\
&\qquad
- \frac{1}{32} (n-k-2)^2
- \frac{\de(c_0)}{\an} \Bigg) w(t)^2 \\
&\geq
\int_{F_t} \left(
\left( 1 - \frac{\de(c_0)}{\an} \right) (\pa_t u)^2
+ (n-1) e(h_t) u \partial_t u
\right) \, dv^{g_t} \\
&\quad
+ \left( \frac{1}{32} (n-k-2)(n-k-3/2)
- \frac{\de(c_0)}{\an} \right) w(t)^2 \\
&\geq
\int_{F_t} \left(
\left( 1 - \frac{\de(c_0)}{\an} \right) (\pa_t u)^2
+ (n-1) e(h_t) u \partial_t u
\right) \, dv^{g_t} \\
&\quad
+ \left( \frac{1}{32} (n-k-2)^2 + \frac{1}{64}
- \frac{\de(c_0)}{\an} \right) w(t)^2.
\end{split}
\end{equation}
We set $E_t \definedas \sup_{F_t} |e(h_t)|$ and use \eqref{eq.udtu.B}
to compute
\begin{equation*}
\begin{split}
w(t)^2 \int_{F_t} (\partial_t u)^2 \, dv^{g_t}
&\geq
\left( \int_{F_t} u (\partial_t u) \, dv^{g_t} \right)^2 \\
&=
\left(
w'(t)w(t) - \frac{n-1}{2} \int_{F_t} e(h_t) u^2 \, dv^{g_t}
\right)^2 \\
&=
\left( w'(t) w(t) \right)^2
+
\left(\frac{n-1}{2} \int_{F_t} e(h_t) u^2 \, dv^{g_t} \right)^2 \\
&\quad
- (n-1) w'(t) w(t) \int_{F_t} e(h_t) u^2 \, dv^{g_t} \\
&\geq
w'(t)^2 w(t)^2
- \left( \frac{n-1}{2} \right)^2 E_t^2 w(t)^4 \\
&\quad
- (n-1) |w'(t)| w(t) \int_{F_t} |e(h_t)| u^2 \, dv^{g_t} \\
&\geq
w'(t)^2 w(t)^2
- \left( \frac{n-1}{2} \right)^2 E_t^2 w(t)^4 \\
&\quad
- (n-1) E_t |w'(t)| w(t)^3.
\end{split}
\end{equation*}
Next we divide by $w(t)^2$ and obtain
\begin{equation} \label{dtu^2.B}
\begin{split}
\int_{F_t} (\partial_t u)^2 \, dv^{g_t}
&\geq
w'(t)^2
- \left( \frac{n-1}{2} \right)^2 E_t^2 w(t)^2
- (n-1) E_t |w'(t)| w(t) \\
&\geq
w'(t)^2
- \left( \frac{n-1}{2} \right)^2 E_t^2 w(t)^2
- \frac{n-1}{2} E_t
\left( w'(t)^2 + w(t)^2 \right) \\
&=
\left( 1 - \frac{n-1}{2} E_t \right) w'(t)^2
- \left(
\frac{n-1}{2} E_t + \left( \frac{n-1}{2} \right)^2 E_t^2
\right) w(t)^2.
\end{split}
\end{equation}
Also
\begin{equation*}
\begin{split}
\left|\int_{F_t} e(h_t) u \partial_t u \, dv^{g_t}\right|
&\leq
\int_{F_t} | e(h_t) u \partial_t u | \, dv^{g_t} \\
&\leq
E_t \int_{F_t} | u \partial_t u | \, dv^{g_t} \\
&\leq
\frac{1}{2} E_t
\int_{F_t} \left( u^2 + (\partial_t u)^2 \right) \, dv^{g_t},
\end{split}
\end{equation*}
so
\begin{equation} \label{eudtu.B}
\int_{F_t}
(n-1)e(h_t) u \partial_t u
\, dv^{g_t}
\geq
- \frac{n-1}{2} E_t
\int_{F_t} \left( u^2 + (\partial_t u)^2 \right) \, dv^{g_t}.
\end{equation}
Fix a small number $\hat{\de} > 0$. We insert \eqref{dtu^2.B} and
\eqref{eudtu.B} in \eqref{main3.B} and choose~$c_0$ small enough
so that $\de(c_0)$ and $E_t$ are small. Then we get that $w(t)$
satisfies the same inequality \eqref{main5.A} as we obtained under the
assumption~$(A_t)$. We have showed that in both cases $(A_t)$ and
$(B_t)$ the function $v(t) = w(t)^{1+\hat{\de}}$ satisfies
\[
v''(t)
\geq
v(t) / \ga^2
\]
since $\frac{32}{(n-k-2)^2} = \ga^2$.

Now we apply Lemma \ref{lem_intw} to the function
$\tilde{v}(t) \definedas v(t+ \frac{\beta + \al}{2})$ with
$T= \frac{\beta-\al}{2} - \ga$ and $m= \frac{2}{1+\hat{\de}}$.
{}From this we obtain
\begin{equation} \label{intvtilde}
\frac{\ga}{m}
\left( (\tilde{v}(T+\ga))^m + (\tilde{v}(-T-\ga))^m \right)
\geq
\int_{-T}^T \tilde{v}^m \, dt.
\end{equation}
With $s=t +\frac{\beta + \al}{2}$ we further have
\[
\int_{-T}^T \tilde{v}^m \, dt
=
\int_{\al + \ga}^{\be - \ga}
\left( w (s) \right)^{(1+\hat{\de})m} \, dt
=
\int_{\al + \ga}^{\be - \ga} w^2 \, ds.
\]
{}From the definition of $w$ we obtain
\[
\int_{-T}^T \tilde{v}^m \, dt
=
\int_{\pi^{-1} \left( (\al + \ga ,\be - \ga) \right) }
u^2 \, dv^{\gWS}.
\]
In addition, we have
\begin{equation*}
\begin{split}
\left( (\tilde{v}(T+\gamma))^m+(\tilde{v}(-T-\gamma))^m \right)
&=
\int_{F_{\al}} u^2 \, dv^{g_{\al}}
+  \int_{F_{\be} }u^2 \, dv^{g_{\be}} \\
&\leq
\| u \|_{L^\infty(P)}^2 \left( \Vol^{g_\al}(F_{\al })
+
\Vol^{g_\be}(F_{\be })  \right).
\end{split}
\end{equation*}
Choosing $\hat{\de}$ small we may assume $m \geq \sqrt{2}$. This
together with \eqref{intvtilde} and $\ga = \frac{\sqrt{32}}{n-k-2}$
gives us
\[
\int_{\pi^{-1} \left((\al + \ga ,\be - \ga)\right)}
u^2 \, dv^{\gWS}
\leq
\frac{4 \| u \|_{L^\infty}^2}{n-k-2}
\left( \Vol^{g_\al} ( F_{\al}) + \Vol^{g_\be} ( F_{\be }) \right).
\]
This proves Theorem \ref{theo.fibest}.
\end{proof}

%%%%%%%%%%%%%%%%%%%%%%%%%%%%%%%%%%%%%%%%%%%%%%%%%%%%%%%%%%%%%%%%%%%%%%%%
\section{Proof of Theorem~\ref{main.weak}}
%%%%%%%%%%%%%%%%%%%%%%%%%%%%%%%%%%%%%%%%%%%%%%%%%%%%%%%%%%%%%%%%%%%%%%%%

%%%%%%%%%%%%%%%%%%%%%%%%%%%%%%%%%%%%%%%%%%%%%%%%%%%%%%%%%%%%%%%%%%%%%%%%
\subsection{Stronger version of Theorem~\ref{main.weak}}
%%%%%%%%%%%%%%%%%%%%%%%%%%%%%%%%%%%%%%%%%%%%%%%%%%%%%%%%%%%%%%%%%%%%%%%%

In this section we prove the following Theorem \ref{main.strong}. By
taking the supremum over all conformal classes Theorem
\ref{main.strong} implies Theorem \ref{main.weak}.

\begin{theorem} \label{main.strong}
Suppose that $(M_1,g_1)$ and $(M_2,g_2)$ are compact Riemannian
manifolds of dimension~$n$.  Let $N$ be obtained from $M_1$, $M_2$, by
a connected sum along~$W$ as described in Section
\ref{joining_man}. Then there is a family of metrics $g_\th$,
$\th\in(0,\th_0)$ on $N$
satisfying
\begin{eqnarray*}
\min \left\{ \mu(M_1 \amalg M_2, g_1\amalg g_2 ), \La_{n,k} \right\}
&\leq &
\liminf_{\th\searrow 0} \mu(N,g_\th)\\
 &\leq &\limsup_{\th\searrow 0} \mu(N,g_\th)\\
 &\leq &\mu(M_1 \amalg M_2,g_1 \amalg g_2).
\end{eqnarray*}
\end{theorem}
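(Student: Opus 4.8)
The plan is: (1) construct the family $g_\th$ explicitly, with a long neck modelled on the spaces $\mH^{k+1}_c\times\mS^{n-k-1}$; (2) obtain the upper bound from a cut-off test function; (3) obtain the lower bound by contradiction, analysing where the $L^p$-mass of Yamabe minimizers on $(N,g_{\th})$ accumulates, the three possibilities being ruled out respectively by the characterization of $\mu(M_1\amalg M_2)$, by Lemma~\ref{limit_space=R^n}, and --- this is the crucial part --- by Theorem~\ref{theo.fibest} together with the positivity of $\La_{n,k}$ (Theorem~\ref{La>0}).

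\emph{Construction and upper bound.} One keeps $g_\th=g_1\amalg g_2$ on $M\setminus U(\th)$, and on the neck $U^N_{\ep(\th)}(\th)$, $\ep(\th)\searrow 0$, one performs a conformal deformation: near $W_i'$ the metric $g_i$ is $C^2$-close to the product $h_i+\xi^{n-k}$; multiplying by $r^{-2}$ (with $r$ the distance to $W_i'$) and substituting $t=-\ln r$ turns the flat normal factor $r^{-2}(dr^2+r^2\si^{n-k-1})$ into the cylinder $dt^2+\si^{n-k-1}$ while the tangential factor $h_i$ becomes $e^{2t}h_i$. Interpolating the warping across the two sides gives on the neck a metric which, up to perturbation terms that become $C^2$-small as $\th\searrow 0$, is a \WS-bundle metric $\gWS=dt^2+e^{2\ph(t)}h_t+\si^{n-k-1}$ over an interval of length $\to\infty$; one arranges that for every $t$ condition $(A_t)$ or $(B_t)$ holds, that the fibers at the two ends of that region have volume $\to 0$, and that on a central subinterval $\ph$ is affine of slope $c=c(\th)\in[-1,1]$ and $h_t$ is constant, so that this subinterval is a long piece of $\mH^{k+1}_c\times\mS^{n-k-1}$; moreover $g_\th\to g_1\amalg g_2$ in $C^2$ on compact subsets of $(M_1\amalg M_2)\setminus W'$. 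For the upper bound fix $\de>0$; since $W'$ has codimension $n-k\geq 3>2$ it has vanishing capacity, so there is a smooth $v$ compactly supported in $(M_1\amalg M_2)\setminus W'$ with $J^{g_1\amalg g_2}(v)<\mu(M_1\amalg M_2,g_1\amalg g_2)+\de$; for $\th$ small $\operatorname{supp}(v)$ lies where $g_\th=g_1\amalg g_2$, so $\mu(N,g_\th)\leq J^{g_\th}(v)<\mu(M_1\amalg M_2)+\de$. Letting $\th\searrow 0$ and then $\de\searrow 0$ yields $\limsup_{\th\searrow 0}\mu(N,g_\th)\leq\mu(M_1\amalg M_2)$; the middle inequality is trivial.

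\emph{Lower bound.} Suppose it fails, so for some sequence $\th_i\searrow 0$ one has $\mu_i\definedas\mu(N,g_{\th_i})\to\mu_\infty<\min\{\mu(M_1\amalg M_2),\La_{n,k}\}$. Since $\mu(M_1\amalg M_2)\leq\mu(\mS^n)$ by Theorem~\ref{aubin}, for $i$ large $\mu_i<\mu(\mS^n)$, and Theorem~\ref{attained} gives Yamabe minimizers $u_i>0$ with $L^{g_{\th_i}}u_i=\mu_iu_i^{p-1}$, $\|u_i\|_{L^p}=1$; set $m_i\definedas\|u_i\|_{L^\infty(N)}$ and pass to subsequences. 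First, if $m_i\to\infty$, choose $x_i$ with $u_i(x_i)\geq\tfrac12 m_i$; applying Lemma~\ref{diffeom} to the rescaled metrics $m_i^{4/(n-2)}g_{\th_i}$ (blow-up rate $m_i^{2/(n-2)}$) and then Lemma~\ref{lim_sol} to $m_i^{-1}u_i\circ\Theta_i$, one gets a non-negative $C^2$ solution $u$ of $L^{\xi^n}u=\mu_\infty u^{p-1}$ on $\mR^n$ with $u(0)\geq\tfrac12$ and, by \eqref{normlr_lim}, $\|u\|_{L^p(\mR^n)}\leq 1$; Lemma~\ref{limit_space=R^n} then forces $\mu_\infty\geq\mu(\mS^n)\geq\mu(M_1\amalg M_2)>\mu_\infty$, a contradiction. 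So $m_i$ is bounded. Second, suppose $\mu_im_i^{p-2}\leq\frac{(n-k-2)^2(n-1)}{8(n-2)}$ for $i$ large. Rewriting $L^{g_{\th_i}}u_i=\mu_iu_i^{p-1}$ in terms of $\gWS$ on the neck yields the perturbed equation \eqref{eqyamodif} with $C^2$-small coefficients, so Theorem~\ref{theo.fibest} applies on the deep part of the neck and bounds $\int u_i^2\,dv^{\gWS}$ there by $\tfrac{4m_i^2}{n-k-2}\bigl(\Vol(F_{\al_i})+\Vol(F_{\be_i})\bigr)\to 0$; since $u_i^p\leq m_i^{p-2}u_i^2$ with $m_i$ bounded, and the two transition annuli of the neck have total volume $\to 0$ while $u_i$ is uniformly bounded, the whole neck carries vanishing $L^p$-mass. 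Elliptic regularity and $C^2$-convergence of $g_{\th_i}$ on compact sets away from $W'$ then give $u_i\to u_\infty$ in $C^2$ on compact subsets of $(M_1\amalg M_2)\setminus W'$, with $u_\infty\geq 0$ solving $L^{g_1\amalg g_2}u_\infty=\mu_\infty u_\infty^{p-1}$; $u_\infty$ extends across $W'$ (a removable singularity, as $\dim W'\leq n-3$ and $u_\infty$ is bounded), and by the previous sentence $\|u_\infty\|_{L^p}=1$, so $\mu(M_1\amalg M_2)\leq J^{g_1\amalg g_2}(u_\infty)=\mu_\infty$, a contradiction.

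\emph{The neck case and the main obstacle.} There remains the case $\mu_im_i^{p-2}>\frac{(n-k-2)^2(n-1)}{8(n-2)}$ (with $m_i$ bounded, hence bounded away from $0$): now Theorem~\ref{theo.fibest} is unavailable and a definite fraction of the $L^p$-mass escapes into the neck. One translates the $u_i$ along the $t$-direction so that this mass is centered at a fixed parameter and, using Lemma~\ref{lim_sol}, passes to a non-negative, non-trivial, $L^\infty$, $L^p$-normalized limit solution of the Yamabe equation on $\mH^{k+1}_c\times\mS^{n-k-1}$ with $c=\lim c(\th_i)\in[-1,1]$ and eigenvalue $\mu_\infty$ (the tangential factor $e^{2\ph(t)}h_t$ converges to $e^{2ct}h$ for a fixed metric $h$ on the compact $W$; cutting off in $W$ near a point where $h$ is Euclidean-close and using the $L^2$- or $L^\infty$-bound to control the gradient error realizes $\mu_\infty$ as a limit of values of $J^{G_c}$ on large pieces of $\mH^{k+1}_c\times\mS^{n-k-1}$). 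This limit lies in $\Om^{(1)}(\mH^{k+1}_c\times\mS^{n-k-1})$ if it happens to be $L^2$, and otherwise in $\Om^{(2)}(\mH^{k+1}_c\times\mS^{n-k-1})$ because the reversed inequality \eqref{assumpmaxu} is inherited in the limit; either way $\mu_\infty\geq\La^{(i)}_{n,k}\geq\La_{n,k}$, contradicting $\mu_\infty<\La_{n,k}$. The main obstacle is precisely this last ingredient: building the neck so that $(A_t)$ or $(B_t)$ holds on all of it and the end-fiber volumes vanish --- which is what makes Theorem~\ref{theo.fibest} applicable and thereby rules out loss of $L^p$-mass through the neck --- and then showing that when mass does escape the translated solutions have a \emph{non-trivial} limit on a model space, with the delicate point, occurring precisely for $k=n-3\geq 3$ (cf.\ Remark~\ref{rem.twoone}), that this limit need not be $L^2$, which is exactly why the class $\Om^{(2)}$, the constant $\La^{(2)}_{n,k}$, and the hypothesis \eqref{assumpmaxu} were introduced.
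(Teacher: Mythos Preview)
Your overall strategy coincides with the paper's, and both the upper bound and the blow-up case $m_i\to\infty$ are handled correctly. The gap is in the bounded-$m_i$ analysis, specifically the claim that the end-fiber volumes of the \WS-bundle region can be arranged to tend to~$0$. This is false: at the ends of the neck one must match $r^{-2}g$, which forces $e^{2\phi}\sim r^{-2}$ with $r$ bounded below by the fixed outer cut-off radius, so $\Vol(F_{\al}),\Vol(F_{\be})$ are bounded below by a positive constant independent of~$\th$; moreover condition~$(A_t)$ requires $\phi''\leq 0$, so with $\phi'\approx 1$ at the boundary $\phi$ is increasing inward and smallest at the ends. Consequently Theorem~\ref{theo.fibest} only yields a \emph{bounded} neck $L^2$-norm --- this is precisely the content of Lemma~\ref{mbelow} --- not a vanishing one, and your conclusion in case~(a) that ``the whole neck carries vanishing $L^p$-mass'' does not follow.

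What is missing is a further split. The paper's dichotomy is on the \emph{neck} supremum rather than on the global $m_i$: Subcase~II.1 is that $\mu_\th\sup_{U^N_\ep(b)}u_\th^{p-2}$ falls below the threshold for some fixed $b>0$, which via Lemma~\ref{mbelow} gives the bounded $L^2$-norm; one then distinguishes (II.1.1) the deep-neck sup stays $\geq D_0>0$ --- center there, blow up the $W$-factor by $e^{f(t_\th)}$, and obtain a non-trivial limit on $\mH^{k+1}_c\times\mS^{n-k-1}$ that inherits the $L^2$-bound, hence lies in $\Om^{(1)}$ and gives $\bar\mu\geq\La^{(1)}_{n,k}$ --- from (II.1.2) the deep-neck sup tends to $0$, whence $\int_{\text{deep neck}}u_\th^p\leq(\sup u_\th)^{p-2}\cdot(\text{bounded }L^2)\to 0$ and a cut-off argument on $M$ gives $\bar\mu\geq\mu(M,g)$. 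Subcase~II.2 is the negation of~II.1, and centering at a deep-neck point where $\mu_\th u_\th^{p-2}$ exceeds the threshold produces a limit in $\Om^{(2)}$. Your two-way split misses~II.1.1 entirely, and in your case~(b) the assertion that the limit might land in $\Om^{(1)}$ is unsupported: the $L^2$-control that would put it there is available only under the neck-threshold hypothesis you have just excluded.
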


In the following we define suitable metrics $g_\th$, and then we show that
they satisfy these inequalities.

%%%%%%%%%%%%%%%%%%%%%%%%%%%%%%%%%%%%%%%%%%%%%%%%%%%%%%%%%%%%%%%%%%%%%%%%
\subsection{Definition of the metrics $g_\th$}
%%%%%%%%%%%%%%%%%%%%%%%%%%%%%%%%%%%%%%%%%%%%%%%%%%%%%%%%%%%%%%%%%%%%%%%%

We continue to use the notation of Section~\ref{joining_man}. In the
following, $C$ denotes a constant that might change its value between
lines. Recall that $(M,g) = (M_1 \amalg M_2, g_1 \amalg g_2)$. For
$i=1,2$ we define the metric $h_i$ as the restriction of $g_i$ to
$W_i' = w_i(W \times \{ 0 \})$, and we set
$h \definedas h_1 \amalg h_2$ on $W' =  W_1' \amalg W_2'$.
As already explained, the normal exponential map of $W'\subset M$
defines a diffeomorphism
\[
w_i: W \times B^{n-k}(\Rmax)
\to
U_i(\Rmax),\quad i=1,2,
\]
which decomposes $U(\Rmax) = U_1(\Rmax) \amalg U_2(\Rmax)$ as a
product $W' \times B^{n-k}(\Rmax)$.

%By possible shrinking $\Rmax$ we
%can and will assume without loss
%of generality, that the diffeomorphism $w_i$ extends to a diffeomorphism
%defined on  $W \times B^{n-k}(\Rmax+\alpha)$ for some $\alpha>0$, which will
%imply that several estimates below hold uniformly
%on $U(\Rmax)$, e.g.\ the estimate
%\eqref{normC0T}.

In general, the Riemannian metric~$g$ does not have a
corresponding product structure, and we introduce
an error term $T$ measuring the difference from the product metric. If
$r$ denotes the distance function to $W'$, then the metric $g$ can be
written as
\begin{equation} \label{metric=product}
g =  h + \xi^{n-k} + T = h + dr^2 + r^2 \sigma^{n-k-1} + T
\end{equation}
on $U(\Rmax)\setminus W'\cong W'\times (0,\Rmax)\times
S^{n-k-1}$. Here $T$ is a symmetric $(2,0)$-tensor vanishing on $W'$
(in the sense of sections of $(T^*M \otimes T^*M)|_{W'}$).
We also define the product metric
\begin{equation} \label{def.g'}
g' \definedas h + \xi^{n-k} = h + dr^2 + r^2 \si^{n-k-1},
\end{equation}
on $U(\Rmax) \setminus W'$. Thus $g = g' + T$. Since $T$ vanishes on
$W'$ we have for sufficiently small $r$
\begin{equation} \label{normC0T}
| T(X,Y) | \leq Cr | X |_{g'} | Y |_{g'}
\end{equation}
for any $X,Y \in T_x M$ where $x \in U(\Rmax)$. Since $T$ is smooth we
have for sufficiently small $r$
\[
|(\nabla_U T)(X,Y) |
\leq
C | X |_{g'} | Y |_{g'} | U|_{g'},
\]
and
\[
|(\nabla^2_{U,V}) T(X,Y) |
\leq
C | X |_{g'} | Y |_{g'} |U|_{g'}|V|_{g'},
\]
for $X,Y,U,V \in T_x M$. We define $T_i \definedas T|_{M_i}$ for
$i=1,2$.

For a fixed $R_0\in (0,\Rmax)$, $R_0<1$, and sufficiently small in the sense
of equation~\eqref{normC0T} and the following equations,
we choose a smooth positive function $F: M \setminus W' \to \mR$ such that
\[
F(x) =
\begin{cases}
1,
&\text{if $x \in M_i \setminus U_i(\Rmax)$;} \\
r_i(x)^{-1},
&\text{if $x \in U_i(R_0)\setminus W'$.}
\end{cases}
\]
Next we choose small numbers $\theta, \detwo \in (0,R_0)$ with
$\th> \detwo > 0$. Here ``small'' means that
%
%we first choose a
%sequence $\th=\th_j$ of positive numbers tending to zero, such
%that all following arguments hold for all $\th$.  Then
%
for a given small number $\theta$
we choose a number $\detwo = \detwo(\th) \in (0,\th)$ such that
all arguments that need $\detwo$ to be small will hold,
%Then, in a third step we choose  $\de_1 = \de_1(\th)\in (0,\detwo)$
% in a similar way,
see Figure~\ref{hier_var}.
%%%%%%%%%%%%%%%%%%%%%%%%%%%%%%%%%%%%%%%%%%%%%%%%%%
\begin{figure}
\begin{center}
$\framebox{\vbox{
{\sc\large Hierarchy of parameters}
\[
\Rmax> R_0 > \th> \detwo > \ep>0
\]
We choose parameters in the order
$\Rmax,R_0,\th,\detwo, A_{\th}$.
We then set
$\ep \definedas e^{-A_{\th}}\detwo$.\\
This implies $|t|=A_{\th}\Leftrightarrow r_i=\detwo$.
}}$

\caption{Hierarchy of parameters} \label{hier_var}
\end{center}
\end{figure}
%%%%%%%%%%%%%%%%%%%%%%%%%%%%%%%%%%%%%%%%%%%%%%%%%%
%
For any $\th>0$ and sufficiently small $\detwo$ there is
$A_\th \in (\th^{-1}, (\detwo)^{-1})$ and a family of smooth functions
$f= f_{\th, \detwo}: U(\Rmax) \to \mR$ depending only on the coordinate
$r$ such that

%
%%%%%%%%%%%%%%%%%%%%%%%%%%%%%%%%%%%%%%%%
\begin{figure}
\includegraphics[width=\textwidth]{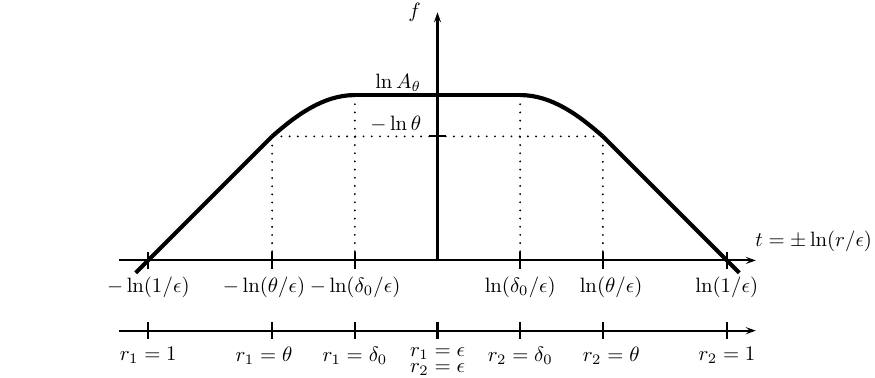}
\smallskip
\caption{The function $f$} \label{figure.f}
\end{figure}
%%%%%%%%%%%%%%%%%%%%%%%%%%%%%%%%%%%%%%%%
%
\[
f(x)  =
\begin{cases}
         -  \ln r(x),  &\text{if $x \in U(\Rmax) \setminus U(\th)$;} \\
\phantom{-} \ln A_\th, &\text{if $x \in U(\detwo)$,}
\end{cases}
\]
and such that
\begin{equation} \label{asumpf}
\left| r\frac{df}{dr} \right|
=
\left| \frac{df }{d(\ln r)} \right|
\leq 1,
\quad
\text{and}
\quad
\left\|r\frac{d}{dr}\left(r\frac{df}{dr}\right)\right\|_{L^\infty}
=
\left\|\frac{d^2f}{d^2(\ln r)}\right\|_{L^\infty}
\to 0
\end{equation}
as $\th\searrow 0$. See Figure~\ref{figure.f}.

We set $\ep = e^{-A_\th} \detwo$. We can and will assume that $\ep<1$.

Let $N$ be obtained from~$M$ by a connected sum along $W$
with parameter $\ep$, as described in Section~\ref{joining_man}.
In particular,
$U^N_\ep(s) = \left( U(s)\setminus U(\ep) \right) /{\sim}$ for all
$s \in [\ep,  \Rmax]$. On the set $U^N_\ep(\Rmax) =
\left( U(\Rmax) \setminus U(\ep) \right)/{\sim}$ we define
the variable $t$ by
\[
t \definedas
\begin{cases}
         -  \ln r_1 + \ln \ep, &
\text{on $U_1(\Rmax) \setminus U_1(\ep)$;} \\
\phantom{-} \ln r_2 - \ln \ep, &
\text{on $U_2(\Rmax) \setminus U_2(\ep)$.}
\end{cases}
\]
Note that $t \leq 0$ on $U_1(\Rmax) \setminus U_1(\ep)$ and $t \geq 0$
on $U_2(\Rmax) \setminus U_2(\ep)$, with $t=0$ precisely on the common
boundary $\pa U_1(\ep)$ identified with $\pa U_2(\ep)$ in $N$. It
follows that
\[
r_i = e^{|t|+ \ln \ep} = \ep e^{|t|}.
\]
We can arrange that
$t:U^N_\ep(\Rmax) \to \mR$  is smooth. Expressed in the variable $t$
we have
\[
F(x) = \ep^{-1}e^{-|t|}
\]
for $x \in U^N_\ep(R_0)$, or in other words if
$|t|+\ln \ep \leq \ln R_0$. Then Equation~\eqref{metric=product} tells
us that
\[
F^2 g =\ep^{-2} e^{-2|t|}(h+T) + dt^2 + \sigma^{n-k-1}
\]
on $U^N_\ep(R_0)$.
%%%%%%%%%%%%%%%%%%%%%%%%%%%%%%%%%%%%%%%%%%%%%%%%%
% The metrics g_\th Double picture
%%%%%%%%%%%%%%%%%%%%%%%%%%%%%%%%%%%%%%%%%%%%%%%%%
\begin{figure}
\begin{center}
\includegraphics[width=\textwidth]{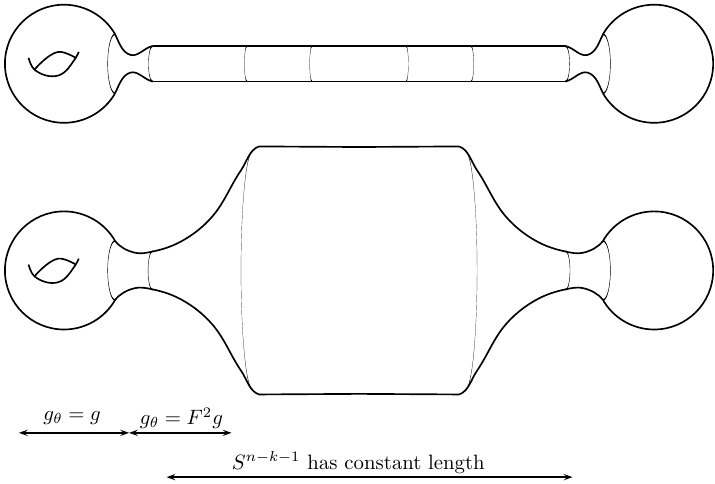}
\end{center}
\caption{The metrics $g_\th$. The horizontal direction in both drawings
corresponds to the $t$-variable. The vertical direction in the upper
drawing corresponds
to the projection to $S^{n-k-1}$, and in the lower drawing it corresponds
to the projection to $S^k$. In the lower drawing, the
curved parts close to the middle part are not drawn realistically.
Their curvature
tends to $0$ for $\th\to 0$, and the middle becomes huge in this limit, and
thus it would be too large for our picture.}\label{fig.gth}
\end{figure}
If we view $f$ as a function of $t$,
then
\[
f(t) =
\begin{cases}
-|t|-\ln\ep,
&\text{if $\ln\th- \ln \ep \leq |t| \leq \ln\Rmax - \ln\ep$;} \\
\ln A_\th,
&\text{if  $|t|\leq \ln\detwo-\ln\ep$;}
\end{cases}
\]
and $|df/dt|\leq 1$, $\|d^2f/dt^2\|_{L^\infty}\to 0$ as $\th$ tends to $0$.
We choose a cut-off function $\chi:\mR\to [0,1]$ such that $\chi=0$ on
$(-\infty,-1]$, $|d\chi| \leq 1$, and $\chi=1$ on $[1,\infty)$. With
these choices we define
\[
g_{\th}
\definedas
\begin{cases}
F^2 g_i,
&\text{on $M_i \setminus U_i(\th)$;} \\
e^{2f(t)}(h_i+T_i) + dt^2 + \sigma^{n-k-1},
&\text{on $U_i(\th)\setminus U_i(\detwo)$;} \\
\left.\begin{aligned}
&A_{\th}^2 \chi( t / A_{\th} )(h_2+T_2)\\
&+ A_{\th}^2 (1-\chi( t / A_{\th} ))(h_1+T_1)\\
&+ dt^2 + \sigma^{n-k-1},
\end{aligned}
\right\}
&\text{on $U^N_\ep(\detwo)$.}
\end{cases}
\]
On $U^N_\ep(R_0)$ we write $g_{\th}$ as
\[
g_\th= e^{2f(t)} \tilde{h}_t + dt^2 + \sigma^{n-k-1} + \widetilde{T}_t,
\]
where the metric $\tilde{h}_t$ is defined by
\[
\tilde{h}_t
\definedas
\chi(t / A_{\th}) h_2 + (1 - \chi(t / A_{\th})) h_1,
\]
for $t \in \mR$, and where the error term $\widetilde{T}_t$ is equal to
\[
\widetilde{T}_t
\definedas
e^{2f(t)}
\left(
\chi(t/A_{\th})T_2 + \left( 1 - \chi(t/A_{\th}) \right) T_1
\right).
\]
See also Figure~\ref{fig.gth}.
On $U^N_\ep(R_0)$ we also define the metric without error term
\begin{equation}\label{def.gth'}
g_\th'
\definedas
g_\th- \widetilde{T}_t
=
e^{2f(t)}\tilde{h}_t + dt^2 + \sigma^{n-k-1}.
\end{equation}
An upper bound for the error term $\ti T_t$ will be needed in the
following. We claim that
\begin{equation} \label{g'gth'}
|X|_{g'}
\leq
C e^{-f(t)} | X |_{g_\th'}
\end{equation}
for $X \in T_x N$, where $g'$ is the metric defined by \eqref{def.g'}.
To prove the claim, we decompose $X$ in a radial part, a part parallel
to $W'$, and a part parallel to $S^{n-k-1}$. This decomposition is
orthogonal with respect to both $g'$ and $g_\th'$. For
$X = \frac{\partial}{\partial t} =
\pm \ep e^{|t|} \frac{\partial}{\partial r} $ we have that
$1 = | X |_{g_\th'}$ and $| X |_{g'}= \ep e^{|t|} \leq e^{-f(t)}$
since $f(t) \leq -|t| - \ln \ep$. The argument is similar if $X$ is
parallel to $S^{n-k-1}$. If $X$ is tangent to $W'$, then
$|X|_g = |X|_h \leq C |X|_{\ti h_t} \leq C e^{-f(t)}|X|_{g_\th'}$,
and the claim follows.

The Relations \eqref{normC0T} and \eqref{g'gth'} imply
\begin{equation*}
\begin{split}
| \widetilde{T}_t(X,Y) |
&\leq
Ce^{2f(t)} |T(X,Y)| \\
&\leq
Ce^{2f(t)} r |X|_{g'} |Y|_{g'} \\
&\leq
C r | X |_{g_\th'}  | Y |_{g_\th'}
\end{split}
\end{equation*}
for all $X,Y$.
In other words this means
\begin{equation} \label{normC0Tt}
|\widetilde{T}_t |_{g_\th'} \leq Cr=C\ep e^{|t|}\leq Ce^{-f(t)}.
\end{equation}
Further, one can calculate that
\begin{equation} \label{normC1Tt}
|\nabla  \widetilde{T}_t |_{g_\th'} \leq Ce^{-f(t)},
\end{equation}
and
\begin{equation} \label{normC2Tt}
|\nabla^2  \widetilde{T}_t |_{g_\th'} \leq Ce^{-f(t)}.
\end{equation}
Here $\nabla$ denotes the Levi-Civita-connection with respect to
$g_\th'$.
In particular we see with Corollary \ref{cor.scal.diff}
\begin{equation} \label{scalgth}
|\Scal^{g_\th} - \Scal^{g_\th'}| \leq C e^{-f(t)}.
\end{equation}

%%%%%%%%%%%%%%%%%%%%%%%%%%%%%%%%%%%%%%%%%%%%%%%%%%%%%%%%%%%%%%%%%%%%%%%%
\subsection{Geometric description of the new metrics}
%%%%%%%%%%%%%%%%%%%%%%%%%%%%%%%%%%%%%%%%%%%%%%%%%%%%%%%%%%%%%%%%%%%%%%%%

In this subsection we collect some facts about the geometry of $F^2g$
and $g_\th'$ introduced in the previous subsection. Most of the
results are not needed for the proof of our result, but are useful to
understand the underlying geometric concept of the argument.
We will thus skip most of the proofs in this subsection.

The first proposition explains the special role of
$\mH^{k+1}\times \mS^{n-k-1}$.

\begin{proposition}
Let $x_i$ be a sequence of points in $M\setminus W$, converging to
$W$. Then the Riemann tensor of $F^2 g$ at $x_i$ converges to the
Riemann tensor of\/ $\mH^{k+1}\times \mS^{n-k-1}$. The covariant
derivative of the Riemann tensor of $F^2 g$ converges to zero. For any
fixed $R>0$ these convergences are uniform on balls (with respect to
the metric $F^2 g$) of radius $R$.  In particular, for any fixed $R>0$
the balls $(B^{F^2g}(x_i,R),x_i,F^2 g)$ converge to a ball of radius $R$
in $\mH^{k+1}\times \mS^{n-k-1}$ in the $C^{2,\al}$-topology of Riemannian
manifolds with base point.
\end{proposition}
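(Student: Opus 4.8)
The strategy is to work near $W'$, where by \eqref{metric=product} and \eqref{def.g'} the metric splits as $g = g' + T$ with $g' = h + dr^2 + r^2\si^{n-k-1}$ and $F = r^{-1}$, and to introduce the variable $t \definedas -\ln r$, so that $x_i \to W'$ is the same as $t(x_i)\to\infty$ and
\[
F^2 g' = e^{2t} h + dt^2 + \si^{n-k-1}
\]
on $W' \times (-\ln R_0,\infty) \times S^{n-k-1}$. Conceptually this explains why $\mH^{k+1}_1 \times \mS^{n-k-1}$ appears: the conformal factor $F = r^{-1}$ is, up to a smooth factor tending to $1$, the one that by Proposition~\ref{HconfS} sends a tubular neighbourhood of $W'$ to a neighbourhood of the cusp end of $\mH^{k+1}_1 \times \mS^{n-k-1}$, so the statement is a blow-up statement at $t=\infty$.

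First I would dispose of the rescaled error term $F^2 T = g - g'$ conformally rescaled. Since the $F^2 g'$-norm of $F^2 T$ equals the $g'$-norm of $T$, inequality \eqref{normC0T} gives $|F^2 T|_{F^2 g'} \leq Cr = Ce^{-t}$; and since $T$ is smooth on $U(R_0)$ and vanishes on $W'$, the stated bounds on the first and second $g'$-covariant derivatives of $T$ (and the local boundedness of the third, by smoothness of $g$) translate, under the conformal change by $F = e^t$ — whose logarithmic differential $dt$ has $F^2 g'$-norm one — into the fact that the first, second and third $F^2 g'$-covariant derivatives of $F^2 T$ are again $O(e^{-t})$. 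Thus on $U_i(R_0)$ the metrics $F^2 g$ and $F^2 g'$ are $C^3$-close, with all error contributions $O(e^{-t}) \to 0$.

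Next I would analyse $F^2 g'$ on its own. It is the Riemannian product of $\bar g \definedas e^{2t}h + dt^2$ on $W' \times \mR$ with the fixed round metric $\si^{n-k-1}$, while $G_1 = \eta^{k+1}_1 + \si^{n-k-1}$ is the product of $\eta^{k+1}_1 = e^{2t}\xi^k + dt^2$ with the same $\si^{n-k-1}$; so it suffices to show $\Riem^{\bar g} \to \Riem^{\mH^{k+1}_1}$ and $\na\Riem^{\bar g}\to 0$ as $t\to\infty$, uniformly on $\bar g$-balls of radius $R$. For this I would write $x_i = (p_i, t_i)$, choose $h$-normal coordinates $x^1,\dots,x^k$ on $W'$ centred at $p_i$, so $h_{ab}(x) = \de_{ab} + O(|x|^2)$ with all higher derivatives of $h_{ab}$ bounded uniformly in $p_i$ by compactness of $W'$, and rescale by $\hat x^a \definedas e^{t_i} x^a$, $s \definedas t - t_i$. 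In these coordinates
\[
\bar g = e^{2s}\, h_{ab}\!\left(e^{-t_i}\hat x\right)\, d\hat x^a\, d\hat x^b + ds^2 ,
\]
and a $\bar g$-ball of radius $R$ about $x_i$ lies in a fixed coordinate box in $(\hat x,s)$. As $t_i\to\infty$ the right-hand side converges in $C^\infty$ on that box to $e^{2s}\xi^k + ds^2 = \eta^{k+1}_1$, uniformly in $p_i$. Since curvature and its covariant derivative are continuous functions of the $2$- and $3$-jet of the metric, $\Riem^{\bar g} \to \Riem^{\mH^{k+1}_1}$ (constant sectional curvature $-1$) and $\na\Riem^{\bar g}\to 0$ uniformly on these balls; taking the product with $\si^{n-k-1}$ gives $\Riem^{F^2 g'} \to \Riem^{G_1}$ and $\na\Riem^{F^2 g'}\to 0$, the latter because $G_1$, being a product of spaces of constant curvature, has parallel curvature tensor.

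Finally, the $C^3$-closeness of $F^2 g$ to $F^2 g'$ from the second step, again by continuity of curvature in the metric, upgrades the convergence of the third step to $\Riem^{F^2 g}(x_i) \to \Riem^{G_1}$ and $\na\Riem^{F^2 g} \to 0$, uniformly on $B^{F^2 g}(x_i, R)$ (the $F^2 g$- and $F^2 g'$-balls being $(1+o(1))$-bi-Lipschitz comparable). The same ingredients — $C^\infty$ convergence of $F^2 g'$ in the rescaled coordinates plus a $C^2$-small perturbation — give the asserted $C^{2,\al}$ convergence of the pointed balls $(B^{F^2 g}(x_i, R), x_i, F^2 g)$ to a metric $R$-ball in $\mH^{k+1}_1 \times \mS^{n-k-1}$. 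I expect the only genuinely fiddly point to be the second step: one must confirm that the conformal blow-up by the singular factor $F = r^{-1}$ nevertheless leaves $T$ and its first few covariant derivatives uniformly of order $r$ in the rescaled geometry; everything else is the standard continuity of curvature under $C^\infty_{\mathrm{loc}}$ convergence of Riemannian metrics.
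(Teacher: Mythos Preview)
The paper does not prove this proposition: it sits in Subsection~7.3, where the authors explicitly write that ``most of the results are not needed for the proof of our result'' and ``we will thus skip most of the proofs in this subsection.'' So there is no paper proof to compare against.

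Your argument is correct and is exactly the natural one. Two remarks. First, the ``fiddly'' point you flag---that $F^2T$ together with its first couple of $F^2g'$-covariant derivatives is $O(e^{-t})$---is essentially what the paper records as \eqref{normC0Tt}--\eqref{normC2Tt}: on the region $r\geq\th$ one has $g_\th'=F^2g'$ and $\widetilde T_t=F^2T$ (your $t$ differs from the paper's by the harmless additive constant $\ln\ep$), so those estimates are precisely your claim, and the derivation of \eqref{normC0Tt} via \eqref{normC0T} and \eqref{g'gth'} is the same as your $|F^2T|_{F^2g'}=|T|_{g'}\leq Cr$. Second, your rescaled-normal-coordinate computation for $\bar g=e^{2t}h+dt^2$ is the same mechanism as in Lemma~\ref{diffeom}, specialised to $b_\al=e^{t_i}$; the paper uses that lemma repeatedly for the analogous limit statements later in the proof.
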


The $C^{2,\al}$-topology of
Riemannian manifolds with base point  has its origins
in Cheeger's finiteness theorem \cite{cheeger:70} and in the work of
Gromov \cite{gromov:81}, \cite{gromov:99}. The article by Petersen
\cite[Pages 167--202]{petersen:97} is a good introduction to the
subject.

In the limit $r\searrow 0$ (or equivalently $t\to \infty$) the
$W$-component of the metric $F^2 g$ grows exponentially. The
motivation for introducing the function $f$ into the definition of
$g_\th$ is to slow down this exponential growth: the diameter of the
$W$-component with respect to $g_\th$ is then bounded by
$A_\th\diam(W,g)$, where $\diam(W,g)$ is the diameter of $W$ with
respect to $g$. This slowing down has to be done carefully in order to
get nice limit spaces. The properties claimed for~$f$ imply the
following result.

\begin{proposition}
Let $\th_i$ be a sequence of positive numbers tending to zero, and let
$x_i\in U_\ep^N(\Rmax)$ be a sequence of points such that the limit $c
\definedas \lim (\frac{\pa}{\pa t} f)(t(x_i))$ exists. Then the
Riemann tensor of $g_{\th_i}$ at $x_i$ converges to the Riemann tensor
of $\mH^{k+1}_c\times \mS^{n-k-1}$. The covariant derivative of the
Riemann tensor of $g_{\th_i}$ converges to zero. For any fixed $R>0$ these
convergences are uniform on balls (with respect to the metric $g_{\th_i}$)
of radius~$R$. In particular,  for any fixed $R>0$ the balls
$(B^{g_{\th_i} }(x_i,R),x_i,g_{\th_i})$ converge to a ball of radius~$R$ in
$\mH^{k+1}_c \times \mS^{n-k-1}$ in the $C^{2,\al}$-topology of
Riemannian manifolds with base point.
\end{proposition}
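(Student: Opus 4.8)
The plan is to prove this proposition by a direct computation of the metric tensor near the base point, combined with the estimates already established for the function $f$ and the error term $\widetilde{T}_t$. The key observation is that on $U^N_\ep(\Rmax)$ the metric $g_{\th_i}$ has the explicit shape
\[
g_{\th_i} = e^{2f(t)}\tilde{h}_t + dt^2 + \sigma^{n-k-1} + \widetilde{T}_t,
\]
so that after an affine change of the $t$-coordinate centered at $t(x_i)$, the leading behaviour of $g_{\th_i}$ is governed entirely by the one-variable function $f$ near $t(x_i)$. First I would recall that, by \eqref{asumpf} written in the $t$-variable, $|df/dt| \leq 1$ and $\|d^2f/dt^2\|_{L^\infty} \to 0$ as $\th \searrow 0$. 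Hence if $c = \lim (\partial_t f)(t(x_i))$ exists, then on any fixed $t$-interval $[t(x_i)-R, t(x_i)+R]$ we have, after recentering $s \definedas t - t(x_i)$,
\[
f(t(x_i)+s) = f(t(x_i)) + c\,s + o(1)
\]
uniformly in $s \in [-R,R]$, with the $o(1)$ term (and its first two derivatives) tending to zero as $i \to \infty$. Absorbing the constant $f(t(x_i))$ into a rescaling of the $W$-directions, $e^{2f(t(x_i)+s)}\tilde{h}_{t(x_i)+s}$ converges in $C^2$ on $[-R,R]\times W$ to $e^{2cs}\tilde{h}_\infty$ for the appropriate limiting metric $\tilde{h}_\infty$ on $W$ (which is $h_1$, $h_2$, or an interpolation, depending on where the $x_i$ sit). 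After a further diffeomorphism pulling $(W, \tilde h_\infty)$ back to a coordinate chart, this is exactly the model metric $\eta^{k+1}_c = e^{2cs}\xi^k + ds^2$ of $\mH^{k+1}_c$.

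Next I would control the remaining pieces. The spherical factor $\sigma^{n-k-1}$ is literally constant and contributes the $\mS^{n-k-1}$ factor of the limit. The error term is handled by the bounds \eqref{normC0Tt}, \eqref{normC1Tt}, \eqref{normC2Tt}, namely $|\widetilde{T}_t|_{g_{\th}'}, |\nabla \widetilde{T}_t|_{g_\th'}, |\nabla^2 \widetilde{T}_t|_{g_\th'} \leq C e^{-f(t)}$. Since $f(t(x_i)) \geq \ln(\detwo/\ep)$ grows (indeed $f \geq \ln A_\th$ on $U^N_\ep(\detwo)$, and more generally $f(t) \geq -|t|-\ln\ep$ with $\ep \to 0$), we get $e^{-f(t(x_i))} \to 0$; combined with the $C^2$-closeness of $g_{\th_i}$ to $g_{\th_i}'$ this shows $\widetilde{T}_t$ together with its first two covariant derivatives tends to zero in $C^0$ on the recentered ball, uniformly. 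Therefore $g_{\th_i}$ converges in $C^2$ (on the coordinate ball) to $\eta^{k+1}_c + \sigma^{n-k-1} = G_c$, which is by definition the metric of $\mH^{k+1}_c \times \mS^{n-k-1}$. By the standard formulas for the Riemann curvature tensor and its covariant derivative in terms of the metric and its first two, resp.\ three, derivatives, $C^2$-convergence of the metrics (in fact one needs a uniform $C^3$-bound, supplied by the smoothness of $h_i$, $T_i$ and the explicit form of $f$, $\chi$) gives $C^0$-convergence of $\Riem^{g_{\th_i}}$ to $\Riem^{G_c}$ and of $\nabla\Riem^{g_{\th_i}}$ to $\nabla\Riem^{G_c} = 0$, since $G_c$ is a locally homogeneous product and hence has parallel curvature. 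Finally, invoking that the exponential maps at $x_i$ (with respect to $g_{\th_i}$) converge, one upgrades metric convergence to convergence of the pointed balls $(B^{g_{\th_i}}(x_i,R), x_i, g_{\th_i})$ in the $C^{2,\al}$-topology of pointed Riemannian manifolds, exactly as in the cited references \cite{cheeger:70,gromov:81,petersen:97}.

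The main obstacle I anticipate is making the passage from pointwise convergence of $(\partial_t f)(t(x_i))$ to genuine $C^2$-convergence of the full metric on a ball of fixed radius completely rigorous: one must check that the second-derivative control $\|d^2f/dt^2\|_{L^\infty}\to 0$ is strong enough to kill not just $\partial_t^2 f$ but all the mixed terms appearing when one expands $e^{2f(t)}\tilde h_t$ and differentiates, and that the $t$-dependence of $\tilde h_t = \chi(t/A_\th)h_2 + (1-\chi(t/A_\th))h_1$ does not spoil this — here one uses $\|d\chi\|\leq 1$, so $\partial_t \tilde h_t = A_\th^{-1}\chi'(t/A_\th)(h_2-h_1)$ is $O(A_\th^{-1}) = o(1)$, and similarly for the second derivative. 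Once these uniformities are verified on the compact $t$-interval corresponding to the fixed radius $R$, the rest is the routine translation of $C^2$ (with uniform $C^3$) metric convergence into curvature convergence and then into pointed $C^{2,\al}$-convergence, which is exactly the content of the standard compactness theory for Riemannian manifolds. Since the proposition is explicitly flagged in the text as motivational and the authors announce they will skip most proofs in this subsection, a proof at the level of this sketch, citing the uniform bounds on $f$, $\chi$, and $\widetilde T_t$ established above, should suffice.
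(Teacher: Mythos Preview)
The paper gives no proof of this proposition; it explicitly announces that the results of this subsection are motivational and that most proofs are skipped. Your sketch is correct and is essentially the argument the paper later carries out in detail for the closely related Lemma~\ref{lim_metric} (the $C^2$-convergence of $\Th_\th^*(g_\th)$ to $G_c$ in Subsubcase~II.1.1): the same ingredients appear there in the same order --- the Taylor-type control of $f$ near $t_\th$ coming from \eqref{asumpf}, the $O(A_\th^{-1})$ smallness of $\partial_t\tilde h_t$ and $\partial_t^2\tilde h_t$, and the $C^2$-decay of $\widetilde T_t$ from \eqref{normC0Tt}--\eqref{normC2Tt}. The one point worth flagging is your passing remark that a uniform $C^3$-bound is needed for the convergence of $\nabla\Riem$: the paper never states such a bound on $f$ explicitly (only $|df/dt|\le 1$ and $\|d^2f/dt^2\|_{L^\infty}\to 0$), so strictly speaking one must also arrange $\|d^3f/dt^3\|_{L^\infty}\to 0$ when constructing $f$, which is routine but not written down.
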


{}From this proposition it follows that the balls
$(B^{F^2g}(x_i,R),x_i,F^2 g)$ converge to a ball of radius $R$ in
$\mH_c^{k+1} \times \mS^{n-k-1}$ in the $C^{2,\al}$-topology of
Riemannian manifolds with base point. Thus, we get an explanation why
the spaces $\mH^{k+1}_c\times \mS^{n-k-1}$ appear as limit spaces.

The sectional curvature of $\mH^{k+1}_c$ is $-c^2$.  Hence the
sectional curvatures of the product $\mH^{k+1}_c\times \mS^{n-k-1}$
are in the interval $[-c^2,1]$. Using this fact we can prove the
following Proposition.

\begin{proposition}\label{scal.bdd}
The scalar curvatures of $g_\th$ and $g_\th'$ are bounded by a
constant independent of $\theta$.
\end{proposition}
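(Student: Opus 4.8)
The plan is to use the explicit description of the metrics $g_\th$ and $g_\th'$ given in Section~\ref{joining_man} and the preceding subsection, together with the two limit propositions just stated. First I would split $N$ into the three regions appearing in the definition of $g_\th$: the ``outer'' parts $M_i \setminus U_i(\th)$ where $g_\th = F^2 g_i$, the ``transition'' parts $U_i(\th) \setminus U_i(\detwo)$ where $g_\th = e^{2f(t)}(h_i+T_i) + dt^2 + \si^{n-k-1}$, and the ``middle'' part $U^N_\ep(\detwo)$. On each region I want a scalar curvature bound independent of $\th$, and then the result for $g_\th$ follows; the bound for $g_\th'$ is obtained by the same argument with $T$, $\widetilde T_t$ set to zero (which only simplifies matters), or alternatively one deduces it directly from the $g_\th$ bound via \eqref{scalgth}, since $e^{-f(t)} \leq A_\th^{-1} \leq \th$ is bounded.

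\textbf{The outer region.} On $M_i \setminus U_i(R_0)$ the metric $g_\th$ equals $g_i$ (since $F \equiv 1$ there), so $\Scal^{g_\th}$ is just $\Scal^{g_i}$, which is bounded by compactness of $M_i$ and is independent of $\th$. On the collar $U_i(R_0) \setminus U_i(\th)$ we have $g_\th = F^2 g_i = r_i^{-2} g_i$, a fixed conformal rescaling of the fixed metric $g_i$ by the fixed function $r_i^{-2}$, again with no $\th$-dependence. By the first limit proposition (the one describing $(B^{F^2 g}(x_i,R), x_i, F^2 g)$ converging to balls in $\mH^{k+1}_c \times \mS^{n-k-1}$ with sectional curvatures in $[-c^2,1] \subseteq [-1,1]$) together with the explicit formula $\scal^{G_c} = -k(k+1)c^2 + (n-k-1)(n-k-2)$, the scalar curvature of $F^2 g$ stays in a fixed bounded range as $r_i \searrow 0$; on any compact part of the collar bounded away from $W'$ it is bounded by continuity. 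Combining, $\Scal^{F^2 g}$ is bounded on all of $M_i \setminus W'$ by a constant not depending on $\th$ (indeed not depending on any parameter).

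\textbf{The transition and middle regions.} On $U_i(\th) \setminus U_i(\detwo)$ the metric is $g_\th = e^{2f(t)} h_t + dt^2 + \si^{n-k-1} + \widetilde T_t$ with $h_t = h_i$ fixed (here $f' = f'(t)$ satisfies $|f'| \leq 1$ and $\|f''\|_{L^\infty} \to 0$ by \eqref{asumpf}). Applying the scalar curvature formula \eqref{scalP} for the model metric $g_\th'$ — with $\phi = f$, $\pa_t h_t = 0$ — every term is controlled: $e^{-2f(t)}\scal^{h_i}$ is bounded since $e^{-2f(t)} \leq 1$ on $U(\Rmax)\setminus U(\th)$ and is $\leq A_\th^{-2}$ nearer the middle (in any case bounded), the constant $(n-k-1)(n-k-2)$ is fixed, $k(k+1)f'(t)^2 \leq k(k+1)$ by \eqref{asumpf}, and $2k f''(t) \to 0$; hence $\Scal^{g_\th'}$ is bounded independently of $\th$. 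The correction from the error term is handled by \eqref{scalgth}: $|\Scal^{g_\th} - \Scal^{g_\th'}| \leq C e^{-f(t)} \leq C$. In the middle region $U^N_\ep(\detwo)$ one argues the same way with $h_t = \tilde h_t = \chi(t/A_\th)h_2 + (1-\chi(t/A_\th))h_1$; now $\phi = f$ is constant ($= \ln A_\th$), so the $\phi'$ and $\phi''$ terms in \eqref{scalP} vanish, the leading term is $e^{-2\ln A_\th}\scal^{\tilde h_t} = A_\th^{-2}\scal^{\tilde h_t}$ which is bounded (in fact tends to $0$) since $\|\scal^{\tilde h_t}\|_{L^\infty}$ is bounded uniformly because $\partial_t(h_1,h_2)$ contributions carry factors $(d\chi/A_\th)$ and $A_\th \to \infty$ — all such terms in \eqref{scalP} are uniformly bounded — plus the fixed $(n-k-1)(n-k-2)$. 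Again \eqref{scalgth} absorbs the $\widetilde T_t$ correction. Collecting the three regions and taking the maximum of the three bounds gives the claim.

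\textbf{Main obstacle.} The only delicate point is the middle region: one must check that all the $t$-derivative terms of $\tilde h_t$ entering \eqref{scalP} — namely $\tr(\tilde h_t^{-1}\pa_t \tilde h_t)$, $\tr((\tilde h_t^{-1}\pa_t\tilde h_t)^2)$, and $\tr(\tilde h_t^{-1}\pa_t^2 \tilde h_t)$ — stay bounded as $\th \searrow 0$. Since $\pa_t \tilde h_t = A_\th^{-1}\chi'(t/A_\th)(h_2 - h_1)$ and $\pa_t^2 \tilde h_t = A_\th^{-2}\chi''(t/A_\th)(h_2-h_1)$, each such trace acquires a factor $A_\th^{-1}$ or $A_\th^{-2}$ which, since $A_\th \to \infty$, makes those contributions tend to zero — so they are certainly bounded. (One uses here that $h_1, h_2$ and $\chi$ are fixed, and that on the support of $\chi'$ the metrics $\tilde h_t$ are uniformly comparable to, say, $h_1$.) With this observation the proof is routine; I would state it briefly and refer to \eqref{scalP}, \eqref{asumpf}, \eqref{scalgth}, and the two limit Propositions above, skipping the elementary computations as the paper does in this subsection.
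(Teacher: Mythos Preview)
Your proposal is correct and follows essentially the same strategy as the paper: bound $\Scal^{g_\th'}$ via the explicit formula~\eqref{scalP} for a \WS-bundle, checking that all terms are uniformly controlled (the key point being your computation $\pa_t\tilde h_t = A_\th^{-1}\chi'(t/A_\th)(h_2-h_1)$, etc., which is exactly what the paper does), and then pass to $\Scal^{g_\th}$ via~\eqref{scalgth}. The paper's proof is terser: it treats the whole of $U^N_\ep(R_0)$ as a single \WS-bundle with $\phi=f$ and $h_t=\tilde h_t$, rather than splitting into your three regions, and it does not invoke the limit proposition for the outer part---outside $U^N_\ep(R_0)$ the metric $g_\th=F^2g$ is literally independent of~$\th$ and $g_\th'$ is not even defined there, so that region is taken as trivially handled. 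Your decomposition and your appeal to the first limit proposition are fine, but unnecessary: since the paper explicitly says those propositions are ``not needed for the proof of our result'' and are stated without proof, it is cleaner to argue as the paper does and apply~\eqref{scalP} directly on all of $U^N_\ep(R_0)$ (which already covers your outer collar $U_i(R_0)\setminus U_i(\th)$ with $\phi'=\pm1$, $\phi''=0$, $\partial_t h_t=0$).
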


\begin{proof}
The metric $g_\th'$ is the metric of a \WSk-bundle.  Hence
\eqref{scalP} is valid.  We calculate $\pa_t\ti h_t=(1/A_\th)
\chi'(t/A_\th)(h_2-h_1)$ and $\pa_t^2\ti h_t=(1/A_\th)^2
\chi''(t/A_\th)(h_2-h_1)$.  This implies
$|\tr^{\ti h_t}\pa_t \ti h_t|\leq C/A_\th$,
$|\tr (\ti h_t^{-1}\pa_t \ti h_t)^2 | \leq C/A_\th^2$,
and $|\tr^{\ti h_t}\pa_t^2 \ti h_t |\leq C/A_\th^2$.
{}From \eqref{scalP} it follows that $\scal^{g_\th'}$ is bounded.
Equation \eqref{scalgth} then implies that $\Scal^{g_\th}$ is bounded.
\end{proof}

The geometry close to the gluing of $M_1\setminus U_1(\ep)$ with
$M_2\setminus U_2(\ep)$ is described by the following simple
proposition.

\begin{proposition}
Let $H$ be the metric on $W\times (-1,1)$ given by $(\chi(t)h_2 +
(1-\chi(t))h_1) + dt^2$.  Then $(U^N_\ep(\detwo), g_\th')$ is isometric
to $(W\times (-1,1)\times S^{n-k-1}, A_\th^2 H +\si^{n-k-1})$.
\end{proposition}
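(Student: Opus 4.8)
The plan is to realize the isometry as a simple rescaling of the $t$\nobreakdash-coordinate, so that the proof reduces to substituting the defining formulas.

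First I would unwind what $g_\th'$ looks like on $U^N_\ep(\detwo)$. By the choice $\ep = e^{-A_\th}\detwo$ we have $r_i = \ep e^{|t|}$, so the condition $r_i < \detwo$ is equivalent to $|t| < A_\th$; hence the construction of the coordinate $t$ in Section~\ref{joining_man} identifies $U^N_\ep(\detwo)$ diffeomorphically with $W \times (-A_\th, A_\th) \times S^{n-k-1}$ (using $w_1$ for $t \le 0$ and $w_2$ for $t \ge 0$, glued along $t = 0$). On this region $f \equiv \ln A_\th$, so $e^{2f(t)} = A_\th^2$, and \eqref{def.gth'} becomes
\[
g_\th' = A_\th^2 \tilde{h}_t + dt^2 + \sigma^{n-k-1}, \qquad \tilde{h}_t = \chi(t/A_\th) h_2 + (1 - \chi(t/A_\th)) h_1 .
\]

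Then I would introduce the diffeomorphism $\Phi : W \times (-1,1) \times S^{n-k-1} \to U^N_\ep(\detwo)$ which is the identity on the $W$- and $S^{n-k-1}$-factors and sends the coordinate $s$ on $(-1,1)$ to $t = A_\th s$. Writing $s$ for the coordinate on $(-1,1)$, so that $H = \big(\chi(s) h_2 + (1-\chi(s)) h_1\big) + ds^2$, the pullback computation is immediate: since $\Phi$ is the identity on $W$ the $t$-dependent metric $\tilde{h}_t$ pulls back to $\chi(s) h_2 + (1-\chi(s)) h_1$, while $\Phi^*(dt^2) = A_\th^2\, ds^2$ and $\Phi^* \sigma^{n-k-1} = \sigma^{n-k-1}$, so that
\[
\Phi^* g_\th' = A_\th^2\big( \chi(s) h_2 + (1-\chi(s)) h_1\big) + A_\th^2\, ds^2 + \sigma^{n-k-1} = A_\th^2 H + \sigma^{n-k-1},
\]
which is precisely the claimed metric.

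There is no analytic difficulty here; the only point requiring a little care is the topological book-keeping in the first step --- namely that on $\partial U_1(\ep)$ identified with $\partial U_2(\ep)$ the gluing map $w_2 \circ w_1^{-1}$ acts as the identity in the product coordinates $(p,\omega) \in W \times S^{n-k-1}$, and that the function $t$ extends smoothly across $t = 0$, both of which were already arranged in Section~\ref{joining_man} and in the construction of the metrics $g_\th$. Everything else is a substitution of the defining formulas.
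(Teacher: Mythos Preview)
Your proof is correct; the paper explicitly skips the proofs in this subsection (``We will thus skip most of the proofs in this subsection''), so there is nothing to compare against. Your argument --- identifying $U^N_\ep(\detwo)$ with $\{|t|<A_\th\}$ via $\ep=e^{-A_\th}\detwo$, using $f\equiv\ln A_\th$ there, and then rescaling $t=A_\th s$ --- is exactly the natural one.
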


%%%%%%%%%%%%%%%%%%%%%%%%%%%%%%%%%%%%%%%%%%%%%%%%%%%%%%%%%%%%%%%%%%%%%%%%
\subsection{Proof of Theorem~\ref{main.strong}}
%%%%%%%%%%%%%%%%%%%%%%%%%%%%%%%%%%%%%%%%%%%%%%%%%%%%%%%%%%%%%%%%%%%%%%%%

The metrics $g_\th$ are defined for small $\th>0$ as described
above. In order to prove Theorem \ref{main.strong} it is sufficient to
prove
\[
\min\left\{\mu(M,g),\La_{n,k} \right\}
\leq
\lim_{i\to \infty}\mu(N,g_{\th_i})
\leq
\mu(M,g)
\]
for any sequence $\th_i \searrow 0$ as $i \to \infty$ for which
$\lim_{i \to \infty} \mu(N,g_{\th_i})$ exists. Recall that
$(M,g) = (M_1\amalg M_2, g_1\amalg g_2)$.

The upper bound on $\lim_{i\to \infty}\mu(N,g_{\th_i})$ is
easy to prove. The proof of the lower bound is more complicated,
our arguments for this part are inspired by the
compact\-ness-concentration principle in analysis, see for example
\cite{evans:90}. In the case of a concentration, we will use blow-up analysis
in order to construct a non-trivial
solution to the Yamabe equation on some limit space. Here we follow
and generalize a similar construction of a blow-up limit
in lecture notes by Schoen, see~\cite[Chapter~V.2]{schoen.yau:94}.\nobreak

For each metric $g_\th$ we have a solution of the Yamabe equation
\eqref{Yamabe_equation}. We take a sequence of $\th$ tending to
$0$. Following the compactness-concentration principle, this sequence
of solutions can concentrate in points or converge to a non-trivial
solution or do both at the same time. The concentration in points can
be used to construct a non-trivial solution on a sphere by blowing up
the metrics.

In our situation we may have concentration in a fixed point (subcase~I.1)
or in a wandering point (subcase I.2), and we may have
convergence to a non-trivial solution on the original manifold
(subcase II.1.2) or in the attached part (subcases II.1.1 and
II.2). In each of these cases we obtain a different lower bound
for $\lim_{i\to \infty}\mu(N,g_{\th_i})$: In the subcases~I.1 and I.2
the lower bound is $\mu(\mS^n)$, in subcase II.1.2 it is $\mu(M,g)$,
and in the subcases II.1.1 and II.2 we obtain $\La_{n,k}^{(1)}$ and
$\La_{n,k}^{(2)}$ as lower bounds. Together these cases give the lower
bound of Theorem~\ref{main.strong}.

The cases here are not exclusive. For example it is possible that the
solutions may both concentrate in a point and converge to a
non-trivial solution on the original manifold.

In our arguments we will often pass to subsequences. To avoid
complicated notation we write $\theta\searrow 0$ for a sequence
$(\th_i)_{i\in \mN}$ converging to zero, and we will pass successively
to subsequences without changing notation. Similarly $\lim_{\th\searrow 0}
h(\th)$ should be read as $\lim_{i\to \infty} h(\th_i)$.

We set $\mu \definedas \mu(M,g)$ and
$\mu_\th\definedas \mu(N,g_\th)$. From Theorem~\ref{aubin} we have
\begin{equation}\label{ineq.mu.aubin}
\mu, \mu_{\th} \leq \mu(\mS^n).
\end{equation}
After passing to a subsequence, the limit
\[
\bar{\mu}
\definedas
\lim_{\th\searrow 0} \mu_{\th}\in [-\infty,\mu(\mS^n)]
\]
exists. Let $J \definedas J^g$ and $J_\th\definedas J^{g_\th}$ be
defined as in \eqref{def.J^g}.

We start with the easier part of the argument, namely with
\begin{equation} \label{easypart}
\bar{\mu} \leq \mu.
\end{equation}
For this let $\al>0$ be a small number. We choose a smooth cut-off
function~$\chi_\al$ on~$M$ such that $\chi_\al = 1$ on
$M \setminus U(2 \al)$, $|d\chi_\al| \leq 2/\al$, and $\chi_\al = 0$
on $U(\al)$. Let~$u$ be a smooth non-zero function such that
$J(u) \leq \mu + \de$ where $\de$ is a small positive number. On the
support of $\chi_\al$ the metrics $g$ and $g_\th$ are conformal since
$g_\th= F^2 g$ and hence by \eqref{confJ} we have
\[
\mu_\th
\leq J_\th\left(\chi_\al F^{-\frac{n-2}{2}} u
\right)
= J(\chi_\al u)
\]
for $\th< \al$. It is straightforward to compute that
$\lim_{\al \searrow 0} J(\chi_\al u) = J(u) \leq \mu + \de$.
{}From this Relation \eqref{easypart} follows.

Now we turn to the more difficult part of the proof, namely the inequality
\begin{equation} \label{diffpart}
\bar{\mu} \geq \min \left\{ \mu, \La_{n,k} \right\}.
\end{equation}
In the case $\bar{\mu} = \mu(\mS^n)$ this inequality follows trivially from
\eqref{ineq.mu.aubin}. Hence we assume $\bar{\mu} < \mu(\mS^n)$
in the following, which implies $\mu_\th< \mu(\mS^n)$ if $\th$
is sufficiently small. From Theorem~\ref{attained} we know that there
exist positive functions $u_\th\in C^2(M)$ such that
\begin{equation} \label{eqa}
L^{g_{\th}} u_\th= \mu_\th u_\th^{p-1},
\end{equation}
and
\[
\int_N u_\th^p \, dv^{g_\th} = 1.
\]

We begin by proving a lemma that yields a bound of the $L^2$-norm
of $u_\th$ in terms of the $L^\infty$-norm. This result is non-trivial
since $\vol(N,g_\th) \to \infty$ as $\th\searrow 0$.

\begin{lemma} \label{mbelow}
Assume that there exists $b >0$ such that
\[
\mu_\th\sup_{U^N_\ep(b)} u_\th^{p-2}
\leq
\frac{(n-k-2)^2 (n-1)}{8(n-2)}
\]
for $\th$ small enough. Then there exist constants $c_1,c_2>0$
independent of $\th$ such that
\[
\int_N u_\th^2 \, dv^{g_\th}
\leq
c_1 \|u_\th\|_{L^\infty(N)}^2 + c_2
\]
for all sufficiently small $\th$.
In particular, if $\|u_\th\|_{L^\infty(N)}$ is bounded, so is $\|
u_\th\|_{L^2(N)}$.
\end{lemma}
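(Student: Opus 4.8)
The plan is to split $N$ into an \emph{exterior} part $N\setminus U^N_\ep(b)$ and the \emph{neck} $U^N_\ep(b)$, and to estimate $\int u_\th^2\,dv^{g_\th}$ on each separately: on the exterior the metric does not depend on $\th$ (once $\th$ is small), while on the neck Theorem~\ref{theo.fibest} applies. First note that the hypothesis for $U^N_\ep(b)$ trivially implies the same statement for every $b'\le b$ (if $\mu_\th\ge 0$ the supremum over a smaller set is smaller, and if $\mu_\th<0$ the inequality is automatic), so after shrinking $b$ we may assume $b\le R_0$ and $b$ small in the sense of the construction; throughout we also take $\th$ so small that $\ep<\detwo<b<\Rmax$ and $\th<b$. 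On the exterior, since $\th<b$ we have $g_\th=F^2g$ on $M_i\setminus U_i(\th)\supseteq\{r_i\ge b\}$, and $F$ is bounded on $M_i\setminus U_i(b)$ (it equals $r_i^{-1}\le b^{-1}$ on $U_i(R_0)$ and is a fixed smooth positive function elsewhere); hence $(N\setminus U^N_\ep(b),g_\th)$ is isometric to the fixed $\th$-independent manifold $(M_1\setminus U_1(b))\amalg(M_2\setminus U_2(b))$ with metric $F^2g$, of some finite volume $V_b$, and therefore $\int_{N\setminus U^N_\ep(b)}u_\th^2\,dv^{g_\th}\le V_b\,\|u_\th\|_{L^\infty(N)}^2$.

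On the neck, with $b\le R_0$ the set $U^N_\ep(b)$ is a \WS-bundle $P=I\times W\times S^{n-k-1}$, $I=(-L,L)$ with $L=\ln(b/\ep)$, carrying the metric $g_\th'=e^{2f(t)}\tilde h_t+dt^2+\si^{n-k-1}$ of the form~\eqref{gthform}, and $g_\th=g_\th'+\widetilde T_t$ is a perturbation of it; rewriting \eqref{eqa} with respect to $g_\th'$ yields a solution of the perturbed equation~\eqref{eqyamodif}. On $\{|t|\le A_\th\}$ the function $f$ is constant and condition $(B_t)$ holds, while on $\{|t|\ge A_\th\}$ the family $\tilde h_t$ is constant and condition $(A_t)$ holds; both follow from the properties built into the construction of $f$, $\tilde h_t$ and $\chi$ (in particular $|f'|\le 1$, $\|f''\|_{L^\infty}\to 0$, $\|\pa_t\tilde h_t\|=O(A_\th^{-1})$), together with the smallness of $R_0$ and $\th$. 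For the same reason the perturbation terms $A,X,s,\ep,e(h_t)$ all have $L^\infty$-norm $\le Ce^{-f(t)}\le Cb$ on $P$ by \eqref{normC0Tt}--\eqref{normC2Tt}, hence $\le c_0$. Finally, hypothesis \eqref{assumpmaxu} for this $P$ is exactly the assumption of the lemma, $\mu_\th\sup_{U^N_\ep(b)}u_\th^{p-2}\le\frac{(n-k-2)^2(n-1)}{8(n-2)}$.

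Now apply Theorem~\ref{theo.fibest} with $\al\definedas -L+\ga+1$ and $\be\definedas L-\ga-1$ (admissible since $2L\to\infty$ as $\th\searrow 0$), obtaining
\[
\int_{\pi^{-1}\left((-L+2\ga+1,\;L-2\ga-1)\right)}u_\th^2\,dv^{g_\th'}
\le
\frac{4\,\|u_\th\|_{L^\infty}^2}{n-k-2}\left(\Vol^{g_\al}(F_\al)+\Vol^{g_\be}(F_\be)\right).
\]
The decisive point is that $F_\al$ and $F_\be$ sit at $r_i=b\,e^{-\ga-1}$, where the fiber metric is $b^{-2}e^{2\ga+2}h_i+\si^{n-k-1}$, so the right-hand side is $\le C\|u_\th\|_{L^\infty}^2$ with $C$ depending only on $b,n,h_1,h_2$ and not on $\th$. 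The two leftover strips $\pi^{-1}\left((-L,-L+2\ga+1]\right)$ and $\pi^{-1}\left([L-2\ga-1,L)\right)$ have $t$-length $2\ga+1$ and lie over $r_i\in[b\,e^{-2\ga-1},b)$, where $e^{f(t)}=r_i^{-1}$ and $\tilde h_t\in\{h_1,h_2\}$ are bounded, so they have $\th$-independent bounded $g_\th'$-volume and the $L^2$-norm of $u_\th$ over them is again $\le C\|u_\th\|_{L^\infty}^2$. Since $|\widetilde T_t|_{g_\th'}\le Cb$ on $P$, the volume elements $dv^{g_\th}$ and $dv^{g_\th'}$ are comparable there, so all these bounds hold with $dv^{g_\th}$ as well. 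Adding the exterior and neck contributions gives $\int_Nu_\th^2\,dv^{g_\th}\le c_1\|u_\th\|_{L^\infty(N)}^2$ for a $\th$-independent $c_1$; a fortiori the inequality of the lemma holds with, say, $c_2=1$, and the final assertion about $L^2$-boundedness follows at once.

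I expect the main obstacle to be conceptual rather than computational: one must cut the neck at the fixed radius $r_i=b$, and not anywhere deeper inside it, since the fibers $F_t$ in the middle of the neck have volume comparable to $A_\th^{\,k}\to\infty$, in which case the right-hand side of Theorem~\ref{theo.fibest} would blow up with $\th$. Once the cut is placed at the fixed scale $r_i=b$, the verification that $(A_t)$ or $(B_t)$ holds pointwise along the neck and that the perturbation is $\le c_0$ is routine bookkeeping from the definition of $g_\th$.
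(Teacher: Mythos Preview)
Your proof is correct and follows essentially the same approach as the paper's: apply Theorem~\ref{theo.fibest} on the neck with the fibers $F_\al,F_\be$ placed at a fixed radius so that their volumes are $\th$-independent, and bound the complement separately. The only cosmetic differences are that the paper takes $\al,\be$ to be the actual endpoints $\mp\ln\tilde r+\ln\ep$ of the interval (so there are no leftover strips to handle), and on the complement $N\setminus P'$ it uses H\"older with the normalization $\|u_\th\|_{L^p}=1$ to produce the genuine constant $c_2=\Vol^{g_\th}(N\setminus P')^{2/n}$, whereas you bound everything by $\|u_\th\|_{L^\infty}^2$ and take $c_2$ vacuous---a slightly stronger conclusion.
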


\begin{proof}
Let $\ti r \in (0, b)$  be fixed and set $P = U(\ti r)$. Then $P$ is a
\WS-bundle where, with the notation of Section~\ref{wsbundles},
$I= (\al,\beta)$ with $\al = -\ln \ti r + \ln \ep$ and
$\beta = \ln \ti r - \ln \ep$. On $P$ we have two natural metrics:
$g_\th$ and $\gWS = g'_\th= g_\th- \widetilde{T}_t$.
The metric $\gWS$ has exactly the form \eqref{gthform} with $\phi = f$
and $h_t = \tilde{h}_t$. Let $\th$ be small enough and let
$t \in (-\ln \ti r + \ln \ep, -\ln \detwo  + \ln \ep )  \cup
(\ln \detwo - \ln \ep, \ln \ti r - \ln \ep )$.
Then assumption $(A_t)$ of Theorem \ref{theo.fibest} is true.
Now, again if $\th$ is small enough, we have for all
$t \in (-\ln \detwo + \ln \ep, \ln \detwo - \ln \ep)$ the relation
$\Scal^{\gWS} = \Scal^{\si^{n-k-1}} + O(1/A_\th)$.
The error term $e(\ti h_t)$ from $(B_t)$ in this case satisfies
\[
2(n-1)|e(\ti h_t)|
\leq
\left| \trace^{\ti h_t}\pa_t \ti h_t\right|
=
\left|\trace^{\ti h_t}
\left(\chi'(t/A_\th)\frac{h_2-h_1}{A_\th}\right) \right|
\leq
\frac{C}{A_\th},
\]
and
\[
2(n-1)|\pa_t e(\ti h_t)|
=
\left|\trace\left(
{\ti h_t}^{-1}(\pa_t \ti h_t){\ti h_t}^{-1}(\pa_t \ti h_t)
\right)\right|+\left|\trace^{\ti h_t} \pa_t^2 \ti h_t\right|
\leq \frac{C}{A_\th^2}.
\]
Because of $1/A_\th\leq \th$ condition $(B_t)$ is true.
Equation~\eqref{eqa} is written in the metric $g_\th$.
Using the expression of the Laplacian in local coordinates,
\[
\Delta^{g_\th} u
=
-\sum_{i,j} (\det g_\th)^{-1/2} \partial_{i}
\left(g_\th^{ij} (\det g_\th)^{1/2} \partial_{j} u \right),
\]
one can check that if we write Equation \eqref{eqa} in the metric
$\gWS$ we obtain an equation of the form \eqref{eqyamodif} with
$\mu = \mu_\th$. Together with \eqref{normC0Tt}, \eqref{normC1Tt} and
\eqref{scalgth}, one verifies that the error terms satisfy
\[
| A(x) |_{\gWS},
| X(x) |_{\gWS},
| s(x) |_{\gWS},
| \ep(x) |_{\gWS} \leq C e^{-f(t)},
\]
where $| \cdot |_{\gWS}$ denotes the pointwise norm at a point in
$U^N_\ep(R_0)$, and where $C$ is a constant independent of $\th$.  In
particular for any $c_0>0$, we obtain
\[
| A(x) |_{\gWS},
| X(x) |_{\gWS},
| s(x) |_{\gWS},
| e(\ti h_t)(x) |_{\gWS},
| \ep(x) |_{\gWS} \leq c_0
\]
on $U^N_\ep(\th)$ for small $\th$. These estimates allow us to apply
Theorem~\ref{theo.fibest}. By the assumptions of Lemma~\ref{mbelow},
if $\ti r \in (0,b)$ is small enough, Assumption~\eqref{assumpmaxu}
of Theorem~\ref{theo.fibest} is true.  Thus, all hypotheses of
Theorem~\ref{theo.fibest} hold for
$\al \definedas -\ln \ti r + \ln \ep$,
$\be \definedas \ln \ti r - \ln \ep$,
and hence
\[
\int_{P'} u_\th^2 \, dv^{\gWS}
\leq
\frac{4 \|u_\th\|_{L^\infty}^2}{n-k-2}
\left( \Vol^{g_\al}(F_{\al}) + \Vol^{g_\be}(F_{\be}) \right).
\]
where $P'\definedas U^N_\ep(\ti r e^{-\ga})$. Now observe that
\[
C
\definedas
\frac{4}{n-k-2}
\left( \Vol^{g_\al}(F_{\al}) + \Vol^{g_\be}(F_{\be}) \right)
\]
does not depend on $\th$ (since $F_{\al}$ and $F_{\be}$ correspond to
the hypersurface $r=\ti r$). This implies that
\[
\int_{P'} u_\th^2 \, dv^{\gWS} \leq C \| u_\th\|_{L^\infty(N)}^2
\]
where $C>0$ is independent of $\th$.
Since if $\ti r$ is small enough, we clearly have
\[
dv^{g_\th} \leq 2 dv^{\gWS},
\]
and we obtain that
\[
\int_{P'} u_\th^2 \, dv^{g_\th}
\leq
c_1 \| u_\th\|_{L^\infty(N)}^2
\]
where $c_1 \definedas 2C > 0$ is independent of $\th$. Now observe
that $\Vol^{g_\th} (N \setminus P')$ is bounded by a constant
independent of $\th$. Using the H\"older inequality we obtain
\begin{equation*}
\begin{split}
\int_{N} u_\th^2 \, dv^{g_\th}
&=
\int_{P'} u_\th^2 \, dv^{g_\th}
+ \int_{N\setminus P'} u_\th^2 \, dv^{g_\th} \\
&\leq
c_1 \| u_\th\|_{L^\infty(N)}^2
+ \Vol^{g_\th} (N  \setminus P')^{\frac{2}{n}}
{\left(\int_{N \setminus P'} u_\th^p \, dv \right)}^{\frac{n-2}{n}}.
\end{split}
\end{equation*}
Since $\|u_\th\|_{L^p(N)}=1$, this proves Lemma~\ref{mbelow} with
$c_1$ as defined above and with
$c_2 \definedas \Vol^{g_\th} (N  \setminus P')^{\frac{2}{n}}$.
For small $\th$, the metric $g_\th|_{N\setminus P'}$ is independent of
$\th$, and thus $c_2$ does not depend on $\th$.
\end{proof}

\begin{corollary}
\[
\liminf_{\th\searrow 0} \| u_\th\|_{L^{\infty}(N)}
>0.
\]
\end{corollary}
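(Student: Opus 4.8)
The plan is to argue by contradiction. Suppose the assertion fails, i.e.\ $\liminf_{\th\searrow 0}\|u_\th\|_{L^\infty(N)}=0$; then after passing to a further subsequence we have $\|u_\th\|_{L^\infty(N)}\to 0$ as $\th\searrow 0$. The first thing I would check is that this subsequence satisfies the hypothesis of Lemma~\ref{mbelow}. Indeed, fix any $b>0$. Since $u_\th\geq 0$ and $\mu_\th\leq \mu(\mS^n)$ by \eqref{ineq.mu.aubin}, we have
\[
\mu_\th\sup_{U^N_\ep(b)} u_\th^{p-2}
\leq
\mu(\mS^n)\,\|u_\th\|_{L^\infty(N)}^{p-2}
\xrightarrow[\th\searrow 0]{} 0,
\]
so the left-hand side is eventually below $\frac{(n-k-2)^2(n-1)}{8(n-2)}$. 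Hence Lemma~\ref{mbelow} applies and provides constants $c_1,c_2>0$, independent of $\th$, with
\[
\int_N u_\th^2\,dv^{g_\th}
\leq
c_1\|u_\th\|_{L^\infty(N)}^2+c_2
\]
for all sufficiently small $\th$.

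The second, complementary ingredient is a reverse estimate coming from the normalization $\|u_\th\|_{L^p(N)}=1$. Writing $u_\th^p=u_\th^{p-2}u_\th^2\leq \|u_\th\|_{L^\infty(N)}^{p-2}u_\th^2$ and integrating gives
\[
1=\int_N u_\th^p\,dv^{g_\th}
\leq
\|u_\th\|_{L^\infty(N)}^{p-2}\int_N u_\th^2\,dv^{g_\th},
\]
so that $\int_N u_\th^2\,dv^{g_\th}\geq \|u_\th\|_{L^\infty(N)}^{2-p}$. Note that $p=\frac{2n}{n-2}>2$ for every $n\geq 3$, so the exponent $2-p$ is negative.

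Combining the two displays yields
\[
\|u_\th\|_{L^\infty(N)}^{2-p}
\leq
c_1\|u_\th\|_{L^\infty(N)}^2+c_2 .
\]
Letting $\th\searrow 0$, the left-hand side tends to $+\infty$ (because $2-p<0$ and $\|u_\th\|_{L^\infty(N)}\to 0$), while the right-hand side stays bounded and in fact tends to $c_2<\infty$. This is the desired contradiction, and hence $\liminf_{\th\searrow 0}\|u_\th\|_{L^\infty(N)}>0$.

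I do not expect any genuine obstacle here: the argument is a short bootstrap of two inequalities, with all the substantive analytic work already contained in Lemma~\ref{mbelow} (which in turn rests on the $L^2$-estimate Theorem~\ref{theo.fibest} on \WS-bundles). The only point requiring a little care is the verification that the chosen subsequence meets the smallness hypothesis of Lemma~\ref{mbelow}, which is exactly where the universal upper bound $\mu_\th\leq\mu(\mS^n)$ of \eqref{ineq.mu.aubin} enters.
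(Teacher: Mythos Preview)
Your proof is correct and follows essentially the same approach as the paper: both argue by contradiction, invoke the bound $\mu_\th\leq\mu(\mS^n)$ to verify the hypothesis of Lemma~\ref{mbelow}, and then combine the resulting $L^2$-bound with the estimate $1=\int u_\th^p\leq\|u_\th\|_{L^\infty}^{p-2}\int u_\th^2$ to reach a contradiction. The only cosmetic difference is that the paper writes the final contradiction as $1\leq m_\th^{p-2}(c_1 m_\th^2+c_2)\to 0$, whereas you divide through to obtain $\|u_\th\|_{L^\infty}^{2-p}\leq c_1\|u_\th\|_{L^\infty}^2+c_2$ with the left side blowing up; these are the same inequality rearranged.
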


\begin{proof}
We set $m_\th\definedas \| u_\th\|_{L^{\infty}(N)}$. In order to prove the
corollary by contradiction we assume $\lim_{\th\searrow 0} m_\th= 0$.
Then since $\mu_\th\leq \mu(\mS^n)$ the assumption of
Lemma~\ref{mbelow} is satisfied for all $\th >0$ sufficiently small,
and for all $b>0$ for which $U^N_\ep(b)$ is defined. We get the
contradiction
\[
1
=
\int_N u_\th^p dv^{g_\th}
\leq
m_\th^{p-2} \int_N u_\th^2 dv^g
\leq
m_\th^{p-2}(c_1 m_\th^2+c_2)
\to 0
\]
as $\th\searrow 0$.
\end{proof}

\begin{corollary}
\[
\bar{\mu}
=
\lim_{\th\searrow 0} \mu_\th
>
-\infty.
\]
\end{corollary}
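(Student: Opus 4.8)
The plan is to deduce the lower bound on $\mu_\th=\mu(N,g_\th)$ directly from the maximum principle applied to the Yamabe equation \eqref{eqa}, feeding in the two uniform bounds already at our disposal: the non-vanishing $\liminf_{\th\searrow 0}\|u_\th\|_{L^\infty(N)}>0$ from the preceding corollary, and the $\th$-independent scalar curvature bound from Proposition~\ref{scal.bdd}.

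Concretely, first I would use that $N$ is compact and $u_\th\in C^2(N)$ is positive, so $m_\th \definedas \|u_\th\|_{L^\infty(N)}$ is attained at some point $x_\th\in N$. Since $\Delta^{g_\th}$ is the non-negative Laplacian, $\Delta^{g_\th}u_\th(x_\th)\ge 0$ at this interior maximum. Evaluating \eqref{eqa} at $x_\th$ and dropping the non-negative term $\an\Delta^{g_\th}u_\th(x_\th)$ gives $\Scal^{g_\th}(x_\th)\,m_\th\le \mu_\th m_\th^{p-1}$, hence after dividing by $m_\th^{p-1}>0$,
\[
\mu_\th \;\ge\; \Scal^{g_\th}(x_\th)\,m_\th^{2-p}.
\]

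Next I would insert the uniform bounds. Proposition~\ref{scal.bdd} provides a constant $C>0$, independent of $\th$, with $\Scal^{g_\th}\ge -C$ on $N$, while the preceding corollary provides $\de_0>0$ and $\th_0>0$ with $m_\th\ge\de_0$ for $\th\in(0,\th_0)$; since $p>2$ this yields $m_\th^{2-p}\le\de_0^{2-p}$. Combining these, $\mu_\th\ge\Scal^{g_\th}(x_\th)\,m_\th^{2-p}\ge -C m_\th^{2-p}\ge -C\de_0^{2-p}$ for all $\th\in(0,\th_0)$, and passing to the limit gives $\bar\mu=\lim_{\th\searrow 0}\mu_\th\ge -C\de_0^{2-p}>-\infty$. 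The argument is short; the only point deserving attention is that the pointwise estimate degenerates as $m_\th\to 0$, so it is essential here that $\|u_\th\|_{L^\infty(N)}$ stays bounded away from $0$ in the limit — precisely the content of the preceding corollary — and that the sign of the Laplacian in \eqref{eqa} is used consistently, so that indeed $\Delta^{g_\th}u_\th(x_\th)\ge 0$ at a maximum.
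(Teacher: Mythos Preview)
Your proof is correct and follows essentially the same route as the paper's own argument: evaluate the Yamabe equation at a maximum point of $u_\th$, use $\Delta^{g_\th}u_\th(x_\th)\ge 0$ there, and combine the uniform scalar curvature bound of Proposition~\ref{scal.bdd} with the strictly positive $\liminf$ of $\|u_\th\|_{L^\infty}$ from the preceding corollary. You have simply spelled out the details (dividing through by $m_\th^{p-1}$ and using $p>2$) that the paper leaves implicit.
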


\begin{proof}
Choose $x_\th$ as above. We then have $\Delta^{g_\th} u_\th(x_\th)
\geq 0$, which together with \eqref{eqa} gives us
\[
\Scal^{g_\th} (x_\th) \| u_\th\|_{L^{\infty}(N)}
\leq
\mu_\th  \| u_\th\|_{L^{\infty}(N)}^{p-1}.
\]
Proposition~\ref{scal.bdd} and the previous corollary then imply
that $\mu_\th$ is bounded from below.
\end{proof}

In addition, by Theorem~\ref{aubin}, $\mu_\th$ is bounded from above
by $\mu(\mS^n)$. It follows that $\bar{\mu} \in \mR$. The rest of the
proof is divided into cases.

\pagebreak[3]
%%%%%%%%%%%%%%%%%%%%%%%%%%%%%%%%%%%%%%%%%%%%%%%%%%%%%%%%%%%%%%
\begin{caseI}
$\limsup_{\th\searrow 0}\|u_\th\|_{L^\infty(N)} = \infty$.
\end{caseI}
%%%%%%%%%%%%%%%%%%%%%%%%%%%%%%%%%%%%%%%%%%%%%%%%%%%%%%%%%%%%%%

As before, we set $m_\th\definedas \|u_\th\|_{L^\infty(N)}$ and
choose $x_\th\in N$ with $u_\th(x_\th) = m_\th$. After again taking 
a subsequence, we can assume that $\lim_{\th\searrow 0} m_\th=\infty$.
We consider two subcases.

%%%%%%%%%%%%%%%%%%%%%%%%%%%%%%%%%%%%%%%%%%%%%%%%%%%%%%%%%%%%%%
\begin{subcaseI.1}
There exists $b>0$ such that $x_\th\in N \setminus U^N_\ep(b)$
for an infinite number of $\th$.
\end{subcaseI.1}
%%%%%%%%%%%%%%%%%%%%%%%%%%%%%%%%%%%%%%%%%%%%%%%%%%%%%%%%%%%%%%

We recall that $N_\ep \setminus U^N_\ep(b) =
M_1 \amalg M_2\setminus U(b)$.
By taking a subsequence we can assume
that there exists $\bar{x} \in M_1 \amalg M_2 \setminus U(b)$ such
that $\lim_{\th\searrow 0} x_\th= \bar{x}$. We let
$\tilde{g}_\th\definedas m_\th^{\frac{4}{n-2}} g_\th$.
In a neighborhood $U$ of $\bar{x}$ the metric $g_{\th}= F^2 g$ does
not depend on $\th$. We apply Lemma~\ref{diffeom} with $O=U$, $\al = \th$,
$q_\al = x_\th$, $q = \bar{x}$, $\ga_\al = g_\th=F^2 g$, and
$b_\al = m_\th^{\frac{2}{n-2}}$. Let $r>0$. For $\th$ small enough
Lemma~\ref{diffeom} gives us a diffeomorphism
\[
\Th_\th:
B^n(r)
\to
B^{g_\th} ( x_\th, m_\th^{-\frac{2}{n-2}} r)
\]
such that the sequence of metrics
$(\Th_\th^* (\tilde{g}_\th))$ tends to the flat
metric $\xi^n$ in $C^2(B^n(r))$. We let
$\tilde{u}_\th\definedas m_\th^{-1} u_\th$. By \eqref{confL} we then
have
\[
L^{\tilde{g}_\th} \tilde{u}_\th= \mu_\th{\tilde{u}_\th}^{p-1}
\]
on $B^{g_\th} ( x_\th, m_\th^{-\frac{2}{n-2}} r)$ and, using the fact that
$dv^{\tilde{g}_\th} = m_\th^p \,dv^{g_\th}$, we have
\begin{equation*}
\begin{split}
\int_ {B^{g_\th} ( x_\th, m_{\th}^{-\frac{2}{n-2}} r)}
{\tilde{u}_\th}^p \,dv^{\tilde{g}_\th}
&=
\int_{B^{g_\th} ( x_\th, m_\th^{-\frac{2}{n-2}} r)}
u_\th^p \,dv^{g_\th} \\
&\leq
\int_N u_\th^p dv^{g_\th} \\
&= 1.
\end{split}
\end{equation*}
Since
\[
\Th_\th:
(B^n(r), \Th_\th^* (\tilde{g}_\th))
\to
(B^{g_\th} ( x_\th, m_\th^{-\frac{2}{n-2}} r), \tilde{g}_\th)
\]
is an isometry we can consider $\tilde{u}_\th$ as a solution of
\[
L^{\Th_\th^*(\tilde{g}_\th)} \tilde{u}_\th
=
\mu_\th\tilde{u}_\th^{p-1}
\]
on $B^n(r)$ with
$\int_{B^n(r)} \tilde{u}_\th^p \, dv^{\Th_\th^*(\tilde{g}_\th)}
\leq 1$.
Since $\| \tilde{u}_\th\|_{L^\infty(B^n(r))}
= |\tilde{u}_\th(0)| = 1$ we can
apply Lemma~\ref{lim_sol} with $V = \mR^n$, $\al=\th$,
$g_\al = \Th_\th^*(\tilde{g}_\th)$, and $u_\al = \tilde{u}_\th$
(we can apply this lemma since each compact set of $\mR^n$
is contained in some ball $B^n(r)$). This shows that there
exists a non-negative function $u \not\equiv 0$ (since
$u(0)=1$) of class $C^2$ on $(\mR^n,\xi^n)$ that satisfies
\[
L^{\xi^n} u
=
\an  \Delta^{\xi^n} u
=
\bar{\mu} u^{p-1}.
\]
By \eqref{normlr_lim} we further have
\[
\int_{ B^n(r)} u^p \, dv^{\xi^n}
=
\lim_{\th\searrow 0} \int_{ B^{g_\th}
( x_\th, m_\th^{-\frac{2}{n-2}} r)} u_\th^p
\,dv^{g_\th}
\leq 1
\]
for any $r>0$. In particular,
\[
\int_{\mR^n} u^p  \, dv^{\xi^n}  \leq 1.
\]
{}From Lemma \ref{limit_space=R^n}, we get that
$\bar{\mu} \geq \mu(\mS^n) \geq \min \{ \mu, \La_{n,k} \}$. We have
proved Inequality~\eqref{diffpart} in this subcase.

%%%%%%%%%%%%%%%%%%%%%%%%%%%%%%%%%%%%%%%%%%%%%%%%%%%%%%%%%%%%%%
\begin{subcaseI.2}
For all $b>0$ it holds that $x_\th\in U^N_\ep(b)$ for $\th$ sufficiently
small.
\end{subcaseI.2}
%%%%%%%%%%%%%%%%%%%%%%%%%%%%%%%%%%%%%%%%%%%%%%%%%%%%%%%%%%%%%%

The subset $U^N_\ep(b)$ is diffeomorphic to $W \times I \times S^{n-k-1}$
where $I$ is an interval. We identify
\[
x_\th= (y_\th, t_\th, z_\th)
\]
where $y_\th\in W$,
$t_\th\in (-\ln R_0  + \ln \ep, -\ln \ep + \ln R_0 )$, and
$z_\th\in S^{n-k-1}$. By taking a subsequence we can assume that
$y_\th$, $\frac{t_\th}{A_{\th}}$, and $z_\th$ converge respectively to
$y \in W$, $T \in [-\infty, +\infty]$, and $z \in S^{n-k-1}$.
First we apply Lemma~\ref{diffeom} with
$V = W$, $\al=\th$, $q_\al= y_\th$, $q=y$, $\ga_\al= \tilde{h}_{t_\th}$,
$\ga_0 = \tilde{h}_T$ (we define $\tilde{h}_{-\infty} = h_1$ and
$\tilde{h}_{+\infty} = h_2$), and
$b_\al = m_\th^{\frac{2}{n-2}} e^{f(t_\th)}$. The lemma provides
diffeomorphisms
\[
\Th_\th^y :
B^k(r)
\to
B^{ \tilde{h}_{t_\th}}
(y_\th,  m_\th^{- {\frac{2}{n-2}}} e^{-f(t_\th)} r)
\]
for $r>0$ such that $(\Th_\th^y)^* (m_\th^{\frac{4}{n-2}}
e^{2f(t_\th)} \tilde{h}_{t_\th})$ tends to the flat metric $\xi^k$
on $B^k(r)$ as $\th\searrow 0$. Second we apply Lemma~\ref{diffeom}
with $V= S^{n-k-1}$, $\al = \th$, $q_\al = z_\th$,
$\ga_\al = \ga_0 = \sigma^{n-k-1}$, and
$b_\al = m_\th^\frac{2}{n-2}$. For $r'>0$ we get diffeomorphisms
\[
\Th_\th^z :
B^{n-k-1}(r')
\to
B^{\sigma^{n-k-1}}(z_\th, m_\th^{-\frac{2}{n-2}} r')
\]
such that
$(\Th_\th^z)^* (m_\th^{\frac{4}{n-2}} \sigma^{n-k-1})$ converges
to $\xi^{n-k-1}$ on $B^{n-k-1}(r')$ as $\th\searrow 0$. For $r,r',r''>0$
we define
\begin{equation*}
\begin{split}
U_\th(r,r',r'')
&\definedas
B^{\tilde{h}_{t_\th}}
(y_\th,  m_\th^{-\frac{2}{n-2}} e^{-f(t_\th)} r)
\times
[t_\th- m_\th^{-\frac{2}{n-2}} r'',
t_\th+ m_\th^{-\frac{2}{n-2}} r''] \\
&\qquad
\times  B^{\sigma^{n-k-1}}(z_\th, m_\th^{-\frac{2}{n-2}} r'),
\end{split}
\end{equation*}
and
\[
\Th_\th:
B^k(r) \times [-r'',r''] \times B^{n-k-1}(r')
\to U_\th(r,r',r'')
\]
by
\[
\Th_\th(y,s,z) \definedas ( \Th_\th^y (y), t(s), \Th_\th^z(z) ),
\]
where $t(s) \definedas t_\th+ m_\th^{\frac{2}{n-2}} s$. By
construction $\Th_\th$ is a diffeomorphism, and we see that
\begin{equation} \label{diffe}
\begin{split}
\Th_\th^* (m_\th^\frac{4}{n-2} g_\th)
&=
(\Th_\th^y)^* (m_\th^\frac{4}{n-2} e^{2f(t)}  \tilde{h}_{t})
+ ds^2 \\
&\quad
+
(\Th_\th^z)^*(m_\th^\frac{4}{n-2} \sigma^{n-k-1})
+ \Th_\th^*(m_\th^\frac{4}{n-2} \widetilde{T}_t).
\end{split}
\end{equation}
Next we study the first term on the right-hand side of
\eqref{diffe}. Note that it is here evaluated at $t$, while we have
information above when evaluated at $t_{\th}$. By construction of
$f(t)$ one can verify that
\[
\lim_{\th\searrow 0}
\left\|
\frac{e^{f(t_\th)}}{e^{f(t)}} - 1
\right\|_{C^2( [t_\th- m_\th^{-\frac{2}{n-2}} r'',
t_\th+ m_\th^{-\frac{2}{n-2}} r''])}
= 0
\]
since $\frac{df}{dt}$ and $\frac{d^2f }{dt^2}$ are uniformly
bounded. Moreover it is clear that
\[
\lim_{\th\searrow 0}
\left\|
\tilde{h}_{t} - \tilde{h}_{t_\th}
\right\|_{C^2( B^{\tilde{h}_{t_\th}}
(y_\th, m_\th^{-\frac{2}{n-2}} e^{-f(t_\th)} r))}
= 0
\]
uniformly in $t \in [t_\th- m_\th^{-\frac{2}{n-2}} r'',
t_\th+ m_\th^{-\frac{2}{n-2}} r'']$. As a consequence
\[
\lim_{\th\searrow  0}
\left\|
(\Th_\th^y)^*
\left( m_\th^{\frac{4}{n-2}} \left(
e^{2f(t)} \tilde{h}_{t} - e^{2f(t_\th)} \tilde{h}_{t_\th}
\right) \right)
\right\|_{C^2(B^k(r))}
=0
\]
uniformly for $t \in [t_\th- m_\th^{-\frac{2}{n-2}} r'',
t_\th+ m_\th^{-\frac{2}{n-2}} r'']$. This implies that the sequence
$(\Th_\th^y)^* (m_\th^{\frac{4}{n-2}} e^{2f(t)} \tilde{h}_{t})$
tends to the flat metric $\xi^k$ in $C^2(B^k(r))$ uniformly in
$t$ as $\th\searrow 0$. Further, we also know that the sequence
$(\Th_\th^z)^* (m_\th^{\frac{4}{n-2}} \sigma^{n-k-1})$ tends to
$\xi^{n-k-1}$ in $C^2(B^{n-k-1}(r'))$ as $\th\searrow 0$.
Recalling from \eqref{def.gth'} that $g_\th' =  g_\th- \widetilde{T}_t$,
we have proved that $\Th_\th^*(m_{\th}^{\frac{4}{n-2}} g_\th')$ tends to
the flat metric in $C^2(B^k(r) \times [-r'',r''] \times B^{n-k-1}(r'))$.
Finally we are going to show that the last term of \eqref{diffe} tends
to zero in $C^2$. It follows from \eqref{normC0Tt} that
\begin{equation} \label{tto0}
\lim_{\th\searrow 0}
\left\|
\Th_\th^*(m_\th^\frac{4}{n-2} \widetilde{T}_t)
\right\|_{C^2(B^k(r) \times [-r'',r''] \times B^{n-k-1}(r'))}
= 0.
\end{equation}
Indeed, \eqref{normC0Tt} tells us that
\begin{equation*}
\begin{split}
\left| \Th_\th^*(m_\th^\frac{4}{n-2} \widetilde{T}_t) (X,Y) \right|
& =
m_\th^\frac{4}{n-2}
\left| \widetilde{T}_t({\Th_\th}_*(X), {\Th_\th}_*(Y)) \right| \\
&\leq
C r m_\th^\frac{4}{n-2}
| {\Th_\th}_*(X) |_{g_\th'}
| {\Th_\th}_*(Y) |_{g_\th'} \\
&\leq
Cr
| X |_{\Th_\th^*(m_{\th}^{\frac{4}{n-2}} g_\th')}
| X |_{\Th_\th^*(m_{\th}^{\frac{4}{n-2}} g_\th')},
\end{split}
\end{equation*}
and since $\Th_\th^*(m_{\th}^{\frac{4}{n-2}}g_\th')$ tends to the
flat metric we get \eqref{tto0}. Doing the same with
$\nabla \widetilde{T}_t$ and $\nabla^2\widetilde{T}_t$ using
\eqref{normC1Tt} and \eqref{normC2Tt}, we obtain that
\begin{equation}\label{tto0c2}
\lim_{\th\searrow 0} \Th_\th^*(m_\th^\frac{4}{n-2} \widetilde{T}_t) = 0
\end{equation}
in $C^2(B^k(r) \times [-r'',r''] \times B^{n-k-1}(r'))$. Returning to
\eqref{diffe} we see that the sequence
$\Th_\th^* (m_\th^{\frac{4}{n-2}} g_\th)$ tends to
$\xi^n = \xi^k + ds^2 + \xi^{n-k-1}$ on
$B^k(r) \times [-r'',r''] \times B^{n-k-1}(r')$.
We proceed as in Subcase~I.1 to show that
$\bar{\mu} \geq \mu(\mS^n) \geq \min \{\mu, \La_{n,k} \}$,
which proves Relation~\eqref{diffpart} in the present subcase. This
ends the proof of Theorem~\ref{main.strong} in Case I.

\pagebreak[2]
%%%%%%%%%%%%%%%%%%%%%%%%%%%%%%%%%%%%%%%%%%%%%%%%%%%%%%%%%%%%%%
\begin{caseII}
There exists a constant $C_1$ such that
$\| u_\th\|_{L^{\infty}(N)}\leq C_1$ for all $\th$.
\end{caseII}
%%%%%%%%%%%%%%%%%%%%%%%%%%%%%%%%%%%%%%%%%%%%%%%%%%%%%%%%%%%%%%

As in Case I we consider two subcases.

%%%%%%%%%%%%%%%%%%%%%%%%%%%%%%%%%%%%%%%%%%%%%%%%%%%%%%%%%%%%%%
\begin{subcaseII.1}
There exists $b > 0$ such that
\[
\liminf_{\th\searrow 0}
\left( \mu_\th\sup_{ U^N_\ep(b) } u_\th^{p-2} \right)
<
\frac{(n-k-2)^2(n-1)}{8(n-2)}.
\]
\end{subcaseII.1}
%%%%%%%%%%%%%%%%%%%%%%%%%%%%%%%%%%%%%%%%%%%%%%%%%%%%%%%%%%%%%%

By restricting to a subsequence we can assume that
\[
\mu_\th  \sup_{ U^N_\ep(b) } u_\th^{p-2}
<
\frac{(n-k-2)^2(n-1)}{8(n-2)}
\]
for all $\th$. Lemma \ref{mbelow} tells us that there is a constant
$A_0 > 0$ such that
\begin{equation} \label{uboundedl2}
\| u_\th\|_{L^2(N,g_\th)} \leq A_0.
\end{equation}
We split the treatment of Subcase II.1. into two subsubcases.

%%%%%%%%%%%%%%%%%%%%%%%%%%%%%%%%%%%%%%%%%%%%%%%%%%%%%%%%%%%%%%
\begin{subsubcaseII.1.1}
$\limsup_{b\searrow 0} \limsup_{\th\searrow 0} \sup_{U^N_\ep(b)} u_\th> 0$.
\end{subsubcaseII.1.1}
%%%%%%%%%%%%%%%%%%%%%%%%%%%%%%%%%%%%%%%%%%%%%%%%%%%%%%%%%%%%%%

We set $D_0 \definedas \frac{1}{2} \limsup_{b\searrow 0}
\limsup_{\th\searrow 0} \sup_{U^N_\ep(b)} u_\th> 0$. Then there are sequences
$(b_i)$ and $(\th_i)$ of positive numbers converging to $0$ such that
\[
\sup_{U^N_\ep(b_i)} u_{\th_i} \geq D_0,
\]
for all $i$. For brevity of notation we write $\th$ for $\th_i$ and
$b_\th$ for $b_i$. Let $x_\th' \in \overline{U^N_\ep(b_\th)}$ be such that
\begin{equation} \label{norminfbound}
u_\th(x_\th') \geq D_0.
\end{equation}
As in Subcase~I.2 above we write $x_\th' = (y_\th, t_\th, z_\th)$
where $y_\th\in W$,
$t_\th\in (-\ln R_0 + \ln \ep, -\ln \ep + \ln R_0)$, and
$z_\th\in S^{n-k-1}$.
By restricting to a subsequence we can assume that $y_\th$,
$\frac{t_\th}{A_{\th}}$, and $z_\th$ converge respectively
to $y \in W$, $T \in [-\infty, +\infty]$, and $z \in S^{n-k-1}$.
We apply Lemma~\ref{diffeom} with $V = W$, $\al = \th$,
$q_\al = y_\th$, $q=y$, $\ga_\al= \tilde{h}_{t_\th}$,
$\ga_0 = \tilde{h}_T$, and $b_\al = e^{f(t_\th)}$ and conclude that
there is a diffeomorphism
\[
\Th_\th^y :
B^k(r)
\to
B^{ \tilde{h}_{t_\th}}(y_\th, e^{-f(t_\th)} r)
\]
for $r>0$ such that
$({\Th_\th^y})^* (e^{2f(t_\th)} \tilde{h}_{t_\th})$
converges to the flat metric $\xi^k$ on $B^k(r)$.
For $r,r'>0$ we set
\[
U_\th(r,r')
\definedas
B^{\tilde{h}_{t_\th}}(y_\th, e^{-f(t_\th)} r)
\times
[t_\th- r', t_\th+ r'] \times S^{n-k-1},
\]
and we define
\[
\Th_\th:
B^k(r) \times [-r',r'] \times S^{n-k-1}
\to U_\th(r,r')
\]
by
\[
\Th_\th(y,s,z) \definedas (\Th_\th^y (y), t(s), z ),
\]
where $t(s) \definedas t_\th+ s$. By construction, $\Th_\th$ is a
diffeomorphism, and we see that
\begin{equation} \label{metric=}
\Th_\th^* ( g_\th)
=
\frac{ e^{2f(t)} }{ e^{2f(t_\th)} }
({\Th_\th^y})^* ( e^{2f(t_\th)} \tilde{h}_{t})
+ ds^2 + \sigma^{n-k-1} + \Th_\th^*(\widetilde{T}_t).
\end{equation}
We will now find the limit of $\Th_\th^* ( g_\th) $ in the
$C^2$ topology. We define
$c \definedas \lim_{\theta\searrow 0} f'(t_\theta)$, which can be assumed to
exist without loss of generality.
\begin{lemma}\label{lim_metric}
For fixed $r,r'>0$ the sequence of metrics $\Th_\th^* ( g_\th)$ tends
to $G_c = \eta^{k+1}_c + \sigma^{n-k-1} =
e^{2c s}\xi^k + ds^2 + \sigma^{n-k-1}$ in the topological space
$C^2(B^k(r) \times [-r',r'] \times S^{n-k-1})$.
\end{lemma}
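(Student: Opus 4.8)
The plan is to treat separately the groups of terms in the decomposition \eqref{metric=} of $\Th_\th^*(g_\th)$ and to show that each converges in $C^2(B^k(r)\times[-r',r']\times\mS^{n-k-1})$ to the corresponding block of $G_c=e^{2cs}\xi^k+ds^2+\sigma^{n-k-1}$. The summands $ds^2$ and $\sigma^{n-k-1}$ occur in \eqref{metric=} verbatim, so there is nothing to prove for them, and it remains to analyze the conformal factor $\tfrac{e^{2f(t)}}{e^{2f(t_\th)}}$, the rescaled $W$-block $\tfrac{e^{2f(t)}}{e^{2f(t_\th)}}(\Th_\th^y)^*(e^{2f(t_\th)}\tilde h_t)$, and the error term $\Th_\th^*(\widetilde{T}_t)$.

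For the conformal factor I would write $t=t(s)=t_\th+s$, so that $\tfrac{e^{2f(t)}}{e^{2f(t_\th)}}=e^{2(f(t_\th+s)-f(t_\th))}$, and Taylor expand $f$ around $t_\th$. Using the properties of $f$ collected in Section~\ref{joining_man}, namely $|df/dt|\le1$ and $\|d^2f/dt^2\|_{L^\infty}\to0$ as $\th\searrow0$, together with $f'(t_\th)\to c$, one gets that $s\mapsto f(t_\th+s)-f(t_\th)$ converges to $s\mapsto cs$ in $C^2([-r',r'])$: the values converge because $|f(t_\th+s)-f(t_\th)-f'(t_\th)s|\le\tfrac12\|f''\|_{L^\infty}r'^2\to0$, the first $s$-derivative is $f'(t_\th+s)=f'(t_\th)+O(\|f''\|_{L^\infty}r')\to c$, and the second $s$-derivative is $f''(t_\th+s)\to0$. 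Hence $\tfrac{e^{2f(t)}}{e^{2f(t_\th)}}\to e^{2cs}$ in $C^2([-r',r'])$, and since this factor is constant in the remaining directions it converges in $C^2$ on the whole product.

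The main work is the $W$-block, and by the previous paragraph it suffices to show $(\Th_\th^y)^*(e^{2f(t_\th)}\tilde h_t)\to\xi^k$ in $C^2(B^k(r))$, uniformly in $s\in[-r',r']$. I would split $\tilde h_t=\tilde h_{t_\th}+(\tilde h_t-\tilde h_{t_\th})$. For the first summand, $(\Th_\th^y)^*(e^{2f(t_\th)}\tilde h_{t_\th})\to\xi^k$ in $C^2(B^k(r))$ is exactly the conclusion of Lemma~\ref{diffeom} as it was applied here (with $\ga_\al=\tilde h_{t_\th}$, $\ga_0=\tilde h_T$, $b_\al=e^{f(t_\th)}$). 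For the difference I would use the explicit formula $\tilde h_t-\tilde h_{t_\th}=\bigl(\chi(t/A_\th)-\chi(t_\th/A_\th)\bigr)(h_2-h_1)$: since $|t-t_\th|=|s|\le r'$ and $A_\th\to\infty$, the scalar factor and its first two $s$-derivatives are $O(A_\th^{-1})$; combined with the fact that the pointwise norm of a $(0,2)$-tensor is unchanged under a constant rescaling of the metric while its $j$-th covariant derivative picks up only the bounded factor $e^{-jf(t_\th)}\le1$ under the rescaled pull-back $\Th_\th^y$, this forces $(\Th_\th^y)^*\bigl(e^{2f(t_\th)}(\tilde h_t-\tilde h_{t_\th})\bigr)\to0$ in $C^2(B^k(r))$, uniformly in $s$ (joint $C^2$-convergence in $(y,s)$ following because the $s$-dependence enters only through that scalar). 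Finally, $\Th_\th^*(\widetilde{T}_t)\to0$ in $C^2$ by the estimates \eqref{normC0Tt}, \eqref{normC1Tt}, \eqref{normC2Tt}, arguing verbatim as for \eqref{tto0c2} in Subcase~I.2, now using that $\Th_\th^*(g_\th')$ converges to the fixed metric $e^{2cs}\xi^k+ds^2+\sigma^{n-k-1}$ (so that all pointwise norms relative to $\Th_\th^*(g_\th')$ stay uniformly comparable to the flat ones) and that $r=\ep e^{|t|}\le e^{r'}r(x_\th')\to0$ along the chosen sequence. Adding the four contributions and using that sums and products of $C^2$-convergent sequences converge in $C^2$ yields $\Th_\th^*(g_\th)\to G_c$.

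The step I expect to be the main obstacle is the uniform-in-$s$ $C^2$-convergence of $(\Th_\th^y)^*(e^{2f(t_\th)}\tilde h_t)$: one must verify that the possibly large factor $e^{2f(t_\th)}$ — of size comparable to $A_\th^2$ in the middle region $U^N_\ep(\detwo)$ — is exactly compensated by the rescaling $x\mapsto\exp^{\tilde h_{t_\th}}_{y_\th}(e^{-f(t_\th)}x)$ built into $\Th_\th^y$, so that one genuinely obtains convergence rather than mere boundedness, and that this convergence is uniform over the compact interval $[-r',r']$; the uniform bounds on the first two $t$-derivatives of $f$ are what make this possible.
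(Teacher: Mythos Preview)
Your proof is correct and follows essentially the same approach as the paper: the same decomposition via \eqref{metric=}, the same Taylor-type argument for $f(t)-f(t_\th)\to cs$ in $C^2$ using $\|f''\|_{L^\infty}\to 0$, the same splitting $\tilde h_t=\tilde h_{t_\th}+(\tilde h_t-\tilde h_{t_\th})$ combined with Lemma~\ref{diffeom}, and the same reference to the argument for \eqref{tto0c2} to dispose of $\Th_\th^*(\widetilde T_t)$. Your treatment is slightly more explicit in writing out $\tilde h_t-\tilde h_{t_\th}=(\chi(t/A_\th)-\chi(t_\th/A_\th))(h_2-h_1)$ and in tracking the scaling under $\Th_\th^y$, but the structure and ingredients match the paper's proof.
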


As this lemma coincides with
\cite[Lemma 4.1]{ammann.dahl.humbert:09b} %This is tau.tex
we only sketch the proof.
\begin{proof}
The intermediate value theorem tells us that
\[
\left| f(t) - f(t_\th) - f'(t_\th)(t-t_\th) \right|
\leq
\frac{r'^2}{2}
\max_{s \in [t_\th-r',t_\th+r']}
\left| f''(s) \right|
\]
for all $t \in [t_\th-r',t_\th+r']$.
Because of \eqref{asumpf} we also have
$\| f'' \|_{L^\infty}\to 0$ for $\th\searrow 0$, and hence
\[
\lim_{\th\searrow 0}
\left\|
f(t) - f(t_\th) - f'(t_\th)(t-t_\th)
\right\|_{C^0([t_\th-r',t_\th+r'])}
= 0
\]
for $r'$ fixed. Further we have
\begin{equation*}
\begin{split}
\left|
\frac{d}{dt}
\Big( f(t) - f(t_\th) - f'(t_\th)(t-t_\th) \Big)
\right|
&=
\left| f'(t) - f'(t_\th) \right| \\
&=
\left|
\int_{t_\th}^t f'' (s) \, ds
\right| \\
&\leq
r' \max_{s \in [t_\th-r',t_\th+r']}
\left| f'' (s) \right| \\
&\to 0
\end{split}
\end{equation*}
as $\th\searrow 0$, and finally
\[
\left|
\frac{d^2}{dt^2}
\left( f(t) - f(t_\th) - f'(t_\th)(t-t_\th) \right)
\right|
=
\left| f''(t) \right|
\to 0
\]
as $\th\searrow 0$.
Together with $c=\lim_{\theta\searrow 0}f'(t_\theta)$
we have shown that
\[
\lim_{\th\searrow 0}
\left\|
f(t) - f(t_\th) - c(t-t_\th)
\right\|_{C^2([t_\th-r',t_\th+r'])}
= 0.
\]
Hence
\[
\lim_{\th\searrow 0}
\left\|
e^{f(t)-f(t_\th)} - e^{c(t-t_\th)}
\right\|_{ C^2([t_\th- r',t_\th+ r']) }
= 0.
\]
We now write
$e^{2f(t)} \tilde{h}_t
= e^{2f(t)} (\tilde{h}_t - \tilde{h}_{t_\th})
+ \frac{ e^{2f(t)} }{ e^{2f(t_\th)} }
e^{2f(t_\th)} \tilde{h}_{t_\th}$.
Using the fact that
\[
\lim_{\th\searrow 0}
\left\| \tilde{h}_{t} - \tilde{h}_{t_\th}
\right\|_{C^2( B^{\tilde{h}_{t_\th}}(y_\th, e^{-f(t_\th)} r))}
= 0
\]
holds uniformly for $t \in [t_\th- r', t_\th +  r']$ we obtain that the
sequence $\frac{ e^{2f(t)} }{ e^{2f(t_\th)} }
(\Th_\th^y)^* ( e^{2f(t_\th)} \tilde{h}_{t})$
tends to $e^{2c s} \xi^k$ in the $C^2(B^k(r))$-topology where as before
$s=t-t_\th\in [-r',r']$. Finally, proceeding
exactly as we did to get Relation \eqref{tto0c2}, we have that
\[
\lim_{\th\searrow 0} \Th_\th^*(\widetilde{T}_t)
= 0
\]
in $C^2 (B^k(r) \times [-r',r'] \times S^{n-k-1})$. Now Lemma~\ref{lim_metric}
follows from \eqref{metric=}.
\end{proof}

We continue with the proof of Subsubcase~II.1.1.
As in Subcases~I.1 and~I.2 we apply Lemma~\ref{lim_sol}
with $(V,g) = (\mR^{k+1} \times S^{n-k-1}, G_c)$, $\al = \th$,
and $g_\al = \Th_\th^* (g_\th)$ (we can apply this lemma
since any compact subset of $\mR^{k+1} \times S^{n-k-1}$ is
contained in some $B^k(r) \times [-r',r'] \times S^{n-k-1}$).
We obtain a $C^2$ function $u \geq 0$ that is a solution of
\[
L^{G_c} u = \bar{\mu} u^{p-1}
\]
on $\mR^{k+1} \times S^{n-k-1}$. From \eqref{normlr_lim} it follows
that
\[
\int_{\mR^{k+1} \times S^{n-k-1}}
u^p \,dv^{G_c}
\leq 1.
\]
{}From \eqref{norminf_lim} it follows that $u \in L^\infty(\mR^{k+1}
\times S^{n-k-1})$. With \eqref{norminfbound}, we see that $u(0) \geq D_0$
and thus, $u \not\equiv 0$.  By \eqref{uboundedl2}, we also get that $u
\in L^2(\mR^{k+1} \times S^{n-k-1})$. By the definition of
$\La^{(1)}_{n,k}$ we have that
$\bar{\mu} \geq \La^{(1)}_{n,k} \geq \La_{n,k}$. This ends the proof
of Theorem \ref{main.strong} in this subsubcase.

%%%%%%%%%%%%%%%%%%%%%%%%%%%%%%%%%%%%%%%%%%%%%%%%%%%%%%%%%%%%%
\begin{subsubcaseII.1.2}
$\lim_{b\searrow 0} \limsup_{\th\searrow 0} \sup_{U^N_\ep(b)} u_\th= 0$.
\end{subsubcaseII.1.2}
%%%%%%%%%%%%%%%%%%%%%%%%%%%%%%%%%%%%%%%%%%%%%%%%%%%%%%%%%%%%%

The proof in this subsubcase proceeds in several steps.

%%%%%%%%%%%%%%%%%%%%%%%%%%%%%%%%%%%%%%%%%%%%%%%%%%%%%%%%%%%%%
\begin{step} \label{step1}
We prove
$\lim_{b \searrow 0} \limsup_{\th  \searrow 0} \int_{U^N_\ep(b)} u_\th^p \, dv^{g_\th}
= 0$.
\end{step}
%%%%%%%%%%%%%%%%%%%%%%%%%%%%%%%%%%%%%%%%%%%%%%%%%%%%%%%%%%%%%

Let $b >0$. Using \eqref{uboundedl2} we have
\[
\int_{U^N_\ep(b) } u_\th^p \, dv^{g_\th}
\leq
A_0  \sup_{U^N_\ep(b)} u_\th^{p-2}
\]
where the constant $A_0$ is independent of $b$ and $\th$. Step
\ref{step1} follows.

%%%%%%%%%%%%%%%%%%%%%%%%%%%%%%%%%%%%%%%%%%%%%%%%%%%%%%%%%%%%%
\begin{step}\label{step2}
We show
$\liminf_{b\searrow 0} \liminf_{\th\searrow 0}
\int_{U^N_\ep(2b) \setminus U^N_\ep(b)} u_\th^2 \, dv^{g_\th} =0$.
\end{step}
%%%%%%%%%%%%%%%%%%%%%%%%%%%%%%%%%%%%%%%%%%%%%%%%%%%%%%%%%%%%%

Let
\[
d_0
\definedas
\liminf_{b\searrow 0} \liminf_{\th\searrow 0}
\int_{U^N_\ep(2b) \setminus U^N_\ep(b)} u_\th^2 \, dv^{g_\th}.
\]
We prove this step by contradiction and assume that $d_0 >0$.
Then there exists $\de >0$ such that for all $b \in (0,\de]$,
\[
\liminf_{\th\searrow 0} \int_{U^N_\ep(2b) \setminus U^N_\ep(b)  } u_\th^2
\, dv^{g_\th}
\geq \frac{d_0}{2}.
\]
For $m \in \mN$ we set $V_m \definedas U(2^{-m} \de) \setminus
U(2^{-(m+1)} \de)$. In particular we have
\[
\liminf_{\th\searrow 0} \int_{V_m} u_\th^2 \, dv^{g_\th}
\geq
\frac{d_0}{2}
\]
for all $m$. Let $N_0 \in \mN$. For $m \neq m'$ the sets $V_m$ and
$V_{m'}$ are disjoint. Hence we can write
\[
\int_N u_\th^2 \, dv^{g_\th}
\geq
\int_{\bigcup_{m=0}^{N_0} V_m}  u_\th^2 \, dv^{g_\th}
\geq
\sum_{m=0}^{N_0} \int_{V_m} u_\th^2 \, dv^{g_\th}
\]
for $\th$ small enough. This leads to
\begin{equation*}
\begin{split}
\liminf_{\th\searrow 0} \int_N u_\th^2 \, dv^{g_\th}
&\geq
\liminf_{\th\searrow 0}
\sum_{m=0}^{N_0} \int_{V_m} u_\th^2 \, dv^{g_\th}  \\
&\geq  \sum_{m=0}^{N_0}  \liminf_{\th\searrow 0} \int_{V_m}
u_\th^2 \, dv^{g_\th} \\
&\geq
(N_0 + 1) \frac{d_0}{2}.
\end{split}
\end{equation*}
Since $N_0$ is arbitrary, this contradicts that $(u_\th)$ is bounded
in $L^2(N)$ and proves Step \ref{step2}.

%%%%%%%%%%%%%%%%%%%%%%%%%%%%%%%%%%%%%%%%%%%%%%%%%%%%%%%%%%%%%
\begin{step} %\label{step3}
Conclusion.
\end{step}
%%%%%%%%%%%%%%%%%%%%%%%%%%%%%%%%%%%%%%%%%%%%%%%%%%%%%%%%%%%%%

Let $d_0 >0$. By Steps \ref{step1} and \ref{step2} we can find $b>0$
such that after passing to a subsequence, we have for all $\th$ close to $0$
\begin{equation} \label{largelp}
\int_{N \setminus U^N_\ep(2b)} u_\th^p \, dv^{g_\th}
\geq
1 - d_0
\end{equation}
and
\begin{equation}\label{smalll2}
\int_{U^N_\ep(2b)  \setminus U^N_\ep(b) } u_\th^2 \, dv^{g_\th}
\leq
d_0.
\end{equation}
Let $\chi \in C^{\infty} (M) $, $0 \leq \chi \leq 1$, be a cut-off
function equal to $0$ on $U^N_\ep(b)$ and equal to $1$ on
$N \setminus U^N_\ep(2b)$. Since the set $U^N_\ep(2b) \setminus U^N_\ep(b)$
corresponds to $ t \in [t_0-\ln 2,t_0] \cup [t_1,t_1+\ln 2]$ with
$t_0 = -\ln b +\ln \ep$ and $t_1 = \ln b - \ln \ep$ we can assume
that
\begin{equation} \label{dchi}
| d\chi |_{g_\th} \leq 2 \ln2.
\end{equation}

We will use the function $\chi u_\th$ to estimate $\mu$. This function
is supported in $N \setminus U^N_\ep(b)$. If $\th$ is smaller than $b$,
then $(N \setminus U^N_\ep(b), g_\th)$ is isometric to
$(M \setminus U^M(b), F^2 g)$. In other words
$(N \setminus U^N_\ep(b), g_\th)$ is conformally equivalent to
$(M \setminus U^M(b), g)$. Relation~\eqref{confJ} implies that
\begin{equation} \label{muleqJ}
\mu
\leq
J_{\th}(\chi u_\th)
=
\frac{\int_N ( \an |d (\chi u_\th)|_{g_\th}^2
+ \Scal^{g_\th} (\chi u_\th)^2) \, dv^{g_\th}}
{{\left(
\int_{N} (\chi u_\th)^p \, dv^{g_\th}
\right)}^{\frac{n-2}{n}}} .
\end{equation}
We multiply Equation \eqref{eqa} by $\chi^2 u_\th$ and integrate
over $N$. We can re-write the result using the following form of
\eqref{formula.dchiu},
\[
\int_N  |d (\chi u_\th)|_{g_\th}^2 \, dv^{g_\th}
=
\int_N \chi^2 u_\th\Delta^{g_\th} u_\th  \, dv^{g_\th}
+
\int_N  |d\chi|_{g_\th}^2 u_\th^2 \, dv^{g_\th},
\]
to obtain
\begin{equation*}
\begin{split}
\int_N \left( \an |d (\chi u_\th)|_{g_\th}^2
+ \Scal^{g_\th} (\chi u_\th)^2 \right) \,dv^{g_\th}
&=
\mu_\th\int_N u_\th^p \chi^2 \,dv^{g_\th}
+ \an \int_N  |d\chi|_{g_\th}^2 u_\th^2 \, dv^{g_\th} \\
&\leq
\mu_\th\int_N u_\th^p \,dv^{g_\th}
+ |\mu_\th|  \int_{U^N_\ep(2b)} u_\th^p \, dv^{g_\th} \\
&\quad
+ \an \int_N  |d\chi|_{g_\th}^2 u_\th^2 \, dv^{g_\th}.
\end{split}
\end{equation*}
Using \eqref{smalll2} and \eqref{dchi}, we have
\[
\int_N  |d\chi|_{g_\th}^2 u_\th^2 \,dv^{g_\th}
=
\int_{U^N_\ep(2b)\setminus U^N_\ep(b)} |d\chi|_{g_\th}^2 u_\th^2 \,dv^{g_\th}
\leq
4 (\ln2)^2 d_0.
\]
Relation~\eqref{largelp} implies $\int_{U^N_\ep(2b)} u_\th^p \, dv^{g_\th}
\leq d_0$. Together with
$\int_N u_\th^p \, dv^{g_\th}~=~1$, we have
\begin{equation} \label{numJ}
\int_N
( \an | d(\chi u_\th) |_{g_\th}^2 + \Scal^{g_\th} (\chi u_\th)^2)
\, dv^{g_\th}
\leq
\mu_\th+|\mu_\th| d_0 + 4   (\ln2)^2 \an d_0.
\end{equation}
In addition, by Relation \eqref{largelp},
\begin{equation} \label{denJ}
\int_{N} (\chi u_\th)^p \, dv^{g_\th} \geq 1 - d_0.
\end{equation}
Plugging \eqref{numJ} and \eqref{denJ} in \eqref{muleqJ} we get
\[
\mu
\leq
\frac{\mu_\th+|\mu_\th|d_0 +  4 (\ln2)^2 \an d_0 }
{(1-d_0)^{\frac{n-2}{n}}}
\]
for small $\th$. By taking the limit $\th\searrow 0$ we can replace
$\mu_\th$ by $\bar{\mu}$ in this inequality. Since $d_0$ can be chosen
arbitrarily small we finally obtain $\mu \leq \bar{\mu}$. This proves
Theorem \ref{main.strong} in Subcase II.1.

%%%%%%%%%%%%%%%%%%%%%%%%%%%%%%%%%%%%%%%%%%%%%%%%%%%%%%%%%%%%%
\begin{subcaseII.2} For all $b >0$, we have
\[
\liminf_{\th\searrow 0}
\left(
\mu_\th\sup_{ U^N_\ep( b  ) } u_\th^{p-2}
\right)
\geq
\frac{(n-k-2)^2(n-1)}{8(n-2)}.
\]
\end{subcaseII.2}
%%%%%%%%%%%%%%%%%%%%%%%%%%%%%%%%%%%%%%%%%%%%%%%%%%%%%%%%%%%%%

Hence, we can construct a subsequence of $\th$ and a sequence
$(b_\th)$ of positive numbers converging to $0$ with
\[
\liminf_{\th\searrow 0}
\left(
\mu_\th\sup_{ U^N_\ep( b_\th) } u_\th^{p-2}
\right)
\geq
\frac{(n-k-2)^2(n-1)}{8(n-2)}.
\]
Choose a point $x_\th'' \in \overline{U^N_\ep(b_\th)}$ such that
$u_\th(x_\th'') = \sup_{U^N_\ep(b_\th)} u_\th$. Since $\mu_\th  \leq
\mu(\mS^n)$, we have
\[u_\th(x_\th'') \geq
D_1
\definedas
\left(
\frac{(n-k-2)^2 (n-1)}{8 \mu(\mS^n)(n-2)}
\right)^{\frac{1}{p-2}}.
\]
With similar arguments as in Subcase II.1.1 (just replace $x_\th'$
by $x_\th''$ and $D_0$ by $D_1$), we get the existence of a $C^2$
function $u \geq 0$ that is a solution of
\[
L^{G_c} u
=
\bar{\mu} u^{p-1}
\]
on $\mH^{k+1}_c \times S^{n-k-1}$. As in Subsubcase II.1.1,
$u \not\equiv 0$, $u \in L^\infty(\mH^{k+1}_c \times S^{n-k-1})$, and
\[
\int_{\mR^{k+1} \times S^{n-k-1}}
u^p \,dv^{G_c}
\leq 1.
\]
Moreover, the assumption of Subcase II.2 implies that
\[
\bar{\mu} u^{p-2}(0)
=
\lim_{\th\searrow 0} \mu_\th u_\th^{p-2}(x_\th'')
\geq
\frac{(n-k-2)^2 (n-1)}{8(n-2)}.
\]
By the definition of $\La^{(2)}_{n,k}$, we have that
$\bar{\mu} \geq \La^{(2)}_{n,k} \geq \La_{n,k}$.

%%%%%%%%%%%%%%%%%%%%%%%%%%%%%%%%%%%%%%%%%%%%%%%%%%%%%%%%%%%%%%%%%
\appendix
%%%%%%%%%%%%%%%%%%%%%%%%%%%%%%%%%%%%%%%%%%%%%%%%%%%%%%%%%%%%%%%%%

%%%%%%%%%%%%%%%%%%%%%%%%%%%%%%%%%%%%%
\section{Some details}
%%%%%%%%%%%%%%%%%%%%%%%%%%%%%%%%%%%%%
%%%%%%%%%%%%%%%%%%%%%%%%%%%%%%%%%%%%%%%%%%%%%%%%%%%%%%%%%%%%%%%%%
\subsection{Scalar curvature}
%%%%%%%%%%%%%%%%%%%%%%%%%%%%%%%%%%%%%%%%%%%%%%%%%%%%%%%%%%%%%%%%%

In this section $U$ denotes an open subset of a manifold
and $q\in U$ a fixed point.

\begin{proposition}
Let $g$ be a Riemannian metric on $U$ and $T$ a symmetric $2$-tensor
such that $\ti g \definedas g+T$ is also a Riemannian metric. Then the
scalar curvature $\scal^{\ti g}(q)$ of $\ti g$ in $q\in U$ is a smooth
function of the Riemann tensor $R^g(q)$ of $g$ at $q$,  $T(q)$,
$\na^g T(q)$, and $(\na^g)^2 T(q)$. Moreover, the operator
$T \mapsto \scal^{g+T}(q)$ is a quasilinear partial differential
operator of second order.
\end{proposition}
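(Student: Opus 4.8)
The plan is to avoid $g$-normal coordinates altogether and instead work invariantly with the difference of the two Levi--Civita connections; this is the cleanest way to see that only $R^g(q)$, and not the full $2$-jet of $g$ at $q$, enters. Write $\ti g\definedas g+T$ and let $\ti\na$ be its Levi--Civita connection. Since $\na^g$ and $\ti\na$ are both torsion free, their difference $C\definedas\ti\na-\na^g$ is a smooth section of $T^*U\otimes T^*U\otimes TU$, symmetric in the two covariant slots, and the Koszul formula yields the explicit expression $C^k_{ij}=\tfrac{1}{2}\ti g^{kl}\big(\na^g_i T_{jl}+\na^g_j T_{il}-\na^g_l T_{ij}\big)$ (using $\na^g g=0$). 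Because matrix inversion is a smooth map on the set of symmetric positive definite matrices, $\ti g^{-1}=(g+T)^{-1}$ is a smooth function of $T$ on the locus where $g+T>0$; hence $C$ is a smooth function of $T$ which is, in addition, affine linear in $\na^g T$.

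Next I would substitute $\ti\na=\na^g+C$ into the definition of the curvature tensor. The standard identity reads
\[
\ti R(X,Y)Z=R^g(X,Y)Z+(\na^g_X C)(Y,Z)-(\na^g_Y C)(X,Z)+C\big(X,C(Y,Z)\big)-C\big(Y,C(X,Z)\big),
\]
so $\ti R$ is $R^g$ plus a term linear in $\na^g C$ plus a term fibrewise quadratic in $C$. By the Leibniz rule, $\na^g C$ involves $\na^g(\ti g^{-1})=-\ti g^{-1}(\na^g T)\ti g^{-1}$ together with the second covariant derivative $(\na^g)^2 T$; combining this with the previous paragraph shows that $\ti R$ is a smooth function of $R^g$, $T$, $\na^g T$ and $(\na^g)^2 T$. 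Contracting, the Ricci tensor $\widetilde{\Ric}$ of $\ti g$ is a trace of $\ti R$ and $\scal^{\ti g}=\ti g^{ij}\widetilde{\Ric}_{ij}=\tr_{\ti g}\widetilde{\Ric}$; since $\ti g^{-1}$ is again a smooth function of $T$, we conclude that $\scal^{\ti g}$ is a smooth function of $R^g$, $T$, $\na^g T$, $(\na^g)^2 T$. Evaluating at $q$ gives the first assertion; to read the statement literally as ``a smooth function of the tensors'' one fixes a $g$-orthonormal frame at $q$ and reads off components, naturality making the answer independent of this choice.

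For the quasilinearity claim I would simply track where $(\na^g)^2 T$ can occur in the above: the term $R^g$ contains no derivatives of $T$, the quadratic term in $C$ contains at most first derivatives of $T$, and $(\na^g)^2 T$ enters only through the term linear in $\na^g C$, and there it enters linearly. Hence $\scal^{g+T}(q)$ is affine linear in $(\na^g)^2 T(q)$ with coefficients that are smooth functions of $R^g(q)$, $T(q)$ and $\na^g T(q)$ (and of the base point); equivalently, $T\mapsto\scal^{g+T}$ is a second-order differential operator whose dependence on the top-order derivatives is linear, i.e.\ quasilinear. The only genuinely delicate point here is bookkeeping --- making sure that the implicit dependence of $C$ and of $\ti g^{-1}$ on $T$ does not secretly reintroduce second derivatives of $T$ --- but since $\ti g=g+T$ depends on $T$ with no differentiation at all, $\ti g^{-1}$ and $C$ involve at most first derivatives, and the order count goes through.
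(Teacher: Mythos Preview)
Your argument is correct and complete. It takes a genuinely different route from the paper, however. The paper works in $g$-normal coordinates centred at $q$: there the Christoffel symbols of $g$ vanish at $q$ and their derivatives are $\pa_\al\Ga_{ij}^k=-\tfrac13(R_{ik\al j}+R_{i\al kj})$, so at $q$ one has $\ti g_{kl,i}=T_{kl;i}$ and $\ti g_{kl,ij}$ is expressible through $(\na^g)^2 T(q)$ and $R^g(q)$; these are then fed into the raw coordinate formula $\scal^{\ti g}=\ti g^{ik}\ti g^{jm}(\ti g_{km,ij}-\ti g_{ki,mj})+P(\ti g^{-1},\pa\ti g)$, from which both smoothness and quasilinearity are read off. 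Your approach via the difference tensor $C=\ti\na-\na^g$ and the identity $\ti R=R^g+\na^g C\wedge{\cdot}+C*C$ is coordinate-free and makes the order-counting particularly transparent; it also explains cleanly why only $R^g(q)$ and not the full $2$-jet of $g$ appears. The paper's route, on the other hand, produces an explicit polynomial expression in normal coordinates, which is what is actually used to obtain the quantitative estimate in the Corollary that follows. Either argument is standard; yours is tidier for the qualitative statement as phrased.
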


\begin{proof}
The proof is straightforward; we will just give a sketch using
notation from \cite{ammann.grosjean.humbert.morel:08} which coincides
with that of \cite{hebey:97}. We denote the components of the
curvature tensors of $g$ and $\ti g$ by
\[
R_{ijkl}
=
g(R^g(\pa_k,\pa_l)\pa_j,\pa_i),
\qquad
\ti R_{ijkl}
=
\ti g(R^{\ti g}(\pa_k,\pa_l)\pa_j,\pa_i).
\]
We work in normal coordinates for the metric $g$ centered in $q$,
indices of partial derivatives in coordinates are added and separated
with a comma ``,'' and covariant ones with respect to $g$ separated
with a semi-colon ``;''. In particular $T=T_{ij}dx^i\,dx^j$,
\[
T_{kl;i}
=
(\na_i T)(\pa_k,\pa_l)
=
\pa_iT_{kl} - T_{ml}\Ga_{ik}^m -T_{km}\Ga_{il}^m.
\]
At the point $q$ we have $\ti g_{kl,i}=T_{kl;i}$. As explained in
\cite[Formula (13)]{ammann.grosjean.humbert.morel:08} we have
\[
\na_\al\Ga_{ij}^k
=
\pa_\al\Ga_{ij}^k
=
-\frac13 \left(R_{ik\al j}+R_{i\al kj}\right)
\]
at the point $q$. Hence in that point,
\begin{eqnarray*}
T_{kl;rs}
 &= &
(\na^2_{rs}T)(\pa_k,\pa_l)\\
 &= &
\pa_r \pa_s T_{kl}
+ \frac13 T_{ml}(R_{smrk}+R_{srmk})
+ \frac13 T_{mk}(R_{smrl}+R_{srml}).
\end{eqnarray*}
In order to calculate the scalar curvature $\Scal^{\ti g}(q)$ of
$\ti g$ in $q$ we use the curvature formula as in \cite{hebey:97}
and contract twice. We obtain
\begin{equation} \label{scal.formula}
\Scal^{\ti g}(q)
=
\ti g^{ik} \ti g^{jm} (\ti g_{km,ij}-\ti g_{ki,mj})
+ P(\ti g^{rm},\ti g_{ij,k})
\end{equation}
where $P$ is a polynomial expression in $\ti g^{-1}$ and
$\pa \ti g$ that is cubic in $\ti g^{-1}=\ti g^{rm}$ and quadratic
in $\ti g_{ij,k}$. Note that formula \eqref{scal.formula} holds for an
arbitrary metric in arbitrary coordinates. The polynomial $P$
vanishes for $T=0$ in normal coordinates for $g$.
\end{proof}

\begin{corollary}\label{cor.scal.diff}
Let $\cR\subset T^*_qM\otimes T^*_qM \otimes T^*_qM \otimes T_qM$
be a bounded set of curvature tensors. Then there is an $\ep>0$ and
$C \in \mR$ such that for all metrics $g$ on $U$ with $R^g|_q\in \cR$
we have the following: if
\[
\max_{i\in\{0,1,2\}} \left| (\na^g)^i T (q) \right| < \ep,
\]
then
\[
|\scal^{g+T}(q) - \scal^{g}(q)|
\leq
C\left(
\left| (\na^g)^2 T (q) \right|
+ \left| \na^g T (q) \right|^2
+ \left| T (q)  \right|
\right).
\]
\end{corollary}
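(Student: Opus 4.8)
The plan is to deduce the corollary directly from the preceding proposition by a Taylor-expansion argument applied uniformly over the compact set $\cR$ of curvature tensors. First I would set up the stage: by the proposition, the map $(R,S,U_1,U_2)\mapsto\scal^{g+T}(q)$ is a smooth function $\Phi$ of the Riemann tensor $R=R^g(q)$, the value $S=T(q)$, the first covariant derivative $U_1=\na^g T(q)$, and the second covariant derivative $U_2=(\na^g)^2T(q)$; moreover $\Phi(R,0,0,0)=\scal^g(q)$ since $T\equiv 0$ gives back the original metric. Quasilinearity of the operator $T\mapsto\scal^{g+T}(q)$, together with the explicit formula \eqref{scal.formula} from the proof, tells us that $\Phi$ is \emph{linear} in the top-order variable $U_2$ (the term $\ti g^{ik}\ti g^{jm}(\ti g_{km,ij}-\ti g_{ki,mj})$ is linear in second derivatives of $\ti g$, hence in $U_2$ after subtracting the $T$-independent part), and polynomial --- in fact with no constant or linear-in-$S$ term at $U_1=U_2=0$ coming from $P$, which vanishes at $T=0$ --- in the lower-order variables $S$ and $U_1$.

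Next I would make the uniformity precise. Since $\cR$ is bounded, its closure $\overline{\cR}$ is compact. Choose $\ep_0>0$ and work on the compact set $K\definedas\overline{\cR}\times\overline{B_{\ep_0}}\times\overline{B_{\ep_0}}\times\overline{B_{\ep_0}}$ in the relevant finite-dimensional tensor spaces. Write
\[
\Phi(R,S,U_1,U_2)-\Phi(R,0,0,0)
=
\big[\Phi(R,S,U_1,U_2)-\Phi(R,S,U_1,0)\big]
+\big[\Phi(R,S,U_1,0)-\Phi(R,0,0,0)\big].
\]
For the first bracket, linearity in $U_2$ gives $\Phi(R,S,U_1,U_2)-\Phi(R,S,U_1,0)=L_{R,S,U_1}(U_2)$ for a linear map $L_{R,S,U_1}$ depending smoothly (hence, on the compact set $K$, with bounded operator norm) on its parameters; thus this term is bounded by $C|U_2|$. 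For the second bracket, the function $(S,U_1)\mapsto\Phi(R,S,U_1,0)-\Phi(R,0,0,0)$ vanishes at $(S,U_1)=(0,0)$; since it is a polynomial (via $P$) whose monomials, by the ``$P$ vanishes for $T=0$'' statement, each contain at least one factor of $S$ or $U_1$, and since the only purely-$S$-linear contribution to the scalar curvature formula also vanishes in normal coordinates at $T=0$ --- one checks from \eqref{scal.formula} that the $S$-linear part of the curvature correction is itself of the form (curvature)$\times S$, hence $O(|S|)$ --- a second-order Taylor estimate with remainder, uniform over $R\in\overline\cR$ by compactness, yields a bound $C(|S|+|U_1|^2+|U_1||S|+|S|^2)\leq C'(|S|+|U_1|^2)$ after shrinking $\ep_0$ so that $|U_1|,|S|\leq 1$. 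Actually one should be slightly careful: the linear-in-$S$ term is genuinely present (it is $-\tfrac{1}{2}\langle\Ric^g,T\rangle$-type), but it is $O(|T(q)|)$, which is exactly one of the allowed terms on the right-hand side, so it causes no problem.

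Collecting the two brackets gives
\[
|\scal^{g+T}(q)-\scal^g(q)|
\leq
C\big(|(\na^g)^2T(q)|+|\na^g T(q)|^2+|T(q)|\big)
\]
for all $g$ with $R^g(q)\in\cR$ and all $T$ with $\max_{i\in\{0,1,2\}}|(\na^g)^iT(q)|<\ep$, where $\ep\leq\ep_0$ and $C$ depend only on $\cR$. The main obstacle I anticipate is not any single estimate but rather bookkeeping: one must argue carefully that $\Phi$ is genuinely linear (not merely affine) in $U_2$ and that all low-order terms carry a factor of $|S|$ or are quadratic in $U_1$, which requires reading off the structure of the polynomial $P$ and of the principal part in \eqref{scal.formula}; and one must invoke compactness of $\overline\cR$ at the right moment to turn ``smooth in $R$'' into ``uniformly bounded constants''. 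Once the structural facts from the proposition's proof are in hand, the estimate itself is a routine finite-dimensional Taylor expansion.
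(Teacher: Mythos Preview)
The paper does not give a proof of this corollary at all; it is stated immediately after the proposition and treated as an evident consequence of the structural information established there (smooth dependence on $R^g(q)$, $T(q)$, $\na^g T(q)$, $(\na^g)^2T(q)$; quasilinearity in the top order; and the explicit formula \eqref{scal.formula} with $P$ quadratic in first derivatives and vanishing at $T=0$). Your argument is exactly the intended filling-in of these details: linearity in $U_2$ from quasilinearity gives the $|(\na^g)^2T|$ term, the quadratic dependence of $P$ on $\ti g_{ij,k}=T_{ij;k}$ at $q$ gives the $|\na^g T|^2$ term with no linear-in-$U_1$ contribution, and the remaining dependence on $S=T(q)$ through $\ti g^{-1}$ and the curvature correction to second partials gives the $|T|$ term; compactness of $\overline{\cR}$ yields uniform constants. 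So your proposal is correct and coincides with what the paper tacitly asserts.
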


%%%%%%%%%%%%%%%%%%%%%%%%%%%%%%%%%%%%%%%%%%%%%%%%%%%%%%%%%%%%%%%%%
\subsection{Details for equation \eqref{scalP}} \label{app.bgm}
%%%%%%%%%%%%%%%%%%%%%%%%%%%%%%%%%%%%%%%%%%%%%%%%%%%%%%%%%%%%%%%%%

We compute the scalar curvature of the metric
$dt^2 + e^{2\phi(t)} h_t$ on $I \times W$. This is a generalized
cylinder metric as studied in \cite{baer.gauduchon.moroianu:05}. In
the following computations we use the notation from
\cite{baer.gauduchon.moroianu:05}, so $g_t = e^{2\phi(t)}h_t$ and we
have
\[
\dot g_t
=
2\phi'(t)e^{2\phi(t)} h_t
+
e^{2\phi(t)} \pa_t h_t,
\]
and
\[
\ddot g_t
=
(2\phi''(t) + 4\phi'(t)^2) e^{2\phi(t)} h_t
+
4\phi'(t) e^{2\phi(t)}\pa_t h_t
+
e^{2\phi(t)} \pa_t^2 h_t.
\]
This implies that the shape operator $S$ of the hypersurfaces defined by having
constant value~$t$ is given by
\[
S = -\phi' \Id - \frac{1}{2} h_t^{-1}\pa_t h_t,
\]
so
\[
\tr (S^2)
=
k \phi'(t)^2
+ \phi'(t) \tr (h_t^{-1} \pa_t h_t )
+ \frac{1}{4} \tr( (h_t^{-1} \pa_t h_t)^2 ),
\]
and
\[
(\tr S)^2
=
k^2 \phi'(t)^2
+ k \phi'(t) \tr (h_t^{-1} \pa_t h_t )
+ \frac{1}{4} (\tr(h_t^{-1} \pa_t h_t) )^2.
\]
Further
\begin{equation*}
\begin{split}
\tr^{g_t} \ddot g_t
&=
(2\phi''(t) + 4\phi'(t)^2) k
+
4\phi'(t) \tr^{h_t} (\pa_t h_t)
+
\tr^{h_t} (\pa_t^2 h_t) \\
&=
(2\phi''(t) + 4\phi'(t)^2) k
+
4\phi'(t) \tr (h_t^{-1} \pa_t h_t )
+
\tr (h_t^{-1}\pa_t^2 h_t) .
\end{split}
\end{equation*}
{}From \cite[Proposition 4.1, (21)]{baer.gauduchon.moroianu:05} we have
\begin{equation*}
\begin{split}
\scal^{e^{2\phi(t)} h_t + dt^2}
&=
\scal^{e^{2\phi(t)} h_t}
+ 3 \tr (S^2) - (\tr S)^2 - \tr^{g_t} \ddot g_t \\
&=
e^{-2\phi(t)} \scal^{h_t} - k(k+1) \phi'(t)^2\\
& \quad - (k+1) \phi'(t) \tr (h_t^{-1} \pa_t h_t )
- 2k \phi''(t) + \frac{3}{4} \tr( ( h_t^{-1} \pa_t h_t)^2 ) \\
&\quad
- \frac{1}{4} (\tr(h_t^{-1} \pa_t h_t))^2
- \tr (h_t^{-1} \pa_t^2 h_t).
\end{split}
\end{equation*}
When we add the scalar curvature of $\si^{n-k-1}$ we get Formula
\eqref{scalP} for the scalar curvature of
$\gWS = dt^2 + e^{2\phi(t)} h_t + \si^{n-k-1}$.

%%%%%%%%%%%%%%%%%%%%%%%%%%%%%%%%%%%%%%%%%%%%%%%%%%%%%%%%%%%%%%%%%
\subsection{A cut-off formula} \label{app.formula.dchiu}
%%%%%%%%%%%%%%%%%%%%%%%%%%%%%%%%%%%%%%%%%%%%%%%%%%%%%%%%%%%%%%%%%

Here we state a formula used several times in the article.
Assume that $u$ and $\chi$ are smooth functions on a
Riemannian manifold $(N,h)$, and that $\chi$ has compact
support. Then
\begin{equation} \label{formula.dchiu}
\begin{split}
\int_N |d(\chi u)|^2 \, dv^h
&=
\int_N
\left(
u^2 |d\chi|^2 + \< u d\chi, \chi du \> + \< \chi du, d(\chi u) \>
\right)
\, dv^h \\
&=
\int_N
\left(
u^2 |d\chi|^2 + \chi u \< d\chi, du \> + \< du, \chi d(\chi u) \>
\right)
\, dv^h \\
&=
\int_N
\left(
u^2 |d\chi|^2 + \chi u \< d\chi, du \>
+ \< du, d(\chi^2 u) - \chi u d\chi \>
\right)
\, dv^h \\
&=
\int_N
\left(
u^2 |d\chi|^2 + \< du, d(\chi^2 u) \>
\right)
\, dv^h \\
&=
\int_N \left(u^2 |d\chi|^2 + \chi^2 u \De^h u\right) dv^h.
\end{split}
\end{equation}

%%%%%%%%%%%%%%%%%%%%%%%%%%%%%%%%%%%%%%%%%%%%%%%%%%%%%%%%%%%
%\bibliographystyle{amsplain}
%\bibliography{literatur}
%%%%%%%%%%%%%%%%%%%%%%%%%%%%%%%%%%%%%%%%%%%%%%%%%%%%%%%%%%%

\providecommand{\bysame}{\leavevmode\hbox to3em{\hrulefill}\thinspace}
\providecommand{\MR}{\relax\ifhmode\unskip\space\fi MR }
% \MRhref is called by the amsart/book/proc definition of \MR.
\providecommand{\MRhref}[2]{%
  \href{http://www.ams.org/mathscinet-getitem?mr=#1}{#2}
}
\providecommand{\href}[2]{#2}

%%%%%%%%%%%%%%%%%%%%%%%%%%%%%%%%%%%%%%%%%%%%%%%%%%%%%%%%%%%%%%%%%
\end{document}